\let\c@table\c@figure
\let\c@lstlisting\c@figure
\DeclareFontFamily{U}{matha}{\hyphenchar\font45}
\DeclareFontShape{U}{matha}{m}{n}{
      <5> <6> <7> <8> <9> <10> gen * matha
      <10.95> matha10 <12> <14.4> <17.28> <20.74> <24.88> matha12
      }{}
\DeclareSymbolFont{matha}{U}{matha}{m}{n}
\DeclareMathSymbol{\vvoplus}{\mathbin}{matha}{"60}
\DeclareMathSymbol{\vvominus}{\mathbin}{matha}{"61}
\DeclareMathSymbol{\vvotimes}{\mathbin}{matha}{"62}
\DeclareMathSymbol{\vvodot}{\mathbin}{matha}{"64}
\DeclareFontFamily{U}{mathb}{\hyphenchar\font45}
\DeclareFontShape{U}{mathb}{m}{n}{
      <5> <6> <7> <8> <9> <10> gen * mathb
      <10.95> mathb10 <12> <14.4> <17.28> <20.74> <24.88> mathb12
      }{}
\DeclareSymbolFont{mathb}{U}{mathb}{m}{n}
\DeclareMathSymbol{\mesh}{\mathbin}{mathb}{'005}
\DeclareMathSymbol{\meshc}{\mathbin}{mathb}{'015}
\DeclareFontFamily{U}{mathx}{\hyphenchar\font45}
\DeclareFontShape{U}{mathx}{m}{n}{
      <5> <6> <7> <8> <9> <10> gen * mathx
      <10.95> mathx10 <12> <14.4> <17.28> <20.74> <24.88> mathx12
      }{}
\DeclareSymbolFont{mathx}{U}{mathx}{m}{n}
\DeclareMathSymbol{\vvbigoplus}{\mathop}{mathx}{"C0}
\DeclareMathSymbol{\vvbigominus}{\mathop}{mathx}{"C1}
\DeclareMathSymbol{\vvbigotimes}{\mathop}{mathx}{"C2}
\DeclareMathSymbol{\vvbigodot}{\mathop}{mathx}{"C4}
\renewcommand{\oplus}{\vvoplus}
\renewcommand{\ominus}{\vvominus}
\renewcommand{\bigoplus}{\vvbigoplus}
\newcommand{\bigominus}{\vvbigominus}
\newcommand{\tosurj}{\mathrel{\mathrlap{\rightarrow}\mkern-1mu\rightarrow}}
\definecolor{lightgray}{rgb}{0.8, 0.8, 0.8}
\definecolor{darkgray}{rgb}{0.7, 0.7, 0.7}
\newcounter{todocounter}
\theoremstyle{plain}
\newtheorem{theorem}{Theorem}[section]
\newtheorem{proposition}[theorem]{Proposition}
\newtheorem{lemma}[theorem]{Lemma}
\newtheorem{corollary}[theorem]{Corollary}
\newtheorem{conjecture}[theorem]{Conjecture}
\newtheorem{question}[theorem]{Question}
\theoremstyle{definition}
\theoremstyle{plain} 
\newfont{\footsc}{cmcsc10 at 8truept}
\newfont{\footbf}{cmbx10 at 8truept}
\newfont{\footrm}{cmr10 at 10truept}
\renewenvironment{abstract}%
                {
                  \begin{list}{}%
                     {\setlength{\rightmargin}{1in}%
                      \setlength{\leftmargin}{1in}}%
                   \item[]\ignorespaces\begin{small}}%
                 {\end{small}\unskip\end{list}}
\newcommand{\Av}{\operatorname{Av}}
\newcommand{\C}{\mathcal{C}}
\newcommand{\D}{\mathcal{D}}
\newcommand{\G}{\mathcal{G}}
\newcommand{\M}{\mathcal{M}}
\renewcommand{\S}{\mathcal{S}}
\newcommand{\U}{\mathcal{U}}
\renewcommand{\O}{\mathcal{O}}
\newcommand{\X}{\mathcal{X}}
\newcommand{\fnmatrix}[1]{\Big(\raisebox{0.5pt}{\scalebox{0.75}{\text{$\begin{matrix*}[r]#1\end{matrix*}$}}}\Big)}
\newcommand{\fnmatrixc}[1]{\Big(\raisebox{0.5pt}{\scalebox{0.75}{\text{$\begin{matrix*}[c]#1\end{matrix*}$}}}\Big)}
\newcommand{\Grid}{\operatorname{Grid}}
\newcommand{\Geom}{\operatorname{Geom}}
\newcommand{\zpm}{0/\mathord{\pm} 1}
\newcommand{\Xfig}{\begin{tikzpicture}[scale=1, anchor=base]
	\pgftransformxscale{0.112};
	\pgftransformyscale{0.112};
	\draw [semithick, line cap=round] (0,2)--(2,0) (0,0)--(2,2);
\end{tikzpicture}}
\newcommand{\Vfig}{\begin{tikzpicture}[scale=1, anchor=base]
	\pgftransformxscale{0.112};
	\pgftransformyscale{0.112};
	\draw [semithick, line cap=round] (0,2)--(1,0) (1,0)--(2,2);
\end{tikzpicture}}
\newcommand{\Vfigt}{\begin{tikzpicture}[scale=1, anchor=base]
	\pgftransformxscale{0.112};
	\pgftransformyscale{0.112};
	\draw [semithick, line cap=round] (0,0)--(1,2) (1,2)--(2,0);
\end{tikzpicture}}
\newcommand{\Vfigl}{\begin{tikzpicture}[scale=1, anchor=base]
	\pgftransformxscale{0.112};
	\pgftransformyscale{0.112};
	\draw [semithick, line cap=round] (2,0)--(0,1) (0,1)--(2,2);
\end{tikzpicture}}
\newcommand{\Vfigr}{\begin{tikzpicture}[scale=1, anchor=base]
	\pgftransformxscale{0.112};
	\pgftransformyscale{0.112};
	\draw [semithick, line cap=round] (0,0)--(2,1) (2,1)--(0,2);
\end{tikzpicture}}
\newcommand{\gridded}{\sharp}
\newcommand{\emptyperm}{\varepsilon}
\newcommand{\suplessthan}{\text{\begin{tiny}\ensuremath{<}\end{tiny}}}
\newcommand{\suplessthaneq}{\text{\begin{tiny}\ensuremath{\le}\end{tiny}}}
\newcommand\mybullet{\raisebox{-5pt}{\normalsize \ensuremath{\bullet}}}
\newcommand\mycirc{\raisebox{-5pt}{\normalsize \ensuremath{\circ}}}
\def\absdot{\@ifnextchar[{\@absdotlabel}{\@absdotnolabel}}
	\def\@absdotlabel[#1]#2{%
		\node at #2 {\normalsize \mybullet};
		\node at #2 [below=2pt] {\ensuremath{#1}};
	}
	\def\@absdotnolabel#1{%
		\node at #1 {\normalsize \mybullet};
	}
\def\absdothollow{\@ifnextchar[{\@absdothollowlabel}{\@absdothollownolabel}}
	\def\@absdothollowlabel[#1]#2{%
		\node at #2 {\normalsize \textcolor{white}{\mybullet}};
		\node at #2 {\normalsize \mycirc};
		\node at #2 [below=2pt] {\ensuremath{#1}};
	}
	\def\@absdothollownolabel#1{%
		\node at #1 {\normalsize \textcolor{white}{\mybullet}};
		\node at #1 {\normalsize \mycirc};
	}
\newcommand{\plotperm}[1]{
	\foreach \j [count=\i] in {#1} {
		\absdot{(\i,\j)};
	};
}
\newcommand{\plotpartialperm}[1]{
	\foreach \i/\j in {#1} {
		\absdot{(\i,\j)};
	};
}
\newcommand\myencircle{\raisebox{-8.75pt}{\LARGE \ensuremath{\circ}}}
\newcommand\myencirclewhite{\raisebox{-8.75pt}{\LARGE \textcolor{white}{\ensuremath{\bullet}}}}
\newcommand{\plotpartialpermencirclewhite}[1]{
	\foreach \i/\j in {#1} {
		\node at (\i,\j) {\myencirclewhite};
		\node at (\i,\j) {\myencircle};
		\absdot{(\i,\j)};
	};
}
\newcommand{\plotpartialpermhollow}[1]{
	\foreach \i/\j in {#1} {
		\absdothollow{(\i,\j)};
	};
}
\newcommand{\plotpermbox}[4]{
	\draw [darkgray, very thick, rounded corners=0.01, line cap=round]
		({#1-0.5}, {#2-0.5}) rectangle ({#3+0.5}, {#4+0.5});
}
\newcommand{\plotpermgraph}[1]{
	\foreach \j [count=\i] in {#1} {
		\foreach \b [count=\a] in {#1} {
			\ifthenelse{\a<\i \AND \b>\j}{\draw (\a,\b)--(\i,\j);}{}
		};
	};
	\plotperm{#1};
}
\newcommand{\plotpermdyckpath}[1]{
	\draw [ultra thick, rounded corners=0.01, line cap=round] (0.5,0.5)
	\foreach \step in {#1} {
		\ifnum\step=1
			-- ++(0,1)
		\else
			-- ++(1,0)
		\fi
	};
}
\newcommand{\plotrotheperm}[1]{
	\foreach \j [count=\i] in {#1} {
		\absdot{(\j, -\i)};
	};
  \pgfmathsetmacro\len{dim(#1)}; 
	\foreach \x in {0,1,...,\len} {
		\draw[darkgray, thin, line cap=round] (0.5,{-\x-0.5})--({\len+0.5},{-\x-0.5});
		\draw[darkgray, thin, line cap=round] ({\x+0.5},-0.5)--({\x+0.5},{-\len-0.5});
	};
}
\newcommand{\plotrotheperminversions}[1]{
	\foreach \pii [count=\i] in {#1} {
		\foreach \pij [count=\j] in {#1} {
			\ifthenelse{\i<\j \AND \pii>\pij}{\absdothollow{(\pij, -\i)}}{}
		};
	};
	\plotrotheperm{#1}
}
\newcommand{\matrixpermwithzeros}[2]{
	\foreach \y [count=\x] in {#2} {
		\foreach \j in {1, 2, ..., #1} {
			\ifthenelse{\j=\y}{
				\node at (\x,\j) {\tiny $\mathbf{1}$};
			}{
				\node at (\x,\j) {\textcolor{darkgray}{\tiny $0$}};
			}
		}
	}
}
\newcommand{\plotdyckpath}[1]{
	\draw[ultra thick, line cap=round] (0.5,0)
	\foreach \step in {#1} {
		\ifnum\step=1
			-- ++(1,1)
		\else
			-- ++(1,-1)
		\fi
	};
}
\newcommand{\arcskinnyplain}[2]{
	\draw (#1,0) arc (180:0:{(#2-#1)/2});
}
\newcommand{\matchsmall}[1]{
	\begin{tikzpicture}[scale=.1, anchor=base]
		\def\h{0};
		\def\maxh{0};
		\foreach \i/\j in {#1} {
			\pgfmathparse{\j-\i};
			\let\h\pgfmathresult;
			\pgfmathifthenelse{\h>\maxh}{\h}{\maxh};
			\global\let\maxh\pgfmathresult;
		};
		\pgftransformyscale{{4.5/\maxh}};
		\foreach \i/\j in {#1} {
			\arcskinnyplain{\i}{\j};
		};
	\end{tikzpicture}
}
\newcommand{\matchpermsmall}[1]{
	\begin{tikzpicture}[scale=.1, anchor=base]
    \pgfmathsetmacro\len{dim(#1)}; 
		\def\h{0};
		\def\maxh{0};
		\foreach \j [count=\i] in {#1} {
			\pgfmathparse{2*\len+1-\j-\i};
			\let\h\pgfmathresult;
			\pgfmathifthenelse{\h>\maxh}{\h}{\maxh};
			\global\let\maxh\pgfmathresult;
		};
		\pgftransformyscale{{4.5/\maxh}};
		\foreach \j [count=\i] in {#1} {
			\arcskinnyplain{\i}{{2*\len+1-\j}};
		};
	\end{tikzpicture}
}
\newcommand{\inlinecycle}[1]{
	\begin{tikzpicture}[scale=0.17, anchor=base]
	\useasboundingbox (-1.5,-0.5) rectangle (1.5,1.5);
	\draw (0,0) circle (1);
	\foreach \i in {1,2,...,#1} {
		\draw ({90-(\i-1)*360/#1}: 1) node {\raisebox{-2.5pt}{\normalsize \ensuremath{\bullet}}};
	}
	\end{tikzpicture}
}
\newcommand{\plotpinsequence}[1]{
	\absdot{(0,0)}{};
	\edef\n{0}
	\edef\s{0}
	\edef\e{0}
	\edef\w{0}
	\edef\x{0}
	\edef\y{0}
	\foreach \pin [remember=\pin as \oldpin (initially 1), count=\i] in {#1} {
		\ifthenelse{\pin=1 \OR \pin=2}{
			\ifthenelse{\oldpin=3}{
				\xdef\x{\number\numexpr\e-1}
			}{
				\xdef\x{\number\numexpr\w+1}
			}
			\ifnum\i=1 
				\pgfmathparse{\e+1}
 				\xdef\e{\pgfmathresult}
			\fi	
		}{ 
			\ifthenelse{\oldpin=1}{
				\xdef\y{\number\numexpr\n-1}
			}{
				\xdef\y{\number\numexpr\s+1}
			}
			\ifnum\i=1 
				\pgfmathparse{\s-1}
 				\xdef\s{\pgfmathresult}
			\fi	
		}
		\ifnum\pin=1 
			\pgfmathparse{\n+2}
 			\xdef\n{\pgfmathresult}		
			\absdot{(\x,\n)}{};
			\ifnum\i>1
				\draw (\x,\n) -- (\x,\y-0.5);
			\else
			\fi
		\fi
		\ifnum\pin=2 
			\pgfmathparse{\s-2}
 			\xdef\s{\pgfmathresult}
			\absdot{(\x,\s)}{};
			\ifnum\i>1
				\draw (\x,\s) -- (\x,\y+0.5);
			\else
			\fi
		\fi
		\ifnum\pin=3 
			\pgfmathparse{\e+2}
 			\xdef\e{\pgfmathresult}
			\absdot{(\e,\y)}{};
			\ifnum\i>1
				\draw (\e,\y) -- (\x-0.5,\y);
			\else
			\fi
		\fi
		\ifnum\pin=4 
			\pgfmathparse{\w-2}
 			\xdef\w{\pgfmathresult}
			\absdot{(\w,\y)}{};
			\ifnum\i>1
				\draw (\w,\y) -- (\x+0.5,\y);
			\else
			\fi
		\fi		
	};
}
\newcommand{\gridsmallhoriz}[1]{
	\begin{tikzpicture}[scale=1, anchor=base]
    \def\gridheight{1};
    \pgfmathsetmacro\gridwidth{dim(#1)}; 
	  \pgftransformxscale{{0.225/\gridwidth}};
		\pgftransformyscale{{0.225/\gridheight}};
    \foreach \dir [count=\i] in {#1} {
      \ifthenelse{\dir>0}{
		    \draw [semithick, line cap=round] ({\i-1}, 0)--(\i, 1);
      }{
        \draw [semithick, line cap=round] ({\i-1}, 1)--(\i, 0);
      };
    };
  \end{tikzpicture}
}
\newcommand{\gridsmallhorizfn}[1]{
	\begin{tikzpicture}[scale=1, anchor=base]
    \def\gridheight{1};
    \pgfmathsetmacro\gridwidth{dim(#1)}; 
	  \pgftransformxscale{{0.155/\gridwidth}};
		\pgftransformyscale{{0.155/\gridheight}};
    \foreach \dir [count=\i] in {#1} {
      \ifthenelse{\dir>0}{
		    \draw [semithick, line cap=round] ({\i-1}, 0)--(\i, 1);
      }{
        \draw [semithick, line cap=round] ({\i-1}, 1)--(\i, 0);
      };
    };
  \end{tikzpicture}
}
\newcommand{\gridhoriz}[1]{
	\begin{tikzpicture}[scale=1, anchor=base]
	  \pgftransformxscale{0.225};
		\pgftransformyscale{0.225};
    \foreach \dir [count=\i] in {#1} {
      \ifthenelse{\dir>0}{
		    \draw [semithick, line cap=round] ({\i-1}, 0)--(\i, 1);
      }{
        \draw [semithick, line cap=round] ({\i-1}, 1)--(\i, 0);
      };
    };
  \end{tikzpicture}
}
\newcommand{\gridsmallvert}[1]{
	\begin{tikzpicture}[scale=1, anchor=base]
    \pgfmathsetmacro\gridheight{dim(#1)}; 
	  \pgftransformxscale{0.225};
		\pgftransformyscale{0.225/2};
    \foreach \dir [count=\i] in {#1} {
      \ifthenelse{\dir>0}{
		    \draw [semithick, line cap=round] (0, {\i-1})--(1, \i);
      }{
        \draw [semithick, line cap=round] (0, \i)--(1, {\i-1});
      };
    };
  \end{tikzpicture}
}
\newcommand{\gridsmalltwobyvert}[2]{
	\begin{tikzpicture}[scale=1, anchor=base]
    \def\gridwidth{2};
    \pgfmathsetmacro\gridheight{dim(#1)}; 
	  \pgftransformxscale{{0.225/\gridwidth}};
		\pgftransformyscale{{0.225/\gridheight}};
    \foreach \dir [count=\i] in {#1} {
			\ifthenelse{\dir=1 \OR \dir=-1}{
				\ifthenelse{\dir>0}{
					\draw [semithick, line cap=round] (0, {\i-1})--(1, \i);
				}{
					\draw [semithick, line cap=round] (0, \i)--(1, {\i-1});
				};
			}{};
    };
    \foreach \dir [count=\i] in {#2} {
			\ifthenelse{\dir=1 \OR \dir=-1}{
				\ifthenelse{\dir>0}{
					\draw [semithick, line cap=round] (1, {\i-1})--(2, \i);
				}{
					\draw [semithick, line cap=round] (1, \i)--(2, {\i-1});
				};
			}{};
    };
  \end{tikzpicture}
}
\newcommand{\gridverysmallvert}[1]{
	\begin{tikzpicture}[scale=1, anchor=base]
    \def\gridwidth{1};
    \pgfmathsetmacro\gridheight{dim(#1)}; 
	  \pgftransformxscale{{0.175/\gridwidth}};
		\pgftransformyscale{{0.175/\gridheight}};
    \foreach \dir [count=\i] in {#1} {
      \ifthenelse{\dir>0}{
		    \draw [semithick, line cap=round] (0, {\i-1})--(1, \i);
      }{
        \draw [semithick, line cap=round] (0, \i)--(1, {\i-1});
      };
    };
  \end{tikzpicture}
}
\title{\sc Labelled Well-Quasi-Order for Permutation Classes}
\author{\centering
	\begin{tabular}{ccc}
	Robert Brignall
	&\rule{0pt}{0pt}&
	Vincent Vatter%
	\footnote{Vatter's research was partially supported by the Simons Foundation via award number 636113.}
	\\[-0.25ex]
	\small School of Mathematics and Statistics
	&&
	\small Department of Mathematics\\[-0.5ex]
	\small The Open University
	&&
	\small University of Florida\\[-0.5ex]
	\small Milton Keynes, England UK
	&&
	\small Gainesville, Florida USA\\[-1.5ex]
	\end{tabular}
	\vspace{0.3in}
}
\begin{document}
\maketitle

\pagestyle{main}

\begin{abstract}
While the theory of labelled well-quasi-order has received significant attention in the graph setting, it has not yet been considered in the context of permutation patterns.
We initiate this study here, and show how labelled well quasi order provides a lens through which to view and extend previous well-quasi-order results in the permutation patterns literature.
Connections to the graph setting are emphasised throughout. In particular, we establish that a permutation class is labelled well-quasi-ordered if and only if its corresponding graph class is also labelled well-quasi-ordered.
\end{abstract}

\maketitle

\section{Introduction}
\label{sec-lwqo-intro}

A prominent theme of the past 85 years%
\footnote{Our figure of 85 years dates the study of well-quasi-order to Wagner~\cite{wagner:uber-eine-eigen:}.}
of combinatorics research has been the study of well-quasi-order (although as Kruskal laments in \cite{kruskal:the-theory-of-w:}, the property goes by a mishmash of names). Suppose we have a universe of finite combinatorial objects and a notion of embedding one object into another that is at least reflexive and transitive, that is, the notion of embedding forms a \emph{quasi-order} (in this paper the order is also generally antisymmetric, so it in fact forms a \emph{partial order}%
\footnote{A well-quasi-ordered partial order is sometimes called a \emph{partially-well-ordered} or \emph{well-partially-ordered} set, or it is simply called a \emph{partial well order}. In particular, these terms are used in some of the early work on well-quasi-order in the permutation patterns context. We tend to agree with Kruskal's sentiment from~\cite[p.~298]{kruskal:the-theory-of-w:}, where he wrote that ``at the casual level it is easier to work with [partial orders] than [quasi-orders], but in advanced work the reverse is true.''}).
Assuming that this notion of embedding does not permit infinite strictly descending chains (as is usually the case for orders on finite combinatorial objects), it is \emph{well-quasi-ordered} (abbreviated \emph{wqo}, and written \emph{belordonn\'e} in French) if it does not contain an infinite antichain---that is, there is no infinite subset of pairwise incomparable objects. (This is but one of several ways to define wqo; others are presented in Section~\ref{subsec-wqo}.)

Three of the most celebrated results in combinatorics---Higman's lemma~\cite{higman:ordering-by-div:}, Kruskal's tree theorem~\cite{kruskal:well-quasi-orde:}, and Robertson and Seymour's graph minor theorem~\cite{robertson:graph-minors-i-xx:}---establish that certain notions of embedding constitute well-quasi-orders. For further background on well-quasi-order in general we refer the reader to the excellent panoramas provided by the recent surveys of Cherlin~\cite{cherlin:forbidden-subst:} and Huczynska and Ru\v{s}kuc~\cite{huczynska:well-quasi-orde:}. It should be noted that well-quasi-order also has significant applications to algorithmic questions, in particular questions about fixed-parameter tractability, for which the reader is referred to the book of Downey and Fellows~\cite[Part IV]{downey:fundamentals-of:}.

While the graph minor theorem establishes that the set of (finite) graphs is wqo under the minor order, it is clearly not wqo under the induced subgraph order. For example, the set of chordless cycles $\inlinecycle{3}$, $\inlinecycle{4}$, $\inlinecycle{5}$, $\dots$ forms an infinite antichain, as does the set of \emph{double-ended forks}%
\footnote{The graphs we call double-ended forks are also called $H$-graphs and split-end paths in some works.},
examples of which are shown in Figure~\ref{fig-double-ended-forks}. Another order that is not wqo is the containment order on permutations, as described shortly.

\begin{figure}
\begin{center}
	\begin{tikzpicture}[scale=0.5]
		\plotpartialperm{-1/-0.5,-1/0.5,0/0,1/0,2/0,3/0,4/-0.5,4/0.5};
		\draw (-1,-0.5)--(0,0)--(-1,0.5);
		\draw (4,-0.5)--(3,0)--(4,0.5);
		\draw (0,0)--(3,0);
	\end{tikzpicture}
	\quad\quad
	\begin{tikzpicture}[scale=0.5]
		\plotpartialperm{-1/-0.5,-1/0.5,0/0,1/0,2/0,3/0,4/0,5/-0.5,5/0.5};
		\draw (-1,-0.5)--(0,0)--(-1,0.5);
		\draw (5,-0.5)--(4,0)--(5,0.5);
		\draw (0,0)--(4,0);
	\end{tikzpicture}
	\quad\quad
	\begin{tikzpicture}[scale=0.5]
		\plotpartialperm{-1/-0.5,-1/0.5,0/0,1/0,2/0,3/0,4/0,5/0,6/-0.5,6/0.5};
		\draw (-1,-0.5)--(0,0)--(-1,0.5);
		\draw (6,-0.5)--(5,0)--(6,0.5);
		\draw (0,0)--(5,0);
	\end{tikzpicture}
\end{center}
\caption{The set of all double ended-forks, three of which are shown here, forms an infinite antichain in the induced subgraph order.}
\label{fig-double-ended-forks}
\end{figure}
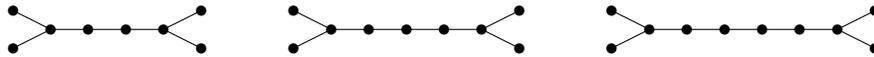

Even when the ultimate goal is to show that a given notion of embedding is wqo, experience suggests that it is often helpful to employ stronger properties than wqo. One much-studied example of such a stronger property is that of better-quasi-order, introduced in 1965 by Nash-Williams~\cite{nash-williams:on-well-quasi-o:infinite} and notably applied by Laver~\cite{laver:on-frai-sses-or:} to prove a conjecture of Fra\"\i ss\'e~\cite{fraisse:sur-la-comparai:} (see also Marcone~\cite{marcone:foundations-of-:}).

We explore the applications of a different strengthening of wqo---labelled well-quasi-order, lwqo for short, or h\'er\'editairement belordonn\'e in French---in the context of the containment order on permutations, the study of which is often called \emph{permutation patterns}.
While the notion of lwqo is implicit in the work of Higman~\cite{higman:ordering-by-div:} and Kruskal~\cite{kruskal:well-quasi-orde:}, it was not until the work of Pouzet in the 1970s (in particular, his 1972 paper~\cite{pouzet:un-bel-ordre-da:}) that this notion was made explicit%
\footnote{Another valuable reference for the early history of lwqo is Pouzet's 1985 survey paper~\cite[Section 3]{pouzet:applications-of:}, while Ding's 1992 paper~\cite{ding:subgraphs-and-w:} includes another rediscovery of the concept, in the not-necessarily-induced subgraph context.}.

The study of lwqo has recently received renewed attention in the induced subgraph context%
\footnote{We refer to Daligault, Rao, and Thomass\'e~\cite{daligault:well-quasi-orde:}, Korpelainen and Lozin~\cite{korpelainen:two-forbidden-i:}, Atminas and Lozin~\cite{atminas:labelled-induce:}, and Brignall, Engen, and Vatter~\cite{brignall:a-counterexampl:} for investigations of lwqo in the induced subgraph context.},
but the present work constitutes the first consideration of lwqo in the permutation context. The specific highlights of this work are as follows.
\begin{itemize}
\item {\bf Theorem~\ref{thm-lwqo-downward-closure}.} \emph{If a set of permutations is lwqo, then its downward closure is also lwqo.}
\item {\bf Theorem~\ref{thm-lwqo-C-lwqo-C+1}.} \emph{If a permutation class is lwqo, then so is the class of its one-point extensions.}
\item {\bf Theorem~\ref{thm-lwqo-sum-closure}.} \emph{If a permutation class is lwqo, then so are its sum closure and skew closure.}
\item {\bf Theorem~\ref{thm-subst-closure-lwqo}.} \emph{If a permutation class is lwqo, then so is its substitution closure.}
\item {\bf Theorem~\ref{thm-C-lwqo-GC-lwqo}.} \emph{A permutation class is lwqo if and only if the corresponding graph class is lwqo.}
\item {\bf Theorem~\ref{thm-ggc-lwqo}.} \emph{Every geometric grid class is lwqo.}
\end{itemize}
For the remainder of this introduction we review the various pieces of notation required for the later sections.

\subsection{Permutation containment and permutation classes}

In the course of this work, we view permutations in several slightly different ways, the most common being one-line, or list, notation. In this viewpoint, a permutation of length $n$ is simply an arrangement of the numbers $1$ through $n$ in a sequence. As done in Figure~\ref{fig-three-antichains}, we also often identify a permutation~$\pi$ with its \emph{plot}: the set of points $\{(i,\pi(i))\}$ in the plane. When we talk about an entry being to the left or right of, or above or below, another entry, we are referring to their relative positions in the plot of the permutation.

Every sequence of distinct real numbers is \emph{order-isomorphic}, or \emph{reduces}, to a unique permutation, namely the permutation whose entries are in the same relative order as the terms of the sequence. We call this permutation the \emph{reduction} of the sequence. For example, the sequence $3,-1,\nicefrac{22}{7},e$ is order-isomorphic to the permutation $3,1,4,2$, which we abbreviate to~$3142$. Given permutations $\sigma=\sigma(1)\cdots\sigma(k)$ and $\pi=\pi(1)\cdots\pi(n)$, we say that~$\sigma$ is \emph{contained} in~$\pi$ if~$\pi$ contains a subsequence that reduces to~$\sigma$. If~$\pi$ does not contain~$\sigma$, then we say that it \emph{avoids} it. For example, $\pi=432679185$ contains $\sigma=32514$, as witnessed by the subsequence~$32918$, but avoids $54321$ because it has no decreasing subsequence of length five.

A \emph{class} of permutations is a set of permutation closed downward under this containment order%
\footnote{In particular, this implies that every nonempty permutation class contains the empty permutation, that we denote by $\emptyperm$.}.
It is common to specify permutation classes by the permutations they avoid. Thus given any set $B$ of permutations, we define the class
\[
	\Av(B)=\{\pi : \pi\mbox{ avoids all $\beta\in B$}\}.
\]
We may always insist that the set $B$ in the above construction is an antichain; in that case $B$ is the set of \emph{minimal} permutations not in the class, $B$ uniquely describes the class, and we call $B$ the \emph{basis} of the class. For a comprehensive survey of permutation classes, we refer the reader to Vatter~\cite{vatter:permutation-cla:}.

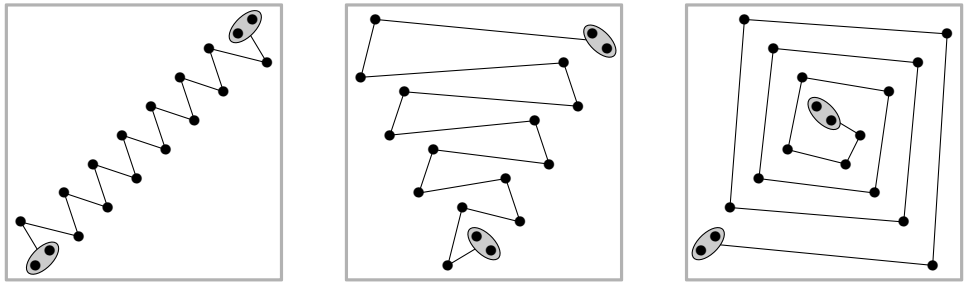
\begin{figure}
\begin{center}
	\begin{tikzpicture}[scale=0.1925, baseline=(current bounding box.south)]
		\draw (2.5,1.5)--(1,4)--(5,3)--(4,6)--(7,5)--(6,8)--(9,7)--(8,10)--(11,9)--(10,12)--(13,11)--(12,14)--(15,13)--(14,16)--(18,15)--(16.5,17.5);
		\draw[fill=lightgray, rotate around={-45:(2.5,1.5)}] (2.5,1.5) ellipse (20pt and 40pt);
		\draw[fill=lightgray, rotate around={-45:(16.5,17.5)}] (16.5,17.5) ellipse (20pt and 40pt);
		\plotpermbox{0.5}{0.5}{18.5}{18.5};
		\plotperm{4,1,2,6,3,8,5,10,7,12,9,14,11,16,13,17,18,15};
	\end{tikzpicture}
\quad\quad
	\begin{tikzpicture}[scale=0.1925, baseline=(current bounding box.south)]
		\draw (9.5,2.5)--(7,1)--(8,5)--(12,4)--(11,7)--(5,6)--(6,9)--(14,8)--(13,11)--(3,10)--(4,13)--(16,12)--(15,15)--(1,14)--(2,18)--(17.5,16.5);
		\draw[fill=lightgray, rotate around={45:(9.5,2.5)}] (9.5,2.5) ellipse (20pt and 40pt);
		\draw[fill=lightgray, rotate around={45:(17.5,16.5)}] (17.5,16.5) ellipse (20pt and 40pt);
		\plotpermbox{0.5}{0.5}{18.5}{18.5};
		\plotperm{14,18,10,13,6,9,1,5,3,2,7,4,11,8,15,12,17,16};
	\end{tikzpicture}
\quad\quad
	\begin{tikzpicture}[scale=0.1925, baseline=(current bounding box.south)]
		\draw (1.5,2.5)--(17,1)--(18,17)--(4,18)--(3,5)--(15,4)--(16,15)--(6,16)--(5,7)--(13,6)--(14,13)--(8,14)--(7,9)--(11,8)--(12,10)--(9.5,11.5);
		\draw[fill=lightgray, rotate around={-45:(1.5,2.5)}] (1.5,2.5) ellipse (20pt and 40pt);
		\draw[fill=lightgray, rotate around={45:(9.5,11.5)}] (9.5,11.5) ellipse (20pt and 40pt);
		\plotpermbox{0.5}{0.5}{18.5}{18.5};
		\plotperm{2,3,5,18,7,16,9,14,12,11,8,10,6,13,4,15,1,17};
	\end{tikzpicture}
\end{center}
\caption{Plots of typical members of three families of infinite antichains of permutations. It should be noted that the edges in these drawings are not formally defined and are intended only to demonstrate that these permutations are ``path-like'' in some sense, that shall also not be formally defined.}
\label{fig-three-antichains}
\end{figure}

It is frequently of interest whether classes given by certain structural definitions are \emph{finitely based}, that is, whether their bases are finite sets. Because the set of all permutations is not a wqo under the containment order, there are infinite antichains of permutations, and thus infinitely-based permutation classes. The generally-held intuition about the construction of these antichains is that their members consist of a ``body'' together with some irregularities at the ``beginning'' and ``end'' that form ``anchors''. For example, Figure~\ref{fig-three-antichains} shows members of three infinite antichains of permutations with their anchors enclosed in ellipses; these are the three antichains defined in the early work of Atkinson, Murphy, and Ru\v{s}kuc~\cite[Section 3]{atkinson:partially-well-:}. The typical approach to the construction of bodies is to establish that a smaller body can embed into a larger one only in some contiguous sense (a sense that varies with the form of the bodies in the particular construction but is always similar to how one path can embed as an induced subgraph into another).

Note that if a class is finitely based, then the membership problem for that class (``is the permutation~$\pi$ of length $n$ a member of~$\C$?'') can be answered in polynomial time (in $n$). For obvious cardinality reasons, the same cannot be said about the membership problem of a general permutation class (as there are uncountably many permutation classes but only countably many algorithms, there exist permutation classes for which the membership problem is undecidable).

\subsection{Well-quasi-order in general}
\label{subsec-wqo}

We begin with the formal definition. A quasi-ordering $\le$ on a set $X$ is \emph{well-quasi-ordered} or is a \emph{well-quasi-ordering} (both abbreviated \emph{wqo}) if every infinite sequence $x_1$, $x_2$, $\dots$ of elements from $X$ contains a \emph{good pair}, that is defined as a pair $(x_i,x_j)$ with $i<j$ and $x_i\le x_j$. As a trivial observation, note that finite quasi-orderings are always wqo, as any infinite sequence from such a quasi-ordering must contain two occurrences of the same element, which then form a good pair. The following two alternative characterisations follow easily from Ramsey-type arguments, and are essentially folklore%
\footnote{It is also not unreasonable to date these equivalent definitions to Higman's 1952 paper~\cite{higman:ordering-by-div:}, where they comprise three of the six parts of his Theorem~2.1---the definition of wqo we have given in terms of good pairs is Higman's condition (v), our Proposition~\ref{prop-wqo-ramsey} is his condition (vi), and our Proposition~\ref{prop-wqo-ramsey-inf-inc} is his condition (iv). For what it is worth, Higman does not himself give the proof of Proposition~\ref{prop-wqo-ramsey-inf-inc} (in his presentation, the equivalence of conditions (iv) and (v) of his Theorem~2.1), instead citing an unpublished manuscript of Erd\H{o}s and Rado for this result. Precisely \emph{which} then-unpublished manuscript of Erd\H{o}s and Rado this refers to is in a bit of doubt; in 1972, Kruskal~\cite[p.~300]{kruskal:the-theory-of-w:} wrote ``incidentally, Higman refers to an unpublished manuscript of Erd\H{o}s and Rado that was probably an early version of \cite{rado:partial-well-or:} or \cite{erdos:solution-to-pro:}''. Curiously, \cite{rado:partial-well-or:} is a single-authored paper by Rado, while \cite{erdos:solution-to-pro:} is a solution to a \emph{Monthly} problem posed by Erd\H{o}s in 1949~\cite{erdos:problem-4358:}. Another possibility is that the manuscript Higman refers to became~\cite{erdos:a-theorem-on-pa:}.}.

\begin{proposition}
\label{prop-wqo-ramsey}
A quasi-ordering $\le$ on the set $X$ is wqo if and only if $X$ contains neither an infinite antichain nor an infinite strictly decreasing sequence $x_1>x_2>\cdots$.
\end{proposition}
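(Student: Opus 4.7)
The plan is to prove both directions essentially by inspection, with the only nontrivial ingredient being the infinite Ramsey theorem for pairs (colored by finitely many colors). For the forward implication, suppose $X$ contains an infinite antichain $x_1, x_2, \dots$ or an infinite strictly decreasing sequence $x_1 > x_2 > \cdots$. In either case, for every $i<j$ we have $x_i \not\le x_j$, so this sequence contains no good pair, contradicting the assumption that $\le$ is wqo.

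For the reverse implication, assume $X$ has no infinite antichain and no infinite strictly decreasing sequence, and let $x_1, x_2, \dots$ be an arbitrary infinite sequence from $X$. The plan is to 3-color each pair $\{i,j\}$ with $i<j$ according to whether (i) $x_i \le x_j$, (ii) $x_j \le x_i$ but $x_i \not\le x_j$ (so $x_j < x_i$ strictly), or (iii) $x_i$ and $x_j$ are incomparable. These cases exhaust all possibilities, placing the equivalent case $x_i \le x_j \le x_i$ under color (i). By Ramsey's theorem for pairs, some infinite subsequence $x_{i_1}, x_{i_2}, \dots$ is monochromatic. Color (i) immediately yields a good pair (indeed, good pairs in abundance). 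Color (ii) produces an infinite strictly decreasing sequence, and color (iii) produces an infinite antichain; both are forbidden by hypothesis. Hence the monochromatic colour must be (i), giving the required good pair and showing $\le$ is wqo.

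The only potential obstacle is a minor bookkeeping point in the quasi-order setting: one must be careful that distinct indices can satisfy $x_i \le x_j$ and $x_j \le x_i$ simultaneously without being equal, which is why I lump this situation together with strict $<$ into a single "good" colour rather than trying to separate equivalence from strict comparability. Beyond that subtlety the argument is purely a Ramsey-theoretic case analysis, requiring no structural hypotheses on $X$ or on $\le$.
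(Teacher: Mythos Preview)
Your proof is correct and follows essentially the same approach as the paper's: both directions are handled identically, with the nontrivial direction proved by $3$-coloring the pairs $\{i,j\}$ according to the order relation between $x_i$ and $x_j$ and invoking the infinite Ramsey theorem. Your extra care in folding the case $x_i \le x_j \le x_i$ into the ``good'' colour is a nice touch for the quasi-order setting, though the paper's version handles this implicitly by simply defining its red colour as $x_i \le x_j$.
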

\begin{proof}
If $(X,\le)$ were to contain an infinite antichain or an infinite strictly decreasing sequence then it would not be wqo. Now suppose that $(X,\le)$ contains neither an infinite antichain nor an infinite strictly decreasing sequence and let $x_1,x_2,\dots$ be any infinite sequence of elements of $X$. Let~$G$ denote the complete graph on the vertices $\{1,2,\dots\}$. For $i<j$, colour the edge $ij$ of~$G$ one of three colours: red if $x_i\le x_j$, blue if $x_i>x_j$, or green if $x_i$ and $x_j$ are incomparable. By Ramsey's theorem,~$G$ must contain an infinite induced subgraph all of whose edges are the same colour. Because $(X,\le)$ contains neither an infinite antichain nor an infinite strictly decreasing subsequence, the edges of this induced subgraph cannot all be blue or green, so they must all be red. It follows that the sequence $x_1,x_2,\dots$ contains a good pair; in fact, it contains infinitely many.
\end{proof}

We say that a quasi-order without infinite strictly decreasing sequences is \emph{well founded}. Thus Proposition~\ref{prop-wqo-ramsey} implies that a well-founded quasi-order is wqo if and only if it does not contain an infinite antichain. In fact, our proof of Proposition~\ref{prop-wqo-ramsey} yields something seemingly much stronger.

\begin{proposition}
\label{prop-wqo-ramsey-inf-inc}
A quasi-ordering $\le$ on the set $X$ is wqo if and only if every infinite sequence $x_1$, $x_2$, $\dots$ of elements from~$X$ contains an infinite increasing subsequence, that is, there are indices $1\le i_1<i_2<\cdots$ such that $x_{i_1}\le x_{i_2}\le\cdots$.
\end{proposition}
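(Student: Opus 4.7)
The plan is to show both directions, with the forward direction essentially piggybacking on the Ramsey colouring already carried out in the proof of Proposition~\ref{prop-wqo-ramsey}. As the text preceding the statement hints (``our proof of Proposition~\ref{prop-wqo-ramsey} gave us something seemingly much stronger''), the hard work has effectively been done, so this will be a short proof rather than a new argument.

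For the easy direction ($\Leftarrow$): suppose every infinite sequence from $X$ contains an infinite increasing subsequence $x_{i_1}\le x_{i_2}\le\cdots$. Then in particular $(x_{i_1},x_{i_2})$ is a good pair in the original sequence, so $(X,\le)$ is wqo by definition.

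For the forward direction ($\Rightarrow$): suppose $(X,\le)$ is wqo and let $x_1,x_2,\ldots$ be any infinite sequence from $X$. By Proposition~\ref{prop-wqo-ramsey}, $(X,\le)$ contains no infinite antichain and no infinite strictly decreasing sequence. Form the complete graph $G$ on $\{1,2,\ldots\}$ and, for $i<j$, colour edge $ij$ red if $x_i\le x_j$, blue if $x_i>x_j$, and green if $x_i,x_j$ are incomparable. Ramsey's theorem produces an infinite monochromatic subset $I=\{i_1<i_2<\cdots\}$. A blue $I$ would yield an infinite strictly decreasing sequence $x_{i_1}>x_{i_2}>\cdots$, and a green $I$ would yield an infinite antichain $\{x_{i_1},x_{i_2},\ldots\}$; both are excluded. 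Hence $I$ is red, giving $x_{i_1}\le x_{i_2}\le\cdots$, which is the desired infinite increasing subsequence.

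There is no real obstacle here, since the colouring argument is already written out above. The only thing to be careful about is that a ``green'' monochromatic set really does give an antichain (which requires that \emph{every} pair of distinct elements in the set is incomparable, but this is exactly what being monochromatically green says), and that a ``blue'' monochromatic set gives a genuinely strictly decreasing chain (which is immediate, since $i<j$ and blue means $x_i>x_j$ strictly). So the proof is essentially two sentences plus a citation of Proposition~\ref{prop-wqo-ramsey} and Ramsey's theorem.
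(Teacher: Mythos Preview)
Your proof is correct and is exactly what the paper does: it gives no separate proof of this proposition, instead remarking that ``our proof of Proposition~\ref{prop-wqo-ramsey} gave us something seemingly much stronger,'' meaning the red monochromatic set produced by the Ramsey argument already furnishes the infinite increasing subsequence. Your write-up simply spells this out, together with the trivial converse.
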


From the result above we obtain the well-quasi-order of products quite easily.

\begin{proposition}
\label{prop-wqo-product}
If the quasi-orders $(X,\mathord{\le_X})$ and $(Y,\mathord{\le_Y})$ are both wqo, then the quasi-order $X\times Y$ is wqo under the product order in which $(x_1,y_1)\le (x_2,y_2)$ if and only if $x_1\le_X x_2$ and $y_1\le_Y y_2$.
\end{proposition}
\begin{proof}
Consider an infinite sequence $(x_1,y_1)$, $(x_2,y_2)$, $\dots$ from $X\times Y$. By Proposition~\ref{prop-wqo-ramsey-inf-inc}, the sequence $x_1$, $x_2$, $\dots$ contains an infinite increasing subsequence $x_{i_1}\le_X x_{i_2}\le_X \cdots$. Applying Proposition~\ref{prop-wqo-ramsey-inf-inc} to the subsequence $y_{i_1}$, $y_{i_2}$, $\dots$ shows that it also has an infinite increasing subsequence $y_{i_{j_1}}\le_Y y_{i_{j_2}}\le_Y \cdots$, so the subsequence $(x_{i_{j_1}}, y_{i_{j_1}})\le (x_{i_{j_2}}, y_{i_{j_2}})\le \cdots$ is an infinite increasing subsequence of our original sequence, and thus $(X\times Y,\le)$ is wqo by Proposition~\ref{prop-wqo-ramsey-inf-inc}.
\end{proof}

As an immediate consequence of Proposition~\ref{prop-wqo-product} we obtain the following result, that is often called Dickson's lemma because Dickson employed a special case of it in a 1913 paper~\cite{dickson:finiteness-of-t:}.

\begin{proposition}
\label{prop-wqo-vector}
For any well-quasi-order $(X,\le)$, the set of $n$-tuples over~$X$ is wqo under the product order, in which $(x_1,\dots,x_n)\le (x_1',\dots,x_n')$ if and only if $x_i\le x_i'$ for all indices~$i$.
\end{proposition}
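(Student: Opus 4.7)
The plan is to prove this by a straightforward induction on $n$, leveraging Proposition~\ref{prop-wqo-product} as the workhorse at each step. The statement for $n=1$ is immediate, since the product order on $1$-tuples over $X$ coincides with the original quasi-order $(X,\le)$, which is wqo by hypothesis.

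For the inductive step, suppose that $X^{n-1}$ with the product order (call it $\le_{n-1}$) is wqo. I would identify $X^n$ with $X^{n-1}\times X$ via the obvious bijection $(x_1,\dots,x_n)\mapsto ((x_1,\dots,x_{n-1}),x_n)$, under which the product order on $X^n$ corresponds exactly to the order $\mathord{\le_{n-1}}\times\mathord{\le}$ on $X^{n-1}\times X$. Applying Proposition~\ref{prop-wqo-product} to the wqo $(X^{n-1},\le_{n-1})$ and $(X,\le)$ then immediately yields that $X^n$ under the product order is wqo, completing the induction.

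There is no real obstacle here: the entire content of the proposition has been done in the preceding proposition, and the step from pairs to $n$-tuples is a routine induction using the associativity of the product construction. The only minor care needed is to verify that the product order on $X^n$ really does match the order on $X^{n-1}\times X$ under the above identification, which is immediate from the componentwise definition.
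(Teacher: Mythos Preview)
Your proposal is correct and matches the paper's approach exactly: the paper states this proposition as ``an immediate consequence of Proposition~\ref{prop-wqo-product}'' and gives no further proof, which is precisely the induction-on-$n$ argument you spell out.
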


We denote the quasi-order of $n$-tuples appearing in Proposition~\ref{prop-wqo-vector} by $(X^n,\le)$.

%
%
%

\subsection{Well-quasi-order for permutation classes}
\label{subsec-wqo-perm-classes}

Well-quasi-ordered permutation classes possess many favourable properties. Below we state two of these and give short proofs of them. A few notes are in order before that, however. First, note that we could have stated this result in much more generality, but we have instead chosen to specialise our treatment to the context of permutation classes. Second, this result is essentially folklore, and our reference to the work of Atkinson, Murphy, and Ru\v{s}kuc is simply the first place where a result such as this appeared in the literature on permutation classes.

\begin{proposition}[Cf.~Atkinson, Murphy, and Ru\v{s}kuc~{\cite[Proposition 1.1]{atkinson:partially-well-:}}]
The following conditions on a permutation class~$\C$ are equivalent:
\begin{enumerate}
\item[(a)]~$\C$ is wqo,
\item[(b)]~$\C$ contains at most countably many subclasses,
\item[(c)]~$\C$ satisfies the \emph{descending chain condition}, that is, there does not exist an infinite sequence
	\[
		\C=\C^{(0)}\supsetneq \C^{(1)}\supsetneq \C^{(2)}\supsetneq\cdots
	\]
	of subclasses of~$\C$.
\end{enumerate}
\end{proposition}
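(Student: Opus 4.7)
My plan is to prove the equivalence by the cyclic chain of implications (a)$\Rightarrow$(b)$\Rightarrow$(c)$\Rightarrow$(a). The structural fact I would exploit throughout is that every subclass $\D$ of $\C$ is uniquely encoded by its \emph{relative basis} $B_\D$---the antichain of minimal elements of $\C\setminus\D$---via $\D = \C \cap \Av(B_\D)$. Because permutation containment is well-founded (each permutation has finite length), every element of $\C\setminus\D$ lies above some minimal one, so the assignment $\D \mapsto B_\D$ is a bijection between subclasses of $\C$ and antichains contained in $\C$.

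For (a)$\Rightarrow$(b), this bijection does the work: if $\C$ is wqo then every antichain in $\C$ is finite, and since $\C$ is countable (as a subset of the countable set of all permutations), the number of finite antichains in $\C$ is countable. For (c)$\Rightarrow$(a) I argue by contrapositive: if $\C$ is not wqo then Proposition~\ref{prop-wqo-ramsey}, together with well-foundedness, supplies an infinite antichain $\pi_1, \pi_2, \ldots$ in $\C$, and setting $\C^{(i)} = \C \cap \Av(\pi_1, \ldots, \pi_i)$ produces an infinite strictly descending chain because the antichain property places $\pi_i$ in $\C^{(i-1)} \setminus \C^{(i)}$.

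The main work is in (b)$\Rightarrow$(c), which I would also prove contrapositively. Given an infinite strictly descending chain $\C=\C^{(0)} \supsetneq \C^{(1)} \supsetneq \cdots$, pick $\pi_i \in \C^{(i-1)} \setminus \C^{(i)}$. For $i<j$ we have $\pi_j \in \C^{(j-1)} \subseteq \C^{(i)}$, so downward closure of $\C^{(i)}$ forbids $\pi_i \le \pi_j$; hence the sequence $\pi_1, \pi_2, \ldots$ contains no good pair. Applying the Ramsey argument used in the proof of Proposition~\ref{prop-wqo-ramsey} and invoking well-foundedness to rule out an infinite strictly descending subsequence then extracts an infinite antichain $\sigma_1, \sigma_2, \ldots$ from among the $\pi_i$. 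Finally, for each $S \subseteq \mathbb{N}$ I set $\D_S = \C \cap \Av(\{\sigma_i \st i \in S\})$; whenever $k \in S\setminus T$, the antichain property gives $\sigma_k \in \D_T \setminus \D_S$, so $S \mapsto \D_S$ is injective and produces $2^{\aleph_0}$ distinct subclasses, contradicting (b).

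The principal obstacle I anticipate is the chain-to-antichain extraction in (b)$\Rightarrow$(c): one cannot conclude directly that a no-good-pair sequence is an antichain, and the crucial additional ingredient is well-foundedness of permutation containment, without which the $\pi_i$ could merely form a descending sequence that on its own yields no uncountable family of subclasses. Everything else is bookkeeping about relative bases and an appeal to the countability of $\C$.
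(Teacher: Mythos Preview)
Your proof is correct. The ingredients---the bijection between subclasses and antichains, the construction of uncountably many subclasses from an infinite antichain, and the construction of a strictly descending chain from an infinite antichain---are exactly those used in the paper, so the approaches are essentially the same.

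The only organisational difference is that the paper proves the two biconditionals (a)$\Leftrightarrow$(b) and (a)$\Leftrightarrow$(c) separately rather than a cycle. In particular, the paper handles (a)$\Rightarrow$(c) directly: from the $\beta_i\in\C^{(i-1)}\setminus\C^{(i)}$ it observes that the set of minimal elements among the $\beta_i$ is an antichain, hence finite by wqo, and then derives a contradiction from $\beta_{m+1}$ lying above one of these finitely many minimal elements. Your route instead proves the contrapositive $\neg$(c)$\Rightarrow\neg$(a) (no good pair plus well-foundedness gives an infinite antichain via Ramsey) and then feeds that antichain into $\neg$(a)$\Rightarrow\neg$(b). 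Both arguments are short; the paper's avoids the explicit Ramsey step, while yours makes the passage through an infinite antichain uniform across all three implications.
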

\begin{proof}
We first show that (a) and (b) are equivalent. Note that all subclasses of~$\C$ are of the form~$\C\cap\Av(B)$ for an antichain $B\subseteq\C$. Thus if (a) holds, then all such antichains $B$ are finite, and since~$\C$ is itself at most countable, it has at most countably many finite subsets. On the other hand, if~$\C$ were to contain an infinite antichain $A$, then
\[
	\{\C\cap\Av(B) : B\subseteq A\}
\]
would be an uncountable family of distinct subclasses of~$\C$.

Next we show that (a) and (c) are equivalent. Suppose to the contrary that the wqo class~$\C$ contains an infinite strictly decreasing sequence of subclasses~$\C=\C^{(0)}\supsetneq \C^{(1)}\supsetneq \C^{(2)}\supsetneq\cdots$. For each $i\ge 1$, choose $\beta_i\in\C^{(i-1)}\setminus\C^{(i)}$. The set of minimal elements of $\{\beta_1,\beta_2\ldots\}$ is an antichain and therefore finite, so there is an integer $m$ such that $\{\beta_1,\beta_2\ldots,\beta_m\}$ contains these minimal elements. In particular, $\beta_{m+1}\ge\beta_i$ for some $1\le i\le m$. However, we chose $\beta_{m+1}\in\C^{(m)}\setminus\C^{(m+1)}$, and because $\beta_{m+1}$ contains $\beta_i$, it does not lie in~$\C^{(i)}$ and thus cannot lie in~$\C^{(m)}$, a contradiction. To establish the other direction, suppose that~$\C$ is not wqo, so it contains an infinite antichain $A=\{\alpha_1,\alpha_2,\dots\}$. Then
\[
	\C\supsetneq
	\C\cap\Av(\{\alpha_1\})\supsetneq
	\C\cap\Av(\{\alpha_1,\alpha_2\})\supsetneq
	\cdots
\]
would be an infinite sequence of subclasses of~$\C$.
\end{proof}

\subsection{Induced subgraphs and classes of graphs}
\label{subsec-induced-subgraphs}

As demonstrated throughout this paper, studies of the permutation containment order and of the induced subgraph order are intimately linked. Let $G=(V,E)$ be a graph. Given a subset~${X\subseteq V}$ of vertices, $G[X]$ denotes the subgraph of~$G$ \emph{induced} by~$X$, which is the graph with vertex set~$X$ and an edge between two vertices $u,v\in X$ if and only if~$G$ contains an edge between~$u$ and~$v$. Alternatively, $G[X]$ can be formed from~$G$ by deleting all of the vertices in $V\setminus X$ and their incident edges.

A \emph{hereditary property} or (throughout this paper) \emph{class} of graphs is a set of finite graphs that is closed downward under the induced subgraph ordering and under isomorphism. Thus if~$\C$ is a class, $G\in\C$, and $H$ is an induced subgraph of~$G$, then $H\in\C$. Many natural sets of graphs form classes, such as the set of perfect graphs or the set of comparability graphs. For an extensive survey we refer to the encyclopaedic text of Brandst\"adt, Le, and Spinrad~\cite{brandstadt:graph-classes:-:}. A common way to describe a graph class is via its antichain of \emph{minimal forbidden induced subgraphs}, that is, the minimal (under the induced subgraph order) graphs that do not lie in the class. This is analogous to how a permutation class can be described by its basis.

Despite the fact that the set of chordless cycles forms an infinite antichain in this order, some important classes of graphs are nevertheless wqo under the induced subgraph ordering. Perhaps the most fundamental result of this type is that the class of \emph{cographs} (short for \emph{complement-reducible graphs}) is wqo in the induced subgraph order, as first observed by Damaschke~\cite[Theorem 4]{damaschke:induced-subgrap:}. This class is quite easily defined by its sole minimal forbidden induced subgraph, the path on four vertices $P_4$.

To give a more constructive definition of this class, recall that the \emph{join}, denoted by $G\ast H$, of the vertex-disjoint graphs~$G$ and $H$ is formed from the disjoint union $G\uplus H$ by adding all possible edges with one endpoint in~$G$ and the other in $H$. (In this context,~$G$ and $H$ are referred to as the \emph{join components} of the resulting graph.) Then a graph is a cograph if and only if, starting with the one-vertex graph $K_1$, it can be built by repeatedly taking the disjoint union or join of two cographs%
\footnote{The term \emph{complement-reducible graph} is due to a different version of this structural result: the one-vertex graph $K_1$ is a cograph, and a graph on two or more vertices is a cograph if and only if it or its complement can be expressed as the disjoint union of two smaller cographs.}.

\begin{figure}
\begin{footnotesize}
\begin{center}
	\begin{tabular}{ccccccc}
	\begin{tikzpicture}[scale=0.1925, baseline=(current bounding box.center)]
		\plotpermbox{0.5}{0.5}{8.5}{8.5};
		\plotpartialperm{1/3,4/8,5/5,7/1,8/4};
	\end{tikzpicture}
	&
	\begin{tikzpicture}[baseline=(current bounding box.center)]
		\node {$\le$};
	\end{tikzpicture}
	&
	\begin{tikzpicture}[scale=0.1925, baseline=(current bounding box.center)]
		\plotpermbox{0.5}{0.5}{8.5}{8.5};
		\plotperm{3,6,2,8,5,7,1,4};
		\plotpartialpermencirclewhite{1/3, 4/8, 5/5, 7/1, 8/4};
	\end{tikzpicture}
	&
	\quad\quad\quad\quad
	&
	\begin{tikzpicture}[scale=0.1925, baseline=(current bounding box.center)]
	    \draw (4,8) to [out=324.666666667, in=117] (7,1);
	    \draw (5,5) to [out=270, in=144] (7,1);
	    \draw (1,3) to [out=270, in=225] (7,1);
	    \draw (4,8) to [out=45, in=45] (8,4);
	    \draw (5,5) to [out=315, in=180] (8,4);
	    \draw (4,8) to [out=284.5, in=90] (5,5);
		\plotpermbox{0.5}{0.5}{8.5}{8.5};
		\plotpartialperm{1/3,4/8,5/5,7/1,8/4};
	\end{tikzpicture}
	&
	\begin{tikzpicture}[baseline=(current bounding box.center)]
		\node {$\le$};
	\end{tikzpicture}
	&
	\begin{tikzpicture}[scale=0.1925, baseline=(current bounding box.center)]
	    \draw (6,7) to [out=300, in=90] (7,1);
	    \draw (4,8) to [out=324.666666667, in=117] (7,1);
	    \draw (5,5) to [out=270, in=144] (7,1);
	    \draw (2,6) to [out=315, in=171] (7,1);
	    \draw (3,2) to [out=315, in=198] (7,1);
	    \draw (1,3) to [out=270, in=225] (7,1);
	    \draw (4,8) to [out=45, in=45] (8,4);
	    \draw (6,7) to [out=330, in=90] (8,4);
	    \draw (2,6) to [out=45, in=135] (8,4);
	    \draw (5,5) to [out=315, in=180] (8,4);
	    \draw (1,3) to [out=0, in=135] (3,2);
	    \draw (2,6) to [out=270, in=90] (3,2);
	    \draw (2,6) to [out=0] (5,5);
	    \draw (4,8) to [out=284.5, in=90] (5,5);
	    \draw (4,8) to [out=4.833333333] (6,7);
		\plotpermbox{0.5}{0.5}{8.5}{8.5};
		\plotperm{3,6,2,8,5,7,1,4};
		\plotpartialpermencirclewhite{1/3, 4/8, 5/5, 7/1, 8/4};
	\end{tikzpicture}
	\\
	$25413$
	&
	$\le$
	&
	$36285714$
	&&
	$G_{25413}$
	&
	$\le$
	&
	$G_{36285714}$
	\end{tabular}
\end{center}
\end{footnotesize}
\caption{The containment order on permutations and their corresponding inversion graphs.}
\label{fig-perm-contain}
\end{figure}
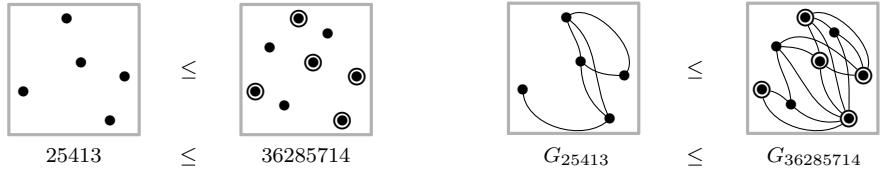

\subsection{Inversion graphs of permutations}
\label{subsec-inversion-graphs}

The connection between the permutation containment order and the induced subgraph order on graphs comes via what we call inversion graphs (but which are more commonly called permutation graphs in the graph theory literature). The \emph{inversion graph} of the permutation $\pi=\pi(1)\cdots\pi(n)$ is the graph $G_\pi$ on the vertices $\{1,\dots,n\}$ in which~$i$ is adjacent to $j$ if and only if $\pi(i)$ and $\pi(j)$ form an \emph{inversion}, meaning that $i<j$ and $\pi(i)>\pi(j)$. In the graph context, we typically only care about isomorphism classes, and so this mapping is many-to-one as witnessed by the fact that $G_{2413}\cong G_{3142}\cong P_4$.

As shown on the right of Figure~\ref{fig-perm-contain}, to obtain the inversion graph of a permutation from its plot we simply add all edges between pairs of entries in which one lies northwest of the other. Figure~\ref{fig-perm-contain} should also convince the reader that if~$\sigma$ is contained in~$\pi$ then $G_\sigma$ is an induced subgraph of $G_\pi$. However, the converse does not hold generally (returning to our example from above, $G_{2413}$ is an induced subgraph of $G_{3142}$ because the two graphs are isomorphic, but of course the permutation $2413$ is not contained in the permutation $3142$). The most one can say in general is the following.

\begin{proposition}
\label{prop-graph-to-perm-containment}
If $G_\sigma$ is an induced subgraph of $G_\pi$, then there is a permutation $\tau\le\pi$ such that $G_\tau\cong G_\sigma$.
\end{proposition}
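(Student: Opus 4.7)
The plan is to construct $\tau$ directly from the induced subgraph witness. Suppose $G_\sigma$ is an induced subgraph of $G_\pi$, so there exists a subset $X = \{i_1 < i_2 < \cdots < i_k\}$ of the vertex set $\{1,\dots,n\}$ of $G_\pi$ such that $G_\pi[X] \cong G_\sigma$. I would then define $\tau$ to be the reduction of the subsequence $\pi(i_1)\pi(i_2)\cdots\pi(i_k)$. By the very definition of permutation containment, $\tau \le \pi$, so the only thing left to verify is that $G_\tau \cong G_\sigma$.

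For this, I would observe that the inversion graph is invariant under order-isomorphism: whether two entries of a sequence of distinct real numbers form an inversion depends only on their relative horizontal and vertical order, and these are preserved upon reducing the sequence to a permutation. Thus, the inversion graph of $\tau$ has the same edge set (under the obvious vertex bijection $j \mapsto i_j$) as the graph on $X$ whose edges are the pairs $\{i_a, i_b\}$ with $a < b$ and $\pi(i_a) > \pi(i_b)$. But this latter graph is precisely $G_\pi[X]$, because $G_\pi$ was defined so that vertices $i$ and $j$ are adjacent exactly when $\pi(i)$ and $\pi(j)$ form an inversion. Consequently $G_\tau \cong G_\pi[X] \cong G_\sigma$, completing the argument.

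There is no real obstacle here; the proof is essentially a matter of unpacking definitions, and the key conceptual point to emphasise is that reducing a subsequence to a permutation does not change which pairs of entries form inversions, so taking the inversion graph commutes with taking a subpermutation in the sense required.
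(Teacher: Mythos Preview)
Your proposal is correct and follows exactly the same approach as the paper: take the vertex set $X$ witnessing the induced subgraph, and let $\tau$ be the reduction of the corresponding subsequence of $\pi$. The paper's own proof is a terse two-sentence version of what you wrote; your additional verification that $G_\tau \cong G_\pi[X]$ (because reduction preserves the inversion relation) simply makes explicit what the paper leaves implicit.
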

\begin{proof}
A witness to the embedding of $G_\sigma$ in $G_\pi$ is a set of vertices of $G_\pi$ that forms an induced subgraph isomorphic to $G_\sigma$. Thus, we may take $\tau$ to be the permutation that is order isomorphic to the corresponding set of entries in~$\pi$.
\end{proof}

\begin{figure}
\begin{center}
	\begin{footnotesize}
	\begin{tikzpicture}[scale=1.5]
		\draw [darkgray, ultra thick, rounded corners=0.01, line cap=round] (-1,-1) rectangle (1,1);
		\draw [darkgray, dashed] (0,0)--(1.2,0);
		\draw [darkgray, dashed] (0,0)--(-1.2,0);
		\draw [->] (0,1.4)-- +(0.2,0);
		\draw [->] (0,1.4)-- +(-0.2,0);
		\node at (0,1.4) [above] {$\pi^{\textrm{r}}$};
		\draw [darkgray, dashed] (0,0)--({1+sqrt(2)/10}, {1+sqrt(2)/10});
		\draw [darkgray, dashed] (0,0)--({-1-sqrt(2)/10}, {-1-sqrt(2)/10});
		\draw [->] ({1+2*sqrt(2)/10}, {1+2*sqrt(2)/10})-- +({sqrt(2)/10}, {-sqrt(2)/10});
		\draw [->] ({1+2*sqrt(2)/10}, {1+2*sqrt(2)/10})-- +({-sqrt(2)/10}, {sqrt(2)/10});
		\node at ({1+1.8*sqrt(2)/10}, {1+1.8*sqrt(2)/10}) [above right] {$\pi^{-1}$};
		\draw [darkgray, dashed] (0,0)--(0,1.2);
		\draw [darkgray, dashed] (0,0)--(0,-1.2);
		\draw [->] (1.4,0)-- +(0,0.2);
		\draw [->] (1.4,0)-- +(0,-0.2);
		\node at (1.4,0) [right] {$\pi^{\textrm{c}}$};
		\draw [darkgray, dashed] (0,0)--({1+sqrt(2)/10}, {-1-sqrt(2)/10});
		\draw [darkgray, dashed] (0,0)--({-1-sqrt(2)/10}, {1+sqrt(2)/10});
		\draw [->] ({-1-2*sqrt(2)/10}, {1+2*sqrt(2)/10})-- +({sqrt(2)/10}, {sqrt(2)/10});
		\draw [->] ({-1-2*sqrt(2)/10}, {1+2*sqrt(2)/10})-- +({-sqrt(2)/10}, {-sqrt(2)/10});
		\node at ({-1-1.8*sqrt(2)/10}, {1+1.8*sqrt(2)/10}) [above left] {$\left((\pi^{\textrm{r}})^{-1}\right)^{\textrm{r}}$};
		\draw [->] (0.3,0) arc (0:270: 0.3);
		\node at ({0.3*cos(247.5)-0.065}, {0.3*sin(247.5)-0.025}) [below] {$(\pi^{\textrm{r}})^{-1}$};
		\draw [->] (0.5,0) arc (0:180: 0.5);
		\node at ({0.5*cos(157.5)}, {0.5*sin(157.5)}) [left] {$\pi^{\textrm{rc}}$};
		\draw [->] (0.7,0) arc (0:90: 0.7);
		\node at ({0.7*cos(67.5)}, {0.7*sin(67.5)+0.025}) [above] {$(\pi^{-1})^{\textrm{r}}$};
	\end{tikzpicture}
	\end{footnotesize}
\end{center}
\caption{The symmetries of the square, labelled by their effect on a permutation~$\pi$.}
\label{fig-symmetries-square-perms}
\end{figure}
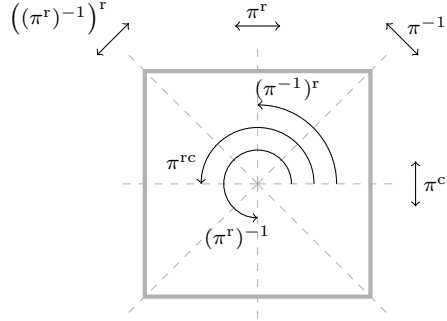

When identifying permutations with their plots, it is clear that the permutation containment order respects all eight symmetries of the square shown in Figure~\ref{fig-symmetries-square-perms}. Of these symmetries, three are particularly important to this work: the \emph{group-theoretic inverse}, $\pi^{-1}$, obtained by reflecting the plot of~$\pi$ about the line $y=x$; the \emph{reverse complement}, $\pi^{\text{rc}}$, obtained by reflecting the plot of~$\pi$ about the line $y=-x$ (and then shifting); and the symmetry obtained by composing these two, $(\pi^{\text{rc}})^{-1}$. Note these symmetries do not affect the corresponding inversion graphs: for all permutations~$\pi$, we have
\[
	G_\pi
	\cong
	G_{\pi^{-1}}
	\cong
	G_{\pi^{\text{rc}}}
	\cong
	G_{(\pi^{\text{rc}})^{-1}}.
\]

Complete graphs are inversion graphs because $K_k\cong G_{k\cdots 21}$. Indeed, $k\cdots 21$ is the only permutation whose inversion graph is (isomorphic to) $K_k$, so every clique in $G_\pi$ arises from a decreasing subsequence of~$\pi$. By symmetry, $12\cdots \ell$ is the only permutation whose inversion graph has no edges, so every independent set in $G_\pi$ arises from an increasing subsequence of~$\pi$. Of course, not every graph is the inversion graph of a permutation. For example, induced cycles of length five or more never appear in inversion graphs.

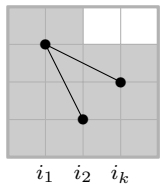
\begin{figure}
\begin{footnotesize}
\begin{center}
	\begin{tikzpicture}[scale=0.5, baseline=(current bounding box.center)]
		\draw [fill=lightgray, color=lightgray] (0,0) rectangle (2,4);
		\draw [fill=lightgray, color=lightgray] (0,0) rectangle (4,3);
		\draw [color=darkgray, line cap=round] (0,0) grid (4,4);
		\plotpermbox{0.5}{0.5}{3.5}{3.5};
		\draw (2,1)--(1,3)--(3,2);
		\plotpartialperm{1/3,2/1,3/2};
		\node at (1,0) [below] {$i_1$};
		\node at (2,0) [below] {$i_2$};
		\node at (3,0) [below] {$i_k$};
	\end{tikzpicture}
\end{center}
\end{footnotesize}
\caption{Inversion graphs do not contain induced cycles on five or more vertices.}
\label{fig-no-cycles}
\end{figure}

\begin{proposition}
\label{prop-no-cycles}
For all $k\ge 5$, the cycle $C_k$ is not an induced subgraph of any inversion graph.
\end{proposition}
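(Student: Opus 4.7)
The plan is a proof by contradiction. Suppose $C_k$ (with $k\ge 5$) is induced in $G_\pi$ on vertices $v_1,v_2,\dots,v_k$ (which are positions of $\pi$) listed in cyclic order around the cycle, and write $p_j$ for the position of $v_j$ in $\pi$. By cyclically relabelling the $v_i$'s, I may assume $v_1$ has the smallest position. Then its cycle-neighbours $v_2$ and $v_k$ both lie to the right of $v_1$, and since each pair is an edge of $G_\pi$ (hence an inversion), both $\pi(v_2)$ and $\pi(v_k)$ lie below $\pi(v_1)$. The remaining cycle-vertices $v_3,\dots,v_{k-1}$ are non-adjacent to $v_1$ in $C_k$, so the absence of inversion with $v_1$ forces $\pi(v_j)>\pi(v_1)$ for every $3\le j\le k-1$.

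By reversing the cyclic orientation of the cycle if necessary, I further assume $\pi(v_2)<\pi(v_k)$. Since $k\ge 4$, $v_2$ and $v_k$ are non-adjacent in $C_k$, so the non-inversion of this pair forces $p_2<p_k$.

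The hypothesis $k\ge 5$ is what I would use next: it guarantees that $v_3$ and $v_k$ are at cycle-distance at least $2$, and hence non-adjacent in $C_k$. Combined with $\pi(v_3)>\pi(v_1)>\pi(v_k)$, non-inversion forces $p_3>p_k$. On the other hand, the edge $v_2v_3$ together with $\pi(v_3)>\pi(v_1)>\pi(v_2)$ forces $p_3<p_2$. Chaining yields $p_3<p_2<p_k<p_3$, a contradiction.

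The substantive work is in isolating the three vertices $v_2$, $v_3$, $v_k$ on which to pit ``edge $\Rightarrow$ inversion'' against ``non-edge $\Rightarrow$ non-inversion''; everything else is bookkeeping. The hypothesis $k\ge 5$ enters exactly where it must, to make $v_3$ and $v_k$ non-adjacent---for $k=4$ they are adjacent, and indeed $G_{3412}\cong C_4$ is an inversion graph, so the bound on $k$ cannot be improved.
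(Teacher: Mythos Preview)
Your proof is correct and follows essentially the same approach as the paper's: both pick $v_1$ to have minimal position, force $v_2$ and $v_k$ to lie southeast of $v_1$ in the plot, and then show that the other neighbour of $v_2$ (your $v_3$) cannot be placed consistently. The paper phrases the final contradiction geometrically in terms of forbidden regions in the plot, whereas you phrase it as the chain $p_3<p_2<p_k<p_3$ of position inequalities, but the underlying argument is identical.
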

\begin{proof}
Suppose to the contrary that some cycle $C_k$ for $k\ge 5$ were contained as an induced subgraph in the inversion graph $G_\pi$. Let $\pi(i_1)$ denote the leftmost entry of~$\pi$ that corresponds to a vertex in this copy of $C_k$. Let the indices of~$\pi$ that correspond to the other vertices of this cycle be $i_2,\dots,i_k$, so that
\(
	i_1\sim i_2\sim \cdots\sim i_k\sim i_1
\)
in $G_\pi$.

We may assume that $\pi(i_2)$ lies to the left of $\pi(i_k)$ because otherwise we could consider the cycle in the reverse order. The vertices $i_2$ and $i_k$ are not adjacent because $k\ge 4$, so they must correspond to a noninversion in~$\pi$, and since $\pi(i_1)$ lies to the left of all other entries corresponding to vertices of this cycle, $\pi(i_2)$ and $\pi(i_k)$ must lie to the southeast of $\pi(i_1)$, as shown in Figure~\ref{fig-no-cycles}. In this figure, the shaded regions cannot contain any other entries of~$\pi$ that correspond to vertices of the cycle for various reasons:
(i) $\pi(i_1)$ is the leftmost such entry;
(ii) only~$\pi(i_2)$ and $\pi(i_k)$ may lie southeast of $\pi(i_1)$ since no other vertex on the cycle is adjacent to~$i_1$;
and
(iii) since $k\ge 5$, there is no vertex adjacent to both $i_2$ and $i_k$ other than $i_1$.
However, this implies that we cannot finish the cycle---there is no way that there could be a vertex adjacent to~$i_2$ but not to $i_1$ or $i_k$---completing the proof.
\end{proof}

Inversion graphs can contain induced cycles of lengths $3$ and $4$, because ${G_{321}\cong K_3\cong C_3}$ and~${G_{3412}\cong C_4}$, but it follows from Proposition~\ref{prop-no-cycles} and an investigation of permutations of lengths~$3$ and~$4$ that~$321$ and~$3412$ are the only permutations whose inversion graphs are isomorphic to cycles. Thus we immediately obtain characterisations of the bipartite and acyclic inversion graphs.

\begin{proposition}
The bipartite inversion graphs are precisely the inversion graphs of permutations in the class $\Av(321)$.
\end{proposition}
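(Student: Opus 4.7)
The plan is to combine Proposition~\ref{prop-no-cycles} with the fact that triangles in inversion graphs correspond exactly to $321$-patterns, using the standard characterisation that a graph is bipartite if and only if it contains no odd cycle.

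First I would handle the easier direction: if $\pi$ contains the pattern $321$, then the three entries of that pattern form a decreasing subsequence of length three in $\pi$, and as noted just before Proposition~\ref{prop-no-cycles}, every decreasing subsequence of length $k$ gives a clique $K_k$ in $G_\pi$. In particular, a $321$-pattern yields a triangle $K_3\cong C_3$ in $G_\pi$, so $G_\pi$ contains an odd cycle and is not bipartite.

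For the other direction, suppose $\pi\in\Av(321)$. Since $K_3$ only arises from a decreasing subsequence of length three, $G_\pi$ is triangle-free. It suffices to show $G_\pi$ has no odd cycle at all. Recall that if a graph contains any odd cycle then, by taking one of shortest length, it contains an \emph{induced} odd cycle (any chord of a shortest odd cycle would produce a shorter odd cycle, a contradiction). Thus it suffices to rule out induced odd cycles in $G_\pi$. Induced $C_3$'s are excluded by triangle-freeness, and induced $C_k$'s for $k\ge 5$ are excluded by Proposition~\ref{prop-no-cycles}. Hence $G_\pi$ has no induced odd cycle, no odd cycle at all, and so is bipartite.

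There is no real obstacle here; the only point that deserves a sentence of care is the passage from ``no induced odd cycle'' to ``bipartite'', which uses the shortest-cycle observation above. Once both directions are written, the equivalence characterises the bipartite inversion graphs as exactly $\{G_\pi : \pi\in\Av(321)\}$, completing the proof.
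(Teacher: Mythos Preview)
Your proof is correct and follows exactly the approach the paper intends: the paper does not write out a formal proof of this proposition, but simply notes that it ``immediately'' follows from Proposition~\ref{prop-no-cycles} together with the observation that $321$ is the only permutation whose inversion graph is $C_3$. Your write-up supplies precisely these details, including the standard shortest-odd-cycle argument needed to pass from ``no induced odd cycle'' to ``bipartite'', which the paper leaves implicit.
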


\begin{proposition}
\label{prop-acyclic-inversion-graphs}
The acyclic inversion graphs are precisely the inversion graphs of permutations in the class $\Av(321, 3412)$.
\end{proposition}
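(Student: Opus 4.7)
The plan is to use Proposition~\ref{prop-no-cycles} to reduce acyclicity to the absence of induced $3$- and $4$-cycles only, and then to identify which permutation patterns force those two cycle lengths to appear in the inversion graph. Since a graph is acyclic precisely when it contains no induced $C_k$ for $k\ge 3$, Proposition~\ref{prop-no-cycles} immediately tells us that an inversion graph $G_\pi$ is acyclic if and only if it contains neither an induced $C_3$ nor an induced $C_4$.

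It therefore suffices to show that $G_\pi$ contains an induced $C_3$ if and only if $\pi$ contains the pattern $321$, and that $G_\pi$ contains an induced $C_4$ if and only if $\pi$ contains the pattern $3412$. The ``if'' directions are immediate from the observations already made in the excerpt: $G_{321}\cong K_3\cong C_3$ and $G_{3412}\cong C_4$, so any occurrence of $321$ (respectively $3412$) in $\pi$ produces an induced $C_3$ (respectively $C_4$) in $G_\pi$ on the vertices corresponding to that occurrence.

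For the ``only if'' directions, I would invoke Proposition~\ref{prop-graph-to-perm-containment}. Suppose $G_\pi$ contains an induced copy of $C_3$; then there is a permutation $\tau\le\pi$ of length $3$ with $G_\tau\cong C_3\cong K_3$, and since a $K_3$ in an inversion graph corresponds to a decreasing subsequence of length $3$, the only such $\tau$ is $321$. Similarly, if $G_\pi$ contains an induced $C_4$, then there is a permutation $\tau\le\pi$ of length $4$ with $G_\tau\cong C_4$. Here I would use the remark in the excerpt that $321$ and $3412$ are the only permutations whose inversion graphs are isomorphic to cycles; since $|\tau|=4$, we must have $\tau=3412$, and thus $\pi$ contains $3412$.

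The only step that requires any real checking is the uniqueness claim that $3412$ is the only length-$4$ permutation whose inversion graph is $C_4$ (and $321$ the only length-$3$ permutation with inversion graph $C_3$); this is a short case analysis of the $24$ permutations of length $4$, and is already alluded to in the paragraph preceding the statement, so I would treat it as given. Combining these equivalences yields $G_\pi$ acyclic $\iff$ $\pi\in\Av(321,3412)$, completing the proof.
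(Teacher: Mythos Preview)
Your proposal is correct and follows precisely the approach the paper intends: the paper does not give a separate proof of this proposition but rather states (in the paragraph immediately preceding Proposition~1.9) that it follows ``immediately'' from Proposition~\ref{prop-no-cycles} together with the observation that $321$ and $3412$ are the only permutations whose inversion graphs are cycles. Your write-up simply makes explicit the use of Proposition~\ref{prop-graph-to-perm-containment} for the ``only if'' directions, which is exactly the mechanism the paper has in mind.
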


There has been extensive study of both the $321$-avoiding permutations~\cite{richards:ballot-sequence:,billey:kazhdan-lusztig:,mansour:321-polygon-avo:,stankova:explicit-enumer:,guillemot:pattern-matchin:,albert:growth-rates-fo:,albert:generating-and-:,albert:rationality-for:} and their graphical analogues, the bipartite inversion graphs~\cite{spinrad:bipartite-permu:,lai:bipartite-permu:,brandstadt:on-the-linear-s:,lozin:minimal-univers:,uehara:linear-structur:,korpelainen:bipartite-induc:,lozin:canonical-antic:,kiyomi:bipartite-permu:,heggernes:induced-subgrap:,kiyomi:finding-a-chain:,lin:linear-time-alg:}.
The acyclic inversion graphs and the corresponding permutation class $\Av(321, 3412)$ have not received nearly as much attention, although Tenner~\cite{tenner:pattern-avoidan:} and Petersen and Tenner~\cite{petersen:the-depth-of-a-:} have considered them from the Bruhat order perspective.

\subsection{Order-preserving and reflecting mappings}
\label{subsec-order-pres}

The mapping $\pi\mapsto G_\pi$ from permutations to their inversion graphs is order-preserving because~$G_\sigma$ is an induced subgraph of $G_\pi$ whenever~$\sigma$ is contained in~$\pi$. Such mappings arise frequently in our proofs; in general, a mapping $\Phi : (X,\mathord{\le_X})\to (Y,\mathord{\le_Y})$ from one poset to another is \emph{order-preserving}~if 
\[
	x_1 \le_X x_2
	\implies
	\Phi(x_1) \le_Y \Phi(x_2)
\]
for all $x_1,x_2\in X$. In addition to the mapping $\pi\mapsto G_\pi$, we note that every mapping from an antichain to a poset is order-preserving. We frequently employ the following elementary fact.

\begin{proposition}
\label{prop-wqo-order-preserving}
Suppose that $(X,\mathord{\le_X})$ and $(Y,\mathord{\le_Y})$ are quasi-orders and that the mapping
\[
	\Phi : (X,\mathord{\le_X})\tosurj (Y,\mathord{\le_Y})
\]
is an order-preserving surjection. If $(X,\mathord{\le_X})$ is wqo, then $(Y,\mathord{\le_Y})$ is also wqo.
\end{proposition}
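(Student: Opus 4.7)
The plan is to work directly from the definition of well-quasi-order in terms of good pairs, which is the most convenient of the equivalent formulations given in Section~\ref{subsec-wqo}. Given an arbitrary infinite sequence $y_1, y_2, \dots$ of elements of $Y$, the goal is to exhibit indices $i < j$ with $y_i \le_Y y_j$.

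First I would use surjectivity of $\Phi$ to lift the sequence: for each $n \ge 1$, pick some $x_n \in X$ with $\Phi(x_n) = y_n$ (this uses the axiom of choice, but only for a countable family, which is unobjectionable here; alternatively one can note that any such choice suffices since we only need one preimage per index). This produces an infinite sequence $x_1, x_2, \dots$ in $X$.

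Since $(X, \le_X)$ is wqo, this sequence must contain a good pair: there exist indices $i < j$ with $x_i \le_X x_j$. Applying the order-preserving property of $\Phi$ yields $y_i = \Phi(x_i) \le_Y \Phi(x_j) = y_j$, which is the desired good pair in the original sequence. Hence $(Y, \le_Y)$ is wqo.

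There is essentially no obstacle here; the argument is a one-line unwinding of the definitions once the correct characterisation of wqo (good pairs rather than, say, infinite increasing subsequences) is selected. The only thing worth noting is that surjectivity is genuinely needed: without it, the infinite sequence in $Y$ might land entirely outside $\Phi(X)$, and we would not be able to lift. In fact the same argument shows the slightly stronger statement that $\Phi(X)$ is wqo as a sub-quasi-order of $Y$ whenever $X$ is wqo, with no surjectivity hypothesis required; surjectivity is only invoked to identify $\Phi(X)$ with $Y$.
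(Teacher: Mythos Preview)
Your proof is correct and follows essentially the same approach as the paper: lift an arbitrary sequence in $Y$ to $X$ via surjectivity, find a good pair there using the wqo hypothesis, and push it back down via order preservation. Your additional remarks about the axiom of choice and the sufficiency of considering $\Phi(X)$ are fine embellishments but are not in the paper's version.
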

\begin{proof}
Let $y_1,y_2,\dots$ be an infinite sequence of elements from $Y$. Because $\Phi$ is surjective, for each $y_i$ we can choose some $x_i\in X$ such that $\Phi(x_i)=y_i$. Because $(X,\mathord{\le_X})$ is wqo, the sequence $x_1,x_2,\dots$ has a good pair, that is, there are indices $i<j$ so that $x_i\le_X x_j$. It follows that $y_i=\Phi(x_i)\le\Phi(x_j)=y_j$, so the sequence $y_1,y_2,\dots$ also has a good pair.
\end{proof}

Applying Proposition~\ref{prop-wqo-order-preserving} in this context immediately yields the following result relating wqo permutation classes and wqo classes of graphs. Here and in what follows, if~$X$ is a set (or class) of permutations, then we denote by $G_X$ the set (or class) of inversion graphs of its members.

\begin{proposition}
\label{prop-wqo-perms-graphs}
Let~$\C$ be a permutation class and $G_\C$ the corresponding graph class.
\begin{enumerate}
\item[(a)] If~$\C$ is wqo in the permutation containment order, then $G_\C$ is wqo in the induced subgraph order.
\item[(b)] Contrapositively, if $G_\C$ is \emph{not} wqo in the induced subgraph order, then~$\C$ is not wqo in the permutation containment order.
\end{enumerate}
\end{proposition}

Intriguingly, the converse of Proposition~\ref{prop-wqo-perms-graphs} is not known to hold. (Although see Proposition~\ref{prop-GC-wqo-C-simples-wqo} for a partial answer.)

\begin{question}
\label{question-prop-wqo-perms-graphs-converse}
Let~$\C$ be a permutation class and $G_\C$ the corresponding graph class. If $G_\C$ is wqo in the induced subgraph order, must~$\C$ be wqo in the permutation containment order?
\end{question}

Recall from Section~\ref{subsec-inversion-graphs} that $G_\pi\cong G_{\pi^{-1}}\cong G_{\pi^{\text{rc}}}\cong G_{(\pi^{\text{rc}})^{-1}}$ for all permutations~$\pi$. Thus
\[
	G_X = G_{X\,\cup\,X^{-1}\,\cup\,X^{\text{rc}}\,\cup\,(X^{\text{rc}})^{-1}}
\]
for all sets~$X$ of permutations. These considerations show that an affirmative answer to Question~\ref{question-prop-wqo-perms-graphs-converse} cannot follow from the same argument as used to prove Proposition~\ref{prop-wqo-perms-graphs}---given an antichain $A$ of permutations, it is certainly not always the case that the corresponding set of inversion graphs $G_A$ is also an antichain.

It is frequently more convenient to work backward; the mapping $\Psi : (X,\mathord{\le_X})\to (Y,\mathord{\le_Y})$ is \emph{order-reflecting}%
\footnote{Note that what is ``reflected'' in the definition of order-reflecting is the implication arrow.}
if
\[
	x_1 \le_X x_2
	\impliedby
	\Psi(x_1) \le_Y \Psi(x_2)
\]
for all $x_1,x_2\in X$. Note that if $(X,\le_X)$ is a poset (as when we restrict our attention to permutation classes) and $\Psi$ is an order-reflecting mapping with domain $(X,\le)$, then $\Psi$ must be injective: if $\Psi(x_1)=\Psi(x_2)$, then we have both $x_1\le_X x_2$ and $x_2\le_X x_1$, which implies that~${x_1=x_2}$. This fact tends to motivate constructions of order-reflecting mappings---we must be able to ``reconstruct'' $x$ from $\Psi(x)$---although this is not a sufficient condition for $\Psi$ to be order-reflecting.

The analogue of Proposition~\ref{prop-wqo-order-preserving} for order-reflecting mappings follows easily.

\begin{proposition}
\label{prop-wqo-order-reflecting}
Suppose that $(X,\mathord{\le_X})$ and $(Y,\mathord{\le_Y})$ are quasi-orders and that the mapping
\[
	\Psi : (X,\mathord{\le_X})\to (Y,\mathord{\le_Y})
\]
is order-reflecting. If $(Y,\mathord{\le_Y})$ is wqo then $(X,\mathord{\le_X})$ is also wqo.
\end{proposition}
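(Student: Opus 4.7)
The plan is to mimic the argument used for Proposition~\ref{prop-wqo-order-preserving}, except that we run it in the opposite direction across $\Psi$. The definition of wqo we have been working with is phrased in terms of good pairs in infinite sequences, so the natural strategy is to start with an arbitrary infinite sequence in $X$, push it forward through $\Psi$, use wqo of $Y$ to extract a good pair there, and then pull this good pair back to $X$ using the order-reflecting property.

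More precisely, I would begin by fixing an arbitrary infinite sequence $x_1, x_2, \ldots$ of elements of $X$, and then forming the image sequence $\Psi(x_1), \Psi(x_2), \ldots$ in $Y$. Since $(Y,\mathord{\le_Y})$ is wqo, this image sequence contains a good pair: there are indices $i < j$ with $\Psi(x_i) \le_Y \Psi(x_j)$. The order-reflecting hypothesis on $\Psi$ converts this directly into the inequality $x_i \le_X x_j$, so $(x_i, x_j)$ is a good pair for our original sequence, and $(X,\mathord{\le_X})$ is wqo as required.

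There is essentially no obstacle in this proof; the asymmetry between order-preserving and order-reflecting maps is exactly what one should expect. For an order-preserving map in Proposition~\ref{prop-wqo-order-preserving} we transport wqo from $X$ to $Y$ and need surjectivity so that an arbitrary sequence in $Y$ can be lifted back to a sequence in $X$. For an order-reflecting map we transport wqo in the opposite direction, from $Y$ to $X$, and so we can simply push arbitrary sequences in $X$ forward into $Y$ without any further assumption on $\Psi$. The paragraph preceding the proposition already records that $\Psi$ must automatically be injective when $(X,\mathord{\le_X})$ is a poset, but this fact is not even needed for the argument to go through.
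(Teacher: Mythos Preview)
Your proof is correct and is essentially identical to the paper's own argument: take an arbitrary sequence in $X$, push it forward through $\Psi$, use wqo of $Y$ to find a good pair $\Psi(x_i)\le_Y\Psi(x_j)$, and apply the order-reflecting property to obtain $x_i\le_X x_j$. The additional commentary you give about the asymmetry with Proposition~\ref{prop-wqo-order-preserving} is accurate but not part of the paper's (very short) proof.
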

\begin{proof}
Let $x_1,x_2,\dots$ be any infinite sequence of elements from~$X$. Because $(Y,\mathord{\le_Y})$ is wqo, the sequence $\Psi(x_1),\Psi(x_2),\dots$ has a good pair, meaning that there are indices $i<j$ such that~${\Psi(x_i)\le_Y \Psi(x_j)}$. It follows that $x_i\le_X x_j$, so the sequence $x_1,x_2,\dots$ also has a good pair.
\end{proof}

\subsection{Sums and increasing oscillations}
\label{subsec-sums}

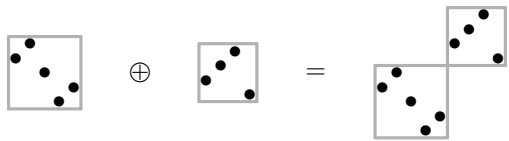
\begin{figure}
\begin{center}
\begin{tabular}{ccccc}
	\begin{tikzpicture}[scale=0.1925, baseline=(current bounding box.center)]
		\plotpermbox{1}{1}{5}{5};
		\plotperm{4,5,3,1,2};
	\end{tikzpicture}
&
	\begin{tikzpicture}[baseline=(current bounding box.center)]
		\node {$\oplus$};
	\end{tikzpicture}
&
	\begin{tikzpicture}[scale=0.1925, baseline=(current bounding box.center)]
		\plotpermbox{1}{1}{4}{4};
		\plotperm{2,3,4,1};
	\end{tikzpicture}
&
	\begin{tikzpicture}[baseline=(current bounding box.center)]
		\node {$=$};
	\end{tikzpicture}
&
	\begin{tikzpicture}[scale=0.1925, baseline=(current bounding box.center)]
		\plotpermbox{1}{1}{5}{5};
		\plotpermbox{6}{6}{9}{9};
		\plotperm{4,5,3,1,2,7,8,9,6};
	\end{tikzpicture}
\end{tabular}
\end{center}
\caption{An example of a direct sum: $45312\oplus 2341=45312\ 7896$.}
\label{fig-ex-sum}
\end{figure}

The (direct) \emph{sum} of the permutations~$\sigma$ of length $m$ and $\tau$ of length $n$ is the permutation $\sigma\oplus\tau$ defined by
\[
	(\sigma\oplus\tau)(i) =
	\left\{
	\begin{array}{ll}
	\sigma(i)&\mbox{for $1\le i\le m$,}\\
	\tau(i-m)+m&\mbox{for $m+1\le i\le m+n$.}
	\end{array}
	\right.
\]
The plot of $\sigma\oplus\tau$ consists of the plot of $\tau$ above and to the right of the plot of~$\sigma$, as shown in Figure~\ref{fig-ex-sum}. Analogously, given permutations~$\sigma$ of length $m$ and $\tau$ of length $n$, their \emph{skew sum} is the permutation $\sigma\ominus\tau$ defined by
\[
	(\sigma\ominus\tau)(i) =
	\left\{
	\begin{array}{ll}
	\sigma(i)+n&\mbox{for $1\le i\le m$,}\\
	\tau(i-m)&\mbox{for $m+1\le i\le m+n$.}
	\end{array}
	\right.
\]
A permutation is \emph{sum indecomposable} (or \emph{connected}) if it cannot be expressed as the direct sum of two shorter permutations and \emph{skew sum indecomposable} (or simply \emph{skew indecomposable}) if it cannot be expressed as the skew sum of two shorter permutations. We leave the routine proof of the following result to the reader.

\begin{proposition}\label{prop-sum-indecomp-connected}
The permutation~$\pi$ is sum indecomposable if and only if $G_\pi$ is connected.
\end{proposition}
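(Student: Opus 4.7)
The plan is to prove both directions of the equivalence. The easy direction is that sum decomposability implies $G_\pi$ is disconnected: if $\pi = \sigma \oplus \tau$ with $|\sigma| = m$ and $|\tau| = n$, then for every pair $i \le m < j$ we have $\pi(i) \le m < \pi(j)$, which is not an inversion, so $G_\pi$ has no edges between $\{1,\dots,m\}$ and $\{m+1,\dots,m+n\}$. Since both blocks are nonempty, $G_\pi$ is disconnected.

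For the converse I would argue contrapositively. Suppose $G_\pi$ is disconnected, let $C$ be the connected component containing vertex $1$, and set $k := \max C$; since $G_\pi$ is disconnected, $k$ is strictly less than the length of $\pi$. The crux is showing $C = \{1, 2, \dots, k\}$. Assuming this, the absence of $G_\pi$-edges between $C$ and $\{k+1,\dots\}$ means that no pair with $i \le k < j$ forms an inversion, so $\pi(i) < \pi(j)$ for all such pairs. This forces $\{\pi(1),\dots,\pi(k)\} = \{1,\dots,k\}$, exhibiting $\pi$ as a direct sum of two nonempty permutations.

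To establish the contiguity claim, suppose for contradiction that some position $b$ with $1 < b < k$ lies outside $C$. Then $b$ has no neighbour in $C$, so for every $v \in C$: if $v < b$ then the absence of the edge $vb$ forces $\pi(v) < \pi(b)$, while if $v > b$ it forces $\pi(v) > \pi(b)$. Splitting $C$ into $C^- := C \cap \{1,\dots,b-1\}$ and $C^+ := C \cap \{b+1,\dots\}$, both are nonempty since $1 \in C^-$ and $k \in C^+$. For any $u \in C^-$ and $v \in C^+$ we have $u < v$ and $\pi(u) < \pi(b) < \pi(v)$, so no edge joins them. Thus $G_\pi[C]$ is disconnected by the partition $C = C^- \cup C^+$, contradicting the connectivity of $C$.

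The hard part is exactly this contiguity lemma: one must exploit the planar geometry of the permutation to turn a nonmember $b$ of $C$ into an obstruction that bisects $C$ into a ``northwest'' part and a ``southeast'' part. Once it is in hand, the translation between ``no inversion crosses position $k$'' and ``no $G_\pi$-edge crosses position $k$'' is purely formal.
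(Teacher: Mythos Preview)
Your proof is correct. The paper does not actually supply a proof of this proposition, stating only ``We leave the routine proof of the following result to the reader,'' so there is nothing to compare against; your argument via the contiguity of the component containing vertex~$1$ is a clean way to establish the nontrivial direction.
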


A permutation is \emph{separable} if it can be built from the permutation $1$ using only sums and skew sums. For example, the permutation $453127896$ of Figure~\ref{fig-ex-sum} is separable:
\begin{eqnarray*}
453127896
&=&45312\oplus 2341\\
&=&(12\ominus 1\ominus 12)\oplus(123\ominus 1)\\
&=&((1\oplus 1)\ominus 1\ominus (1\oplus 1))\oplus((1\oplus 1\oplus 1)\ominus 1).
\end{eqnarray*}
The term ``separable'' is due to Bose, Buss, and Lubiw~\cite{bose:pattern-matchin:}, who proved that the separable permutations are $\Av(2413,3142)$, although these permutations first appeared in the much earlier work of Avis and Newborn~\cite{avis:on-pop-stacks-i:}. The graphical analogues of separable permutations are the cographs defined in Section~\ref{subsec-induced-subgraphs}. More precisely, an inductive argument quickly yields the following.

\begin{proposition}\label{prop-cograph-separable}
The cographs are precisely the inversion graphs of separable permutations.
\end{proposition}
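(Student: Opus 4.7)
The plan is a straightforward double induction, matching the two binary constructors on each side. The key observation is that the two operations on permutations correspond directly to the two operations on cographs under the inversion-graph map:
\[
    G_{\sigma \oplus \tau} \;\cong\; G_\sigma \uplus G_\tau
    \qquad\text{and}\qquad
    G_{\sigma \ominus \tau} \;\cong\; G_\sigma \ast G_\tau.
\]
The first isomorphism holds because in $\sigma \oplus \tau$ every entry of $\tau$ lies strictly above and to the right of every entry of $\sigma$, so no cross-pair forms an inversion; thus the two vertex sets induce no cross-edges and we recover the disjoint union. The second holds dually: in $\sigma \ominus \tau$ every entry of $\sigma$ lies northwest of every entry of $\tau$, so every cross-pair is an inversion, producing all possible cross-edges and hence the join.

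For the forward direction, let $\pi$ be separable. I would proceed by induction on $|\pi|$. The base case $\pi = 1$ gives $G_\pi = K_1$, which is a cograph. For the inductive step, by the definition of separable, $\pi = \sigma \oplus \tau$ or $\pi = \sigma \ominus \tau$ for shorter separable $\sigma, \tau$. The inductive hypothesis says $G_\sigma$ and $G_\tau$ are cographs, and by the two isomorphisms above $G_\pi$ is either a disjoint union or a join of cographs, hence itself a cograph.

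For the converse, I would induct on the number of vertices of a cograph $G$. The base case $G = K_1$ is realised by $\pi = 1$. Otherwise, by the recursive definition of cographs, $G = G_1 \uplus G_2$ or $G = G_1 \ast G_2$ for smaller cographs $G_1, G_2$. By induction there exist separable permutations $\sigma_1, \sigma_2$ with $G_{\sigma_i} \cong G_i$, and then $\sigma_1 \oplus \sigma_2$ (respectively $\sigma_1 \ominus \sigma_2$) is separable and has inversion graph isomorphic to $G$ by the two isomorphisms above.

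There is no real obstacle here: once the two isomorphisms relating $(\oplus,\ominus)$ to $(\uplus,\ast)$ are written down, both directions collapse to a one-line induction. The only mild subtlety is that the cograph recursion and the separable recursion are both non-unique in general, but this does not matter because we only need \emph{some} decomposition on each side to drive the induction; moreover, the passage to isomorphism classes of graphs is harmless since the inversion-graph map is only ever used up to isomorphism.
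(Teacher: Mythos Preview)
Your proposal is correct and is precisely the ``inductive argument'' the paper alludes to immediately before the proposition; the paper does not actually write out a proof, merely stating that such an argument ``quickly yields'' the result. Your two key isomorphisms $G_{\sigma\oplus\tau}\cong G_\sigma\uplus G_\tau$ and $G_{\sigma\ominus\tau}\cong G_\sigma\ast G_\tau$ are exactly the bridge needed, and the double induction you sketch is the natural way to fill in the details.
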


We conclude our discussion of inversion graphs by considering those permutations whose inversion graphs are paths. Proposition~\ref{inc-osc-path}, below, shows that these permutations are precisely the sum indecomposable permutations that are order isomorphic to subsequences of the \emph{increasing oscillating sequence},
\[
	2,4,1,6,3,8,5,\dots,2k,2k-3,\dots.
\]
We call such permutations \emph{increasing oscillations}%
\footnote{The term increasing oscillation dates to Murphy's thesis~\cite{murphy:restricted-perm:}, although we note that under our definition the permutations $1$, $21$, $231$, and $312$ are increasing oscillations while in his thesis and some other works they are not.}.
Thus the set of increasing oscillations is
\[
	\{1,
	21,
	231,
	312,
	2413,
	3142,
	24153,
	31524,
	241635,
	315264,
	2416375,
	3152746,
	\dots\}.
\]
As promised, we show that these permutations have the property we seek.

\begin{proposition}\label{inc-osc-path}
The inversion graph $G_\pi$ is a path if and only if~$\pi$ is an increasing oscillation.
\end{proposition}
\begin{proof}
It is evident that the inversion graphs of increasing oscillations are paths. To establish the other direction, suppose that $G_\pi$ is a path on $k\ge 2$ vertices, and denote the indices of~$\pi$ as~${i_1,\dots,i_k}$ so that
\[
	i_1\sim i_2\sim \cdots \sim i_k
\]
in $G_\pi$. Without loss of generality, we may assume that $\pi(i_1)$ lies to the left of $\pi(i_k)$. Since $i_1$ is adjacent only to $i_2$ in $G_\pi$, $\pi(i_1)$ is either the bottommost or the leftmost entry of~$\pi$, and we assume it is the leftmost (the other argument being a symmetry of this). 

The placement of $\pi(i_2)$ is now determined: it must lie southeast of $\pi(i_1)$. The entry $\pi(i_3)$---if it exists---cannot lie southeast of $\pi(i_1)$, and thus must lie above $\pi(i_1)$ and horizontally between~$\pi(i_1)$ and~$\pi(i_2)$. Continuing iteratively in this manner, we see that for $\ell>1$, $\pi(i_{2\ell})$ must lie to the right of $\pi(i_{2\ell-2})$ and vertically between $\pi(i_{2\ell-2})$ and $\pi(i_{2\ell-1})$, while $\pi(i_{2\ell+1})$ must lie above $\pi(i_{2\ell-1})$ and horizontally between $\pi(i_{2\ell-1})$ and $\pi(i_{2\ell})$. The resulting permutation is the increasing oscillation of length $n$ whose first entry is $2$.
\end{proof}

\begin{figure}
\begin{center}
	\begin{tikzpicture}[scale=0.1925, baseline=(current bounding box.center)]
		\plotpermbox{0.5}{0.5}{16.5}{16.5};
		\plotpermgraph{4,1,2,6,3,8,5,10,7,12,9,14,11,15,16,13};
	\end{tikzpicture}
\quad\quad
	\begin{tikzpicture}[scale=0.1925, baseline=(current bounding box.center)]
		\plotpermbox{0.5}{0.5}{16.5}{16.5};
		\draw (1,2)--(4,1)--(3,4)--(6,3)--(5,6)--(8,5)--(7,8)--(10,7)--(9,10)--(12,9)--(11,12)--(14,11)--(13,14)--(16,13)--(15,16);
		\draw (2,15) to[out=255, in=135] (4,1);
		\draw (2,15) to[out=265, in=110] (3,4);
		\draw (2,15) to[out=275, in=135] (6,3);
		\draw (2,15) to[out=285, in=115] (5,6);
		\draw (2,15) to[out=295, in=135] (8,5);
		\draw (2,15) to (7,8);
		\draw (2,15) to (10,7);
		\draw (2,15) to (9,10);
		\draw (2,15) to[out=-25, in=135] (12,9);
		\draw (2,15) to[out=-15, in=155] (11,12);
		\draw (2,15) to[out=-5, in=135] (14,11);
		\draw (2,15) to[out=5, in=160] (13,14);
		\draw (2,15) to[out=15, in=135] (16,13);
		\plotperm{2,15,4,1,6,3,8,5,10,7,12,9,14,11,16,13};
	\end{tikzpicture}
\quad\quad
		\begin{tikzpicture}[scale=0.1925, baseline=(current bounding box.center)]
		\plotpermbox{0.5}{0.5}{15.5}{15.5};
		\draw (2,2)--(5,1)--(4,4)--(7,3)--(6,6)--(9,5)--(8,8)--(11,7)--(10,10)--(13,9)--(12,12)--(15,11)--(14,14);
		\draw (1,13) to[out=15, in=135] (15,11);
		\draw (1,13) to[out=5, in=150] (12,12);
		\draw (1,13) to[out=-5, in=135] (13,9);
		\draw (1,13) to[out=-15, in=145] (10,10);
		\draw (1,13) to[out=-25, in=135] (11,7);
		\draw (1,13) to[out=-35, in=135] (8,8);
		\draw (1,13) to (9,5); 
		\draw (1,13) to[out=-55, in=135] (6,6);
		\draw (1,13) to[out=-65, in=135] (7,3);
		\draw (1,13) to[out=-75, in=125] (4,4);
		\draw (1,13) to[out=-85, in=140] (5,1);
		\draw (1,13) to[out=-95, in=115] (2,2);
		\draw (3,15) to[out=-105, in=135] (5,1);
		\draw (3,15) to[out=-95, in=110] (4,4);
		\draw (3,15) to[out=-85, in=135] (7,3);
		\draw (3,15) to[out=-75, in=115] (6,6);
		\draw (3,15) to[out=-65, in=135] (9,5);
		\draw (3,15) to[out=-55, in=135] (8,8);
		\draw (3,15) to (11,7); 
		\draw (3,15) to[out=-35, in=135] (10,10);
		\draw (3,15) to[out=-25, in=135] (13,9);
		\draw (3,15) to[out=-15, in=145] (12,12);
		\draw (3,15) to[out=-5, in=130] (15,11);
		\draw (3,15) to[out=5, in=155] (14,14);
		\plotperm{13,2,15,4,1,6,3,8,5,10,7,12,9,14,11};
		\end{tikzpicture}
\end{center}
\caption{The inversion graphs of typical members of three different infinite antichains of permutations based on the increasing oscillating sequence.}
\label{fig-three-inc-osc-antichains}
\end{figure}
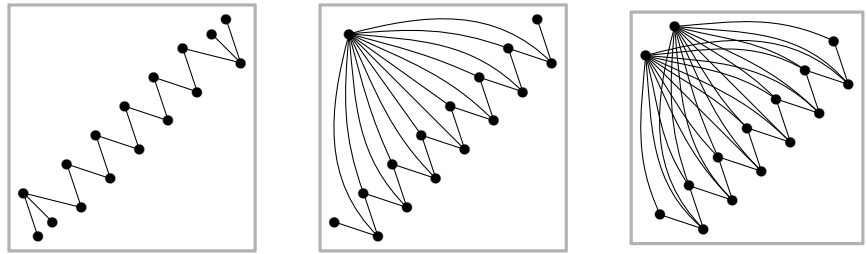

Increasing oscillations can be used to build several infinite antichains. One such antichain is pictured on the left of Figure~\ref{fig-three-antichains}, another member of which is shown on the left of Figure~\ref{fig-three-inc-osc-antichains}. This is essentially the same as the antichain constructed in 2000 by Spielman and B\'ona~\cite{spielman:an-infinite-ant:}. However, infinite antichains of permutations date back at least to the early 1970s, if not the late 1960s. In his 1972 paper~\cite[Lemma 6]{tarjan:sorting-using-n:}, Tarjan presented the antichain shown in the centre of Figure~\ref{fig-three-inc-osc-antichains}. A year later, Pratt~\cite[Figure 3]{pratt:computing-permu:} presented the antichain shown on the right of Figure~\ref{fig-three-inc-osc-antichains}. However, Tarjan's construction may have been preceded by a construction of Laver~\cite[pg. 9]{laver:well-quasi-orde:}; while Laver's paper was not published until 1976, his antichain is mentioned in Kruskal's 1972 paper~\cite[pg. 304]{kruskal:the-theory-of-w:}, and Laver writes that this antichain is derived from a construction presented in the penultimate paragraph of Jenkyns and Nash-Williams's 1968 paper~\cite{jenkyns:counterexamples:}. (The Jenkyns--Nash-Williams construction is not an infinite antichain of permutations, but it nevertheless bears a striking resemblance to the antichains based on the increasing oscillating sequence.)

A \emph{linear forest} is a disjoint union of paths. Let~$\O_I$ denote the class of permutations whose graphs are linear forests. It follows immediately from Proposition~\ref{inc-osc-path} that the class~$\O_I$ consists precisely of all permutations that are order isomorphic to subsequences of the increasing oscillating sequence. Viewed from this angle, the result below gives the basis of this permutation class. This result was first stated without proof in Murphy's thesis~\cite[Proposition 36]{murphy:restricted-perm:} and a proof was later given by Brignall, Ru\v{s}kuc, and Vatter~{\cite[Proposition 14]{brignall:simple-permutat:decide:}}, but the proof below using inversion graphs is much more straight-forward.

\begin{proposition}
\label{prop-OI-fb}
The linear forests are precisely the inversion graphs of permutations in the class $\O_I=\Av(321, 2341,\allowbreak 3412, 4123)$.
\end{proposition}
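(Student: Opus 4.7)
The plan is to characterize linear forests graph-theoretically and then translate the characterization into a pattern-avoidance condition via Propositions~\ref{prop-no-cycles} and~\ref{prop-graph-to-perm-containment}. A graph is a linear forest precisely when it is acyclic and has maximum degree at most $2$. Since an acyclic graph is triangle-free, any vertex of degree at least $3$ would yield an induced claw $K_{1,3}$, so the linear forests are exactly the graphs that contain no induced cycle and no induced $K_{1,3}$.

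By Proposition~\ref{prop-no-cycles}, an inversion graph $G_\pi$ never contains an induced cycle on five or more vertices. Hence $G_\pi$ is a linear forest if and only if none of $K_3$, $C_4$, or $K_{1,3}$ appears in $G_\pi$ as an induced subgraph. By Proposition~\ref{prop-graph-to-perm-containment}, the appearance of any of these as an induced subgraph in $G_\pi$ is equivalent to $\pi$ containing a permutation $\tau$ (of length $3$ or $4$, as appropriate) with $G_\tau$ isomorphic to the relevant graph.

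What remains is a small finite check: determine which short permutations realize each of the three forbidden graphs. The paper has already observed that $321$ is the only length-$3$ permutation with $G_\tau\cong K_3$, and that $3412$ is the only length-$4$ permutation with $G_\tau\cong C_4$. For $K_{1,3}$, a direct inspection of the length-$4$ permutations reveals that exactly $2341$ and $4123$ have inversion graph isomorphic to $K_{1,3}$: in $2341$ the final entry $1$ inverts with each of the preceding increasing entries, and $4123$ is the reverse-complement analogue. Combining these three facts yields that $G_\pi$ is a linear forest if and only if $\pi$ avoids each of $321$, $2341$, $3412$, and $4123$, which is the statement of the proposition. The only (mild) obstacle is the $K_{1,3}$ enumeration, since the $K_3$ and $C_4$ cases come for free from the surrounding discussion.
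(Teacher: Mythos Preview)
Your proof is correct and follows essentially the same approach as the paper: both arguments identify linear forests as the graphs avoiding induced cycles and the claw $K_{1,3}$, use the fact that inversion graphs have no long induced cycles to reduce the forbidden list to $K_3$, $C_4$, and $K_{1,3}$, and then check that the permutations realizing these are precisely $321$, $3412$, and $\{2341,4123\}$. The only cosmetic difference is that the paper invokes Proposition~\ref{prop-acyclic-inversion-graphs} directly for the acyclic part, whereas you unpack it via Proposition~\ref{prop-no-cycles}.
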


\begin{proof}
The inversion graphs $G_{321}$ and $G_{3412}$ are cycles, and the inversion graphs $G_{2341}$ and $G_{4123}$ are both isomorphic to the claw $K_{1,3}$. Thus if $G_\pi$ is a disjoint union of paths,~$\pi$ must avoid $321$, $3412$, $2341$, and $4123$. In the other direction, Proposition~\ref{prop-acyclic-inversion-graphs} shows us that $G_\pi$ is acyclic if~$\pi\in\Av(321,3412)$, and if~$\pi$ further avoids $2341$ and $4123$, then $G_\pi$ cannot have a vertex of degree $3$ or greater, so $G_\pi$ is indeed a disjoint union of paths.
\end{proof}

We establish in Proposition~\ref{prop-OI-wqo} that the permutation class~$\O_I$ is wqo. It follows from part (a) of Proposition~\ref{prop-wqo-perms-graphs} that the graph class of linear forests is wqo, although there are a multitude of ways to see this latter fact.

At this point we know that paths are inversion graphs, while cycles of length five or more are not. It follows that the minimal forbidden induced subgraph characterisation of the class of inversion graphs contains $\{C_k : k\ge 5\}$ and thus is infinite. We remark that the entire infinite minimal forbidden induced subgraph characterisation of the class of inversion graphs was found in the seminal work of Gallai~\cite{gallai:transitiv-orien:}.

\subsection{Labelled well-quasi-order}
\label{subsec-lwqo}

We first define labelled well-quasi-order in the graph context. Let $(L,\mathord{\le_L})$ be any quasi-order (although we soon require that it be a wqo). An \emph{$L$-labeling} of the graph~$G$ is a mapping $\ell_G$ from the vertices of~$G$ to $L$, and the pair $(G,\ell_G)$ is called an \emph{{$L$-labelled} graph}. The {$L$-labelled} graph $(H,\ell_H)$ is a \emph{labelled induced subgraph} of the {$L$-labelled} graph $(G,\ell_G)$ if $H$ is isomorphic to an induced subgraph of~$G$ and this isomorphism maps each vertex $v\in H$ to a vertex $w\in G$ such that $\ell_H(v)\le_L \ell_G(w)$.

Given a class~$\C$ of graphs and a quasi-order $(L,\le_L)$, we denote by~$\C\wr L$ the set of {$L$-labelled} graphs from~$\C$. We say that~$\C$ is \emph{labelled well-quasi-ordered (lwqo)} if~$\C\wr L$ is wqo under the labelled induced subgraph order for \emph{every} wqo set $(L,\mathord{\le_L})$. Note that this is equivalent to saying that the set of graphs in~$\C$ labelled by $(L,\mathord{\le_L})$ does not contain an infinite antichain. It is worth emphasising that the definition of lwqo ranges over \emph{every} wqo set of labels $(L,\mathord{\le_L})$, and not merely finite sets of labels.

\begin{figure}
\begin{center}
	\begin{tikzpicture}[scale=0.5]
		\draw (0,0)--(3,0);
		\plotpartialperm{1/0,2/0};
		\plotpartialpermencirclewhite{0/0,3/0};
	\end{tikzpicture}
	\quad\quad
	\begin{tikzpicture}[scale=0.5]
		\draw (0,0)--(4,0);
		\plotpartialperm{1/0,2/0,3/0};
		\plotpartialpermencirclewhite{0/0,4/0};
	\end{tikzpicture}
	\quad\quad
	\begin{tikzpicture}[scale=0.5]
		\draw (0,0)--(5,0);
		\plotpartialperm{1/0,2/0,3/0,4/0};
		\plotpartialpermencirclewhite{0/0,5/0};
	\end{tikzpicture}
\end{center}
\caption{The set of all paths labelled by a two-element antichain forms an infinite antichain in the labelled induced subgraph order.}
\label{fig-double-ended-forks-lwqo}
\end{figure}
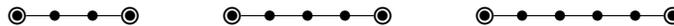

The lwqo property is much stronger than wqo. For one example, the set of all chordless paths, which is trivially wqo, is not lwqo, as indicated in Figure~\ref{fig-double-ended-forks-lwqo}. This shows that the class of linear forests is not lwqo, despite being wqo. On the other hand, as noted by Atminas and Lozin~\cite{atminas:labelled-induce:}, the class of cographs \emph{is} lwqo.

In the permutation context, we view an $L$-labeling of the permutation~$\pi$ of length $n$ as a mapping $\ell_\pi$ from the indices of~$\pi$ to elements of $L$, that is, $\ell_\pi : \{1,2,\dots,n\}\to L$. We think of~$\ell_\pi(i)$ as being the label attached to the entry $\pi(i)$, and we call the pair $(\pi,\ell_\pi)$ an \emph{{$L$-labelled} permutation}. Given two {$L$-labelled} permutations $(\pi,\ell_\pi)$ and $(\sigma,\ell_\sigma)$ where~$\pi$ and~$\sigma$ have lengths~$n$ and~$k$, respectively, we say that $(\sigma,\ell_\sigma)$ is contained in $(\pi,\ell_\pi)$ if there is an increasing sequence of~$k$~indices $1\le i_1<i_2<\cdots<i_k\le n$ such that the subsequence $\pi(i_1)\pi(i_2)\cdots\pi(i_k)$ is order isomorphic to~$\sigma$ and $\ell_\sigma(j) \le_L \ell_\pi(i_j)$ for all $1\le j\le k$.

As in the graph context, we let~$\C\wr L$ denote the set of {$L$-labelled} permutations of~$\C$ and say that the permutation class~$\C$ is \emph{labelled well-quasi-ordered (lwqo)} if~$\C\wr L$ is wqo (under the labelled containment order defined above) for \emph{every} wqo $(L,\mathord{\le_L})$.

Just as we associated inversion graphs to permutations in Section~\ref{subsec-inversion-graphs}, we can associate labelled inversion graphs to labelled permutations. Given an {$L$-labelled} permutation $(\pi,\ell_\pi)$, we define its \emph{{$L$-labelled} inversion graph} to be the pair $(G_\pi, \ell_\pi)$. Thus in the labelled graph $(G_\pi, \ell_\pi)$, the vertex~$i$ receives the same label as the entry $\pi(i)$ in the labelled permutation $(\pi,\ell_\pi)$. Applying Proposition~\ref{prop-wqo-order-preserving} in this context gives us the following analogue of Proposition~\ref{prop-wqo-perms-graphs}.

\begin{proposition}
\label{prop-lwqo-perms-graphs}
Let~$\C$ be a permutation class and $G_\C$ the corresponding graph class.
\begin{enumerate}
\item[(a)] If~$\C$ is lwqo in the permutation containment order, then $G_\C$ is lwqo in the induced subgraph order.
\item[(b)] Contrapositively, if $G_\C$ is \emph{not} lwqo in the induced subgraph order, then~$\C$ is not lwqo in the permutation containment order.
\end{enumerate}
\end{proposition}
\begin{proof}
Suppose the permutation class~$\C$ is lwqo and take $(L,\le_L)$ to be an arbitrary wqo set. Because~$\C$ is lwqo, the set of {$L$-labelled} members of~$\C$ is wqo. The mapping $(\pi,\ell_\pi)\mapsto (G_\pi,\ell_\pi)$ is easily seen to be order-preserving, and maps the {$L$-labelled} members of~$\C$ surjectively onto the {$L$-labelled} members of $G_\C$. Therefore Proposition~\ref{prop-wqo-order-preserving} shows that the {$L$-labelled} members of $G_\C$ are wqo, and since $(L,\le_L)$ was an arbitrary wqo set, this shows that $G_\C$ is lwqo.
\end{proof}

Analogous to Question~\ref{question-prop-wqo-perms-graphs-converse}, it is natural to ask: if $G_\C$ is lwqo in the induced subgraph order, is it necessarily true that~$\C$ is lwqo in the permutation containment order? We prove that the answer to this question is ``yes'' with Theorem~\ref{thm-C-lwqo-GC-lwqo}.

There are a variety of notions of structure that interpolate between wqo and lwqo. Specialising his notion to our context, Pouzet~\cite{pouzet:un-bel-ordre-da:} defined the class~$\C$ to be \emph{$n$-well-quasi-ordered} (\emph{$n$-wqo}) if the set of all permutations in~$\C$ labelled by an $n$-element antichain is wqo. The following result is trivial, but it arises in several of our discussions related to $n$-wqo, so we make it explicit here.

\begin{proposition}
\label{prop-n-wqo-other-poset}
If the permutation class~$\C$ is $n$-wqo and $(L,\le_L)$ is any $n$-element poset, then~$\C\wr L$ is wqo.
\end{proposition}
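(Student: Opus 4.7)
The plan is to reduce the general poset-labelled case directly to the antichain case by observing that enlarging the order on the labels can only \emph{help} embeddings. The key move is to identify an antichain $A$ on the same underlying $n$-element set as $L$ and to apply Proposition~\ref{prop-wqo-order-preserving}.

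Fix an $n$-element antichain $A=\{a_1,\dots,a_n\}$, and identify this set with the underlying set of $L$ via any bijection. Under this identification, $\C\wr A$ and $\C\wr L$ have literally the same underlying set of labelled permutations: namely, all pairs $(\pi,\ell_\pi)$ with $\pi\in\C$ and $\ell_\pi\st\{1,\dots,|\pi|\}\to\{a_1,\dots,a_n\}$. The only difference is which quasi-order is used on the labels, and hence in how labelled containment is defined.

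Next, I would verify that the identity map $\Phi\st \C\wr A\to \C\wr L$ is an order-preserving surjection. Surjectivity is immediate since both sets coincide as sets. For order-preservation, suppose $(\sigma,\ell_\sigma)\le(\pi,\ell_\pi)$ in $\C\wr A$, witnessed by indices $i_1<\cdots<i_k$ such that $\pi(i_1)\cdots\pi(i_k)$ reduces to $\sigma$ and $\ell_\sigma(j)\le_A\ell_\pi(i_j)$ for every $j$. Because $A$ is an antichain, the inequality $\ell_\sigma(j)\le_A\ell_\pi(i_j)$ forces $\ell_\sigma(j)=\ell_\pi(i_j)$, which in turn yields $\ell_\sigma(j)\le_L\ell_\pi(i_j)$ by reflexivity of $L$. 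The same sequence of indices therefore witnesses $(\sigma,\ell_\sigma)\le(\pi,\ell_\pi)$ in $\C\wr L$, so $\Phi$ is order-preserving.

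The assumption that $\C$ is $n$-wqo is exactly the statement that $\C\wr A$ is wqo, so Proposition~\ref{prop-wqo-order-preserving} applied to $\Phi$ yields that $\C\wr L$ is wqo, as required. There is no real obstacle here: the whole content of the proposition is the trivial observation that passing from an antichain to a richer poset on the same label set only adds comparabilities, and therefore can only create more embeddings, not fewer.
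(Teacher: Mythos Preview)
Your proof is correct and follows essentially the same approach as the paper: both fix a bijection between an $n$-element antichain $A$ and $L$, observe that the induced map $\C\wr A\to\C\wr L$ is an order-preserving surjection, and apply Proposition~\ref{prop-wqo-order-preserving}. Your version spells out slightly more detail (that antichain comparability forces equality, hence reflexive comparability in $L$), but the argument is the same.
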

\begin{proof}
Let $(A,\le_A)$ be an $n$-element antichain. By the hypotheses, we know that~$\C\wr A$ is wqo. Let $\phi : A\to L$ denote any bijection between $A$ and $L$, so $\phi$ is an order-preserving surjection. It follows that the mapping $(\pi,\ell_\pi)\mapsto (\pi,\phi\circ\ell_\pi)$ from~$\C\wr A$ to~$\C\wr L$ is also an order-preserving surjection, so~$\C\wr L$ is wqo by Proposition~\ref{prop-wqo-order-preserving}.
\end{proof}

In his 1972 paper, Pouzet made the following still-open%
\footnote{\label{fn-kriz}It should also be noted that Conjecture~\ref{conj-pouzet-2-wqo} has an interpretation in category theory, as detailed by K\v{r}\'{i}\v{z} and Thomas~\cite{kriz:on-well-quasi-o:}, and in this more general context, the conjecture was shown to be false by K\v{r}\'{i}\v{z} and Sgall~\cite{kriz:well-quasiorder:}. In addition, a possible approach to proving Conjecture~\ref{conj-pouzet-2-wqo} has been outlined by Daligault, Rao, and Thomass\'e~\cite{daligault:well-quasi-orde:}, who conjectured that every $2$-wqo class of graphs has bounded clique-width (a term we do not discuss further here), and further asked if the same conclusion held for every wqo class of graphs. Lozin, Razgon, and Zamaraev~\cite{lozin:well-quasi-orde:}, however, answered the question negatively by constructing a wqo class of graphs with unbounded clique-width, although the original conjecture about $2$-wqo classes remains open.}
conjecture, that we have specialised to our context.

\begin{conjecture}[Cf. Pouzet~\cite{pouzet:un-bel-ordre-da:}]
\label{conj-pouzet-2-wqo}
A permutation class is $2$-wqo if and only if it is $n$-wqo for all $n\ge 1$.
\end{conjecture}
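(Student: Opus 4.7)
The forward direction is immediate: any $n$-element antichain contains a $2$-element sub-antichain, so if $\C$ is $n$-wqo for every $n$ then it is in particular $2$-wqo. All of the content is in the converse, and my plan would proceed by trying to jump directly from the $2$-wqo hypothesis to full lwqo. The naive induction on label-set size (assume $\C$ is $n$-wqo, deduce $(n+1)$-wqo) has a clear first move---given an infinite sequence in $\C\wr\{1,\ldots,n+1\}$, decompose each permutation into its entries labeled $n+1$ and the rest, apply $n$-wqo to the complement, and then try to merge the two subsequences. This is essentially the approach one might try first, and it immediately runs into the obstruction that the two subsequences interact geometrically inside the plot, so one cannot recover the full embedding from the pieces.

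A more promising attack is indirect: prove that every $2$-wqo permutation class admits a strong structural description, and then show that classes with that structure are in fact lwqo, whence $n$-wqo for all $n$ by Proposition~\ref{prop-n-wqo-other-poset}. Via Theorem~\ref{thm-C-lwqo-GC-lwqo}, the question transfers to the inversion-graph class $G_\C$, placing us exactly in the framework of the Daligault--Rao--Thomass\'e proposal from footnote~\ref{fn-kriz}: isolate a width parameter (a permutation-theoretic analogue of clique-width, most likely grid-theoretic in flavor) and show that (i)~$2$-wqo forces the parameter to be bounded, and (ii)~classes of bounded parameter are lwqo. For the second step one has nontrivial evidence already, since geometric grid classes are lwqo by Theorem~\ref{thm-ggc-lwqo}; so the heart of the plan is to associate a grid-like structure to any $2$-wqo class.

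The genuine obstacle, and the reason the conjecture has stood for half a century, is step~(i): we have no mechanism for extracting structure from the bare $2$-wqo hypothesis. The K\v{r}\'{i}\v{z}--Sgall counterexample rules out any purely formal lifting argument that uses only quasi-order axioms, so one cannot hope to avoid permutation-specific combinatorics; yet $2$-wqo gives us only the statement that a certain labeled quasi-order has no infinite antichain, which is not obviously a structural hypothesis at all. As an intermediate milestone I would aim merely at $3$-wqo from $2$-wqo, and even there I would expect the argument to require first proving, for some meaningful width parameter $w$, that $w(\C)$ is bounded in terms of the size of a hypothetical antichain in $\C\wr\{1,2,3\}$---a statement that appears to demand the structural theorem rather than precede it, which is precisely why the problem is hard.
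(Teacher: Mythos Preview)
This statement is a \emph{conjecture} in the paper, not a theorem: the paper explicitly introduces it as ``the following conjecture, which remains open'' and offers no proof. Your write-up correctly reflects this---you do not claim a proof, but rather sketch possible lines of attack and explain why each runs into a genuine obstruction. That is the appropriate response here, and there is nothing in the paper to compare against.

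A few remarks on the content of your discussion. Your account of the forward direction is slightly off: being $n$-wqo for all $n$ trivially implies $2$-wqo simply because $n=2$ is one of the cases, not because of any sub-antichain argument. On the substantive side, your framing via Theorem~\ref{thm-C-lwqo-GC-lwqo} and the Daligault--Rao--Thomass\'e bounded-width programme is exactly the approach the paper alludes to in its footnote on the conjecture, and your identification of step~(i)---extracting structure from the bare $2$-wqo hypothesis---as the real barrier is accurate. Note also that the paper poses the stronger Question~\ref{ques-pouzet-2-wqo-lwqo} (does $2$-wqo imply lwqo?), which is precisely the leap your plan attempts; so your proposal is in line with what the authors themselves suggest, but neither you nor they can currently carry it out.
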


Since finite antichains are trivially wqo, it follows that an lwqo class is $n$-wqo for every $n$.
In fact, we are not aware of a class that is $2$-wqo that is not also known to be lwqo.
The following question was raised in the graph context~\cite{brignall:a-counterexampl:}, but we see no reason not to also ask it in the permutation context.

\begin{question}[Cf. Brignall, Engen, and Vatter~\cite{brignall:a-counterexampl:}]
\label{ques-pouzet-2-wqo-lwqo}
Is every $2$-wqo permutation class also lwqo?
\end{question}

While the $n$-wqo property has received scant attention in the permutation context, the labelled permutations that arise in its definition have been studied fairly extensively. Permutations whose entries are labelled by members of a finite antichain are precisely the same as the \emph{coloured permutations} that were considered by Mansour in a 2001 paper~\cite{mansour:pattern-avoidan:color} and also in numerous subsequent articles by Mansour and other authors. In the special case of $n=2$, the labelled permutations are typically identified with \emph{signed permutations} (or from the algebraic perspective, members of the hyperoctahedral group). The study of pattern avoidance in this context dates back a bit further, to the 2000 work of Simion~\cite{simion:combinatorial-s:B}.

\begin{figure}
\begin{center}
	\begin{tikzpicture}[scale=0.1925, baseline=(current bounding box.south)]
		\plotpermbox{0.5}{0.5}{16.5}{16.5};
		\draw (2,1)--(1,3)--(4,2)--(3,5)--(6,4)--(5,7)--(8,6)--(7,9)--(10,8)--(9,11)--(12,10)--(11,13)--(14,12)--(13,15)--(16,14)--(15,16);
		\plotperm{3,1,5,2,7,4,9,6,11,8,13,10,15,12,16,14};
		\plotpartialpermencirclewhite{2/1,15/16};
	\end{tikzpicture}
	\quad\quad
	\begin{tikzpicture}[scale=0.1925, baseline=(current bounding box.south)]
		\plotpermbox{0.5}{0.5}{16.5}{16.5};
		\draw (9,2)--(7,1)--(8,4)--(11,3)--(10,6)--(5,5)--(6,8)--(13,7)--(12,10)--(3,9)--(4,12)--(15,11)--(14,14)--(1,13)--(2,16)--(16,15);
		\plotperm{13,16,9,12,5,8,1,4,2,6,3,10,7,14,11,15};
		\plotpartialpermencirclewhite{9/2,16/15};
	\end{tikzpicture}
	\quad\quad
	\begin{tikzpicture}[scale=0.1925, baseline=(current bounding box.south)]
		\plotpermbox{0.5}{0.5}{16.5}{16.5};
		\draw (1,2)--(15,1)--(16,15)--(3,16)--(2,4)--(13,3)--(14,13)--(5,14)--(4,6)--(11,5)--(12,11)--(7,12)--(6,8)--(9,7)--(10,9)--(8,10);
		\plotperm{2,4,16,6,14,8,12,10,7,9,5,11,3,13,1,15};
		\plotpartialpermencirclewhite{1/2,8/10};
	\end{tikzpicture}
\end{center}
\caption{Typical members of the antichains of Figure~\ref{fig-three-antichains}, shown as labelled permutations. Note that, except in the leftmost picture, the lines between entries do not coincide with the edges of their inversion graphs.}
\label{fig-three-antichains-lwqo}
\end{figure}
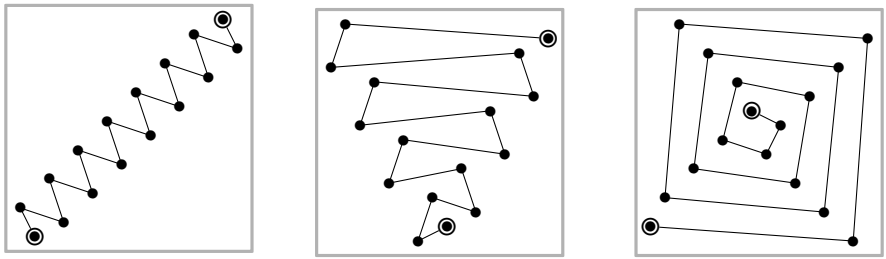

Finally, we remark that the lwqo property---and in fact also the notion of $2$-wqo---simplifies the intuition behind constructing infinite antichains as it allows us to replace ``anchors'' by labelled entries and focus instead on the more important task of constructing bodies, as shown in Figure~\ref{fig-three-antichains-lwqo}. Putting this intuition in the context of our comments above, practice seems to indicate that infinite antichains of signed permutations are in some sense more ``natural'' than infinite antichains of unsigned permutations.

\section{Finite Bases}
\label{sec-finite-bases}

In their seminal study of wqo permutation classes, Atkinson, Murphy, and Ru\v{s}kuc~\cite{atkinson:partially-well-:} define a permutation class to be \emph{strongly finitely based} if it and all of its subclasses are finitely based. Proposition~\ref{prop-lwqo-fin-basis} shows that all lwqo permutation classes are strongly finitely based. Before that, we give a characterisation of strongly-finitely-based classes.

\begin{proposition}[Cf.~Atkinson, Murphy, and Ru\v{s}kuc~{\cite[Proposition 1.1]{atkinson:partially-well-:}}]
\label{prop-strong-fin-based}
The permutation class~$\C$ is strongly finitely based if and only if it is both finitely based and wqo.
\end{proposition}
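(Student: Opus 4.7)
The plan is to prove both directions by relating the basis of a subclass $\D \subseteq \C$ to the basis of $\C$ together with the minimal elements of $\C \setminus \D$.

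For the forward direction (strongly finitely based implies finitely based and wqo), finite basedness of $\C$ is immediate since $\C$ is a subclass of itself. For wqo, I would argue the contrapositive. Suppose $\C$ contains an infinite antichain $A = \{\alpha_1, \alpha_2, \dots\}$, and consider the subclass $\D = \C \cap \Av(A)$. I claim every $\alpha_i$ lies in the basis of $\D$, which makes this basis infinite and contradicts strong finite basedness. Indeed, $\alpha_i \notin \D$ since $\alpha_i$ contains itself; and for any $\sigma < \alpha_i$, we have $\sigma \in \C$ because $\C$ is downward closed, and $\sigma$ avoids every $\alpha_j$ because otherwise $\alpha_j \le \sigma < \alpha_i$ would contradict $A$ being an antichain. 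Hence $\sigma \in \D$, showing $\alpha_i$ is minimal among permutations not in $\D$.

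For the reverse direction, assume $\C$ is finitely based and wqo, and let $\D$ be an arbitrary subclass. I will show that every element $\beta$ of the basis of $\D$ falls into one of two finite sets. Let $\beta$ be a minimal permutation not in $\D$, so every $\sigma < \beta$ lies in $\D \subseteq \C$. There are two cases. If $\beta \notin \C$, then every proper subpermutation of $\beta$ lies in $\C$, so $\beta$ is in the (finite) basis of $\C$. If $\beta \in \C$, then $\beta \in \C \setminus \D$, and by the minimality of $\beta$ in $\D^c$, no element of $\C \setminus \D$ is strictly contained in $\beta$; that is, $\beta$ is minimal in $\C \setminus \D$. The set of minimal elements of $\C \setminus \D$ is an antichain inside the wqo set $\C$, hence finite. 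Combining both cases, the basis of $\D$ is contained in the union of two finite sets, so $\D$ is finitely based.

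I do not anticipate any serious obstacle; the only subtlety is in the forward direction, where one must choose the subclass $\D = \C \cap \Av(A)$ carefully so that the antichain $A$ survives intact as a subset of its basis — using any strictly larger family of forbidden permutations could destroy the minimality of some $\alpha_i$ needed for the contradiction.
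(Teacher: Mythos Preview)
Your proof is correct and follows essentially the same approach as the paper's: in both, the subclass witnessing failure of strong finite basedness is $\D=\C\cap\Av(A)=\Av(A\cup B)$ (these are the same class), and in both the basis of an arbitrary subclass is decomposed into a subset of the basis of $\C$ together with an antichain lying inside $\C$. Your presentation of the first direction is in fact slightly cleaner than the paper's, which takes a small detour through the possibility that elements of $A$ might be contained in elements of $B$.
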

\begin{proof}
First assume that the class~$\C$ is finitely based and wqo and let $B$ denote its (finite) basis. The basis of any subclass $\D\subseteq\C$ consists of a subset of $B$ together with an antichain belonging to~$\C$. As $B$ is finite and~$\C$ is wqo, this basis must be finite.

Now suppose that the class~$\C$ and all of its subclasses are finitely based, let $B$ denote the basis of~$\C$, and suppose to the contrary that~$\C$ contains the infinite antichain $A$. Define $\D=\Av(A\cup B)$. Obviously, $\D$ is a subclass of~$\C$, and it is tempting to conclude that it is infinitely based, but we must be a bit careful. Indeed, the basis of $\D$ must be a subset of $A\cup B$, but it need not be all of $A\cup B$ because members of $A$ could be contained in members of $B$. Nevertheless, because $A\subseteq\C=\Av(B)$, it follows that no member of $A$ contains a member of $B$, and thus that every member of $A$ is contained in the basis of $\D$. Therefore $\D$ is in fact an infinitely-based subclass of~$\C$, and this contradiction completes our proof.
\end{proof}

Few general results have been established about strongly-finitely-based classes, but the result above allows us to show that the union of two strongly-finitely-based classes is itself strongly finitely based%
\footnote{\label{fn-intersection}The intersection of two strongly-finitely-based classes is also strongly finitely based, but this fact is a trivial consequence of the definition.}.
There are two ingredients to this proof: first, the fact that the union of two wqo classes is wqo, which is self-evident, and second, that the union of two-finitely-based classes is finitely based, which was first observed by Atkinson.

\begin{proposition}[Atkinson~{\cite[Theorem 2.1]{atkinson:restricted-perm:}}]
\label{prop-union-fb}
A finite union of finitely-based permutation classes is itself finitely based.
\end{proposition}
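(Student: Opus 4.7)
The plan is to reduce immediately to the case of two classes (an easy induction on the number of classes in the union) and then describe the basis of $\C_1\cup\C_2$ explicitly as a subset of a finite set, where $\C_i=\Av(B_i)$ with both $B_1$ and $B_2$ finite.

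For the two-class case, first I would characterise the basis of $\C_1\cup\C_2$. A permutation $\beta$ lies outside $\C_1\cup\C_2$ precisely when it contains some $\beta_1\in B_1$ and some $\beta_2\in B_2$, so the basis of $\C_1\cup\C_2$ is the antichain of minimal elements of
\[
	\M=\{\pi\st\text{$\pi$ contains some $\beta_1\in B_1$ and some $\beta_2\in B_2$}\}.
\]
Hence it suffices to show that the minimal elements of $\M$ form a finite set, and in fact I will show something slightly stronger: for each pair $(\beta_1,\beta_2)\in B_1\times B_2$, the set of minimal permutations containing both $\beta_1$ and $\beta_2$ is finite. Taking a finite union over the (finitely many) pairs in $B_1\times B_2$ then gives a finite superset of the basis, which is all that is needed.

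The key observation is the length bound: if $\pi$ is a minimal permutation containing both $\beta_1$ and $\beta_2$, then $|\pi|\le|\beta_1|+|\beta_2|$. To see this, fix embeddings of $\beta_1$ and $\beta_2$ into $\pi$; any entry of $\pi$ that participates in neither embedding may be deleted without destroying either embedding, contradicting minimality. Thus every minimal element of $\M$ has length at most $\max_{\beta_1\in B_1,\,\beta_2\in B_2}(|\beta_1|+|\beta_2|)$, which is a finite bound since both bases are finite, and there are only finitely many permutations of length at most this bound.

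No real obstacle is anticipated here; the only subtlety worth flagging is that the basis of $\C_1\cup\C_2$ need not equal the set of \emph{all} minimal common extensions of pairs from $B_1\times B_2$ (a given minimal common extension of one pair may still contain a minimal common extension of another pair, in which case only the smaller one belongs to the basis), but this only makes the basis smaller, so the finiteness conclusion is unaffected. The result for arbitrary finite unions then follows from the identity $\C_1\cup\cdots\cup\C_k=(\C_1\cup\cdots\cup\C_{k-1})\cup\C_k$ and induction on $k$.
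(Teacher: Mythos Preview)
Your proposal is correct and follows essentially the same approach as the paper's proof: both reduce to the two-class case, observe that any basis element of the union must contain a basis element from each class, and then use minimality to obtain a length bound (any entry not participating in either embedding could be deleted). The paper phrases this last step as ``$\beta$ must in fact be comprised entirely of a copy of $\gamma$ together with a copy of $\delta$, perhaps sharing some entries'', which is exactly your observation that $|\pi|\le|\beta_1|+|\beta_2|$.
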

\begin{proof}
It suffices to prove that the union of two finitely-based classes, say~$\C$ and $\D$, is finitely based. Consider a basis element $\beta$ of~$\C\cup\D$. Since $\beta\notin\C,\D$, it must contain basis elements of both~$\C$ and $\D$; say these basis elements are $\gamma$ and $\delta$, respectively. By the minimality property of basis elements, no proper subpermutation of $\beta$ may contain both $\gamma$ and $\delta$, so $\beta$ must in fact be comprised entirely of a copy of $\gamma$ together with a copy of $\delta$, perhaps sharing some entries. Since~$\C$ and $\D$ are finitely based, there is a bound on the length of their basis elements, so there is a bound on the length of the basis elements of~$\C\cup\D$. This implies that~$\C\cup\D$ is finitely based, as desired.
\end{proof}

Returning to the context of wqo, we show below that $2$-wqo permutation classes are finitely based, which implies that they are strongly finitely based. While this is the first appearance of the permutation version of this result, we note that this version is a special case of a 1972 result of Pouzet~\cite{pouzet:un-bel-ordre-da:} (in French; see \cite[Theorem 3.1]{pouzet:applications-of:} for an English description of this result), who established the finite basis property for lwqo classes in the more general context of relational structures (multirelations, in French). A proof similar to that below has also been given in the context of graphs by Daligault, Rao, and Thomass\'e~\cite[Proposition 3]{daligault:well-quasi-orde:}. For lwqo classes, we strengthen this result later with Theorem~\ref{thm-lwqo-C-lwqo-C+1}.

\begin{proposition}[Cf.~Pouzet~\cite{pouzet:un-bel-ordre-da:}]
\label{prop-lwqo-fin-basis}
Every $2$-wqo (and thus in particular, every lwqo) permutation class is finitely based.
\end{proposition}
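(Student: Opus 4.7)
The plan is to argue by contradiction. Suppose $\C$ is 2-wqo but has an infinite basis $\{\beta_1,\beta_2,\ldots\}$. First, for each $i$, delete the rightmost entry of $\beta_i$ to obtain a shorter permutation $\alpha_i$; since $\beta_i$ is a basis element, every proper subpermutation of $\beta_i$ lies in $\C$, so $\alpha_i\in\C$. Write $n_i$ for the length of $\beta_i$ and let $v_i$ denote the value of the deleted entry.

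The key idea is to encode $v_i$ into a two-labeling of $\alpha_i$ so that a labeled embedding will force containment between the $\beta_i$. Fix a two-element antichain $L=\{a,b\}$ and define $\ell_i\colon\{1,\ldots,n_i-1\}\to L$ by labeling position $k$ with $a$ if $\alpha_i(k)<v_i$ and with $b$ if $\alpha_i(k)>v_i$. Since $\C$ is 2-wqo, the set $\C\wr L$ is wqo, so the sequence $(\alpha_1,\ell_1),(\alpha_2,\ell_2),\ldots$ contains a good pair: there exist indices $i<j$ with $(\alpha_i,\ell_i)\le(\alpha_j,\ell_j)$ in the labeled containment order.

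I would then show that this labeled embedding extends to an embedding $\beta_i\le\beta_j$, yielding a contradiction with the fact that the basis is an antichain. The extension is obtained by mapping the deleted entry of $\beta_i$ (at position $n_i$, value $v_i$) to the deleted entry of $\beta_j$ (at position $n_j$, value $v_j$). Positional compatibility is immediate because both of these deleted entries are rightmost. Value compatibility follows from the labeling: any entry of $\beta_i$ below $v_i$ was labeled $a$, hence is mapped to an entry of $\alpha_j$ also labeled $a$, which by construction lies below $v_j$ in $\beta_j$; the symmetric statement holds for entries above $v_i$. Consequently $\beta_i\le\beta_j$, contradicting the fact that $\{\beta_1,\beta_2,\ldots\}$ is an antichain.

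The only real subtlety is choosing the right labeling, which is the heart of the argument: we are handed only \emph{two} labels, and we must use them to simultaneously recover the position and the value of the removed entry. Consistently deleting the rightmost entry handles the positional information for free, and the simple \textquotedblleft above/below $v_i$\textquotedblright{} labeling handles the value information; together these suffice. No additional machinery beyond the definition of 2-wqo is required.
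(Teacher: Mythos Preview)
Your proposal is correct and is essentially the same argument as the paper's: both delete the rightmost entry of each basis element and label the remaining entries by whether they lie above or below the deleted value, then use $2$-wqo to produce a labeled good pair that extends to a containment between basis elements. The only cosmetic difference is that the paper packages this as an order-reflecting map $\Psi\colon B\to\C\wr L$ and appeals to Proposition~\ref{prop-wqo-order-reflecting}, whereas you phrase it as a direct contradiction via a good pair.
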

\begin{proof}
Suppose that the class~$\C$ with basis $B$ is 2-wqo and let $L=\{\circ,\bullet\}$ be a $2$-element antichain, so that~$\C\wr L$ is wqo.

For every basis element $\beta\in B$ we denote by $\beta^-$ the permutation obtained from $\beta$ by removing its rightmost entry. Label each entry of $\beta^-$ by $\circ$ if the corresponding entry of $\beta$ lies below the rightmost entry of $\beta$, or $\bullet$ otherwise. Let $\Psi(\beta)$ denote the resulting labelled permutation, as depicted in Figure~\ref{fig-prop-lwqo-fin-basis}. Since $\beta^-\in\C$ for all $\beta\in B$ by the definition of basis elements, we have that $\Psi(\beta)\in\C\wr L$, that is, $\Psi : B\to\C\wr L$.

\begin{figure}
\begin{footnotesize}
\begin{center}
	\begin{tikzpicture}[scale=0.1925, baseline=(current bounding box.center)]
	\plotpermbox{0.5}{0.5}{9.5}{9.5};
	\plotperm{2, 8, 7, 3, 6, 9, 1, 5, 4};
    \end{tikzpicture}
	\quad
	\begin{tikzpicture}[baseline=(current bounding box.center)]
		\node {$\to$};
	\end{tikzpicture}
	\quad
	\begin{tikzpicture}[scale=0.1925, baseline=(current bounding box.center)]
	\plotpermbox{0.5}{0.5}{9.5}{9.5};
	\draw [darkgray, thick] (0,4)--(10,4);
	\plotpartialpermhollow{1/2, 4/3, 7/1};
	\plotpartialperm{2/8, 3/7, 5/6, 6/9, 8/5, 9/4};
	\plotpartialpermencirclewhite{9/4};
    \end{tikzpicture}
	\quad
	\begin{tikzpicture}[baseline=(current bounding box.center)]
		\node {$\to$};
	\end{tikzpicture}
	\quad
	\begin{tikzpicture}[scale=0.1925, baseline=(current bounding box.center)]
	\plotpermbox{0.5}{0.5}{8.5}{8.5};
	\plotpartialpermhollow{1/2, 4/3, 7/1};
	\plotpartialperm{2/7, 3/6, 5/5, 6/8, 8/4};
    \end{tikzpicture}
\end{center}
\end{footnotesize}
\caption{The entry removal process described in the proof of Proposition~\ref{prop-lwqo-fin-basis}, applied to the potential basis element $287369154$.}
\label{fig-prop-lwqo-fin-basis}
\end{figure}
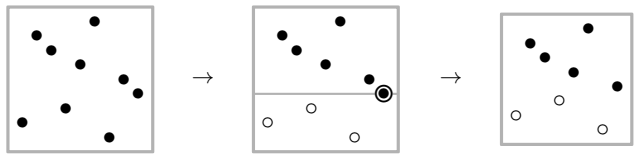

If we can show that $\Psi$ is order-reflecting, then, since~$\C\wr L$ is wqo, Proposition~\ref{prop-wqo-order-reflecting} will imply that $B$ is wqo. Because $B$ is an antichain by definition, this will imply that $B$ is finite and thus complete the proof. To this end, suppose that $\Psi(\beta)\le\Psi(\gamma)$ for $\beta,\gamma\in B$.

Let $k+1$ and $n+1$ denote the lengths of $\beta$ and $\gamma$, respectively, so there is an increasing sequence $1\le i_1<i_2<\cdots<i_k\le n$ of indices such that the subsequence $\gamma(i_1)\cdots\gamma(i_k)$ of $\gamma^-$ is order isomorphic to $\beta^-$ and such that for all $1\le j\le k$, the label of $\gamma(i_j)$ is the same as the label of $\beta(j)$. This condition ensures that the relative position of $\beta(k+1)$ amongst the entries of $\beta^-$ is the same as the relative position of $\gamma(n+1)$ amongst the entries of the subsequence $\gamma(i_1)\cdots\gamma(i_k)$, and thus the subsequence $\gamma(i_1)\cdots\gamma(i_k)\gamma(n+1)$ is order isomorphic to $\beta$, so we conclude that $\beta\le\gamma$ (and in fact $\beta=\gamma$, since both are basis elements), $\Psi$ is order-reflecting, and the proof is completed.
\end{proof}

Having established that lwqo classes are strongly finitely based, one might wonder what, if anything, differentiates these two properties. In fact, we have also observed that the family of strongly-finitely-based classes is closed under union (a consequence of Proposition~\ref{prop-union-fb}) and intersection (see Footnote~\ref{fn-intersection}). That the family of lwqo classes is closed under union and intersection follows trivially from the definition. Thus the reader could be forgiven for thinking that these properties might coincide, but this is not the case.

\begin{proposition}
\label{prop-strong-fb-not-lwqo-perms}
There is a permutation class that is both finitely based and wqo, but not lwqo.
\end{proposition}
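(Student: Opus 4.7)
The plan is to take $\O_I = \Av(321, 2341, 3412, 4123)$ as the witness class. It is finitely based by definition, and Proposition~\ref{prop-OI-wqo} asserts that it is wqo, so the only nontrivial task is to show that $\O_I$ is not lwqo. For this I will invoke Proposition~\ref{prop-lwqo-perms-graphs}(b), which transports the non-lwqo property from graphs to permutations: it suffices to show that the corresponding graph class $G_{\O_I}$ is not lwqo.

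By Proposition~\ref{prop-OI-fb}, $G_{\O_I}$ is precisely the class of linear forests, so I need an infinite antichain of labeled linear forests. Let $L = \{\circ, \bullet\}$ be a two-element antichain and, for each $k \ge 3$, let $(P_k, \ell_k)$ denote the path on $k$ vertices with its two endpoints labeled $\circ$ and every interior vertex labeled $\bullet$ (this is the family illustrated in Figure~\ref{fig-double-ended-forks-lwqo}). To verify that $\{(P_k, \ell_k) : k \ge 3\}$ is an antichain in the labeled induced subgraph order, suppose for contradiction that $(P_k, \ell_k) \le (P_\ell, \ell_\ell)$ for some $k < \ell$. Every induced subgraph of $P_\ell$ isomorphic to $P_k$ is a contiguous subpath; since $L$ is an antichain, a label-respecting embedding forces the two endpoints of this subpath to be labeled $\circ$, and the only $\circ$-labeled vertices of $P_\ell$ are its endpoints, which would force the subpath to coincide with $P_\ell$, contradicting $k < \ell$. (The interior $\bullet$-labels impose no additional constraint since the interior of $P_\ell$ is labeled $\bullet$ throughout.)

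Assembling these pieces finishes the argument: $\O_I$ is finitely based by inspection of its basis, wqo by Proposition~\ref{prop-OI-wqo}, and not lwqo because $\{(P_k, \ell_k) : k \ge 3\}$ is an infinite antichain in $G_{\O_I} \wr L$, so Proposition~\ref{prop-lwqo-perms-graphs}(b) applies.

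There is really no serious obstacle here; the entire proof is assembly from ingredients already on the table. If one prefers a construction internal to the permutation category rather than routing through graphs, the equivalent approach is to pick, for each large $n$, a specific increasing oscillation $\pi_n$ (whose inversion graph is $P_n$ by Proposition~\ref{inc-osc-path}), label the two entries of $\pi_n$ corresponding to the endpoints of the path with $\circ$ and all other entries with $\bullet$, and rerun the contiguity argument inside $\O_I \wr L$; the order-preserving labeled inversion graph map then transports any hypothetical permutation containment into the forbidden labeled graph containment ruled out above.
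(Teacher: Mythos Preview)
Your proof is correct and uses the same witness class $\O_I$ with the same supporting results (Propositions~\ref{prop-OI-fb} and~\ref{prop-OI-wqo}) as the paper. The only cosmetic difference is that for the ``not lwqo'' step the paper labels the increasing oscillations directly (as in the left of Figure~\ref{fig-three-antichains-lwqo}) and argues inside $\O_I\wr L$, whereas you route through the graph side via Proposition~\ref{prop-lwqo-perms-graphs}(b) and the labeled-path antichain of Figure~\ref{fig-double-ended-forks-lwqo}; these are the same antichain viewed on opposite sides of the $\pi\mapsto G_\pi$ correspondence, and you even note the permutation-internal variant at the end.
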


Indeed, the proof of Proposition~\ref{prop-strong-fb-not-lwqo-perms} has mostly been completed in the previous section. Recall that the downward closure of the set of increasing oscillations is the class~$\O_I$, and that this class is finitely based by Proposition~\ref{prop-OI-fb}. Moreover, it is not hard to see that~$\O_I$ is also wqo, a fact we formally establish with Proposition~\ref{prop-OI-wqo}. But~$\O_I$ is \emph{not} lwqo, as one may label the increasing oscillations as indicated on the left of Figure~\ref{fig-three-antichains-lwqo}, which shows that~$\O_I$ is not even~$2$-wqo.

Thus lwqo is strictly stronger than the property of being strongly finitely based in the permutation context, but we could still wonder if the two properties might coincide in the graph context. In this direction, note that the proofs of Propositions~\ref{prop-strong-fin-based} and~\ref{prop-lwqo-fin-basis} can readily be adapted to show that a graph class is strongly finitely based if and only if it is finitely based and wqo, and that every 2-wqo graph class is strongly finitely based.

Korpelainen, Lozin, and Razgon~\cite{korpelainen:boundary-proper:} conjectured%
\footnote{This conjecture was later restated by Atminas and Lozin~\cite{atminas:labelled-induce:} in their work on lwqo.}
that, contrary to Proposition~\ref{prop-strong-fb-not-lwqo-perms} for permutations, the converse of Proposition~\ref{prop-lwqo-fin-basis} holds for graphs. The counterexample we have given for permutations does not translate to a counterexample for graphs because the analogous class of graphs (the linear forests) is not finitely based (its basis contains all cycles). However, there is another permutational counterexample that \emph{does} translate to a graphical counterexample.

\begin{proposition}[Brignall, Engen, and Vatter~\cite{brignall:a-counterexampl:}]
\label{prop-strong-fb-not-lwqo-graphs}
There is a permutation class~$\C$ such that the corresponding graph class $G_{\C}$ is wqo and defined by finitely many forbidden induced subgraphs, but is not lwqo.
\end{proposition}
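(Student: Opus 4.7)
The plan is to invoke the construction of Brignall, Engen, and Vatter~\cite{brignall:a-counterexampl:}, which produces a single permutation class $\C$ bearing all three required properties simultaneously. The guiding intuition is the counterexample already discussed for permutations: the class $\O_I$ shows (via the labelling on the left of Figure~\ref{fig-three-antichains-lwqo}) how path-like inversion graphs can be turned into an infinite labeled antichain, but $\O_I$ fails as a graphical witness because the inversion-graph class of linear forests has all cycles $C_k$ in its basis. The task, therefore, is to build a class that preserves the path-like antichain behaviour but simultaneously rules out long induced cycles using only finitely many forbidden induced subgraphs in the graph setting.

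I would proceed in three stages. First, specify the class $\C$ and verify that $\C$ itself is wqo by a structural argument analogous to Proposition~\ref{prop-OI-wqo}; by Proposition~\ref{prop-wqo-perms-graphs}(a), this gives wqo of $G_\C$. Second, check that $G_\C$ has only finitely many minimal forbidden induced subgraphs. The key leverage here is Proposition~\ref{prop-no-cycles}: every inversion graph already excludes $C_k$ for $k\ge 5$ automatically, so cycles do not need to be listed in the basis of $G_\C$ within the universe of inversion graphs---and if the class is chosen so that the underlying graph family is already defined inside the inversion-graph universe by excluding only small configurations ($C_3$, $C_4$, and a handful of small trees or split-type obstructions), then the basis in the category of all graphs is still finite. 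Third, exhibit an explicit infinite antichain in $G_\C\wr L$ for a suitable finite label poset $L$, obtained by taking the path-like members of $G_\C$ and distinguishing their two endpoints with different labels, so that the labels force an orientation which prevents any shorter labeled graph from embedding into a longer one.

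The main obstacle is stage two: ensuring the finite graph basis. This is precisely where the linear-forests attempt breaks down, and it is what forces us away from $\O_I$ into a more carefully engineered class. The construction must simultaneously (a) contain arbitrarily long induced paths, to support the labeled antichain of stage three, and (b) contain enough additional structural data in the inversion graphs so that every ``large'' forbidden subgraph factors through finitely many small ones. In practice this is achieved by allowing only a constrained family of inversion-graph shapes in which any cycle is forced to include a small configuration already excluded by the basis.

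Stages one and three are then essentially routine once the class is in hand: wqo follows by a Higman-type subword argument on the canonical sequence description of members of $\C$, and the labeled antichain is the direct graph-theoretic image of the labeled-permutation antichain shown on the left of Figure~\ref{fig-three-antichains-lwqo}, where labelling the two extremal vertices with distinct incomparable elements of $L$ makes any purported embedding between two distinct antichain members impossible. Combining the three stages yields a graph class $G_\C$ that is wqo, finitely based, and yet not lwqo, as required.
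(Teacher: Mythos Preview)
Your three-stage outline is sound, and you rightly note that the result is to be invoked from~\cite{brignall:a-counterexampl:}. However, the specific construction you sketch does not match the one that actually works, and the paper names it explicitly: the class $\C$ is the downward closure of the \emph{widdershins spirals} (Figure~\ref{fig-widdershins} and the right-hand panel of Figure~\ref{fig-three-antichains-lwqo}), not a path-based variant of $\O_I$.

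Your guiding picture---that the class ``must contain arbitrarily long induced paths'' and that the labeled antichain is ``the direct graph-theoretic image of the labeled-permutation antichain shown on the \emph{left} of Figure~\ref{fig-three-antichains-lwqo}''---is the wrong one. The inversion graphs of widdershins spirals are not paths. The reason this construction succeeds where $\O_I$ fails is not that it somehow retains paths while dodging cycle obstructions; rather, the permutation class happens to have a finite basis (the thirteen permutations listed in the footnote following the proposition), and a separate argument in~\cite{brignall:a-counterexampl:} establishes that $G_\C$ is finitely based as well. Your stage-two heuristic about excluding $C_3$, $C_4$, and small trees does not describe this class either: for instance $321\in\C$, so $C_3\cong G_{321}\in G_\C$.

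In short, the outline is fine but you never actually specify $\C$, and the structural picture you fill in (paths with labeled endpoints) is not the one used. Point to the widdershins spirals and the right-hand side of Figure~\ref{fig-three-antichains-lwqo}.
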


\begin{figure}
\begin{footnotesize}
\begin{center}
  \begin{tikzpicture}[scale=0.1925, baseline=(current bounding box.center)]
    \draw[thick, darkgray, ->] (2,-2) to[curve through={(1,2) (-2,1) (-1,-5) (4,-4) (3,4) (-4,3) (-3,-7) (6,-6) (5,6) (-6,5) (-5,-9) (8,-8) (7,8) (-8,7) (-7,-11)}] (5,-12.5);
    \plotpartialperm{2/-2, 1/2, -2/1, -1/-5, 4/-4, 3/4, -4/3, -3/-7, 6/-6, 5/6, -6/5, -5/-9, 8/-8, 7/8, -8/7, -7/-11}
  \end{tikzpicture}
\quad\quad\quad\quad
  \begin{tikzpicture}[scale=0.1925, baseline=(current bounding box.center)]
	\draw [darkgray, very thick, rounded corners=0.01, line cap=round]
		(-8.5,-0.5)--(8.5,-0.5);
	\draw [darkgray, very thick, rounded corners=0.01, line cap=round]
		(0.5,-11.5)--(0.5,8.5);
    \draw (1,2)--(1,-2);
	\draw (-2,1)--(2,1);
	\draw (-1,-5)--(-1,2);
	\draw (4,-4)--(-2,-4);
	\draw (3,4)--(3,-5);
	\draw (-4,3)--(4,3);
	\draw (-3,-7)--(-3,4);
	\draw (6,-6)--(-4,-6);
	\draw (5,6)--(5,-7);
	\draw (-6,5)--(6,5);
	\draw (-5,-9)--(-5,6);
	\draw (8,-8)--(-6,-8);
	\draw (7,8)--(7,-9);
	\draw (-8,7)--(8,7);
	\draw (-7,-11)--(-7,8);
    \plotpartialperm{1/2, -2/1, -1/-5, 4/-4, 3/4, -4/3, -3/-7, 6/-6, 5/6, -6/5, -5/-9, 8/-8, 7/8, -8/7, 2/-2, -7/-11};
    \plotpartialpermhollow{2/-2, -7/-11};
  \end{tikzpicture}
\end{center}
\end{footnotesize}
\caption{Two additional depictions of a widdershins spiral, first shown on the right of Figure~\ref{fig-three-antichains-lwqo}.}
\label{fig-widdershins}
\end{figure}
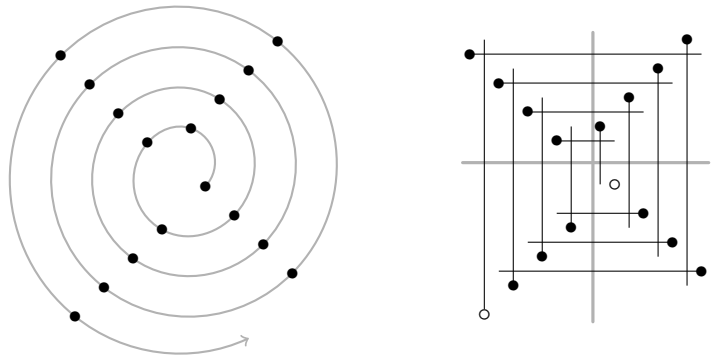

The construction used in~\cite{brignall:a-counterexampl:} to establish Proposition~\ref{prop-strong-fb-not-lwqo-graphs} is based on a set of permutations named \emph{widdershins spirals}%
\footnote{Widdershins is a Lower Scots word meaning ``to go anti-clockwise''.}
by Murphy in his thesis~\cite[Section 3.2]{murphy:restricted-perm:}. The clockwise symmetries of Widdershins spirals have already appeared in Figures~\ref{fig-three-antichains} and \ref{fig-three-antichains-lwqo}, and two additional drawings are shown in Figure~\ref{fig-widdershins}. As with the class~$\O_I$, it can readily be shown---by labeling the widdershins spirals as on the right of Figure~\ref{fig-widdershins}---that the downward closure of this set of permutations is not $2$-wqo (and thus is not lwqo). A bit more analysis shows that the same holds for the corresponding graph class. Further analysis given in~\cite{brignall:a-counterexampl:} then establishes that the downward closure of this permutation class is wqo and finitely based\footnote{The basis consists of $2143$, $2413$, $3412$, $314562$, $412563$, $415632$, $431562$, $512364$, $512643$, $516432$, $541263$, $541632$, and $543162$.}. The final step to establish Proposition~\ref{prop-strong-fb-not-lwqo-graphs} is to show that the corresponding class of graphs is finitely based.

\section{Finite Sets and Downward Closures}

We begin with the simplest possible permutation classes, which are rather trivially lwqo.

\begin{proposition}
\label{prop-finite-lwqo}
Every finite set of permutations is lwqo.
\end{proposition}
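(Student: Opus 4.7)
The plan is to reduce to the case of a single fixed permutation by pigeonhole, and then invoke Dickson's lemma (Proposition~\ref{prop-wqo-vector}).

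Let $S$ be a finite set of permutations and let $(L,\mathord{\le_L})$ be an arbitrary wqo. To show that $S\wr L$ is wqo, I would take an arbitrary infinite sequence $(\pi_1,\ell_1),(\pi_2,\ell_2),\dots$ of $L$-labeled permutations from $S\wr L$ and produce a good pair. Since $S$ is finite, by pigeonhole there exist a single permutation $\pi\in S$, say of length $n$, and indices $j_1<j_2<\cdots$ such that $\pi_{j_k}=\pi$ for all $k$. It therefore suffices to show that $\{\pi\}\wr L$ is wqo.

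The key observation is that a permutation $\pi$ of length $n$ admits only one order-isomorphic embedding into itself, namely the identity embedding $i\mapsto i$. Consequently, for any two $L$-labelings $\ell$ and $\ell'$ of $\pi$, we have $(\pi,\ell)\le(\pi,\ell')$ in the labeled containment order if and only if $\ell(i)\le_L\ell'(i)$ for every $i\in\{1,\dots,n\}$. This shows that the mapping $(\pi,\ell)\mapsto(\ell(1),\dots,\ell(n))$ is an order-isomorphism between $\{\pi\}\wr L$ and the product order $(L^n,\le)$.

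By Dickson's lemma (Proposition~\ref{prop-wqo-vector}), $(L^n,\le)$ is wqo, and hence so is $\{\pi\}\wr L$. It follows that the subsequence $(\pi,\ell_{j_1}),(\pi,\ell_{j_2}),\dots$ contains a good pair, which is also a good pair in the original sequence, completing the proof.

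There is no substantive obstacle here. The only point worth stating carefully is the rigidity claim that a permutation embeds into itself in only the trivial way, which is immediate from the requirement that the subsequence have length $n$ and be order-isomorphic to $\pi$.
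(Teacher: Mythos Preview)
Your proof is correct and follows essentially the same approach as the paper: both reduce to the observation that $\{\pi\}\wr L$ is order-isomorphic to the product order $(L^n,\le)$ and then invoke Proposition~\ref{prop-wqo-vector}. The only cosmetic difference is that you pigeonhole explicitly on an infinite sequence, whereas the paper phrases the same step as ``a finite union of wqo posets is wqo''.
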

\begin{proof}
Suppose that~$\C$ is a finite set of permutations and $(L,\mathord{\le_L})$ is wqo. If $\pi\in\C$ has length $m$, then the poset of labelings of~$\pi$ is isomorphic to $(L^m,\mathord{\le_L})$, which is wqo by Proposition~\ref{prop-wqo-vector} (Dickson's lemma). It follows that~$\C\wr L$ can be expressed as the union of finitely many wqo posets, and thus~$\C\wr L$ is itself wqo, as required.
\end{proof}

Next we establish that downward closures of lwqo sets (not necessarily classes) of permutations are also lwqo. As we elaborate on after the proof, this is one of the more striking differences between wqo and lwqo.

\begin{theorem}
\label{thm-lwqo-downward-closure}
If the set~$X$ of permutations is lwqo, then its \emph{downward closure},
\[
	X^{\suplessthaneq}
	=
	\{\sigma : \text{$\sigma \le \pi$ for some $\pi\in X$}\},
\]
is also lwqo.
\end{theorem}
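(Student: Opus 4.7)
The plan is to reduce the problem to the lwqo property of $X$ itself. Fix an arbitrary wqo $(L,\mathord{\le_L})$ and an arbitrary infinite sequence $(\sigma_1,\ell_{\sigma_1}), (\sigma_2,\ell_{\sigma_2}), \dots$ of $L$-labeled permutations from $X^{\suplessthaneq}\wr L$; the goal is to exhibit a good pair.

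For each $i$, by definition of $X^{\suplessthaneq}$ there exists $\pi_i\in X$ together with an embedding $\phi_i\st \sigma_i\toinj\pi_i$ witnessing $\sigma_i\le\pi_i$. I would now label $\pi_i$ with a richer alphabet that records both ``which entries come from $\sigma_i$'' and ``what label each such entry carried.'' Concretely, let $L'=L\sqcup\{*\}$, where $*$ is a new element taken to be incomparable to every element of $L$ (so $L'$ is wqo, being the disjoint union of two wqo posets). Define $\ell_{\pi_i}\st\{1,\dots,|\pi_i|\}\to L'$ by
\[
	\ell_{\pi_i}(j) = \begin{cases}\ell_{\sigma_i}(k) & \text{if $j=\phi_i(k)$ for some $k$,}\\ * & \text{otherwise.}\end{cases}
\]

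Since $X$ is lwqo, $X\wr L'$ is wqo, so the infinite sequence $(\pi_1,\ell_{\pi_1}),(\pi_2,\ell_{\pi_2}),\dots$ contains a good pair: indices $i<j$ with $(\pi_i,\ell_{\pi_i})\le(\pi_j,\ell_{\pi_j})$ via some increasing sequence of positions in $\pi_j$. The crucial point is that because $*$ is incomparable in $L'$ to every element of $L$, a position of $\pi_i$ with an $L$-label cannot be matched to a position of $\pi_j$ labeled $*$, and vice versa. Hence the embedding of $\pi_i$ into $\pi_j$ must send the $L$-labeled positions of $\pi_i$ (which are exactly the image of $\phi_i$, and thus carry the entries of $\sigma_i$) bijectively into $L$-labeled positions of $\pi_j$ (which likewise carry the entries of $\sigma_j$). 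Restricting the embedding to these positions and using the order-preservation of the ambient embedding of $\pi_i$ in $\pi_j$ yields an order-isomorphic subsequence of $\sigma_j$ matching $\sigma_i$, with each label of $\sigma_i$ dominated by the corresponding label of $\sigma_j$. This is precisely a good pair $(\sigma_i,\ell_{\sigma_i})\le(\sigma_j,\ell_{\sigma_j})$ in $X^{\suplessthaneq}\wr L$.

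No step should present much difficulty; the only design choice to get right is the alphabet $L'$. Using a fresh \emph{incomparable} symbol $*$ rather than a new minimum is essential, for it is what forces the lwqo-embedding of $\pi_i$ into $\pi_j$ to respect the distinction between ``chosen'' entries (those used to realize $\sigma_i$) and unused entries, thereby transporting the embedding of $\pi_i\hookrightarrow\pi_j$ into an embedding of the labeled $\sigma_i$ into the labeled $\sigma_j$.
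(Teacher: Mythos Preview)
Your proof is correct and follows essentially the same idea as the paper's: augment the label alphabet with a special ``blank'' symbol, lift each labeled $\sigma_i$ to a labeled $\pi_i\in X$, and invoke the lwqo of $X$ over the enlarged alphabet. The paper packages this as an order-preserving surjection $X\wr L_0\tosurj X^{\suplessthaneq}\wr L$ (deleting the blank-labeled entries) rather than a direct good-pair argument, but the content is the same.

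One correction to your closing remark: taking $*$ incomparable is \emph{not} essential. The only fact you use is that an $L$-labeled position of $\pi_i$ cannot map to a $*$-labeled position of $\pi_j$; the ``vice versa'' is never needed. This one-sided constraint holds equally well when $*$ is a new minimum element---and indeed that is exactly what the paper does, adjoining a minimum $0$ to $L$.
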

\begin{proof}
Suppose that~$X$ is an lwqo set of permutations and take $(L,\le_L)$ to be an arbitrary wqo set. We want to show that $X^{\suplessthaneq}\wr L$ is wqo. We begin by adjoining to~$L$ a new minimum element~${0\notin L}$ to form the poset $(L_0,\le_0)$, that is trivially wqo. Because~$X$ is lwqo and $(L_0,\le_0)$ is wqo, we know that $X\wr L_0$ is wqo. We prove the proposition by constructing an order-preserving surjection~${\Phi : X\wr L_0 \tosurj X^{\suplessthaneq}\wr L}$ and then appealing to Proposition~\ref{prop-wqo-order-preserving}.

Let $(\pi,\ell_\pi)\in X\wr L_0$ be arbitrary. We define the {$L$-labelled} permutation $\Phi((\pi,\ell_\pi))\in X^{\suplessthaneq}\wr L$ by deleting all entries of~$\pi$ labelled by $0$, keeping the remaining entries together with their labels, and then reducing these entries to obtain a labelled permutation in $X^{\suplessthaneq}\wr L$.

We first verify that $\Phi$ is surjective. Suppose $(\sigma,\ell_\sigma)\in X^{\suplessthaneq}\wr L$ where~$\sigma$ has length $k$. Because~${\sigma\in X^{\suplessthaneq}}$, there is some permutation $\pi\in X$ with $\sigma\le\pi$. Fix such a permutation~$\pi$, let $n$ denote its length, and fix an embedding of~$\sigma$ into~$\pi$. Supposing that this embedding is given by the indices $1\le i_1<\cdots<i_k\le n$, we define the $L_0$-labeling $\ell_\pi$ of~$\pi$ by
\[
	\ell_\pi(i)=
	\begin{cases}
	\ell_\sigma(j)&\text{if $i=i_j$,}\\
	0&\text{otherwise.}
	\end{cases}
\]
Because $\Phi$ maps $(\pi,\ell_\pi)$ to $(\sigma,\ell_\sigma)$, we see that $\Phi$ is indeed surjective.

Next we must verify that $\Phi$ is order-preserving. Let $(\tau,\ell_\tau)$ and $(\pi,\ell_\pi)$ be $L_0$-labelled members of~$X$ of lengths $k$ and $n$ respectively such that $(\tau,\ell_\tau)$ is contained in $(\pi,\ell_\pi)$. Further suppose that this containment is given by the indices $1\le i_1<\cdots<i_k\le n$. We know that $\ell_\tau(j)\le\ell_\pi(i_j)$ for all indices $1\le j\le k$, so since $0$ is minimal in $L_0$ we know that $\ell_\tau(j)=0$ whenever $\ell_\pi(i_j)=0$. Therefore, whenever $\Phi$ deletes an entry from $(\pi,\ell_\pi)$, either that entry does not participate in this embedding of $(\tau,\ell_\tau)$ into $(\pi,\ell_\pi)$, or $\Phi$ also deletes the corresponding entry from $(\tau,\ell_\tau)$. As $\Phi$ does not change the labels of the remaining entries, we see that $\Phi((\tau,\ell_\tau))$ is contained in $\Phi((\pi,\ell_\pi))$. This verifies that $\Phi$ is order-preserving and completes the proof.
\end{proof}

The wqo analogue of Theorem~\ref{thm-lwqo-downward-closure} fails catastrophically: take $A=\{\alpha_1,\alpha_2,\dots\}$ to be an infinite antichain of permutations and set $X=\{\alpha_1\oplus\cdots\oplus\alpha_k : k\ge 1\}$. Then~$X$ is a chain, so it is wqo, but its downward closure contains the infinite antichain $A$. 
There are also an abundance of counterexamples in more general settings, for example, if we take $X=\{0\}$ in the poset $(\mathbb{Z},\le)$, then $X^{\suplessthaneq}=\{0,-1,-2,\dots\}$ contains an infinite strictly decreasing sequence and thus is not wqo. 

Thus, that Theorem~\ref{thm-lwqo-downward-closure} holds is remarkable, and emphasises the strength of the lwqo property. Indeed, this result is not peculiar to permutations: for example, in the case of graphs one simply needs to fix an embedding of a graph $H$ in the downset as an induced subgraph of a graph~$G$ from the set itself, and adopt a ``zero label'' to mark those vertices of~$G$ that are not included in the embedding of $H$.

\section{One-Point Extensions}
\label{sec-one-pt-extensions}

If $\beta$ is a basis element of a permutation class~$\C$, then the removal of any entry of $\beta$ yields a member of~$\C$. Here we consider the inverse operation, where we add entries to members of a class in all possible ways. As will be demonstrated, this study sheds further light on bases of permutation classes and allows us to further explore the relationship between wqo and lwqo.

Given a permutation class~$\C$ and an integer $t\ge 0$, we let~$\C^{+t}$ denote the set of all permutations~$\pi$ for which there exists a collection of $t$ or fewer entries such that by removing these entries from~$\pi$ and taking the reduction we obtain a member of~$\C$. Note that~$\C^{+t}$ is itself a permutation class. Moreover, since
\[
	\C^{+t}=\C^{+1+1+\cdots+1},
\]
for most purposes it suffices to consider classes of the form~$\C^{+1}$. We call members of~$\C^{+1}$ the \emph{one-point extensions} of~$\C$ (although~$\C\subseteq\C^{+1}$, so some members of~$\C^{+1}$ are also members of~$\C$). Thus the nonempty permutation~$\pi$ lies in~$\C^{+1}$ if and only if there is an entry $\pi(a)$ of~$\pi$ such that $\pi-\pi(a)\in\C$, where we define $\pi-\pi(a)$ to be the result of deleting the entry $\pi(a)$ from~$\pi$ and then reducing the remaining entries to obtain a permutation.

Every basis element of the class~$\C$ is necessarily a one-point extension of~$\C$, because the removal of \emph{any} entry of a basis element of~$\C$ yields a permutation in~$\C$. Thus if~$\C^{+1}$ is wqo, then~$\C$ is finitely based. In fact, since~$\C\subseteq\C^{+1}$, if~$\C^{+1}$ is wqo, then~$\C$ is strongly finitely based, a conclusion we record below.

\begin{proposition}
\label{prop-C+1-wqo-implies-C-sfb}
If the permutation class~$\C^{+1}$ is wqo, then the class~$\C$ is both finitely based and wqo (that is, strongly finitely based).
\end{proposition}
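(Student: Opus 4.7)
The plan is to observe that the two ingredients needed have essentially already been assembled in the paragraph preceding the statement, so the proof amounts to putting them together with Proposition~\ref{prop-strong-fin-based}. In particular, I would first note the two facts already highlighted: $\C \subseteq \C^{+1}$ and the basis of $\C$ is contained in $\C^{+1}$ (since removing any entry of a minimal forbidden permutation yields a member of $\C$).

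For the first step, I would show that $\C$ is wqo. This is immediate from the containment $\C \subseteq \C^{+1}$: any infinite antichain or infinite strictly decreasing sequence in $\C$ would be an infinite antichain or strictly decreasing sequence in $\C^{+1}$, contradicting the hypothesis via Proposition~\ref{prop-wqo-ramsey}. (Of course, one could just note that the restriction of any wqo to a subset is still wqo.)

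For the second step, I would show $\C$ is finitely based. The basis $B$ of $\C$ is by definition an antichain under the containment order. Because $B \subseteq \C^{+1}$ and $\C^{+1}$ is wqo (hence contains no infinite antichain by Proposition~\ref{prop-wqo-ramsey}), $B$ must be finite.

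Finally, having established that $\C$ is both wqo and finitely based, I would invoke Proposition~\ref{prop-strong-fin-based} to conclude that $\C$ is strongly finitely based. The main ``obstacle'' here is really nonexistent---the bulk of the reasoning has already been spelled out in the paragraph immediately preceding the proposition, and the proof is a one-paragraph concatenation of those observations with the earlier characterisation of strong finite basedness.
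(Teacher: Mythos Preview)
Your proof is correct and follows exactly the approach the paper uses: the paper does not give a formal proof of this proposition but simply records, in the paragraph immediately preceding it, the two observations you spell out (basis elements of $\C$ lie in $\C^{+1}$, and $\C\subseteq\C^{+1}$), and your write-up makes these explicit with the appropriate references to Propositions~\ref{prop-wqo-ramsey} and~\ref{prop-strong-fin-based}.
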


The converse of Proposition~\ref{prop-C+1-wqo-implies-C-sfb} is not true, and one counterexample is the class~$\O_I$. This class is finitely based by Proposition~\ref{prop-OI-fb} and is wqo by our upcoming Proposition~\ref{prop-OI-wqo}, but $\O_I^{+1}$ is not wqo, as demonstrated by the infinite antichain depicted in the centre of Figure~\ref{fig-three-inc-osc-antichains}. However, it is true that if the class~$\C$ is finitely based, then the class~$\C^{+1}$ is also finitely based.

\begin{proposition}[Atkinson and Beals~{\cite[Lemma 7]{atkinson:permuting-mecha:}}]
\label{prop-C-fb-C+1-fb}
If the permutation class~$\C$ is finitely based then the class~$\C^{+1}$ is also finitely based.
\end{proposition}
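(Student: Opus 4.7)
My plan is to bound the length of every basis element $\beta$ of $\C^{+1}$ in terms of $m=\max_{\gamma\in B}|\gamma|$, where $B$ is the (finite) basis of $\C$; since there are only finitely many permutations of any bounded length, this implies the basis of $\C^{+1}$ is finite. Fix such a $\beta$ of length $n$ and associate to it the hypergraph $H$ on vertex set $\{1,\dots,n\}$ whose edges are exactly the subsets $I\subseteq\{1,\dots,n\}$ for which $\beta|_I$ is order-isomorphic to some element of $B$. Every edge of $H$ has size at most $m$.

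The first step is to translate the two defining properties of $\beta$ into hypergraph language. The condition $\beta\notin\C^{+1}$ says that for every position $a$, the permutation $\beta-\beta(a)$ contains some basis element of $\C$, i.e.\ some edge of $H$ lies in $\{1,\dots,n\}\setminus\{a\}$; equivalently, no single vertex of $H$ is contained in every edge, so the transversal number satisfies $\tau(H)\ge 2$. The minimality of $\beta$ in particular forces $\beta-\beta(c)\in\C^{+1}$ for every $c$, which yields some position $w(c)\neq c$ such that $\beta|_{\{1,\dots,n\}\setminus\{c,w(c)\}}\in\C$. This translates precisely to: every edge of $H$ meets $\{c,w(c)\}$, so $\{c,w(c)\}$ is a $2$-transversal of $H$ for each vertex $c$.

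Next, I would fix any one of these $2$-transversals $\{a,b\}$. Because $\tau(H)\ge 2$ rules out $\{a\}$ and $\{b\}$ from being transversals on their own, there exist edges $e_1,e_2\in H$ with $a\in e_1$, $b\notin e_1$ and $b\in e_2$, $a\notin e_2$. Set $V'=e_1\cup e_2$, so $|V'|\le 2m$ and $\{a,b\}\subseteq V'$. For each $c\in\{1,\dots,n\}\setminus V'$, neither $e_1$ nor $e_2$ contains $c$, and then the $2$-transversality of $\{c,w(c)\}$ forces $w(c)\in e_1\cap e_2$. Setting $W=\{w(c):c\in\{1,\dots,n\}\setminus V'\}$, this gives $W\subseteq e_1\cap e_2$ and so $|W|\le m$.

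For the final bound, for each $w\in W$ I would use $\tau(H)\ge 2$ to pick an edge $e_w\in H$ with $w\notin e_w$. Then for any $c\in\{1,\dots,n\}\setminus V'$ with $w(c)=w$, the $2$-transversality of $\{c,w\}$ forces $c\in e_w$. Hence the fibre $\{c\in\{1,\dots,n\}\setminus V':w(c)=w\}$ is contained in $e_w$ and has at most $m$ elements. Summing over $w\in W$ yields $|\{1,\dots,n\}\setminus V'|\le m|W|\le m^2$, and therefore $n\le 2m+m^2$. I expect the main subtlety to lie in the translation step—making sure that the minimality of $\beta$ really does produce the partner function $w(c)$ with the stated transversal property—after which the remainder is a straightforward counting argument in $H$.
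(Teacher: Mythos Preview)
Your proof is correct, but it takes a genuinely different route from the paper's. The paper does not work with a basis element at all: it starts from an \emph{arbitrary} permutation $\gamma\notin\C^{+1}$, fixes a single occurrence of a basis element of $\C$ inside $\gamma$ (say on positions $i_1,\dots,i_k$ with $k\le m$), and then, for each $i_j$, uses only the fact that $\gamma-\gamma(i_j)\notin\C$ to find a second basis-element occurrence avoiding $i_j$. The union of these $k+1$ occurrences has size at most $m+m\cdot m=m(m+1)$, and the restriction of $\gamma$ to this union is itself not in $\C^{+1}$ (removing a position outside the first occurrence leaves that occurrence intact; removing $i_j$ leaves the $j$th auxiliary occurrence intact). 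Thus every non-member of $\C^{+1}$ contains one of length at most $m(m+1)$.

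By contrast, you work directly with a basis element $\beta$ and exploit \emph{both} halves of its definition: non-membership gives $\tau(H)\ge 2$, while minimality manufactures the partner function $w(\cdot)$ and hence a $2$-transversal through every vertex. This extra structure lets you bound $n$ intrinsically via a neat transversal argument, yielding $n\le m^2+2m$. The paper's approach is shorter, uses less (it never invokes minimality of $\beta$, only non-membership), and gives a marginally better constant; your approach is a clean reformulation in hypergraph language and makes the role of minimality explicit. Both are perfectly valid, and your translation step---the point you flagged as the main subtlety---is carried out correctly.
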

\begin{proof}
Suppose that the longest basis element of~$\C$ has length $m$ and suppose that $\gamma\not\in\C^{+1}$. Because $\gamma\not\in\C$, it contains a subsequence, say $\gamma(i_1)\cdots\gamma(i_m)$, that is not order isomorphic to a member of~$\C$. Furthermore, since $\gamma\not\in\C^{+1}$, the permutation obtained from $\gamma$ by removing the entry $\gamma(i_k)$ for any $1\le k\le m$ also contains a basis element of~$\C$ of length at most $m$. By considering the $m$ entries $\gamma(i_1)\cdots\gamma(i_m)$ together with the at most $m^2$ entries of $\gamma$ arising from these additional basis elements of~$\C$, we see that $\gamma$ contains a permutation of length at most $m+m^2$ that does not lie in~$\C^{+1}$, proving that no basis element of~$\C^{+1}$ may be longer than this.
\end{proof}

Combining Propositions~\ref{prop-C+1-wqo-implies-C-sfb} and \ref{prop-C-fb-C+1-fb}, we immediately obtain the following result.

\begin{proposition}
\label{prop-C+1-wqo-implies-sfb}
If the permutation class~$\C^{+1}$ is wqo, then it is strongly finitely based.
\end{proposition}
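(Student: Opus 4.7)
The plan is to combine the two preceding propositions in a short chain of implications, since together they already give everything we need.

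First I would note that by hypothesis $\C^{+1}$ is wqo, so one half of the definition of strongly finitely based (namely, being wqo) is immediate. It therefore suffices to show that $\C^{+1}$ is finitely based. To do this, I would apply Proposition~\ref{prop-C+1-wqo-implies-C-sfb} to the assumption that $\C^{+1}$ is wqo: this yields that $\C$ itself is strongly finitely based, and in particular $\C$ is finitely based. Then I would invoke Proposition~\ref{prop-C-fb-C+1-fb} on $\C$, which upgrades a finite basis of $\C$ to a finite basis of $\C^{+1}$. Together with the assumed wqo property of $\C^{+1}$, this shows $\C^{+1}$ is strongly finitely based (and one could also cite Proposition~\ref{prop-strong-fin-based} to make the equivalence with ``finitely based and wqo'' explicit).

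There is no real obstacle here: the statement is essentially the formal transitive closure of the two earlier results, and the only thing to be careful about is that Proposition~\ref{prop-C+1-wqo-implies-C-sfb} is applied to conclude a property of $\C$ (not $\C^{+1}$), which is then fed back into Proposition~\ref{prop-C-fb-C+1-fb} to return to a statement about $\C^{+1}$. So the proof will likely be just two or three sentences, organised as: \emph{assume} $\C^{+1}$ is wqo; \emph{deduce} from Proposition~\ref{prop-C+1-wqo-implies-C-sfb} that $\C$ is finitely based; \emph{deduce} from Proposition~\ref{prop-C-fb-C+1-fb} that $\C^{+1}$ is finitely based; \emph{conclude} that $\C^{+1}$ is both finitely based and wqo, hence strongly finitely based.
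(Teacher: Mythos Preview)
Your proposal is correct and matches the paper's approach exactly: the paper simply states that this result follows immediately by combining Propositions~\ref{prop-C+1-wqo-implies-C-sfb} and~\ref{prop-C-fb-C+1-fb}, which is precisely the chain of implications you describe.
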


We are not aware of a counterexample to the converse of Proposition~\ref{prop-C-fb-C+1-fb}, and so we raise the following question.

\begin{question}
\label{quest-C+1-fb-C-fb}
Is there an infinitely-based permutation (or graph) class~$\C$ such that~$\C^{+1}$ is finitely based?
\end{question}

Finally, we connect one-point extensions and lwqo, showing that if the class~$\C$ is lwqo then the class~$\C^{+1}$ is also lwqo. In particular, this implies via Proposition~\ref{prop-C+1-wqo-implies-C-sfb} that every lwqo class is finitely based. Thus this result strengthens Proposition~\ref{prop-lwqo-fin-basis} for lwqo classes.

\newcommand{\nelabel}{
	\begin{tikzpicture}[scale=0.09, anchor=base]
		\draw[line cap=round] (0,0)--(0.95,1.9);
		\draw[line cap=round, ultra thin, fill=black] (1,2)--({1+0.75*cos(270+18.4349488)}, {2+0.75*sin(270+18.4349488)})--({1+0.75*cos(180+18.4349488)}, {2+0.75*sin(180+18.4349488)})--cycle;
 	\end{tikzpicture}
}
\newcommand{\selabel}{
	\begin{tikzpicture}[scale=0.09, yscale=-1, anchor=base]
		\draw[line cap=round] (0,0)--(0.95,1.9);
		\draw[line cap=round, ultra thin, fill=black] (1,2)--({1+0.75*cos(270+18.4349488)}, {2+0.75*sin(270+18.4349488)})--({1+0.75*cos(180+18.4349488)}, {2+0.75*sin(180+18.4349488)})--cycle;
 	\end{tikzpicture}
}
\newcommand{\nwlabel}{
	\begin{tikzpicture}[scale=0.09, xscale=-1, anchor=base]
		\draw[line cap=round] (0,0)--(0.95,1.9);
		\draw[line cap=round, ultra thin, fill=black] (1,2)--({1+0.75*cos(270+18.4349488)}, {2+0.75*sin(270+18.4349488)})--({1+0.75*cos(180+18.4349488)}, {2+0.75*sin(180+18.4349488)})--cycle;
 	\end{tikzpicture}
}
\newcommand{\swlabel}{
	\begin{tikzpicture}[scale=0.09, xscale=-1, yscale=-1, anchor=base]
		\draw[line cap=round] (0,0)--(0.95,1.9);
		\draw[line cap=round, ultra thin, fill=black] (1,2)--({1+0.75*cos(270+18.4349488)}, {2+0.75*sin(270+18.4349488)})--({1+0.75*cos(180+18.4349488)}, {2+0.75*sin(180+18.4349488)})--cycle;
 	\end{tikzpicture}
}

\newcommand{\compass}{
	\begin{tikzpicture}[scale=0.0775, anchor=base]
		\draw[ultra thin, fill=black] (60:0.75)--(150:0.4)--(240:0.75)--(330:0.4)--cycle;
		\draw[thin, color=white] (150:0.5)--(330:0.5);
		\draw[color=white, fill=white] (0,0) circle (0.125);
		\draw (0,0) circle (1);
	\end{tikzpicture}
}
\newcommand{\altcompass}{
	\begin{tikzpicture}[scale=0.0775, anchor=base]
		\draw[ultra thin, fill=black] (60:1)--(150:0.5)--(240:1)--(330:0.5)--cycle;
		\draw[thin, color=white] (150:0.55)--(330:0.55);
		\draw[color=white, fill=white] (0,0) circle (0.2);
	\end{tikzpicture}
}

\begin{theorem}[Cf. Oudrar~{\cite[Proposition~5.32]{oudrar:sur-lenumeratio:}}]
\label{thm-lwqo-C-lwqo-C+1}
The permutation class~$\C$ is lwqo if and only the class~$\C^{+1}$ is lwqo.
\end{theorem}
\begin{proof}
One direction is clear, so suppose the class~$\C$ is lwqo and take $(L,\le_L)$ to be an arbitrary wqo set. We want to show that~$\C^{+1}\wr L$ is wqo. To this end, let $\{\swlabel,\selabel,\nelabel,\nwlabel\}$ denote a $4$-element antichain disjoint from $L$ and define the poset $L_{\compass}$ as the Cartesian product $L\times L\times\{\swlabel,\selabel,\nelabel,\nwlabel\}$. Because $L_{\compass}$ is wqo by Proposition~\ref{prop-wqo-product} and~$\C$ is lwqo by the hypotheses,~$\C\wr L_{\compass}$ is wqo. We prove the proposition by constructing an order-reflecting mapping $\Psi : \C^{+1}\wr L\to \C\wr L_{\compass}$ and then appealing to Proposition~\ref{prop-wqo-order-reflecting}. In fact, we restrict our domain to the members of~$\C^{+1}\wr L$ of length at least two, since if these labelled permutations are wqo then it follows that~$\C^{+1}$ is lwqo.

\begin{figure}
\begin{footnotesize}
\begin{center}
	\begin{tikzpicture}[scale=0.1925, baseline=(current bounding box.center)]
	\plotpermbox{0.5}{0.5}{9.5}{9.5};
	\plotperm{5, 7, 1, 8, 3, 4, 6, 9, 2};
	\plotpartialpermencirclewhite{6/4};
    \end{tikzpicture}
	\quad
	\begin{tikzpicture}[baseline=(current bounding box.center)]
		\node {$\to$};
	\end{tikzpicture}
	\quad
	\begin{tikzpicture}[scale=0.1925, baseline=(current bounding box.center)]
	\plotpermbox{0.5}{0.5}{9.5}{9.5};
	\draw [darkgray, thick] (0,4)--(10,4);
	\draw [darkgray, thick] (6,0)--(6,10);
	\plotpartialperm{6/4};
	\plotpartialpermencirclewhite{6/4};
	\node at (7,6) {\scriptsize $\swlabel$};
	\node at (8,9) {\scriptsize $\swlabel$};
	\node at (1,5) {\scriptsize $\selabel$};
	\node at (2,7) {\scriptsize $\selabel$};
	\node at (4,8) {\scriptsize $\selabel$};
	\node at (3,1) {\scriptsize $\nelabel$};
	\node at (5,3) {\scriptsize $\nelabel$};
	\node at (9,2) {\scriptsize $\nwlabel$};
    \end{tikzpicture}
	\quad
	\begin{tikzpicture}[baseline=(current bounding box.center)]
		\node {$\to$};
	\end{tikzpicture}
	\quad
	\begin{tikzpicture}[scale=0.1925, baseline=(current bounding box.center)]
	\plotpermbox{0.5}{0.5}{8.5}{8.5};
	\node at (6,5) {\scriptsize $\swlabel$};
	\node at (7,8) {\scriptsize $\swlabel$};
	\node at (1,4) {\scriptsize $\selabel$};
	\node at (2,6) {\scriptsize $\selabel$};
	\node at (4,7) {\scriptsize $\selabel$};
	\node at (3,1) {\scriptsize $\nelabel$};
	\node at (5,3) {\scriptsize $\nelabel$};
	\node at (8,2) {\scriptsize $\nwlabel$};
    \end{tikzpicture}
\end{center}
\end{footnotesize}
\caption{A depiction of the labeling process described in the proof of Theorem~\ref{thm-lwqo-C-lwqo-C+1}, applied to the permutation $\pi=571834692$.}
\label{fig-prop-lwqo-C-lwqo-C+1}
\end{figure}
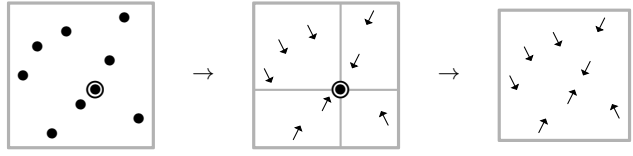

Take $(\pi,\ell_\pi)\in\C^{+1}\wr L$ where~$\pi$ has length $n+1\ge 2$, and choose some entry $\pi(a)$ such that $\pi-\pi(a)\in\C$. We define $\Psi((\pi,\ell_\pi))$ to be the permutation $(\pi^a,\ell^a_{\pi})\in\C\wr L_{\compass}$ where ${\pi^a=\pi-\pi(a)\in\C}$ and the labeling $\ell^a_{\pi} : \{1,2,\dots,n\}\to L_{\compass}$ is defined by
\[
	\ell^a_{\pi}(i)=
	\begin{cases}
	(\ell_\pi(i),\ell_\pi(a),\selabel)&\text{if $i<a$ and $\pi(i)>\pi(a)$,}\\
	(\ell_\pi(i),\ell_\pi(a),\nelabel)&\text{if $i<a$ and $\pi(i)<\pi(a)$,}\\
	(\ell_\pi(i+1),\ell_\pi(a),\swlabel)&\text{if $i\ge a$ and $\pi(i+1)>\pi(a)$,}\\
	(\ell_\pi(i+1),\ell_\pi(a),\nwlabel)&\text{if $i\ge a$ and $\pi(i+1)<\pi(a)$.}\\
	\end{cases}
\]
Intuitively, as indicated in Figure~\ref{fig-prop-lwqo-C-lwqo-C+1}, the labeling $\ell^a_{\pi}$ retains the labeling of all entries of~$\pi$---entries other than $\pi(a)$ keep their labeling in $\ell_\pi^a$, while the label of $\pi(a)$ is recorded in the labelings of \emph{all} remaining entries---while also recording the position of $\pi(a)$ relative to all of the remaining entries of~$\pi$. It follows that $\Psi$ is injective: from $\Psi((\pi,\ell_\pi))$ we can recover~$\pi$ and $\ell_\pi$.

In order to show that $\Psi$ is order-reflecting, suppose that there are two members of~$\C^{+1}\wr L$, say $(\pi,\ell_\pi)$ and $(\sigma,\ell_\sigma)$, such that $\Psi((\sigma,\ell_\sigma))=(\sigma^a,\ell^a_\sigma)$ is contained in $\Psi((\pi,\ell_\pi))=(\pi^b,\ell^b_\pi)$ in the order on~$\C\wr L_{\compass}$. Letting $k+1$ and $n+1$ denote the lengths of~$\sigma$ and~$\pi$, respectively, this means that there is an increasing sequence $1\le i_1<i_2<\cdots<i_k\le n$ of indices such that the subsequence $\pi^b(i_1)\pi^b(i_2)\cdots\pi^b(i_k)$ is order isomorphic to $\sigma^a$ and that $\ell^a_\sigma(j)\le_{L_{\compass}} \ell^b_\pi(i_j)$ for all~${1\le j\le k}$.

First, note that $\ell^a_\sigma$ must have precisely $a-1$ labels whose third components are $\selabel$ or $\nelabel$ because there are precisely $a-1$ entries to the left of $\sigma(a)$ in~$\sigma$. It follows that precisely $a-1$ of the labels $\ell^b_\pi(i_j)$ have third component equal to $\selabel$ or $\nelabel$, so the entry $\pi(b)$ lies between the entries of~$\pi$ corresponding to~$\pi^b(i_{a-1})$ and $\pi^b(i_a)$ in~$\pi$, that is, $i_{a-1}<b<i_a+1$. In other words, the horizontal position of $\pi(b)$ amongst the entries of~$\pi$ corresponding to the subsequence $\pi^b(i_1)\pi^b(i_2)\cdots\pi^b(i_k)$ is the same as the horizontal position of $\sigma(a)$ amongst the other entries of~$\sigma$. By counting labels whose third component is equal to either $\nelabel$ or $\nwlabel$, the same claim holds for the vertical positions of these two entries. This shows that~$\sigma$ is contained in~$\pi$ in the positions $1\le i_1<\cdots<i_{a-1}<b<i_a+1<\cdots<i_k+1\le n+1$. Moreover, the labels of the corresponding entries are comparable as desired: $\ell_\sigma(a)\le\ell_\pi(b)$ because these labels are encoded in the second components of all the labels of $\ell^a_\sigma$ and $\ell^b_\pi$, and the other labels have the desired comparisons because of the first components of $\ell^a_\sigma$ and $\ell^b_\pi$. This shows that $\Psi$ is order-reflecting and completes the proof.
\end{proof}

Note that Theorem~\ref{thm-lwqo-C-lwqo-C+1} does not hold if lwqo is replaced by wqo, as demonstrated by the example of~$\O_I$ and $\O_I^{+1}$.

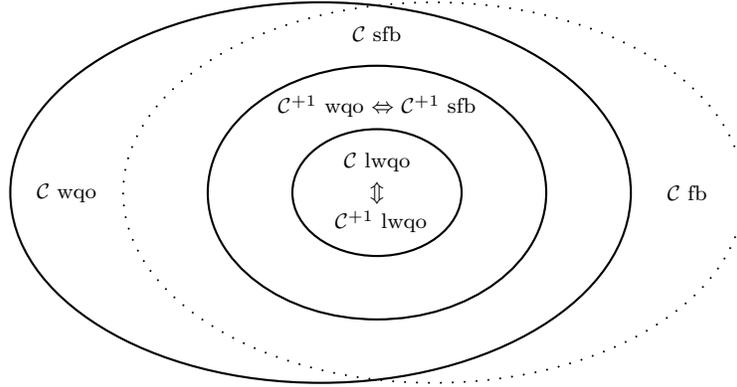
\begin{figure}
\begin{footnotesize}
\begin{center}
	\begin{tikzpicture}[scale=0.75, yscale=0.75, baseline=(current bounding box.center)]
	\draw [line width=0.8pt, dash pattern=on 0pt off 4.5pt, line cap=round] (1,0) ellipse (5.5 and 4.5);
	\draw [thick] (-1,0) ellipse (5.5 and 4.5);
	\draw [thick] (0,0) circle (1.5);
	\draw [thick] (0,0) circle (3);
	\node at (0,0) [align=center] {$\C$ lwqo\\[2pt] $\Updownarrow$\\[2pt]~$\C^{+1}$ lwqo};
	\node at (0,2.05) {$\C^{+1}$ wqo $\Leftrightarrow$~$\C^{+1}$ sfb};
	\node at (0,3.75) {$\C$ sfb};
	\node at (5.5,0) {$\C$ fb};
	\node at (-5.5,0) {$\C$ wqo};
    \end{tikzpicture}
\end{center}
\end{footnotesize}
\caption{Properties of~$\C$ and~$\C^{+1}$. Finitely and strongly finitely based are abbreviated as fb and sfb, respectively. Not pictured is the property that~$\C^{+1}$ is finitely based.}
\label{fig-inclusions-C-C+1}
\end{figure}

Figure~\ref{fig-inclusions-C-C+1} displays the inclusions between properties of~$\C$ and~$\C^{+1}$. We have already seen examples showing that all of the inclusions in this diagram are strict except, possibly, the inclusion showing that if~$\C$ is lwqo (or equivalently, by Theorem~\ref{thm-lwqo-C-lwqo-C+1}, if~$\C^{+1}$ is lwqo) then~$\C^{+1}$ is wqo (and thus by Proposition~\ref{prop-C+1-wqo-implies-sfb}, also strongly finitely based). In fact, we are not aware of a permutation class (or a graph class for that matter)~$\C$ for which~$\C^{+1}$ is wqo but~$\C$ is not lwqo.

\begin{conjecture}
\label{conj-C+1-wqo-C-lwqo}
If the permutation class~$\C^{+1}$ is wqo, then~$\C$, and thus also~$\C^{+1}$, is lwqo.
\end{conjecture}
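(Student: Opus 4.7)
The plan begins with the reduction provided by Theorem~\ref{thm-lwqo-C-lwqo-C+1}: since $\C$ is lwqo if and only if $\C^{+1}$ is lwqo, the conjecture is equivalent to the statement that wqo of $\C^{+1}$ implies lwqo of $\C^{+1}$. Fix an arbitrary wqo label set $(L,\le_L)$ and, arguing for contradiction, suppose $\C\wr L$ contains an infinite antichain $(\pi_i,\ell_{\pi_i})_{i\ge 1}$. The aim is to produce from this sequence an infinite antichain in $\C^{+t}$ for some fixed $t$, contradicting the hypothesis.

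My first attempt would target the weaker conclusion of $n$-wqo for each fixed $n$, taking $L$ to be an $n$-element antichain. The proof of Theorem~\ref{thm-lwqo-C-lwqo-C+1} shows that a single distinguished anchor entry already encodes a $4$-element quadrant label on all remaining entries via an order-reflecting mapping. Reversing the construction, I would try to build an order-reflecting mapping $\Psi\st\C\wr L\to\C^{+k}$ by \emph{adding} $k$ anchor entries to each labeled permutation, in such a way that each original entry's label can be recovered from the joint tuple of quadrants it occupies with respect to the $k$ anchors; this permits distinguishing up to $4^k$ labels. To make the mapping order-reflecting, the anchors must be placed at extreme positions (for instance, with values and indices larger than those of $\pi$) so that any embedding in $\C^{+k}$ is forced to match anchors to anchors. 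Provided we know $\C^{+k}$ is wqo for every $k$, this scheme would establish $n$-wqo of $\C$ for every $n$. Bridging $n$-wqo to full lwqo then amounts to Pouzet's Conjecture~\ref{conj-pouzet-2-wqo}; to avoid invoking an open problem, one would instead need a single construction that handles an arbitrary wqo $(L,\le_L)$, perhaps by iterating the one-point anchor idea along a Higman- or Kruskal-style structural decomposition of $L$.

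The main obstacle is the step $\C^{+1}\text{ wqo}\Rightarrow\C^{+k}\text{ wqo}$ that both of the proposed approaches require. The tempting route---apply the conjecture's hypothesis with $\C^{+1}$ in place of $\C$ to deduce $(\C^{+1})^{+1}=\C^{+2}$ is wqo---is circular, since it presupposes exactly the conclusion we are trying to prove. Any successful argument must break this circularity, either by establishing directly (under the hypothesis that $\C^{+1}$ is wqo) that each $\C^{+k}$ is wqo, or by devising a single-anchor encoding that captures an entire wqo's worth of label information in one extra point. The latter possibility would need to exploit the structural consequences of $\C^{+1}$ being wqo---in particular that $\C$ is strongly finitely based, by Proposition~\ref{prop-C+1-wqo-implies-sfb}---to restrict which labeled antichains can exist in $\C\wr L$. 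It is precisely this tension between bounded extra points and unbounded label information that seems to make the conjecture genuinely deep, and I suspect progress will require either a new preservation theorem for wqo under one-point extensions, or a fundamentally different encoding that bypasses the intermediate step of $n$-wqo entirely.
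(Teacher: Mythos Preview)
This statement is presented in the paper as an open \emph{conjecture}, not a theorem; the paper offers no proof, and indeed remarks that no counterexample is known. Your proposal is therefore not being compared against an existing argument---there is none---but assessed on its own terms as a sketch toward a genuinely open problem.

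Your analysis is accurate and your self-diagnosis of the obstacle is exactly right. The anchor-encoding idea you describe is essentially the inverse of the order-reflecting map in the proof of Theorem~\ref{thm-lwqo-C-lwqo-C+1}, and it would indeed establish $n$-wqo of $\C$ from wqo of $\C^{+k}$ for $k$ roughly $\log_4 n$. But, as you correctly identify, the hypothesis gives you only $\C^{+1}$ wqo, and the step $\C^{+1}$ wqo $\Rightarrow$ $\C^{+k}$ wqo for all $k$ is itself the content of the conjecture (via Theorem~\ref{thm-lwqo-C-lwqo-C+1}), so invoking it is circular. The paper makes the same observation in the paragraph following the conjecture: the proof of Theorem~\ref{thm-lwqo-C-lwqo-C+1} already shows that $4$-wqo of $\C$ implies wqo of $\C^{+1}$, so Conjecture~\ref{conj-C+1-wqo-C-lwqo} would follow from Pouzet's Conjecture~\ref{conj-pouzet-2-wqo} (that $2$-wqo implies $n$-wqo for all $n$), which is also open. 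Your closing assessment---that one needs either a direct preservation theorem for wqo under iterated one-point extension, or an encoding that avoids the $n$-wqo intermediary---matches the state of the problem as the paper leaves it.
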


Theorem~\ref{thm-lwqo-C-lwqo-C+1} implies that if~$\C$ is lwqo then~$\C^{+t}$ is lwqo for all integers $t\ge 0$. Perhaps more interestingly, with only minor modifications, the proof given shows that if~$\C$ is a $4$-wqo permutation class, then~$\C^{+1}$ is wqo. Conjecture~\ref{conj-pouzet-2-wqo} would imply that every $2$-wqo class is $4$-wqo, and thus in particular would imply the following. (Which would also be implied by the stronger Conjecture~\ref{ques-pouzet-2-wqo-lwqo}.)

\begin{conjecture}
\label{conj-2-wqo-C-lwqo-C+1}
If the permutation class~$\C$ is $2$-wqo, then the class~$\C^{+1}$ is wqo.
\end{conjecture}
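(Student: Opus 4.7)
The plan is to adapt the proof of Theorem~\ref{thm-lwqo-C-lwqo-C+1} to the unlabeled setting. Given an infinite sequence $\pi_1, \pi_2, \ldots$ in $\C^{+1}$, for each $i$ we select an entry $\pi_i(a_i)$ such that $\pi_i^{a_i} := \pi_i - \pi_i(a_i)$ lies in $\C$. Define $\Psi(\pi_i) = (\pi_i^{a_i}, \ell_i)$, where $\ell_i$ assigns to each surviving entry the label recording its quadrant (NE, NW, SE, or SW) relative to the virtual point $(a_i, \pi_i(a_i))$, exactly as the third component of the labeling in the proof of Theorem~\ref{thm-lwqo-C-lwqo-C+1}. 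The same argument as there shows that $\Psi$ is order-reflecting: any labeled embedding of $\Psi(\pi_i)$ into $\Psi(\pi_j)$ reinserts the chosen entries to witness $\pi_i \le \pi_j$. Since the quadrant labels form a four-element antichain, this shows directly that $4$-wqo of $\C$ implies wqo of $\C^{+1}$, which is the observation noted in the paragraph preceding the conjecture.

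The obstacle is passing from four labels to two. The obvious attempt is to split the quadrant into two independent two-labelings: $\ell_i^h$ (left of $a_i$ versus right) and $\ell_i^v$ (below $\pi_i(a_i)$ versus above). By $2$-wqo, the sequence $(\pi_i^{a_i}, \ell_i^h)$ contains an infinite subsequence that is increasing in the labeled containment order (Proposition~\ref{prop-wqo-ramsey-inf-inc}), and a second application of $2$-wqo on this subsequence produces a further subsequence in which the vertical labelings are also increasing. For each pair $i < j$ in this final subsequence, we thus obtain a witness embedding respecting horizontal labels and a (possibly different) witness embedding respecting vertical labels; however, the order-reflecting argument for $\Psi$ requires a single embedding respecting both, i.e.\ one respecting the full quadrant label.

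Overcoming this is the crux of the difficulty. A robust solution would amount to proving that $2$-wqo implies $4$-wqo for the class $\C$---the relevant instance of Pouzet's Conjecture~\ref{conj-pouzet-2-wqo}. A possible direct route is a Ramsey-theoretic refinement: on the doubly-good subsequence, colour each pair $(i,j)$ according to whether some horizontal-respecting embedding is simultaneously vertical-respecting, and attempt to rule out an infinite ``bad'' monochromatic class by exploiting the fact that both labelings arise from a single virtual point and are therefore highly constrained, unlike an arbitrary pair of independent labelings. I expect this step---fusing the two labelings into a single witness embedding using only the two-label hypothesis---to be the main obstacle, and it is plausible that any full resolution will require genuinely new ideas beyond the entry-insertion technique used in Theorem~\ref{thm-lwqo-C-lwqo-C+1}.
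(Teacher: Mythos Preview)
This statement is a \emph{conjecture} in the paper, not a theorem; the paper offers no proof. Your analysis correctly reconstructs the paper's own discussion: the argument you give in the first paragraph is precisely the observation the paper makes immediately before stating the conjecture, namely that the proof of Theorem~\ref{thm-lwqo-C-lwqo-C+1} already shows that $4$-wqo of $\C$ implies wqo of $\C^{+1}$, and your identification of the obstacle---that closing the gap between $2$-wqo and $4$-wqo is exactly the relevant instance of Pouzet's Conjecture~\ref{conj-pouzet-2-wqo}---is exactly how the paper motivates the conjecture.

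Your attempted direct route (split the quadrant label into independent horizontal and vertical $2$-labelings, pass to subsequences, then try to fuse the two witnesses) is a natural idea, and your diagnosis of why it fails is accurate: the two embeddings obtained need not coincide, and nothing in the $2$-wqo hypothesis forces a common witness. The Ramsey colouring you suggest does not obviously help, since ruling out an infinite monochromatic ``no common witness'' class appears to require exactly the $4$-wqo information you are trying to avoid assuming. In short, you have correctly rediscovered both the partial result the paper records and the reason the full statement remains open; no proof is missing from your write-up that the paper supplies.
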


The graphical analogue of Conjecture~\ref{conj-2-wqo-C-lwqo-C+1} is true, and is essentially equivalent to Proposition~\ref{prop-lwqo-fin-basis}. Note that a one-vertex extension of an inversion graph need not be an inversion graph---extending our $+1$ notation to graph classes, we always have $G_{\C^{+1}}\subseteq G_\C^{+1}$, but this inclusion is usually strict%
\footnote{The only reason the inclusion $G_{\C^{+1}}\subseteq G_\C^{+1}$ is not \emph{always} strict is because of trivial cases such as~$\C=\emptyset$.}.
Therefore, this discussion does not relate to the relationship of wqo between permutations and inversion graphs, as asked in Question~\ref{question-prop-wqo-perms-graphs-converse}.

Returning to the permutation context, the key difference between the proofs of Proposition~\ref{prop-lwqo-fin-basis} and Theorem~\ref{thm-lwqo-C-lwqo-C+1} is that in the proof of Proposition~\ref{prop-lwqo-fin-basis} we are allowed to delete the \emph{last} entry of a permutation, while in the proof of Theorem~\ref{thm-lwqo-C-lwqo-C+1} we must be prepared to delete \emph{any} of its entries.

Continuing in this direction, our proof of Theorem~\ref{thm-lwqo-C-lwqo-C+1} shows that if~$\C$ is $4n^2$-wqo then~$\C^{+1}$ is $n$-wqo. If Conjecture~\ref{conj-pouzet-2-wqo} (about the equivalence of $2$-wqo and $n$-wqo for all $n\ge 1$) holds, then the conjecture below would also be true.

\begin{conjecture}
\label{conj-2-wqo-C-lwqo-C+t}
If the permutation class~$\C$ is $2$-wqo, then the class~$\C^{+t}$ is $2$-wqo for every $t\ge 0$.
\end{conjecture}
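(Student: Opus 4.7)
The plan is to combine a quantitative refinement of the construction in Theorem~\ref{thm-lwqo-C-lwqo-C+1} with Pouzet's Conjecture~\ref{conj-pouzet-2-wqo} and then induct on $t$. The authors have essentially flagged the needed refinement: the proof of Theorem~\ref{thm-lwqo-C-lwqo-C+1} in fact establishes that if $\C$ is $4n^2$-wqo then $\C^{+1}$ is $n$-wqo. Explicitly, if one takes $L$ in that proof to be an $n$-element antichain, then $L_{\compass} = L \times L \times \{\swlabel,\selabel,\nelabel,\nwlabel\}$ is a poset of size $4n^2$, so by Proposition~\ref{prop-n-wqo-other-poset} the hypothesis ``$\C$ is $4n^2$-wqo'' makes $\C \wr L_{\compass}$ wqo, and the order-reflecting map $\Psi$ combined with Proposition~\ref{prop-wqo-order-reflecting} delivers that $\C^{+1} \wr L$ is wqo. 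Setting $n = 2$ records the implication I will use: if $\C$ is $16$-wqo, then $\C^{+1}$ is $2$-wqo.

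With this quantitative step in hand I would induct on $t$. The case $t=0$ is trivial. For the inductive step, assume $\C^{+(t-1)}$ is $2$-wqo. Pouzet's Conjecture~\ref{conj-pouzet-2-wqo}, applied to the permutation class $\C^{+(t-1)}$, lifts $2$-wqo to $k$-wqo for every $k \ge 1$, and in particular yields that $\C^{+(t-1)}$ is $16$-wqo. The quantitative step then shows that $\C^{+t} = (\C^{+(t-1)})^{+1}$ is $2$-wqo, closing the induction.

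The hard part---indeed the only real obstacle---is Conjecture~\ref{conj-pouzet-2-wqo} itself; as noted in footnote~\ref{fn-kriz}, its analogue in more general relational settings has been refuted, so an unconditional argument must exploit something specifically permutation-theoretic. A direct attack, bypassing Pouzet's conjecture, would aim to build an order-reflecting encoding that records both the position and original label of a deleted entry using only a $2$-element antichain of labels on the surviving entries. The four possible quadrant positions of the deleted entry constitute genuine four-fold data that any order-reflecting map must preserve---the proof of Proposition~\ref{prop-lwqo-fin-basis} gets away with two labels only because the deleted entry is pinned to the last position---and I see no way to compress this four-fold information into a binary alphabet while retaining order-reflection. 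My tentative view is that resolving Conjecture~\ref{conj-2-wqo-C-lwqo-C+t} unconditionally will require either progress on Pouzet's conjecture or a genuinely new idea.
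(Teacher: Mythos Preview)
Your analysis matches the paper's exactly: the statement is presented as an open conjecture, and the paper's only justification is the same conditional derivation you give---the quantitative refinement of Theorem~\ref{thm-lwqo-C-lwqo-C+1} (if $\C$ is $4n^2$-wqo then $\C^{+1}$ is $n$-wqo) combined with Pouzet's Conjecture~\ref{conj-pouzet-2-wqo} and induction on $t$. Your recognition that no unconditional proof is offered, and your assessment of why a direct two-label encoding seems blocked, are in line with the paper's own treatment.
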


\section{Minimal Bad Sequences}
\label{sec-min-bad-seq}

We need a bit more machinery to prove further results about lwqo in the three sections after this. In particular, we make significant use of the notion of minimal bad sequences. These were first introduced in Nash-Williams's incredibly influential 3-page paper~\cite{nash-williams:on-well-quasi-o:infinite} in which he used them to give an elegant proof of Kruskal's tree theorem~\cite{kruskal:well-quasi-orde:}%
\footnote{While it is well outside the scope of this paper, it is nevertheless an interesting fact that the existence of minimal bad sequences is, in the sense of reverse mathematics, stronger than Kruskal's tree theorem. See Rathjen and Weiermann~\cite{rathjen:proof-theoretic:}.}.
We also appeal to Higman's lemma, which is an easy consequence of the existence of minimal bad sequences. However, before introducing these tools we must address a technical matter.

Recall that a quasi-order is \emph{well founded} if every nonempty subset of its elements contains a minimal element (one that is not greater than any other element). Every wqo set is well founded, as otherwise it would contain an infinite strictly decreasing sequence. While not wqo, the containment order on permutations is also well founded. It is also not hard to see that if $(X,\le)$ is a well-founded quasi-order, then the product $(X^m,\le)$ is also well founded (under the product order defined in Proposition~\ref{prop-wqo-vector}). Since we need well foundedness in order to guarantee the existence of minimal bad sequences, we first establish that sets of labelled permutations are well founded.

\begin{proposition}
\label{prop-labelled-well-founded}
For any nonempty set~$X$ of permutations and any quasi-order $(L,\le_L)$, $X\wr L$ is well founded if and only if~$L$ is well founded.
\end{proposition}
\begin{proof}
If $L$ is not well founded, then since~$X$ is nonempty, it is easy to see that $X\wr L$ is also not well founded. For the other direction, suppose that $L$ is well founded and let $S$ denote a nonempty subset of $X\wr L$. Further let $U$ denote the set of underlying permutations of members of $S$, so
\[
	U=\{\pi\in X : \text{$S$ contains an $L$-labeling of~$\pi$}\}.
\]
Because the containment order on permutations is well founded, $U$ has a minimal element, say~$\pi$. Suppose~$\pi$ has length $n$, so each $L$-labeling of~$\pi$ lying in $S$ can be identified with an $n$-tuple in~$L^n$. The order on $L$-labelings of~$\pi$ is precisely the product order on~$L^n$, which is well founded because $L$ is well founded. Therefore this set of labelings has a minimal element, and~$\pi$ with this minimal labeling is a minimal element of the set $S$.
\end{proof}

We may now define minimal bad sequences. Suppose that the quasi-order $(X,\le)$ is \emph{not} wqo. We define a \emph{bad sequence} from~$X$ to be an infinite sequence $x_1,x_2,\dots$ of elements of~$X$ that does not contain a good pair, meaning that for all indices $i<j$ we have $x_i\not\le x_j$. A bad sequence $x_1,x_2,\dots$ from~$X$ is \emph{minimal} if, for all indices~$i$, there does not exist a bad sequence $x_1,x_2,\dots,x_{i-1},y_i,y_{i+1},\dots$ with $y_i<x_i$.

\begin{proposition}[Nash-Williams~{\cite[Proof of Lemma 2]{nash-williams:on-well-quasi-o:}}]
\label{prop-wqo-iff-min-bad-seq}
A well-founded quasi-order $(X,\le)$ is wqo if and only if it does not contain a minimal bad sequence.
\end{proposition}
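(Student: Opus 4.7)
The forward direction is immediate: if $(X,\le)$ is wqo, then there are no bad sequences at all, so in particular no minimal bad sequence. The plan for the contrapositive of the other direction is the classical recursive construction of Nash-Williams: assuming $(X,\le)$ is well-founded and not wqo, I will build a minimal bad sequence step by step, using well-foundedness to choose each term.

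The construction proceeds by induction. Suppose that $x_1,\dots,x_{i-1}$ have already been chosen such that at least one bad sequence from $X$ starts with this prefix (for $i=1$ this is vacuous, and such a bad sequence exists because $(X,\le)$ is not wqo). Let
\[
	S_i = \{y\in X \st \text{some bad sequence from $X$ begins with $x_1,\dots,x_{i-1},y$}\}.
\]
By the inductive hypothesis, $S_i$ is nonempty, so by well-foundedness $S_i$ contains a minimal element, which we take to be $x_i$. Since $x_i\in S_i$, there is a bad sequence beginning $x_1,\dots,x_{i-1},x_i$, so the inductive hypothesis is preserved and the recursion may continue.

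It remains to verify that the sequence $x_1,x_2,\dots$ so obtained is itself bad and is minimal. Badness: if $x_i\le x_j$ for some $i<j$, then since $x_1,\dots,x_j$ is a prefix of some bad sequence this would yield a good pair in that bad sequence, a contradiction. Minimality: if there were a bad sequence $x_1,\dots,x_{i-1},y_i,y_{i+1},\dots$ with $y_i<x_i$, then $y_i$ would belong to $S_i$, contradicting the minimal choice of $x_i$ in $S_i$.

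The only delicate point, and the step I expect to require the most care in writing, is ensuring that the recursion actually goes through: specifically, that at each stage the set $S_i$ really is nonempty. This is exactly what the inductive maintenance of ``there is a bad sequence extending our chosen prefix'' is for, and it is preserved precisely because we chose $x_i$ from $S_i$ rather than any arbitrary minimal element of $X$. The construction also tacitly invokes dependent choice, which I would not dwell on.
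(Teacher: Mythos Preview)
Your proof is correct and follows exactly the same Nash-Williams recursive construction as the paper: at each stage choose $x_i$ minimal among those elements extending the current prefix to a bad sequence, using well-foundedness. Your write-up is in fact slightly more thorough than the paper's, since you explicitly verify that the resulting sequence is bad and minimal (the paper simply asserts this), and you note the use of dependent choice just as the paper does in a footnote.
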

\begin{proof}
It follows from the definition that wqo is equivalent to the absence of bad sequences. Thus it suffices to prove that if $(X,\le)$ contains a bad sequence, then it also contains a minimal bad sequence. Because $(X,\le)$ is well founded we may choose an element $x_1\in X$ to be minimal such that it begins a bad sequence. We may then choose an element $x_2\in X$ to be minimal such that~${x_1,x_2}$ begins a bad sequence. Proceeding by induction\footnote{Note that this inductive construction of a bad sequence requires, at a minimum, the axiom of dependent choice in order to make the countably infinite number of choices of entries $x_i$. Nash-Williams~\cite{nash-williams:on-well-quasi-o:} assumed the axiom of choice in constructing his bad sequence.}, if we assume that $x_1,x_2,\dots,x_i$ begins a bad sequence, we may choose an element $x_{i+1}\in X$ to be minimal such that $x_1,x_2,\dots,x_i,x_{i+1}$ begins a bad sequence. The result is a minimal bad sequence.
\end{proof}

Given any subset $S$ of a quasi-order $(X,\le)$, we define its \emph{proper closure} to be the set
\[
	S^{\suplessthan} = \{y : \text{$y < x$ for some $x\in S$}\}.
\]
All we require about minimal bad sequences, other than their existence, is the following result of Nash-Williams stating that their proper closures are wqo. A strengthening of this result has been given by Gustedt~\cite[Theorem~6]{gustedt:finiteness-theo:}; indeed, \cite{gustedt:finiteness-theo:} contains a more thorough treatment of all of the material in this section.

\begin{proposition}[Nash-Williams~{\cite[Proof of Lemma 2]{nash-williams:on-well-quasi-o:}}]
\label{prop-min-bad-seq-prop-closure-wqo}
If $S=\{x_1,x_2,\dots\}$ is a minimal bad sequence in a quasi-order $(X,\le)$, then its proper closure $S^{\suplessthan}$ is wqo%
\footnote{One might wonder if $S^{\suplessthan}$ is ever lwqo under these hypotheses. For us to be able to define lwqo, $(X,\le)$ must consist of objects with ground sets that can be labelled. When~$X$ is a permutation class (or graph class), $S^{\suplessthan}$ \emph{cannot} be lwqo. This is because $S^{\suplessthan}$ is a class itself and has an infinite basis (the minimal elements of $S$, of which there must be infinitely many), but lwqo permutation classes must have finite bases by Proposition~\ref{prop-lwqo-fin-basis} (and as already remarked, the analogous result holds for graph classes).}.
\end{proposition}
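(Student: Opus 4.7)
The plan is to argue by contradiction using the classical Nash-Williams splicing trick. Suppose that $S^{\suplessthan}$ is \emph{not} wqo. Since $S^{\suplessthan}\subseteq X$ is well-founded (as a subset of a well-founded quasi-order; note that Proposition~\ref{prop-wqo-iff-min-bad-seq} requires well-foundedness precisely for this reason), we may invoke the fact that a well-founded quasi-order fails to be wqo only if it contains a bad sequence. Fix such a bad sequence $y_1,y_2,\ldots$ in $S^{\suplessthan}$.

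For each index $j$, use the definition of $S^{\suplessthan}$ to choose an index $i_j$ with $y_j < x_{i_j}$. Let $m=\min_{j\ge 1} i_j$, which exists because the $i_j$ are positive integers, and let $j_0$ be some index with $i_{j_0}=m$. I would then propose the spliced sequence
\[
	x_1,\ x_2,\ \ldots,\ x_{m-1},\ y_{j_0},\ y_{j_0+1},\ y_{j_0+2},\ \ldots
\]
as a bad sequence whose first $m$ entries begin with $x_1,\ldots,x_{m-1}$ and then have $y_{j_0}<x_m$, directly contradicting the minimality of $S=\{x_1,x_2,\ldots\}$.

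The main step, which I expect to be where care is needed, is verifying that this spliced sequence really is bad. Any would-be good pair falls into one of three cases. A good pair contained among $x_1,\ldots,x_{m-1}$ is ruled out because $S$ is bad. A good pair contained among $y_{j_0},y_{j_0+1},\ldots$ is ruled out because $y_1,y_2,\ldots$ is bad and bad sequences restrict to bad sequences. The interesting case is a cross pair $x_a\le y_j$ with $a<m\le j_0\le j$; by transitivity we would then have $x_a\le y_j<x_{i_j}$, hence $x_a\le x_{i_j}$, and since $a<m\le i_j$ by the choice of $m$, this would give a good pair in $S$, a contradiction. Thus the spliced sequence is bad, and since $y_{j_0}<x_m$ strictly, the tail $y_{j_0},y_{j_0+1},\ldots$ witnesses a bad sequence beginning with $x_1,\ldots,x_{m-1}$ and then an element strictly below $x_m$, violating the minimality of $S$. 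This contradiction shows that $S^{\suplessthan}$ is wqo.

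Two technical caveats worth flagging in the writeup: first, the proof needs well-foundedness of $S^{\suplessthan}$, inherited from that of $X$, in order to convert ``not wqo'' into the existence of a bad sequence via Proposition~\ref{prop-wqo-iff-min-bad-seq}; in the applications in this paper this comes from Proposition~\ref{prop-labeled-well-founded}. Second, simultaneously choosing an $i_j$ for every $j$ is an application of the Axiom of (Dependent or Countable) Choice, the same foundational caveat already acknowledged in the proof of Proposition~\ref{prop-wqo-iff-min-bad-seq}.
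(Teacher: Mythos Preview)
Your proof is correct and follows exactly the same Nash-Williams splicing argument as the paper (the paper writes $f(n)$ for your $i_j$ and splices at $y_m,y_{m+1},\dots$ with $m$ chosen so $f(m)$ is minimal, which is your $j_0$). Two small clean-ups: the appeal to well-foundedness is unnecessary, since ``not wqo'' already means, by definition, that a bad sequence exists; and in your cross-pair case the chain $a<m\le j_0\le j$ is not quite right (there is no reason $m\le j_0$), though what you actually use---$a<m$ and $m\le i_j$---is correct and suffices.
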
 
\begin{proof}
Suppose to the contrary that $S^{\suplessthan}$ is not wqo, so it contains a bad sequence $y_1,y_2,\dots$. By the definition of $S^{\suplessthan}$, there is a function $f$ such that $y_n<x_{f(n)}$ for all $n\ge 1$. Choose $m$ such that
\[
	f(m)=\min\{ f(n) : n=1,2,\dots \}.
\]
We claim that
\[
	x_1,x_2,\dots,x_{f(m)-1},y_m,y_{m+1},\dots
\]
is a bad sequence. Note that since this claim contradicts the minimality of $x_1,x_2,\dots$, its proof will complete the proof of the proposition. Because the elements $x_i$ and the elements $y_j$ belong to bad sequences, it suffices to show that $x_i\not\le y_j$ for $1\le i<f(m)$ and $j\ge m$. For each such~$i$ we have $i<f(m)\le f(j)$ by our choice of $m$, so $x_i\not\le x_{f(j)}$. On the other hand, we have~${y_j<x_{f(j)}}$, implying that $x_i\not\le y_j$ and completing the proof of the claim and the proposition.
\end{proof}

For the final result of this section we present a special case of Higman's lemma and its short derivation using minimal bad sequences. It should be noted both that Higman's lemma applies in more general settings than we encounter here and that Higman's original proof predates the one below by over a decade%
\footnote{Higman's result is stated in the general context of abstract algebras with a wqo set of finitary operations. The version presented here and in most of the literature is the specialisation of his result to the case of the single binary operation of concatenation of words. Higman's proof proceeds by induction on the arity of the operations, at each step arguing by ``descent'': any counterexample must give rise to a smaller one, but this process cannot continue indefinitely.}.

Given a poset $(X,\le)$, we denote by $X^\ast$ the set (also called a language or free monoid) of all words with letters from~$X$ (equivalently, finite sequences with elements from~$X$). The \emph{generalised subword order} on $X^\ast$ is defined by stipulating that $v=v_1\cdots v_k$ is contained in $w=w_1\cdots w_n$ if and only if $w$ has a subsequence $w_{i_1}w_{i_2}\cdots w_{i_k}$ such that $v_j\le w_{i_j}$ for all $j$.

\newtheorem*{higmans-lemma}{\rm \textbf{Higman's Lemma}~\cite{higman:ordering-by-div:}}
\begin{higmans-lemma}
If $(X,\le)$ is wqo, then $X^\ast$ is also wqo, under the generalised subword order.
\end{higmans-lemma}
\begin{proof}
Suppose to the contrary that there was a minimal bad sequence $S=\{w_1,w_2,\dots\}$ from~$X^\ast$. Express each word $w_i$ as $w_i=\ell_iu_i$ where $\ell_i\in X$ is the first letter of $w_i$ and $u_i\in S^{\suplessthan}$ is the rest of the word, and define $\Psi : S\to X\times S^{\suplessthan}$ by $\Psi(w_i)=(\ell_i,u_i)$. It follows that $\Psi$ is order-reflecting: if
\[
	\Psi(w_i)
	=
	(\ell_i,u_i)
	\le
	(\ell_j,u_j)
	=
	\Psi(w_j),
\]
then $w_i\le w_j$.
However,~$X$ is wqo by our hypotheses and $S^{\suplessthan}$ is wqo by Proposition~\ref{prop-min-bad-seq-prop-closure-wqo}, so Proposition~\ref{prop-wqo-order-reflecting} implies that $S$ is wqo, and this contradiction completes the proof.
\end{proof}

\section{Sums and Skew Sums}
\label{sec-sums}

Our first application of the tools of the previous section is to sums and skew sums of permutation classes. It should be noted that sums and skew sums are but a small part of the substitution decomposition, which we cover in depth in the next section. However, we are able to establish stronger results in this context than in the more general context of the substitution decomposition.

The sum and skew sum of two permutations was defined in Section~\ref{subsec-sums}. Given permutation classes~$\C$ and $\D$, we now define their \emph{sum} by
\[
	\C\oplus\D
	=
	\{\sigma\oplus\tau : \text{$\sigma\in\C$ and $\tau\in\D$}\}.
\]
This set is always a permutation class itself, due to our convention that every nonempty permutation class must contain the empty permutation. The \emph{skew sum} of the classes~$\C$ and $\D$, denoted by~$\C\ominus\D$, is defined analogously. 

If both~$\C$ and $\D$ are wqo, then Proposition~\ref{prop-wqo-product} shows that their Cartesian product~$\C\times\D$ is wqo. We can then conclude by Proposition~\ref{prop-wqo-order-preserving} that~$\C\oplus\D$ is wqo because the surjective mapping ${\Phi : \C\times\D\to\C\oplus\D}$ given by
\[
	\Phi((\sigma,\tau))=\sigma\oplus\tau
\]
is order-preserving. Obviously the same holds for their skew sum,~$\C\ominus\D$.

This fact seems to have first been observed by Atkinson, Murphy, and Ru\v{s}kuc~{\cite[Lemma~2.4]{atkinson:partially-well-:}. It is possible to replace ``wqo'' in this result by ``lwqo'', although to do so we need to define sums of label functions. Note that if $(\sigma,\ell_\sigma),(\tau,\ell_\tau)\in\C\wr L$ are {$L$-labelled} permutations of lengths $m$ and $n$, respectively, then the natural way to $L$-label their sum, $(\sigma,\ell_\sigma)\oplus(\tau,\ell_\tau)$, is to attach the labels of $\ell_\sigma$ to the first $m$ entries of $\sigma\oplus\tau$ and to attach the labels of $\ell_\tau$ to the last $n$ entries of $\sigma\oplus\tau$.
With this in mind, let $(L,\le_L)$ be any quasi-order. Given two label functions ${\ell_1 : \{1,2,\dots,m\}\to L}$ and $\ell_2 : \{1,2,\dots,n\}\to L$, we define the label function ${\ell_1\oplus\ell_2 : \{1,2,\dots,m+n\}\to L}$ by
\[
	(\ell_1\oplus\ell_2)(i) =
	\left\{
	\begin{array}{ll}
	\ell_1(i)&\mbox{for $1\le i\le m$,}\\
	\ell_2(i-m)&\mbox{for $m+1\le i\le m+n$.}
	\end{array}
	\right.
\]
For $(\sigma,\ell_\sigma),(\tau,\ell_\tau)\in\C\wr L$, the definition of $(\sigma,\ell_\sigma)\oplus(\tau,\ell_\tau)$ is then
\[
	(\sigma,\ell_\sigma)\oplus(\tau,\ell_\tau)
	=
	(\sigma\oplus\tau, \ell_\sigma\oplus\ell_\tau).
\]
Our argument that the sum~$\C\oplus\D$ is wqo whenever both~$\C$ and $\D$ are wqo now extends to show that~$\C\oplus\D$ is lwqo whenever both~$\C$ and $\D$ are lwqo. We simply need to take $(L,\le_L)$ to be an arbitrary wqo set and consider the surjective order-preserving mapping $\Phi : (\C\wr L)\times(\D\wr L)\to (\C\oplus\D)\wr L$ defined by
\[
	\Phi((\sigma,\ell_\sigma),(\tau,\ell_\tau))
	=
	(\sigma,\ell_\sigma)\oplus(\tau,\ell_\tau)
	=
	(\sigma\oplus\tau, \ell_\sigma\oplus\ell_\tau).
\]
We record this fact below.

\begin{proposition}
\label{prop-sums-wqo-lwqo}
If the classes~$\C$ and $\D$ are both wqo (resp., lwqo), then~$\C\oplus\D$ and~$\C\ominus\D$ are also wqo (resp., lwqo).
\end{proposition}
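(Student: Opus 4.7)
The plan is to observe that the proposition is essentially already proved in the expository paragraphs preceding the statement, and simply formalise those remarks. The core idea is to realise $\C \oplus \D$ (and its labeled analogue) as the image of a product under an order-preserving surjection, and then invoke Propositions~\ref{prop-wqo-product} and~\ref{prop-wqo-order-preserving}. The skew-sum case will follow either by a symmetric argument or by applying the reverse-complement symmetry that sends $\oplus$ to $\ominus$ and preserves the containment order on permutation classes (and their labeled refinements).

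For the wqo assertion, I would define $\Phi \st \C \times \D \to \C \oplus \D$ by $\Phi((\sigma,\tau)) = \sigma \oplus \tau$. Surjectivity is immediate from the definition of $\C \oplus \D$. To check $\Phi$ is order-preserving, suppose $\sigma_1 \le \sigma_2$ in $\C$ via an embedding of indices, and $\tau_1 \le \tau_2$ in $\D$ via another embedding; taking the union of the two embeddings (with the indices of the $\tau$-embedding shifted by the length of $\sigma_2$) yields an embedding of $\sigma_1 \oplus \tau_1$ into $\sigma_2 \oplus \tau_2$. Proposition~\ref{prop-wqo-product} gives that $\C \times \D$ is wqo, and then Proposition~\ref{prop-wqo-order-preserving} yields that $\C \oplus \D$ is wqo.

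For the lwqo assertion, fix an arbitrary wqo set $(L, \le_L)$ and extend the construction above to labeled permutations using the operation $\ell_\sigma \oplus \ell_\tau$ defined in the paragraph preceding the statement. Specifically, I would define
\[
	\Phi_L \st (\C \wr L) \times (\D \wr L) \to (\C \oplus \D) \wr L,
	\quad
	\Phi_L((\sigma,\ell_\sigma),(\tau,\ell_\tau)) = (\sigma \oplus \tau,\ \ell_\sigma \oplus \ell_\tau).
\]
Surjectivity follows because any $(\pi,\ell_\pi) \in (\C \oplus \D) \wr L$ can be written as $\pi = \sigma \oplus \tau$ with $\sigma \in \C$ and $\tau \in \D$, and then the labels $\ell_\pi$ split uniquely into $\ell_\sigma$ and $\ell_\tau$ that map to $\ell_\pi$ under $\oplus$. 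For order-preservation, given labeled containments $(\sigma_1,\ell_{\sigma_1}) \le (\sigma_2,\ell_{\sigma_2})$ and $(\tau_1,\ell_{\tau_1}) \le (\tau_2,\ell_{\tau_2})$, the same index-union trick as before yields an embedding of $(\sigma_1 \oplus \tau_1,\ \ell_{\sigma_1} \oplus \ell_{\tau_1})$ into $(\sigma_2 \oplus \tau_2,\ \ell_{\sigma_2} \oplus \ell_{\tau_2})$ whose corresponding labels remain comparable because the comparisons hold on each half. Since $\C \wr L$ and $\D \wr L$ are wqo by the lwqo hypotheses, Proposition~\ref{prop-wqo-product} shows the product is wqo, and then Proposition~\ref{prop-wqo-order-preserving} shows $(\C \oplus \D) \wr L$ is wqo; as $L$ was arbitrary, $\C \oplus \D$ is lwqo.

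There is essentially no obstacle here—the result is a clean application of the two propositions from Section~\ref{subsec-wqo}. The only mild subtlety worth being explicit about is that the map $\Phi$ (and $\Phi_L$) need not be injective, because a permutation such as $12$ has multiple decompositions as a direct sum (e.g. $1 \oplus 1$ and $\emptyperm \oplus 12$); but since we only require an order-preserving \emph{surjection} to apply Proposition~\ref{prop-wqo-order-preserving}, non-injectivity is irrelevant. The skew-sum case is handled identically, replacing $\oplus$ by $\ominus$ throughout.
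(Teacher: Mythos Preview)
Your proposal is correct and follows exactly the same approach as the paper: realise $\C\oplus\D$ (resp., its labeled version) as the image of the product $\C\times\D$ (resp., $(\C\wr L)\times(\D\wr L)$) under the order-preserving surjection $(\sigma,\tau)\mapsto\sigma\oplus\tau$, then invoke Propositions~\ref{prop-wqo-product} and~\ref{prop-wqo-order-preserving}. Indeed, as you yourself observe, the paper presents this argument in the paragraphs immediately preceding the proposition and simply records the conclusion as Proposition~\ref{prop-sums-wqo-lwqo}.
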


The class~$\C$ is \emph{sum closed} if~$\C\oplus\C\subseteq\C$. Given any class~$\C$, its \emph{sum closure}, denoted by~$\bigoplus \C$, is defined to be the smallest (in terms of set containment) sum closed permutation class containing~$\C$. Equivalently,
\[
	\bigoplus \C
	=
	\{
	\alpha_1\oplus\alpha_2\oplus\cdots\oplus\alpha_m
	:
	\alpha_1,\dots,\alpha_m\in\C
	\}.
\]
We define the terms \emph{skew closed} and \emph{skew closure} analogously. For enumeration, it is important to observe that every permutation $\pi\in\bigoplus\C$ can be expressed uniquely as a sum of sum indecomposable permutations (resp., a skew sum of skew indecomposable permutations). However, wqo and lwqo arguments do not require such fine control over the structure of these classes. The wqo content of the following result was first observed by Atkinson, Murphy, and Ru\v{s}kuc~{\cite[Theorem 2.5]{atkinson:partially-well-:}}.

\begin{theorem}
\label{thm-lwqo-sum-closure}
If the class~$\C$ is wqo (resp., lwqo), then its sum closure $\bigoplus\C$ and skew closure~$\bigominus\C$ are also wqo (resp., lwqo).
\end{theorem}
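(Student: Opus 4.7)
The strategy is to reduce the claim to Higman's Lemma via the unique sum-indecomposable decomposition. Let $\I\subseteq\C$ denote the set of nonempty sum-indecomposable permutations in $\C$. Each $\pi\in\bigoplus\C$ admits a unique expression $\pi=\alpha_1\oplus\alpha_2\oplus\cdots\oplus\alpha_k$ with each $\alpha_i\in\I$ (the empty permutation corresponding to $k=0$), giving a bijection $\Psi\colon\bigoplus\C\to\I^\ast$ sending $\pi$ to the word $\alpha_1\alpha_2\cdots\alpha_k$.

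The key claim is that $\Psi$ is order-reflecting from the permutation containment order to the generalised subword order on $\I^\ast$. Suppose $\Psi(\pi)$ is contained in $\Psi(\pi')$, where $\pi'=\alpha'_1\oplus\cdots\oplus\alpha'_n$, so that there exist strict indices $1\le i_1<i_2<\cdots<i_k\le n$ satisfying $\alpha_j\le\alpha'_{i_j}$ for every $j$. Assembling these component embeddings inside the sum structure of $\pi'$ immediately exhibits $\pi\le\pi'$. (Note that $\Psi$ is typically not order-preserving in the opposite direction, since a sum-indecomposable factor of $\pi$ may embed into $\pi'$ across the boundary between two components of $\pi'$; but this direction is not needed.) For the wqo assertion, since $\C$ is wqo so is $\I\subseteq\C$; Higman's Lemma then shows that $\I^\ast$ is wqo under the generalised subword order, and Proposition~\ref{prop-wqo-order-reflecting} concludes that $\bigoplus\C$ is wqo.

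For the lwqo assertion, fix any wqo set $(L,\le_L)$. The labeled sum introduced just before Proposition~\ref{prop-sums-wqo-lwqo} lets me decompose each $(\pi,\ell_\pi)\in(\bigoplus\C)\wr L$ uniquely as a labeled sum of its sum-indecomposable components carrying the restrictions of $\ell_\pi$, producing a bijection $\Psi_L\colon(\bigoplus\C)\wr L\to(\I\wr L)^\ast$. The argument above carries over verbatim---label compatibility is preserved because the individual component embeddings are themselves label-preserving---to show $\Psi_L$ is order-reflecting with respect to the generalised subword order. Since $\C$ is lwqo, $\I\wr L$ is wqo, so Higman's Lemma gives that $(\I\wr L)^\ast$ is wqo, and Proposition~\ref{prop-wqo-order-reflecting} finishes the argument. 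As $L$ was arbitrary, $\bigoplus\C$ is lwqo. The skew-closure case follows identically with ``skew-indecomposable'' in place of ``sum-indecomposable'', or equivalently via the reverse-complement symmetry, which interchanges $\oplus$ and $\ominus$ and preserves both wqo and lwqo.

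The principal subtlety I expect is the order-reflecting property of $\Psi$ and $\Psi_L$, which depends crucially on the rigidity enforced by sum-indecomposability: a sum-indecomposable source component $\alpha_j$ has no internal sum boundary to split across, so the strictly increasing subword indices $i_1<\cdots<i_k$ supplied by the generalised subword order align with \emph{disjoint} components of $\pi'$ in exactly the way needed to splice the individual embeddings into a global one. Everything else is a routine invocation of the machinery of Section~\ref{sec-min-bad-seq}.
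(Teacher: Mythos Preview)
Your proof is correct, but it is dual to the paper's argument rather than the same. The paper constructs an order-\emph{preserving surjection} $\Phi\colon\C^\ast\to\bigoplus\C$ (and $(\C\wr L)^\ast\to(\bigoplus\C)\wr L$) sending a word to the sum of its letters, and then invokes Proposition~\ref{prop-wqo-order-preserving}; you instead construct an order-\emph{reflecting injection} $\Psi\colon\bigoplus\C\to\I^\ast$ via the unique sum-indecomposable decomposition and invoke Proposition~\ref{prop-wqo-order-reflecting}. Both routes reduce to Higman's Lemma, so the core content is the same. The paper's version is marginally cleaner in that it does not need the unique decomposition at all---indeed, the paper remarks just before the theorem that such control is unnecessary for wqo/lwqo purposes---whereas your approach requires it to make $\Psi$ well-defined.

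One expository quibble: your final paragraph locates the ``principal subtlety'' in the order-reflecting property of $\Psi$, but that direction is actually trivial and does not use sum-indecomposability at all---given $\alpha_j\le\alpha'_{i_j}$ with $i_1<\cdots<i_k$, the blockwise embeddings assemble into a global one regardless of the structure of the $\alpha_j$. Sum-indecomposability is needed only for the \emph{well-definedness} of $\Psi$ (uniqueness of the decomposition), and the phenomenon you describe (a component splitting across a sum boundary) is precisely the obstruction to $\Psi$ being order-\emph{preserving}, which you have already correctly disclaimed.
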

\begin{proof}
The skew versions of the result follow by symmetry from the sum versions, so we consider only the latter. First suppose that~$\C$ is wqo. Higman's lemma shows that~$\C^\ast$ is wqo under the generalised subword order. It follows by inspection that the mapping $\Phi :  \C^\ast\to\bigoplus\C$ defined by
\[
	\Phi(\alpha_1\alpha_2\cdots\alpha_m)
	=
	\alpha_1\oplus\alpha_2\oplus\cdots\oplus\alpha_m
\]
is order-preserving. Therefore Proposition~\ref{prop-wqo-order-preserving} implies that $\bigoplus\C$ is wqo.

Now suppose that~$\C$ is lwqo and take $(L,\le_L)$ to be an arbitrary wqo set. Thus~$\C\wr L$ is wqo, and so $(\C\wr L)^\ast$ is wqo by Higman's lemma. Define the mapping $\Phi : (\C\wr L)^\ast\to (\bigoplus\C)\wr L$ by
\[
	\Phi((\alpha_1,\ell_1)(\alpha_2,\ell_2)\cdots(\alpha_m,\ell_m))
	=
	(\alpha_1\oplus\alpha_2\oplus\cdots\oplus\alpha_m, \ell_1\oplus\ell_2\oplus\cdots\oplus\ell_m).
\]
Again, $\Phi$ is order-preserving, so $(\bigoplus\C)\wr L$ is wqo by Proposition~\ref{prop-wqo-order-preserving}, proving that $\bigoplus\C$ is lwqo.
\end{proof}

We conclude with a result referenced several times already, that the downward closure of the increasing oscillations,~$\O_I$, is wqo. While~$\O_I$ is a sum closed class, every permutation from~$\O_I$ is contained in a sum indecomposable permutation from~$\O_I$, and this implies that~$\O_I$ is not contained in the sum closure of a smaller class. Thus the fact that~$\O_I$ is wqo is not a consequence of Theorem~\ref{thm-lwqo-sum-closure}.

Instead, to establish that~$\O_I$ is wqo, we note that every member of~$\O_I$ can be expressed as a sum of increasing oscillations. Recall that the set of increasing oscillations (under our conventions) is
\[
	\{1,
	21,
	231,
	312,
	2413,
	3142,
	24153,
	31524,
	241635,
	315264,
	2416375,
	3152746,
	\dots\}.
\]
It is not difficult to see that there are two increasing oscillations of each length $n\ge 3$. Viewing the increasing oscillations themselves as a poset under the permutation containment order---as in Figure~\ref{fig-inc-osc-Hasse}---we see that these two increasing oscillations of each length are partitioned into two chains, and that both increasing oscillations of length $n\ge 3$ are contained in both increasing oscillations of length $n+1$. The poset of increasing oscillations is therefore trivially wqo. It then follows from Higman's lemma and Proposition~\ref{prop-wqo-order-preserving} that the class~$\O_I$ is wqo. Note that we have already observed (for instance, with Figure~\ref{fig-three-antichains-lwqo}) that~$\O_I$ is not lwqo (it is not even $2$-wqo). This gives us the following.

\begin{figure}
	\begin{center}
	\begin{footnotesize}
	\begin{tikzpicture}[every node/.style={},xscale=2, yscale=1]
		\node (1) at (0.5,0) {$1$};
		\node (21) at (0.5,1) {$21$};
		\node (231) at (0,2) {$231$};
		\node (312) at (1,2) {$312$};
		\node (2413) at (0,3) {$2413$};
		\node (3142) at (1,3) {$3142$};
		\node (24153) at (0,4) {$24153$};
		\node (31524) at (1,4) {$31524$};
		\node (d0) at (0,4.75) {$\vdots$};
		\node (d1) at (1,4.75) {$\vdots$};
		\foreach \q/\p in {1/21, 21/231, 21/312, 231/2413, 231/3142, 312/2413, 312/3142, 2413/24153, 2413/31524, 3142/24153, 3142/31524}
			\draw [thick, line cap=round] (\q) -- (\p);
	\end{tikzpicture}
	\end{footnotesize}
	\end{center}
\caption{The Hasse diagram of  increasing oscillations under the permutation containment order.}
\label{fig-inc-osc-Hasse}
\end{figure}
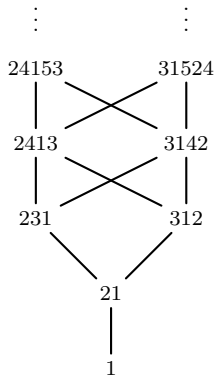

\begin{proposition}
\label{prop-OI-wqo}
The downward closure of the increasing oscillations,~$\O_I$, is wqo but not lwqo.
\end{proposition}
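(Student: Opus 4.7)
The plan splits cleanly into two parts. The non-lwqo claim is immediate from the construction on the left of Figure~\ref{fig-three-antichains-lwqo}: labelling the first and last entries of each increasing oscillation with a distinguished colour from a $2$-element antichain produces an infinite antichain in $\O_I\wr L$, showing $\O_I$ is not even $2$-wqo, let alone lwqo. The substantive content is thus the wqo direction, which I would establish by decomposing members of $\O_I$ into sums of increasing oscillations and applying Higman's lemma.

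First I would invoke Proposition~\ref{prop-OI-fb} to identify $\O_I$ with the class of permutations whose inversion graphs are linear forests. Given any $\pi\in\O_I$, write the unique decomposition $\pi=\alpha_1\oplus\alpha_2\oplus\cdots\oplus\alpha_m$ into sum-indecomposable summands. By Proposition~\ref{prop-sum-indecomp-connected} each $G_{\alpha_i}$ is a connected subgraph of a linear forest, hence a path, so by Proposition~\ref{inc-osc-path} each $\alpha_i$ is an increasing oscillation. Every element of $\O_I$ is therefore a sum of increasing oscillations.

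Next I would check that the set $I$ of increasing oscillations, under the inherited containment order, is itself wqo. Inspection of Figure~\ref{fig-inc-osc-Hasse} shows that for every $n\ge 3$ both increasing oscillations of length $n$ are contained in both increasing oscillations of length $n+1$; consequently any antichain in $I$ is confined to a single length level and so has at most two elements. Since $I$ is graded by length it has no infinite strictly descending chain either, so Proposition~\ref{prop-wqo-ramsey} yields that $I$ is wqo. Higman's lemma then shows that $I^\ast$ is wqo under the generalised subword order, and the map $\Phi\st I^\ast\to\O_I$ defined by
\[
\Phi(\alpha_1\alpha_2\cdots\alpha_m)=\alpha_1\oplus\alpha_2\oplus\cdots\oplus\alpha_m
\]
is an order-preserving surjection: surjectivity comes from the decomposition above, and order-preservation is immediate since an embedding of words $(\alpha_1,\dots,\alpha_k)$ into $(\beta_1,\dots,\beta_n)$ via indices $i_1<\cdots<i_k$ with $\alpha_j\le\beta_{i_j}$ lifts to an embedding of the corresponding sums by placing each $\alpha_j$ inside $\beta_{i_j}$ and discarding the unused summands. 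Proposition~\ref{prop-wqo-order-preserving} then delivers that $\O_I$ is wqo.

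No step poses a real obstacle; the one point that might look delicate is the wqo of $I$ itself, which is settled by the direct ``ladder'' structure of its Hasse diagram.
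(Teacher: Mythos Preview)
Your proposal is correct and follows essentially the same argument as the paper: decompose every member of $\O_I$ as a sum of increasing oscillations, observe that the poset of increasing oscillations is wqo via its ladder-shaped Hasse diagram, and then apply Higman's lemma together with Proposition~\ref{prop-wqo-order-preserving} to the order-preserving surjection $\alpha_1\cdots\alpha_m\mapsto\alpha_1\oplus\cdots\oplus\alpha_m$, while the non-lwqo claim is handled by the labeled antichain of Figure~\ref{fig-three-antichains-lwqo}. Your version is in fact slightly more thorough, since you explicitly justify the decomposition step via Propositions~\ref{prop-OI-fb}, \ref{prop-sum-indecomp-connected}, and~\ref{inc-osc-path}, whereas the paper simply asserts it.
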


Another example of a class that is wqo but not lwqo (or even $2$-wqo) is given by the downward closure of the widdershins spirals; see Brignall, Engen, and Vatter~\cite[Proposition 3.3]{brignall:a-counterexampl:} for a proof.

There is a naturally defined class containing both $\bigoplus\C$ and $\bigominus\C$: the \emph{separable closure} of~$\C$ is defined to be the smallest permutation class containing~$\C$ that is both sum and skew closed. (The separable closure of~$\C$ has also been called the \emph{strong completion} of~$\C$ by some authors, including Murphy~\cite[Section~2.2.5]{murphy:restricted-perm:}.) Atkinson, Murphy, and Ru\v{s}kuc~{\cite[Theorem~2.5]{atkinson:partially-well-:}} showed that the separable closure of a wqo class is itself wqo by appealing to a more general version of Higman's lemma than we have presented. We derive and generalise this result later, in Corollary~\ref{cor-separable-closure-wqo}, as a consequence of more general results on the substitution decomposition.

\section{The Substitution Decomposition}
\label{sec-subst-decomp}

Having considered in detail the relationship between sums and skew sums and labelled well-quasi-order, we now consider the more general context provided by the substitution decomposition. This notion, introduced in the context of permutations momentarily, is common to all relational structures and has appeared in a wide variety of settings under numerous names, such as modular decomposition, X-join, and lexicographic sum.
As well as furthering our story about lwqo in permutations, the parallel notion for graphs is sufficiently strongly related to the permutation version to establish a partial answer to Question~\ref{question-prop-wqo-perms-graphs-converse}, as well as a complete answer to the lwqo analogue (at the end of this section).

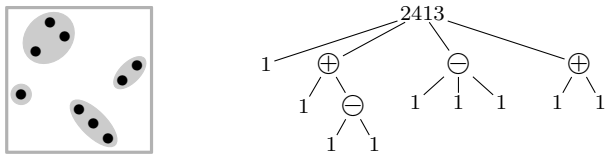
\begin{figure}
\begin{footnotesize}
\begin{center}
	\begin{tikzpicture}[scale=0.1925, baseline=(current bounding box.south)]
		\draw [lightgray, fill] (1,4) circle (20pt);
		\draw[lightgray, fill, rotate around={-45:(2.9,7.9)}] (2.9,7.9) ellipse (45pt and 55pt);
		\draw[lightgray, fill, rotate around={45:(6,2)}] (6,2) ellipse (25pt and 60pt);
		\draw[lightgray, fill, rotate around={-45:(8.5,5.5)}] (8.5,5.5) ellipse (20pt and 40pt);
		\plotpermbox{0.5}{0.5}{9.5}{9.5};
		\plotperm{4,7,9,8,3,2,1,5,6};
	\end{tikzpicture}
	\quad\quad\quad\quad
	\tikzset{inner sep=1pt, outer sep=0pt}
	\begin{forest}
		[$2413$, l sep=6pt, s sep=15pt
			[$1$, anchor=center]
			[$\bigoplus$, l sep=6pt, s sep=10pt, anchor=center
				[$1$, l=-1pt, anchor=center]
				[$\bigominus$, l=-1pt, l sep=6pt, s sep=10pt, anchor=center
					[$1$, l=-1pt, anchor=center]
					[$1$, l=-1pt, anchor=center]
				]
			]
			[$\bigominus$, l sep=6pt, s sep=10pt, anchor=center
				[$1$, l=-1pt, anchor=center]
				[$1$, l=-1pt, anchor=center]
				[$1$, l=-1pt, anchor=center]
			]
			[$\bigoplus$, l sep=6pt, s sep=10pt, anchor=center
				[$1$, l=-1pt, anchor=center]
				[$1$, l=-1pt, anchor=center]
			]
		]
	\end{forest}
\end{center}
\end{footnotesize}
\caption{The plot of the permutation $479832156$ (left) and its substitution decomposition tree (right).}
\label{fig-subst-tree}
\end{figure}

We begin with the definitions in the permutation context. An \emph{interval} in the permutation~$\pi$ is a set of contiguous indices $I=\{a,a+1,\dots,b\}$ such that the set of values ${\pi(I)=\{\pi(i) : i\in I\}}$ is also contiguous.  Given a permutation~$\sigma$ of length $m$ and nonempty permutations $\alpha_1,\dots,\alpha_m$, the \emph{inflation} of~$\sigma$ by $\alpha_1,\dots,\alpha_m$,  denoted by $\sigma[\alpha_1,\dots,\alpha_m]$, is the unique permutation of length ${|\alpha_1|+\cdots+|\alpha_m|}$ obtained by replacing each entry $\sigma(i)$ by an interval that is order isomorphic to $\alpha_i$ in such a way that the intervals are themselves order isomorphic to~$\sigma$.  For example,
\[
	2413[1,132,321,12]=4\ 798\ 321\ 56,
\]
the permutation plotted on the left of Figure~\ref{fig-subst-tree}.

Every permutation of length $n\ge 1$ has \emph{trivial} intervals of lengths $0$, $1$, and $n$; all other intervals are termed \emph{proper}. We further say that the empty permutation and the permutation $1$ are \emph{trivial}. A nontrivial permutation is \emph{simple} if it has no proper intervals. The shortest simple permutations are thus $12$ and $21$, there are no simple permutations of length three, and the simple permutations of length four are $2413$ and $3142$. We have seen many simple permutations in the preceding pages---the permutation $36285714$ plotted in Figure~\ref{fig-perm-contain}, the permutations in the centre and right of Figure~\ref{fig-three-inc-osc-antichains}, the underlying permutations of Figure~\ref{fig-three-antichains-lwqo}, and the widdershins spirals of Figure~\ref{fig-widdershins} are all simple permutations.

The following result follows immediately from the definitions.

\begin{proposition}
\label{prop-simple-decomp-basic}
Every nontrivial permutation can be expressed as an inflation of a nontrivial simple permutation.
\end{proposition}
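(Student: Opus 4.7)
The plan is a minimality argument on the size of the skeleton. Given a nontrivial permutation $\pi$ of length $n \geq 2$, I would first observe that the set of decompositions $\pi = \sigma[\alpha_1, \ldots, \alpha_m]$ with $m \geq 2$ and each $\alpha_i$ nonempty is nonempty: it always contains the trivial decomposition $\pi = \pi[1, 1, \ldots, 1]$, in which the skeleton itself has length $n$. I would then choose such a decomposition with $m$ as small as possible, and claim the resulting skeleton $\sigma$ must be simple. Since $\sigma$ has length $m \geq 2$, this will give the required expression of $\pi$ as an inflation of a nontrivial simple permutation.

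The crucial step is to derive a contradiction from the assumption that this minimal-$m$ skeleton $\sigma$ has a proper interval, say at positions $a, a+1, \ldots, b$ with $2 \leq b - a + 1 \leq m - 1$. Writing $\tau$ for the pattern of $\sigma$ on $[a,b]$ and $\sigma'$ for the permutation obtained by collapsing this interval to a single entry, we have $\sigma = \sigma'[1, \ldots, 1, \tau, 1, \ldots, 1]$. Substituting this expression back and using the associativity of inflation yields
\[
\pi = \sigma'\bigl[\alpha_1, \ldots, \alpha_{a-1}, \tau[\alpha_a, \ldots, \alpha_b], \alpha_{b+1}, \ldots, \alpha_m\bigr],
\]
a new decomposition of $\pi$ in which the skeleton $\sigma'$ has length $m - (b - a + 1) + 1$. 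Since $b - a + 1 \geq 2$ this length is strictly less than $m$, and since $b - a + 1 \leq m - 1$ it is still at least $2$, contradicting the minimality of $m$. Therefore $\sigma$ has no proper intervals and is simple.

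The only substantive routine fact I would need is the associativity of inflation used to combine the two successive inflations into one. This is immediate from the definition of inflation as replacement of entries by contiguous blocks of the prescribed relative order, so I do not expect it to be an obstacle. The main conceptual content of the argument is simply the slogan ``the smallest nontrivial skeleton must already be simple,'' which foreshadows the stronger (uniqueness) version of the substitution decomposition theorem while requiring only existence here.
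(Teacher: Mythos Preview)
Your argument is correct. The paper itself does not give a proof of this proposition at all; it simply declares that the result ``follows immediately from the definitions'' and moves on. Your minimality argument (choose a decomposition with the shortest nontrivial skeleton, then collapse any proper interval of that skeleton to contradict minimality) is precisely the standard way to unpack that phrase, and nothing in it is problematic---the associativity of inflation you rely on is indeed a direct consequence of the definition.
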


This process of expressing a permutation as the inflation of a simple permutation is what we call the \emph{substitution decomposition}. By repeatedly applying Proposition~\ref{prop-simple-decomp-basic} to decompose a permutation, and then to decompose its nontrivial intervals, and so on, one obtains a \emph{substitution decomposition tree}. An example of a substitution decomposition tree is shown on the right of Figure~\ref{fig-subst-tree}.

We are also interested in the inflation of one class by another. Given two classes~$\C$ and $\U$, the \emph{inflation} of~$\C$ by $\U$ is defined as
\[
	\C[\U]
	=
	\{\sigma[\alpha_1,\dots,\alpha_m]
	:
	\mbox{$\sigma\in\C_m$ and $\alpha_1,\dots,\alpha_m\in\U$}
	\}.
\]

For enumeration, it is essential to associate each permutation to a \emph{unique} substitution decomposition. The standard uniqueness result here is Albert and Atkinson~{\cite[Proposition~2]{albert:simple-permutat:}}, while Brignall~{\cite[Lemma 3.1]{brignall:wreath-products:}} gives a version for classes of the form~$\C[\U]$. However, all we need to establish our lwqo results is Proposition~\ref{prop-simple-decomp-basic}. We begin by considering classes of the form~$\C[\U]$.

\begin{theorem}
\label{thm-CU-inflate-wqo}
If the permutation class~$\C$ is lwqo and the class $\U$ is wqo, then the class~$\C[\U]$ is wqo.
\end{theorem}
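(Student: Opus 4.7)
The plan is to exhibit an order-preserving surjection from $\C\wr\U$ onto $\C[\U]$ and then invoke Proposition~\ref{prop-wqo-order-preserving}. The motivating observation is that a $\U$-labeled permutation $(\sigma,\ell_\sigma)\in\C\wr\U$ carries precisely the data required to form an inflation: an outer permutation $\sigma\in\C$, together with a choice of permutation $\ell_\sigma(i)\in\U$ to be substituted for each entry $\sigma(i)$. The hypotheses of the theorem are exactly what is needed to guarantee that the domain $\C\wr\U$ is wqo: since $\C$ is lwqo and $\U$ (with the permutation containment order) is wqo, the set $\C\wr\U$ of $\U$-labeled members of $\C$ is wqo by the definition of lwqo.

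Concretely, I would define $\Phi\colon\C\wr\U\to\C[\U]$ on a labeled permutation $(\sigma,\ell_\sigma)$ of length $m$ by
\[
\Phi\bigl((\sigma,\ell_\sigma)\bigr)=\sigma[\ell_\sigma(1),\ldots,\ell_\sigma(m)],
\]
with the small convention that any positions labeled by the empty permutation are simply dropped from $\sigma$ before inflating. This causes no trouble because $\C$ is downward closed, so the trimmed outer permutation still lies in $\C$ and the output lies in $\C[\U]$. Surjectivity is immediate: every $\pi\in\C[\U]$ has some presentation $\pi=\sigma[\alpha_1,\ldots,\alpha_m]$ with $\sigma\in\C$ and $\alpha_i\in\U$, and such a $\pi$ is the image under $\Phi$ of the labeled permutation obtained by attaching label $\alpha_i$ to entry $\sigma(i)$.

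The main step, and essentially the only one requiring care, is to verify that $\Phi$ is order-preserving. Suppose $(\sigma,\ell_\sigma)\le(\tau,\ell_\tau)$ in $\C\wr\U$, witnessed by indices $1\le i_1<\cdots<i_m\le n$ such that the subsequence $\tau(i_1)\cdots\tau(i_m)$ is order isomorphic to $\sigma$ and $\ell_\sigma(j)\le_\U\ell_\tau(i_j)$ for each $j$. Each label comparison $\ell_\sigma(j)\le_\U\ell_\tau(i_j)$ supplies an embedding of the block $\ell_\sigma(j)$ as a subsequence of $\ell_\tau(i_j)$, and these block-by-block embeddings assemble into an embedding of $\sigma[\ell_\sigma(1),\ldots,\ell_\sigma(m)]$ into $\tau[\ell_\tau(1),\ldots,\ell_\tau(n)]$: the chosen subblocks already sit in the correct blocks, and the relative placement of different blocks is governed by the embedding of $\sigma$ inside $\tau$ via the indices $i_j$. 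Thus $\Phi$ is an order-preserving surjection $\C\wr\U\tosurj\C[\U]$, and Proposition~\ref{prop-wqo-order-preserving} now shows that $\C[\U]$ is wqo, completing the proof.
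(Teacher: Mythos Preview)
Your proposal is correct and follows essentially the same approach as the paper: define the inflation map $\Phi\colon\C\wr\U\to\C[\U]$ by $\Phi((\sigma,\ell_\sigma))=\sigma[\ell_\sigma(1),\ldots,\ell_\sigma(m)]$, observe it is a surjection, verify it is order-preserving via the same block-by-block embedding argument, and conclude by Proposition~\ref{prop-wqo-order-preserving}. Your explicit handling of empty labels is a small technical nicety the paper glosses over (the paper's definition of inflation requires nonempty $\alpha_i$), but otherwise the two arguments are the same.
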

\begin{proof}
Suppose the class~$\C$ is lwqo and the class $\U$ is wqo. Thus the set of $\U$-labelled permutations of~$\C$,~$\C\wr\U$, is wqo. We define the mapping $\Phi : \C\wr\U\tosurj\C[\U]$ by
\[
	\Phi((\pi,\ell_\pi))
	=
	\pi[\ell_\pi(1),\dots,\ell_\pi(n)],
\]
where $n$ denotes the length of~$\pi$. Note that $\Phi$ is surjective by the definition of~$\C[\U]$.

Suppose that $(\sigma,\ell_\sigma)\le (\pi,\ell_\pi)\in\C\wr\U$, where~$\sigma$ and~$\pi$ have lengths $k$ and $n$, respectively. As witness to this containment, there must exist indices $1\le i_1<\cdots<i_k\le n$ so that $\pi(i_1)\cdots\pi(i_k)$ is order isomorphic to~$\sigma$ and $\ell_\sigma(j)\le\ell_\pi(i_j)$ for all indices $1\le j\le k$. Using this witness, we see that the permutation $\sigma[\ell_\pi(i_1),\dots,\ell_\pi(i_k)]$ contains the permutation $\sigma[\ell_\sigma(1),\dots,\ell_\sigma(k)]$, and is contained in the permutation $\pi[\ell_\pi(1),\dots,\ell_\pi(n)]$. Therefore,
\[
	\Phi((\sigma,\ell_\sigma))
	\le
	\sigma[\ell_\pi(i_1),\dots,\ell_\pi(i_k)]
	\le
	\Phi((\pi,\ell_\pi)),
\]
establishing that $\Phi$ is order-preserving. The result now follows from Proposition~\ref{prop-wqo-order-preserving}.
\end{proof}

\begin{table}
\begin{center}
\begin{tabular}{ccl}
	$\C$&$\U$&$\C[\U]$\\\hline
	wqo&wqo&not necessarily wqo\\
	wqo&lwqo&not necessarily wqo\\
	lwqo&wqo&wqo by Theorem~\ref{thm-CU-inflate-wqo} but not necessarily lwqo\\
	lwqo&lwqo&lwqo by Corollary~\ref{cor-CU-inflate-lwqo}
\end{tabular}
\end{center}
\caption{The wqo/lwqo status of various inflations of permutation classes.}
\label{table-wqo-lwqo-inflations}
\end{table}

The four possible variations on the hypotheses of Theorem~\ref{thm-CU-inflate-wqo} are considered in Table~\ref{table-wqo-lwqo-inflations}. To justify the first two rows of this table, take~$\C$ to be the downward closure of the increasing oscillations,~$\O_I$, which is wqo by Proposition~\ref{prop-OI-wqo}, and take $\U=\{1,12\}$, which is lwqo because it is finite (Proposition~\ref{prop-finite-lwqo}). Then~$\C[\U]$ contains the infinite antichain shown on the left of Figure~\ref{fig-three-antichains}, and thus is not wqo. Beyond Theorem~\ref{thm-CU-inflate-wqo}, the third row of Table~\ref{table-wqo-lwqo-inflations} says that the inflation of an lwqo class by a wqo class is not necessarily lwqo; for example, consider~$\C=\{1\}$ and~$\U=\O_I$. The fourth line of Table~\ref{table-wqo-lwqo-inflations} is settled by Corollary~\ref{cor-CU-inflate-lwqo}, which follows from our result on substitution closures below.


The class~$\C$ is said to be \emph{substitution closed} if~$\C[\C]\subseteq\C$. The \emph{substitution closure} $\langle\C\rangle$ of a class~$\C$ is defined as the smallest substitution closed class containing~$\C$.  A standard argument shows that $\langle\C\rangle$ exists, and the following result also follows readily.

\begin{proposition}
\label{simples-in-substitution-completion}
The substitution closure $\langle \C \rangle$ of the class~$\C$ is the largest class of permutations that contains precisely the same simple permutations as~$\C$.
\end{proposition}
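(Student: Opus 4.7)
The forward direction is immediate: since $\C \subseteq \langle\C\rangle$, every simple permutation of $\C$ is a simple permutation of $\langle\C\rangle$. The content of the proposition is thus the reverse inclusion, and the plan is to prove this by induction on an explicit construction of $\langle\C\rangle$.

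Specifically, I would define $\C^{(0)} = \C$ and, inductively, let $\C^{(n+1)}$ be the downward closure (under permutation containment) of the set of inflations $\sigma[\alpha_1,\dots,\alpha_m]$ where $\sigma\in\C^{(n)}_m$ and each $\alpha_i\in\C^{(n)}$. Then $\bigcup_{n\ge 0}\C^{(n)}$ is substitution closed and contains $\C$, while any substitution-closed class containing $\C$ must contain each $\C^{(n)}$; hence this union equals $\langle\C\rangle$. It therefore suffices to show, by induction on $n$, that the simple permutations of $\C^{(n)}$ coincide with the simple permutations of $\C$.

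The inductive step rests on the following structural observation, which I would verify first. If $\pi$ is a subpermutation of an inflation $\sigma[\alpha_1,\dots,\alpha_m]$, then $\pi$ itself decomposes as an inflation $\sigma'[\beta_1,\dots,\beta_{m'}]$, where $\sigma'$ is a subpermutation of $\sigma$ (obtained by keeping those positions of $\sigma$ whose corresponding block contributes at least one entry), and each $\beta_j$ is a nonempty subpermutation of some $\alpha_i$. Now assume $\pi$ is a nontrivial simple permutation arising this way. If $m'\ge 2$ and some $\beta_j$ has length at least $2$, then the positions used by $\beta_j$ form a proper interval of $\pi$, contradicting simplicity. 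So either $m' = 1$, in which case $\pi = \beta_1 \le \alpha_i$ for some $i$, or every $\beta_j$ has length $1$, in which case $\pi = \sigma' \le \sigma$. In both cases $\pi$ is a subpermutation of an element of $\C^{(n)}$, and hence, by downward closure of $\C^{(n)}$, already lies in $\C^{(n)}$.

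Combining this with the inductive hypothesis shows that every simple permutation in $\C^{(n+1)}$ lies in $\C$, completing the induction and the proof. The main technical point to get right is the structural observation about subpermutations of inflations; once that is in hand, the rest is routine. I would also need to remark that the trivial simple permutations (the empty permutation and $1$) are handled automatically, since they belong to every nonempty permutation class.
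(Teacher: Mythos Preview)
Your proof is correct. The paper does not actually prove this proposition; it merely asserts that it ``follows readily'' after defining $\langle\C\rangle$ as the smallest substitution-closed class containing $\C$ (and noting, without proof, that this is equivalently the largest class with the same simples as $\C$). So you are supplying an argument the paper omits, and the structural observation you isolate---that any subpermutation of $\sigma[\alpha_1,\dots,\alpha_m]$ is itself an inflation $\sigma'[\beta_1,\dots,\beta_{m'}]$ with $\sigma'\le\sigma$ and each $\beta_j$ contained in some $\alpha_i$---is indeed the heart of the matter.

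Two minor remarks. First, the inductive scaffolding can be removed: let $\D$ be the class of all permutations every simple subpermutation of which lies in $\C$. Your structural observation shows directly that $\D$ is substitution-closed, and clearly $\C\subseteq\D$, so $\langle\C\rangle\subseteq\D$; since the simple permutations of $\D$ are precisely those of $\C$, you are done in one step. This is presumably what the paper has in mind by ``follows readily''. Second, a small terminological slip: in the paper's conventions the empty permutation and $1$ are \emph{trivial} and therefore not simple (the shortest simple permutations are $12$ and $21$), so there is nothing to say about ``trivial simple permutations''.
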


We have observed that the inflation of a wqo class by another wqo class is not necessarily wqo, so the substitution closure of a wqo class need not be wqo; a concrete example is $\langle\O_I\rangle$. However, as we show below, the substitution closure of an \emph{lwqo} class is always \emph{lwqo}. In order to establish this result, we must extend our notion of substitution decomposition to labelled permutations. We do this exactly like we extended the notion of sums to labelled permutations in Section~\ref{sec-sums}. Thus we define the inflation of an unlabelled permutation~$\sigma$ of length $m$ by a sequence of $m$ labelled permutations $(\alpha_1,\ell_1),\dots,(\alpha_m,\ell_m)$ as the permutation $\sigma[\alpha_1,\dots,\alpha_m]$ in which the first $|\alpha_1|$ entries are labelled by $\ell_1(1)$, $\dots$, $\ell_1(|\alpha_1|)$, the next $|\alpha_2|$ entries are labelled by $\ell_2(1)$, $\dots$, $\ell_2(|\alpha_2|)$, and so on. Thus we formally have, using the definition of sums of label functions from Section~\ref{sec-sums}, that
\[
	\sigma[(\alpha_1,\ell_1),\dots,(\alpha_m,\ell_m)]
	=
	(\sigma[\alpha_1,\dots,\alpha_m], \ell_1\oplus\cdots\oplus\ell_m).
\]
It follows immediately from Proposition~\ref{prop-simple-decomp-basic} that every nontrivial \emph{labelled} permutation can be expressed as an inflation of a nontrivial (unlabelled) simple permutation by \emph{labelled} permutations.

\begin{theorem}
\label{thm-subst-closure-lwqo}
If the permutation class~$\C$ is lwqo, then its substitution closure $\langle\C\rangle$ is also lwqo.
\end{theorem}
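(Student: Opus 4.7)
The plan is to argue by contradiction using a minimal bad sequence, paralleling the derivation of Higman's lemma in Section~\ref{sec-min-bad-seq}. Fix an arbitrary wqo $(L,\le_L)$ and suppose for contradiction that $\langle\C\rangle\wr L$ is not wqo. Since this set is well-founded by Proposition~\ref{prop-labeled-well-founded}, Proposition~\ref{prop-wqo-iff-min-bad-seq} yields a minimal bad sequence $S=(\pi_1,\ell_1),(\pi_2,\ell_2),\dots$, and by Proposition~\ref{prop-min-bad-seq-prop-closure-wqo} its proper closure $\Lambda=S^{\suplessthan}$ is wqo under labeled containment. Because $L$ is wqo, only finitely many $\pi_i$ can have length at most one (any infinite subsequence of such terms would contain a good pair), so after discarding them I may assume every $\pi_i$ has length at least two.

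Next I apply Proposition~\ref{prop-simple-decomp-basic} to write each $\pi_i=\sigma_i[\alpha_{i,1},\dots,\alpha_{i,m_i}]$ as an inflation of a nontrivial simple $\sigma_i$, forcing $m_i\ge 2$ and $|\alpha_{i,j}|<|\pi_i|$. Transferring the labeling $\ell_i$ to the intervals gives
\[
	(\pi_i,\ell_i)=\sigma_i[(\alpha_{i,1},\ell_{i,1}),\dots,(\alpha_{i,m_i},\ell_{i,m_i})].
\]
Each $(\alpha_{i,j},\ell_{i,j})$ is a proper labeled subpermutation of $(\pi_i,\ell_i)$ and hence lies in $\Lambda$, while by Proposition~\ref{simples-in-substitution-completion} every simple skeleton $\sigma_i$ lies in $\C$ itself.

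Since $\C$ is lwqo and $\Lambda$ is wqo, $\C\wr\Lambda$ is wqo. I then define
\[
	\Phi\st\C\wr\Lambda \to \langle\C\rangle\wr L,\qquad \Phi(\sigma,\ell)=\sigma[\ell(1),\dots,\ell(|\sigma|)],
\]
where each label $\ell(j)\in\Lambda$ is read as a labeled permutation that is substituted at position $j$ together with its $L$-labels; the image lies in $\langle\C\rangle\wr L$ by substitution closure. Order-preservation is a routine unpacking: given $(\sigma,\ell)\le(\sigma',\ell')$ in $\C\wr\Lambda$ via indices $i_1<\cdots<i_m$, the labeled comparisons $\ell(j)\le_\Lambda \ell'(i_j)$ embed each interval of $\Phi(\sigma,\ell)$ into the corresponding interval of $\Phi(\sigma',\ell')$. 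By construction each $(\pi_i,\ell_i)=\Phi(\sigma_i,\widetilde\ell_i)$ for $\widetilde\ell_i(j)=(\alpha_{i,j},\ell_{i,j})$, so a good pair in $\C\wr\Lambda$ transports under $\Phi$ to a good pair of $S$, contradicting badness. The main delicacy is the labeled substitution decomposition itself: one must verify that every $(\alpha_{i,j},\ell_{i,j})$ genuinely lies in $\Lambda=S^{\suplessthan}$ (which requires the simple skeleton to be nontrivial, hence the trimming step), and that each $\sigma_i$ lies in $\C$ rather than merely in $\langle\C\rangle$---exactly the content of Proposition~\ref{simples-in-substitution-completion}.
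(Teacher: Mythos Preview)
Your proposal is correct and follows essentially the same approach as the paper: a minimal bad sequence in $\langle\C\rangle\wr L$, decomposition of each nontrivial term via a simple skeleton in $\C$ with labeled intervals in $S^{\suplessthan}$, and the order-preserving inflation map $\Phi\st\C\wr S^{\suplessthan}\to\langle\C\rangle\wr L$ to derive a contradiction. Your explicit invocation of Proposition~\ref{simples-in-substitution-completion} to place the skeletons in $\C$ and your handling of the length-$\le 1$ terms match the paper's argument.
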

\begin{proof}
Let~$\C$ be an lwqo class and take $(L,\mathord{\le_L})$ to be an arbitrary wqo set. Suppose to the contrary that $\langle\C\rangle\wr L$ is not wqo and take a minimal bad sequence $S=\{(\pi_1,\ell_1),(\pi_2,\ell_2),\dots\}$ from $\langle\C\rangle\wr L$. Thus $S^{\suplessthan}$ is wqo by Proposition~\ref{prop-min-bad-seq-prop-closure-wqo}. Some members of $S$ may be labelings of the trivial permutation~$1$, that is, members of $\{1\}\wr L$. However, $\{1\}$ is trivially lwqo (by Proposition~\ref{prop-finite-lwqo}, for example), and thus only finitely many members of $S$ may lie in $\{1\}\wr L$. For all of the other members of $S$, Proposition~\ref{prop-simple-decomp-basic} implies that there is a simple permutation $\sigma_i\in\C$ of length $m_i\ge 2$ and labelled permutations $(\alpha_{i,1},\ell_{i,1}),\dots,(\alpha_{i,m_i},\ell_{i,m_i})\in S^{\suplessthan}$ such that
\[
	(\pi_i,\ell_i)
	=
	\sigma_i[(\alpha_{i,1},\ell_{i,1}),\dots,(\alpha_{i,m_i},\ell_{i,m_i})].
\]
We define the mapping $\Phi : \C\wr S^{\suplessthan}\to \langle\C\rangle\wr L$ by
\[
	\Phi((\sigma,\ell_\sigma))
	=
	\sigma[\ell_\sigma(1),\dots,\ell_\sigma(m)],
\]
where $m$ denotes the length of~$\sigma$; note here that each label $\ell_\sigma(i)\in S^{\suplessthan}$ is itself a labelled permutation since $S^{\suplessthan}\subseteq \langle\C\rangle\wr L$. The mapping $\Phi$ is order-preserving, as can be seen by an argument analogous to that used in the proof of Theorem~\ref{thm-CU-inflate-wqo}. We know that $S^{\suplessthan}$ is wqo and thus our hypothesis implies that~$\C\wr\S^{\suplessthan}$ is wqo, so the range of $\Phi$ is wqo by Proposition~\ref{prop-wqo-order-preserving}. By our observations above, this range contains all but finitely many members of $S$, but this is a contradiction because a wqo set cannot contain a bad sequence.
\end{proof}

Let $S$ denote the set of simple permutations in the class~$\C$. It follows from Proposition~\ref{prop-simple-decomp-basic} that $\langle\C\rangle=\langle S^{\suplessthaneq}\rangle$. Moreover, if the set $S$ of simple permutations is lwqo then Theorem~\ref{thm-lwqo-downward-closure} shows that $S^{\suplessthaneq}$ is lwqo, so we have the following.

\begin{corollary}
\label{cor-simples-lwqo}
If the set of simple permutations contained in the permutation class~$\C$ is lwqo, then $\langle\C\rangle$ is also lwqo. In particular, if the set of simple permutations of the permutation class~$\C$ is lwqo, then~$\C$ itself is lwqo.
\end{corollary}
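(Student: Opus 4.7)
The plan is to reduce the statement to a direct combination of Theorem~\ref{thm-lwqo-downward-closure} and Theorem~\ref{thm-subst-closure-lwqo}. Writing $S$ for the set of simple permutations in $\C$, the hypothesis says that $S$ is lwqo. Theorem~\ref{thm-lwqo-downward-closure} then gives that the downward closure $S^{\suplessthaneq}$ is lwqo, and Theorem~\ref{thm-subst-closure-lwqo} applied to the class $S^{\suplessthaneq}$ gives that $\langle S^{\suplessthaneq}\rangle$ is lwqo. The only remaining work is to verify the identification $\langle\C\rangle=\langle S^{\suplessthaneq}\rangle$, which is the content of the brief paragraph preceding the corollary.

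To establish this identification, I would first observe that $S\subseteq\C\subseteq\langle\C\rangle$, and since $\langle\C\rangle$ is a permutation class and hence downward closed, $S^{\suplessthaneq}\subseteq\langle\C\rangle$. Applying the substitution closure operator to both sides and using that $\langle\C\rangle$ is already substitution closed yields $\langle S^{\suplessthaneq}\rangle\subseteq\langle\C\rangle$. For the reverse inclusion, I would proceed by induction on length: trivial permutations lie in $S^{\suplessthaneq}$, while a nontrivial permutation $\pi\in\C$ decomposes as $\pi=\sigma[\alpha_1,\ldots,\alpha_m]$ for some nontrivial simple permutation $\sigma\in S$ and shorter permutations $\alpha_1,\ldots,\alpha_m\in\C$ by Proposition~\ref{prop-simple-decomp-basic}. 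By induction each $\alpha_i$ lies in $\langle S^{\suplessthaneq}\rangle$, and since this latter class is substitution closed and already contains $\sigma$, it contains $\pi$. Hence $\C\subseteq\langle S^{\suplessthaneq}\rangle$, and applying the substitution closure operator once more gives $\langle\C\rangle\subseteq\langle S^{\suplessthaneq}\rangle$ as well.

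For the ``in particular'' clause, I would simply note that lwqo is inherited by subsets: if $Y\subseteq X$ and $(L,\le_L)$ is any wqo, then any labeled antichain in $Y\wr L$ is already a labeled antichain in $X\wr L$, so $X\wr L$ being wqo forces $Y\wr L$ to be wqo as well. Since $\C\subseteq\langle\C\rangle$ and we have just shown that $\langle\C\rangle$ is lwqo, it follows immediately that $\C$ is lwqo. There is no genuine obstacle in this argument; the proof is essentially a bookkeeping exercise showing that $\langle\C\rangle$ is determined entirely by the downward closure of its simple permutations, after which the two preceding theorems do all the real work.
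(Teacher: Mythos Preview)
Your proposal is correct and follows essentially the same route as the paper: the paper's argument is precisely that $\langle\C\rangle=\langle S^{\suplessthaneq}\rangle$ (via Proposition~\ref{prop-simple-decomp-basic}), that $S^{\suplessthaneq}$ is lwqo by Theorem~\ref{thm-lwqo-downward-closure}, and then that $\langle S^{\suplessthaneq}\rangle$ is lwqo by Theorem~\ref{thm-subst-closure-lwqo}. You have simply spelled out the verification of $\langle\C\rangle=\langle S^{\suplessthaneq}\rangle$ and the ``in particular'' clause in more detail than the paper does.
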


We observed with Proposition~\ref{prop-finite-lwqo} that finite sets of permutations are always lwqo. Thus we can further specialise Corollary~\ref{cor-simples-lwqo} to obtain an lwqo strengthening of a result of Albert and Atkinson.

\begin{corollary}
[Cf. Albert and Atkinson~{\cite[Corollary~8]{albert:simple-permutat:}}]
\label{cor-simples-lwqo-finite}
Every permutation class containing only finitely many simple permutations is lwqo.
\end{corollary}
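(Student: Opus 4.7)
The plan is to prove Corollary~\ref{cor-simples-lwqo-finite} as an immediate consequence of the two results that precede it in the excerpt. Let $\C$ be a permutation class containing only finitely many simple permutations, and let $S$ denote this finite set of simple permutations in $\C$.

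First I would apply Proposition~\ref{prop-finite-lwqo}, which tells us that every finite set of permutations is lwqo, to conclude that $S$ itself is lwqo. Then I would invoke the second sentence of Corollary~\ref{cor-simples-lwqo} (``if the simple permutations of the permutation class $\C$ are lwqo, then $\C$ itself is lwqo'') to conclude that $\C$ is lwqo.

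There is essentially no obstacle here, since both the lwqo-ness of finite sets and the passage from lwqo simples to lwqo class have already been established. The only small point worth mentioning is that Corollary~\ref{cor-simples-lwqo} is stated for the substitution closure $\langle\C\rangle$, but since $\C\subseteq\langle\C\rangle$ and the lwqo property is trivially inherited by subclasses (any infinite bad sequence of labeled members of $\C$ would also be an infinite bad sequence in $\langle\C\rangle\wr L$), lwqo of $\langle\C\rangle$ immediately yields lwqo of $\C$. This inheritance is exactly what the ``in particular'' clause of Corollary~\ref{cor-simples-lwqo} already records, so no additional argument is required.
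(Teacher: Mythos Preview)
Your proof is correct and matches the paper's approach exactly: the paper derives the corollary by combining Proposition~\ref{prop-finite-lwqo} (finite sets are lwqo) with Corollary~\ref{cor-simples-lwqo}, just as you do. Your remark about the ``in particular'' clause handling the passage from $\langle\C\rangle$ to $\C$ is also spot on.
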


To complete the discussion of Table~\ref{table-wqo-lwqo-inflations}, we observe that the inflation of an lwqo class by another lwqo class is also lwqo.

\begin{corollary}
\label{cor-CU-inflate-lwqo}
If the permutation classes~$\C$ and $\U$ are both lwqo, then the class~$\C[\U]$ is also lwqo.
\end{corollary}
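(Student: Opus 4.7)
The plan is to deduce the result from Theorem~\ref{thm-subst-closure-lwqo} applied to the union $\C \cup \U$, noting the inclusion $\C[\U] \subseteq \langle \C \cup \U \rangle$.

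First I would verify that $\C \cup \U$ is itself lwqo. Given any wqo poset $(L,\le_L)$, the lwqo hypotheses on $\C$ and $\U$ ensure that both $\C \wr L$ and $\U \wr L$ are wqo, and clearly $(\C \cup \U) \wr L = (\C \wr L) \cup (\U \wr L)$. The union of two wqo sets is wqo: in any infinite sequence of elements from the union, one of the two subsets must contain infinitely many terms, and applying Proposition~\ref{prop-wqo-ramsey-inf-inc} to that subsequence produces a good pair. Hence $(\C \cup \U) \wr L$ is wqo for every wqo $L$, so $\C \cup \U$ is lwqo.

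Next I would apply Theorem~\ref{thm-subst-closure-lwqo} to conclude that the substitution closure $\langle \C \cup \U \rangle$ is lwqo. Because $\langle \C \cup \U \rangle$ is substitution closed and contains both $\C$ and $\U$, any inflation $\sigma[\alpha_1,\dots,\alpha_m]$ with $\sigma \in \C$ and $\alpha_1,\dots,\alpha_m \in \U$ lies in $\langle \C \cup \U \rangle$, so $\C[\U] \subseteq \langle \C \cup \U \rangle$. Finally, the lwqo property is clearly inherited by subsets: if $X \subseteq Y$ and $Y$ is lwqo, then for any wqo $L$ the set $X \wr L$ is a subset of the wqo set $Y \wr L$ and thus is itself wqo. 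Applying this to $X = \C[\U]$ and $Y = \langle \C \cup \U \rangle$ completes the proof.

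I do not anticipate any real obstacle, as the heavy lifting has already been done in Theorem~\ref{thm-subst-closure-lwqo}; the argument amounts to two elementary closure properties of lwqo (under finite unions and under taking subsets) combined with the observation that $\C[\U]$ is contained in the substitution closure of $\C \cup \U$.
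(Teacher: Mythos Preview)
Your proposal is correct and follows essentially the same approach as the paper's own proof: show that $\C\cup\U$ is lwqo, apply Theorem~\ref{thm-subst-closure-lwqo} to obtain that $\langle\C\cup\U\rangle$ is lwqo, and then use the containment $\C[\U]\subseteq\langle\C\cup\U\rangle$. The paper's version is terser, simply asserting that the union is ``trivially lwqo'' and that the inclusion yields the result, whereas you have spelled out the easy closure properties explicitly.
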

\begin{proof}
If both~$\C$ and $\U$ are lwqo then their union is trivially lwqo, so $\langle\C\cup\U\rangle$ is lwqo by Theorem~\ref{thm-subst-closure-lwqo}, and since~$\C[\U]\subseteq \langle\C\cup\U\rangle$, we see that~$\C[\U]$ is lwqo, as desired.
\end{proof}

Another way to define the separable permutations (first defined in Section~\ref{subsec-sums}) is as the substitution closure $\langle\{1,12,21\}\rangle$. Thus Corollary~\ref{cor-simples-lwqo-finite} immediately implies that the class of separable permutations is lwqo. Moreover, the separable closure of the class~$\C$ (defined at the end of Section~\ref{sec-sums}) is equal to $\langle\{1,12,21\}\rangle[\C]$, and thus we have the following.

\begin{corollary}
[Cf. Atkinson, Murphy, and Ru\v{s}kuc~{\cite[Theorem 2.5]{atkinson:partially-well-:}}]
\label{cor-separable-closure-wqo}
If the class~$\C$ is wqo (resp., lwqo), then its separable closure is also wqo (resp., lwqo).
\end{corollary}

Corollary~\ref{cor-simples-lwqo-finite} and Proposition~\ref{prop-lwqo-fin-basis} imply that every permutation class with only finitely many simple permutations is finitely based. This fact was first proved by Albert and Atkinson via a result about substructures of simple relational structures due to Schmerl and Trotter~\cite{schmerl:critically-inde:}%
\footnote{See Brignall and Vatter~\cite{brignall:a-simple-proof-:} for a proof of Schmerl and Trotter's theorem in the special case of permutations.}.

\begin{corollary}
[Cf. Albert and Atkinson~{\cite[Theorem~9]{albert:simple-permutat:}}]
\label{cor-simples-basis-finite}
Every permutation class containing only finitely many simple permutations is finitely based.
\end{corollary}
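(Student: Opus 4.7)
The plan is to chain together the two results cited immediately before the corollary's statement. Specifically, let $\C$ be a permutation class containing only finitely many simple permutations. The first step is to invoke Corollary~\ref{cor-simples-lwqo-finite}, which hands us directly that $\C$ is lwqo. (Recall that this corollary was itself obtained by combining Proposition~\ref{prop-finite-lwqo} on the lwqo-ness of finite sets with Corollary~\ref{cor-simples-lwqo}, applied to the finite set of simples of $\C$.)

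The second step is to apply Proposition~\ref{prop-lwqo-fin-basis}, which asserts that every lwqo (indeed every $2$-wqo) permutation class is finitely based. Since $\C$ is lwqo by the previous step, this immediately yields that $\C$ is finitely based, completing the proof.

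There is essentially no obstacle: the work has already been done in the earlier results, and Corollary~\ref{cor-simples-basis-finite} is just the two-line composition of those two statements. The only thing worth emphasising is that while the original Albert--Atkinson proof of this fact required the rather intricate Schmerl--Trotter theorem on substructures of simple relational structures, the present route reaches the same conclusion purely through the lwqo framework. Thus the proof is just: \emph{$\C$ has finitely many simples $\Rightarrow$ $\C$ is lwqo (Corollary~\ref{cor-simples-lwqo-finite}) $\Rightarrow$ $\C$ is finitely based (Proposition~\ref{prop-lwqo-fin-basis})}.
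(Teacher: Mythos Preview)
Your proposal is correct and matches the paper's own derivation exactly: the paper states that Corollary~\ref{cor-simples-basis-finite} follows by combining Corollary~\ref{cor-simples-lwqo-finite} with Proposition~\ref{prop-lwqo-fin-basis}, and also notes (as you do) that this bypasses the Schmerl--Trotter theorem used in Albert and Atkinson's original proof.
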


We mention Corollary~\ref{cor-simples-basis-finite} here only for historical interest; it is a much, much weaker result than what Theorem~\ref{thm-subst-closure-lwqo} and Proposition~\ref{prop-lwqo-fin-basis} imply, which is that the substitution closure of \emph{any} lwqo class is finitely based. The bases of substitution closures in general have particularly nice forms%
\footnote{However, in practice it can be difficult to establish precisely what the members of the basis are, and there are frequently infinitely many of them---see Atkinson, Ru\v{s}kuc, and Smith~\cite{atkinson:substitution-cl:} for one such example.},
as shown below. This result is another ingredient in Albert and Atkinson's proof of Corollary~\ref{cor-simples-basis-finite} and follows readily from Proposition~\ref{simples-in-substitution-completion}.

\begin{proposition}[Albert and Atkinson~\cite{albert:simple-permutat:}]
\label{prop-substitution-completion-basis}
The basis of the substitution closure of a class~$\C$ consists of the minimal simple permutations not contained in~$\C$.
\end{proposition}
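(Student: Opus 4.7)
The plan is to show the two inclusions: every basis element of $\langle\C\rangle$ is a minimal simple permutation not in $\C$, and every minimal simple permutation not in $\C$ is a basis element of $\langle\C\rangle$. Throughout I will rely on Proposition~\ref{prop-simple-decomp-basic} (every nontrivial permutation is an inflation of a nontrivial simple permutation) and Proposition~\ref{simples-in-substitution-completion} (the substitution closure contains the same simple permutations as $\C$).

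First I would show that every basis element $\beta$ of $\langle\C\rangle$ is simple. Suppose not, and apply Proposition~\ref{prop-simple-decomp-basic} to write $\beta=\sigma[\alpha_1,\dots,\alpha_m]$ with $\sigma$ a nontrivial simple permutation and $m\ge 2$. Since $\beta$ is not itself simple, either $|\sigma|\ge 2$ with some $|\alpha_i|\ge 2$, or $\sigma\in\{12,21\}$ and $m=2$. In either case $\sigma$ and each $\alpha_i$ are proper subpermutations of $\beta$: $|\sigma|=m<|\beta|$ whenever any $\alpha_i$ has length $\ge 2$, and each $\alpha_i$ has length at most $|\beta|-(m-1)<|\beta|$ since the other factors contribute at least $m-1\ge 1$ to the length. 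Because $\beta$ lies in the basis, each of $\sigma,\alpha_1,\dots,\alpha_m$ lies in $\langle\C\rangle$, and the substitution closedness of $\langle\C\rangle$ then forces $\beta\in\langle\C\rangle$, a contradiction.

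Next I would show that any minimal simple permutation $\beta\notin\C$ lies in the basis of $\langle\C\rangle$. By Proposition~\ref{simples-in-substitution-completion}, $\beta\notin\langle\C\rangle$, so it remains to check that every proper subpermutation of $\beta$ lies in $\langle\C\rangle$. I would prove this by induction on length: the empty and singleton permutations lie in $\langle\C\rangle$ trivially, and for a nontrivial proper subpermutation $\gamma<\beta$, I apply Proposition~\ref{prop-simple-decomp-basic} to write $\gamma=\tau[\delta_1,\dots,\delta_k]$ with $\tau$ simple of length at least $2$. Since $\tau\le\gamma<\beta$ we have $|\tau|<|\beta|$, so $\tau$ is a simple permutation strictly shorter than $\beta$; by the minimality of $\beta$ among simple permutations avoided by $\C$, $\tau\in\C\subseteq\langle\C\rangle$. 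Each $\delta_i$ has length smaller than $|\gamma|<|\beta|$, so by induction $\delta_i\in\langle\C\rangle$, and then substitution closedness gives $\gamma\in\langle\C\rangle$.

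The main obstacle—really a bookkeeping obstacle rather than a conceptual one—is keeping track of the proper subpermutation relationship in the first part (ensuring $\sigma$ and the $\alpha_i$'s are all strictly smaller than $\beta$ when $\beta$ is not simple), and in the second part correctly phrasing the induction so that the minimality of $\beta$ is applied to the simple ``skeleton'' $\tau$ rather than to $\gamma$ itself. Once these two inclusions are established, the two sets coincide, completing the proof.
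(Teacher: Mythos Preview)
The paper does not supply its own proof of this proposition (it is simply attributed to Albert and Atkinson), so there is nothing to compare against; your argument is essentially correct and standard.

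There is one small omission in your first direction. You show that every basis element $\beta$ of $\langle\C\rangle$ is simple, but your plan promised more: that $\beta$ is a \emph{minimal} simple permutation not in $\C$. That $\beta\notin\C$ follows immediately from Proposition~\ref{simples-in-substitution-completion} once you know $\beta$ is simple and $\beta\notin\langle\C\rangle$. For minimality, if $\tau<\beta$ were simple with $\tau\notin\C$, then $\tau\notin\langle\C\rangle$ by the same proposition, contradicting that every proper subpermutation of a basis element lies in $\langle\C\rangle$. (Alternatively, once Part~2 is done you can note that any basis element, being a simple permutation not in $\C$, contains a minimal one, which by Part~2 is also in the basis; the antichain property then forces equality.) Either way, this step should be stated.

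One minor imprecision in Part~2: when you invoke the minimality of $\beta$ to conclude $\tau\in\C$, the relevant fact is $\tau<\beta$ in the containment order (which you have from $\tau\le\gamma<\beta$), not merely $|\tau|<|\beta|$. Your chain already gives you what you need; just cite the containment rather than the length.
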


The result below follows from Schmerl and Trotter's theorem and Proposition~\ref{prop-substitution-completion-basis}. We mention this result because it relates to many of the ideas discussed here, though it is not implied by our lwqo results unless Conjecture~\ref{conj-C+1-wqo-C-lwqo} holds.

\begin{proposition}
[Albert, Ru\v{s}kuc, and Vatter~{\cite[Proposition~2.9]{albert:inflations-of-g:}}]
If the class~$\C^{+1}$ is wqo, then the class $\langle\C\rangle$ is finitely based.
\end{proposition}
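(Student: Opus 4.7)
The plan is to invoke Proposition~\ref{prop-substitution-completion-basis}, by which the basis of $\langle\C\rangle$ is precisely the antichain $B_s$ of minimal simple permutations not contained in $\C$. It therefore suffices to show that $B_s$ is finite.

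The key tool is the Schmerl--Trotter theorem in its strong permutation-specific form, as alluded to in the excerpt: every simple permutation $\beta$ of length at least four either contains a simple subpermutation of length $|\beta|-1$, or else belongs to one of a finite number of families of ``exceptional'' simple permutations (the parallel alternations and wedge simple permutations). A secondary fact that follows from the same classification is that the set of all exceptional simples is wqo under the permutation containment order, being a finite union of chains (or similarly structured sets).

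Given these inputs, I would partition $B_s$ into its exceptional and non-exceptional parts. For any non-exceptional $\beta \in B_s$ of length at least four, the simple subpermutation $\beta'$ of length $|\beta|-1$ guaranteed by Schmerl--Trotter must lie in $\C$ by the minimality of $\beta$ in $B_s$; hence $\beta \in \C^{+1}$. Since $\C^{+1}$ is wqo by hypothesis and $B_s$ is an antichain, the non-exceptional part of $B_s$ is therefore finite. The exceptional part of $B_s$ is an antichain in a wqo set and is likewise finite, and there are only finitely many simple permutations of length less than four; altogether $B_s$ is finite, as required.

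The main obstacle is the invocation of the \emph{strong} form of Schmerl--Trotter, including the wqo property of the exceptional simples. The weak form of the theorem (``$\beta$ contains a simple of length $|\beta|-1$ or $|\beta|-2$'') would only place $B_s$ inside $\C^{+2}$, from which we could not conclude finiteness, since the hypothesis that $\C^{+1}$ is wqo does not obviously imply that $\C^{+2}$ is wqo. Harnessing the additional information about exceptional simples is what makes the one-point (rather than two-point) hypothesis suffice.
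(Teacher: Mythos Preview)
Your argument is correct and follows precisely the route the paper indicates (it does not itself supply a proof, only the remark that the result ``has been derived from Schmerl and Trotter's theorem and Proposition~\ref{prop-substitution-completion-basis}''). You have filled in those details accurately: the basis of $\langle\C\rangle$ is the antichain of minimal simple permutations outside $\C$, the non-exceptional members of that antichain land in $\C^{+1}$ via the one-point form of Schmerl--Trotter, and the exceptional families (parallel alternations and wedge simples) are a finite union of chains, hence wqo. Your observation that the weak two-point form of Schmerl--Trotter would only yield $B_s\subseteq\C^{+2}$, which is \emph{not} known to be wqo under the hypothesis (cf.\ the paper's example $\O_I$ wqo but $\O_I^{+1}$ not), is exactly the right diagnosis of why the stronger statement is needed.
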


Conjecture~\ref{conj-C+1-wqo-C-lwqo} has a natural analogue to substitution closures, stated below.

\begin{conjecture}
\label{conj-C-subst-wqo-C-lwqo}
If the permutation class $\langle\C\rangle$ is wqo, then~$\C$, and thus also $\langle\C\rangle$, is lwqo.
\end{conjecture}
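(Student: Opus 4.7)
My plan is to attempt the contrapositive and funnel everything through the simple permutations. Suppose $\langle\C\rangle$ is wqo. By Proposition~\ref{simples-in-substitution-completion}, the set $\Simples(\C)$ of simple permutations in $\C$ coincides with the simples of $\langle\C\rangle$, so it is a wqo subset of $\langle\C\rangle$. By Corollary~\ref{cor-simples-lwqo}, it would suffice to show that $\Simples(\C)$ is lwqo, since then $\langle\C\rangle$---and in particular $\C$---would be lwqo by Theorem~\ref{thm-subst-closure-lwqo}. So the entire conjecture reduces to the sub-claim: \emph{if $\langle\C\rangle$ is wqo, then the simple permutations of $\C$ are lwqo.}

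To attack this sub-claim, I would fix a wqo set $(L,\le_L)$ and assume toward contradiction that $\Simples(\C)\wr L$ contains an infinite antichain. Passing to a minimal bad sequence $S=\{(\sigma_1,\ell_1),(\sigma_2,\ell_2),\dots\}$ from $\Simples(\C)\wr L$, Proposition~\ref{prop-min-bad-seq-prop-closure-wqo} yields that the proper closure $S^{\suplessthan}$ is wqo, providing a pre-existing wqo ``alphabet'' of labeled permutations drawn from inside $\Simples(\C)\wr L$. The strategy is then to construct an order-reflecting encoding $\Psi\st S\to\langle\C\rangle$ sending each labeled simple $(\sigma_i,\ell_i)$ to an inflation $\sigma_i[\alpha_{i,1},\dots,\alpha_{i,m_i}]\in\langle\C\rangle$ whose inflating patterns $\alpha_{i,j}$ are chosen so as to record the labels $\ell_i(j)$. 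Since $\langle\C\rangle$ is wqo and $\Psi$ is order-reflecting, Proposition~\ref{prop-wqo-order-reflecting} would force $S$ itself to be wqo, contradicting its badness and completing the proof.

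The main obstacle, and where I expect the argument to stall, is in producing the order-reflecting encoding $\Psi$. Two difficulties conspire. First, labels come from an arbitrary wqo set $L$, and there is no a priori reason $\langle\C\rangle$ should contain an order-reflecting copy of $L$; the best one can reasonably hope for is to use pieces of $S^{\suplessthan}$ itself as the inflating patterns, but aligning this alphabet with the structure of the labels $\ell_i(j)$ seems delicate. Second, substitution behaves badly with respect to pattern containment: a subpermutation of an inflation $\sigma_i[\alpha_{i,1},\dots,\alpha_{i,m_i}]$ need not respect the interval structure of $\sigma_i$, so a spurious containment $\Psi((\sigma_i,\ell_i))\le\Psi((\sigma_j,\ell_j))$ in $\langle\C\rangle$ could arise without being witnessed by a labeled containment among the simples. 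Overcoming this would require inflating patterns rigid enough to force any embedding to preserve intervals---an effect reminiscent of the ``anchors'' used in the infinite antichains of Section~\ref{subsec-sums}---and it is precisely this rigidity that the wqo (as opposed to lwqo) hypothesis on $\langle\C\rangle$ gives us no obvious tool to secure. For this reason I expect a complete proof will need either a new structural result about how simple permutations sit inside wqo substitution closures, or progress on the closely related Conjecture~\ref{conj-C+1-wqo-C-lwqo}, whose one-point-extension analogue faces the same essential difficulty.
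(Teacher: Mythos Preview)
The statement you are attempting is a \emph{conjecture}, not a theorem: the paper does not prove it and explicitly lists it among the open problems in the concluding section. There is therefore no ``paper's own proof'' to compare against.

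That said, your reduction is sound and is exactly the right first move. By Proposition~\ref{simples-in-substitution-completion} the simples of $\C$ and of $\langle\C\rangle$ coincide, and Corollary~\ref{cor-simples-lwqo} (together with Theorem~\ref{thm-subst-closure-lwqo}) shows that lwqo of $\langle\C\rangle$ follows from lwqo of $\Simples(\C)$. So the conjecture is genuinely equivalent to the sub-claim you isolate: \emph{if $\langle\C\rangle$ is wqo, then its simple permutations are lwqo}.

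Your diagnosis of where the argument stalls is also accurate. The encoding strategy $\Psi\colon S\to\langle\C\rangle$ via inflations faces exactly the rigidity problem you describe: an embedding of $\sigma[\alpha_1,\dots,\alpha_m]$ into $\tau[\beta_1,\dots,\beta_n]$ need not respect the interval structure, so order-reflection fails in general. The uniqueness results for the substitution decomposition (such as \cite[Proposition~2]{albert:simple-permutat:}) do give some control---an inflation of a simple permutation of length $\ge 4$ determines its simple quotient uniquely---but this only tells you that $\sigma\le\tau$ or $\sigma$ embeds across the interval structure of $\tau$; it does not force the inflating blocks to line up in a way that recovers the labels. Your instinct that this is essentially the same obstruction as in Conjecture~\ref{conj-C+1-wqo-C-lwqo} is well-founded: both conjectures ask whether a wqo hypothesis on a larger class forces an lwqo conclusion on a smaller one, and in both cases the missing ingredient is a mechanism to encode arbitrary wqo labels using only unlabeled containment. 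No such mechanism is currently known, which is why the statement remains a conjecture.
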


We conclude this section by briefly discussing analogues in the graph context, and then establishing links between wqo/lwqo for permutation classes and their corresponding graph classes.

In the context of graphs, the substitution decomposition is generally called the \emph{modular decomposition}, and the analogues of simple permutations are most commonly called \emph{prime graphs}. The analogue of our Theorem~\ref{thm-subst-closure-lwqo} was established by Atminas and Lozin~\cite[Theorem~2]{atminas:labelled-induce:}. 

The notion of modular decomposition of graphs dates back to Gallai's ground-breaking paper on transitive orientations~\cite{gallai:transitiv-orien:}%
\footnote{See~\cite{gallai:a-translation-o:} for an English translation of Gallai's paper.}, %
and indeed this paper also provides the first connection between these concepts for permutations and inversion graphs. It is easy to establish that the inversion graph of a simple permutation is prime, but indeed more is true.

\begin{lemma}[Gallai~\cite{gallai:transitiv-orien:}]\label{lem-gallai-simple-prime}
If $G_\sigma$ is a prime inversion graph, then~$\sigma$ is simple, and the only permutations whose inversion graphs are isomorphic to $\G_{\sigma}$ are $\{\sigma,\sigma^{-1},\sigma^{\text{rc}},(\sigma^{\text{rc}})^{-1}\}$.
\end{lemma}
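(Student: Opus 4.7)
The proof has two parts. For the first claim (primality implies simplicity), my plan is to show that every interval of $\sigma$ gives a module of $G_\sigma$; the contrapositive then yields the result. Given an interval $I = \{a, \ldots, b\}$ with $\sigma(I) = \{c, \ldots, d\}$, any vertex $j \notin I$ lies positionally either left of $a$ or right of $b$, and independently has value either below $c$ or above $d$. A routine check of the four resulting cases shows that in each, $j$ is either inverted with every element of $I$ (adjacent to all of $I$ in $G_\sigma$) or with none (not adjacent to any), so $I$ is a module of $G_\sigma$, with the correspondence preserving (non)triviality. Applied to any $\tau$ with $G_\tau \cong G_\sigma$, the same argument shows $\tau$ is also simple.

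For the second claim, my plan is to recast permutations as compatible pairs of transitive orientations. A permutation $\tau$ of length $n$ is equivalent to a pair of linear orders on $\{1, \ldots, n\}$---the position order $<_p$ and the value order $<_v$---with a pair $\{i,j\}$ forming an edge of $G_\tau$ precisely when $<_p$ and $<_v$ disagree on that pair. Consequently, $<_p$ restricted to the edges of $G_\tau$ is a transitive orientation of $G_\tau$ (transitivity being inherited from $<_p$), and its restriction to the non-edges is a transitive orientation of $\overline{G_\tau}$; conversely, a permutation with prescribed inversion graph is recovered from such a compatible pair of transitive orientations of $G$ and $\overline G$. Since modules of $G$ and $\overline G$ coincide, both $G_\sigma$ and $\overline{G_\sigma}$ are prime comparability graphs. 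Invoking Gallai's theorem---that every prime comparability graph admits exactly two transitive orientations (reverses of one another)---I conclude that there are at most $2 \times 2 = 4$ permutations $\tau$ with $G_\tau \cong G_\sigma$.

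Finally, I would identify these four candidates with $\sigma$, $\sigma^{-1}$, $\sigma^{\mathrm{rc}}$, $(\sigma^{\mathrm{rc}})^{-1}$: taking inverses interchanges $<_p$ and $<_v$ and hence swaps the roles of the two transitive orientations; applying reverse-complement reverses both $<_p$ and $<_v$ simultaneously, hence reverses both transitive orientations; and $(\sigma^{\mathrm{rc}})^{-1}$ composes these two operations. These account for all four orientation-pair combinations, and since the introduction of this section has already remarked that each of these four symmetries yields a permutation with inversion graph isomorphic to $G_\sigma$, the claim follows. The main obstacle is the invocation of Gallai's theorem on uniqueness of transitive orientations of prime comparability graphs: this is the central content of~\cite{gallai:transitiv-orien:} and well beyond what we could reproduce from first principles, but it is acceptable to cite given the attribution of the lemma itself.
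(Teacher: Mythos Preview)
The paper does not prove this lemma; it is stated with attribution to Gallai and invoked directly, so there is no in-paper proof to compare against. Your argument is a correct reconstruction of the standard proof. The first part (intervals of $\sigma$ are modules of $G_\sigma$, hence prime $\Rightarrow$ simple) is routine and correct. The second part via transitive orientations is the canonical approach: a permutation with inversion graph isomorphic to $G$ is determined by a pair of transitive orientations of $G$ and $\overline G$, and Gallai's uniqueness theorem gives exactly two of each when $G$ is prime, yielding at most four permutations. One small imprecision: taking the inverse does not ``swap the roles'' of the two orientations---since $<_p$ and $<_v$ agree on non-edges, interchanging them fixes the orientation $T_2$ of $\overline G$ while reversing $T_1$ on $G$. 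With that correction, the four symmetries correspond exactly to the four pairs $(T_1,T_2)$, $(T_1^{\mathrm{rev}},T_2)$, $(T_1^{\mathrm{rev}},T_2^{\mathrm{rev}})$, $(T_1,T_2^{\mathrm{rev}})$, and the conclusion follows. Your caveat that the load-bearing step is Gallai's theorem on transitive orientations of prime comparability graphs is exactly right and appropriate given the attribution.
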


When combined with Proposition~\ref{prop-graph-to-perm-containment}, Lemma~\ref{lem-gallai-simple-prime} tells us that if~$\sigma$ is a simple permutation and~$\pi$ is any permutation, then $G_\sigma\leq G_\pi$ implies that one of~$\sigma$, $\sigma^{-1}$, $\sigma^{\text{rc}}$, or $(\sigma^{\text{rc}})^{-1}$ is contained in~$\pi$. This puts us in a position to provide the following partial answer to Question~\ref{question-prop-wqo-perms-graphs-converse} (if~$G_\C$ is wqo, is~$\C$ necessarily wqo?).

\begin{proposition}
\label{prop-GC-wqo-C-simples-wqo}
Let~$\C$ be a permutation class such that $G_\C$ is wqo in the induced subgraph order. Then the simple permutations in~$\C$ are wqo.
\end{proposition}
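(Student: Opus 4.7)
The plan is to argue by contradiction, combining a Ramsey-type extraction on the graph side (made possible by the hypothesis that $G_\C$ is wqo) with a short case analysis arising from the fact that the map $\sigma\mapsto G_\sigma$ is at most $4$-to-$1$ on simple permutations.

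Suppose for contradiction that the simple permutations in $\C$ contain an infinite antichain $\sigma_1,\sigma_2,\dots$. Each $G_{\sigma_i}$ lies in $G_\C$, which is wqo by hypothesis, so Proposition~\ref{prop-wqo-ramsey-inf-inc} yields an infinite subsequence on which the inversion graphs are increasing in the induced subgraph order; after passing to this subsequence and reindexing, assume $G_{\sigma_1}\le G_{\sigma_2}\le\cdots$. Then by Proposition~\ref{prop-graph-to-perm-containment} together with Lemma~\ref{lem-gallai-simple-prime} (precisely the combined consequence recalled in the paragraph just above the statement being proved), for each pair $k<\ell$ the permutation $\sigma_\ell$ contains at least one of the four permutations $\sigma_k$, $\sigma_k^{-1}$, $\sigma_k^{\text{rc}}$, $(\sigma_k^{\text{rc}})^{-1}$.

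Now colour each pair $\{k,\ell\}$ with $k<\ell$ by (any choice of) which of these four symmetries is contained in $\sigma_\ell$. The infinite Ramsey theorem applied to this $4$-colouring produces an infinite monochromatic subset; passing to this subset and reindexing once more, we obtain a single symmetry $\phi$ such that $\phi(\sigma_k)\le\sigma_\ell$ for all $k<\ell$. If $\phi$ is the identity then $\sigma_1\le\sigma_2$, immediately contradicting that $\{\sigma_i\}$ is an antichain. Otherwise $\phi$ is one of the involutions ${}^{-1}$, ${}^{\text{rc}}$, $({}^{\text{rc}})^{-1}$, and each of these preserves the containment order (as noted in Section~\ref{subsec-inversion-graphs}). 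Applying $\phi$ to the relation $\phi(\sigma_k)\le\sigma_\ell$ gives $\sigma_k\le\phi(\sigma_\ell)$, so $\sigma_1\le\phi(\sigma_2)$ (from $k=1,\ell=2$) together with $\phi(\sigma_2)\le\sigma_3$ (from $k=2,\ell=3$) force $\sigma_1\le\sigma_3$, again contradicting the antichain assumption.

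The main obstacle is really only coping with the fact that isomorphic inversion graphs can correspond to more than one permutation; this is handled cleanly by Ramsey because Lemma~\ref{lem-gallai-simple-prime} restricts the ambiguity to an orbit of size at most $4$, and because each of the three non-identity symmetries in that orbit is an involution, a single transitivity step then disposes of all three nontrivial cases uniformly.
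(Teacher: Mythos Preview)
Your proof is correct and follows the same overall strategy as the paper's: extract an increasing chain of inversion graphs, invoke Gallai's lemma to translate each graph embedding into a permutation embedding up to one of the four graph-preserving symmetries, and then exploit that these symmetries are order-preserving involutions. The only difference is in the finishing step. The paper notes that a chain of length five already suffices: writing the chain as $\sigma_{i_1}^{s_1}\le\sigma_{i_2}^{s_2}\le\sigma_{i_3}^{s_3}\le\sigma_{i_4}^{s_4}\le\sigma_{i_5}$, pigeonhole forces either some $s_j$ to be the identity or two of the $s_j$ to coincide, and in either case one gets a good pair directly. You instead invoke the infinite Ramsey theorem on pairs to obtain a single monochromatic symmetry $\phi$, and then use the involution trick $\sigma_1\le\phi(\sigma_2)\le\sigma_3$. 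Your route is perfectly valid but uses heavier machinery than needed; the paper's pigeonhole-on-five argument is more elementary and avoids the second Ramsey extraction.
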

\begin{proof}
Suppose that $G_\C$ is wqo, and consider an arbitrary sequence $\sigma_1$, $\sigma_2$, $\dots$ of simple permutations in~$\C$. Since we are assuming that $G_\C$ is wqo, Proposition~\ref{prop-wqo-ramsey-inf-inc} shows that the corresponding sequence of inversion graphs $G_{\sigma_1}$, $G_{\sigma_2}$, $\dots$ contains an infinite increasing subsequence, although all we need is an increasing subsequence of length five, say
\[
	G_{\sigma_{i_1}}
	\le
	G_{\sigma_{i_2}}
	\le
	G_{\sigma_{i_3}}
	\le
	G_{\sigma_{i_4}}
	\le
	G_{\sigma_{i_5}}
\]
for $1\le i_1<i_2<i_3<i_4<i_5$. By Proposition~\ref{prop-graph-to-perm-containment} and Lemma~\ref{lem-gallai-simple-prime}, each of these inclusions must be witnessed by a symmetry of the shorter simple permutation embedding in the longer. Thus
\[
	\sigma_{i_1}^{s_1}
	\leq
	\sigma_{i_2}^{s_2}
	\leq
	\sigma_{i_3}^{s_3}
	\leq
	\sigma_{i_4}^{s_4}
	\leq
	\sigma_{i_5},
\]
where $s_1$, $s_2$, $s_3$, and $s_4$ are each one of the four graph-preserving symmetries. If none of these four symmetries is the identity, then two of them are the same (by the pigeonhole principle), and in either case we can find indices $j$ and $k$ with $1\leq j<k\leq 5$ such that $\sigma_{i_j}\le\sigma_{i_k}$. This implies that every infinite sequence of simple permutations of~$\C$ contains a good pair (a pair of elements in increasing order), proving the result.
\end{proof}

For the remainder of this section, we answer the lwqo analogue of Question~\ref{question-prop-wqo-perms-graphs-converse} in the affirmative, proving that a permutation class~$\C$ is lwqo if and only if the corresponding graph class $G_\C$ is lwqo. In some sense, this proof consists merely of ``adding labels'' to the proof of Proposition~\ref{prop-GC-wqo-C-simples-wqo} and then appealing to Corollary~\ref{cor-simples-lwqo}.

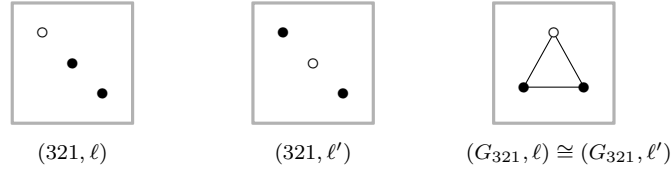
\begin{figure}
	\begin{center}
	\begin{footnotesize}
	\begin{tikzpicture}[scale=0.4, baseline=(current bounding box.center)]
		\plotpermbox{0.5}{0.5}{3.5}{3.5};
		\plotpartialperm{3/1,2/2};
		\plotpartialpermhollow{1/3};
		\node at (2,-1) {$(321,\ell)$};
	\begin{scope}[shift={(8,0)}]
		\plotpermbox{0.5}{0.5}{3.5}{3.5};
		\plotpartialperm{3/1,1/3};
		\plotpartialpermhollow{2/2};
		\node at (2,-1) {$(321,\ell')$};
	\end{scope}
	\begin{scope}[shift={(16,0)}]
		\plotpermbox{0.5}{0.5}{3.5}{3.5};
		\draw (1,1.2)--(2,3)--(3,1.2) --cycle;
		\plotpartialperm{1/1.2,3/1.2};
		\plotpartialpermhollow{2/3};
		\node at (2.5,-1) {$(G_{321},\ell)\cong (G_{321},\ell')$};
	\end{scope}
	\end{tikzpicture}
	\end{footnotesize}
	\end{center}
\caption{Two labelings of 321 (on the left) by the antichain $L=\{\circ,\bullet\}$ that are not related by any symmetry, but whose inversion graphs (on the right) are isomorphic.}
\label{fig-labeling-example}
\end{figure}%

One issue that we must handle more carefully when ``adding labels'' concerns automorphisms of inversion graphs. The four graph-preserving symmetries of  permutations necessarily induce automorphisms of their inversion graphs (indeed, this is why they are called graph-preserving), but the converse is not generally true; simply consider $G_{n\cdots 21}=K_n$, which has all $n!$ possible automorphisms. The significance of this is that automorphisms of inversion graphs can be used to rearrange the labels assigned to vertices in a way that is not represented by any symmetry of the underlying permutation---see Figure~\ref{fig-labeling-example} for an example. 

However, the situation is much nicer when the permutation is simple. 

\begin{proposition}
[{Klav\'ik and Zeman~\cite[Lemma~6.6 and its geometric interpretation]{klavik:automorphism-gr:}}]
\label{prop:klavik}
If~$G_\sigma$ is a prime inversion graph, then every automorphism of $G_\sigma$ corresponds to a symmetry of~$\sigma$, in particular, one of~$\sigma$, $\sigma^{-1}$, $\sigma^{\text{rc}}$, or $(\sigma^{\text{rc}})^{-1}$.
\end{proposition}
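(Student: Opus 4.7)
The plan is to combine Lemma~\ref{lem-gallai-simple-prime} with the primeness of $G_\sigma$ (equivalently, the simplicity of $\sigma$). The four graph-preserving symmetries of $\sigma$ induce four candidate position maps on $\{1,\ldots,n\}$: $\psi_{\text{id}}(i)=i$, $\psi_{\text{rc}}(i)=n{+}1{-}i$, $\psi_{\text{inv}}(i)=\sigma(i)$, and $\psi_{\text{rc-inv}}(i)=n{+}1{-}\sigma(i)$. A direct calculation on inversions shows that $\psi_s$ preserves the edge set of $G_\sigma$ exactly when the corresponding symmetry $s$ fixes $\sigma$ as a permutation, so the inclusion $\{\psi_s:s(\sigma)=\sigma\}\subseteq\operatorname{Aut}(G_\sigma)$ is the easy direction.

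For the reverse inclusion, given an arbitrary $\phi\in\operatorname{Aut}(G_\sigma)$, I would build a permutation $\tau_\phi$ of $\{1,\ldots,n\}$ whose plot is obtained from the plot of $\sigma$ by relocating each point $(i,\sigma(i))$ to a new point whose horizontal coordinate is $\phi(i)$ and whose vertical coordinate is determined so that the map $i\mapsto\phi(i)$ provides an unlabeled graph isomorphism $G_\sigma\to G_{\tau_\phi}$. Concretely, on a pair $\{i,j\}$ where $\phi$ preserves the horizontal ordering, the new vertical coordinates must agree in order with $\sigma(i),\sigma(j)$, whereas on pairs where $\phi$ reverses the horizontal ordering, they must disagree. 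Once $\tau_\phi$ is shown to exist, Lemma~\ref{lem-gallai-simple-prime} forces $\tau_\phi\in\{\sigma,\sigma^{-1},\sigma^{\text{rc}},(\sigma^{\text{rc}})^{-1}\}$, and identifying which of these four it is pinpoints $\phi$ as the corresponding $\psi_s$.

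The main obstacle is the consistency of the pair-by-pair ordering conditions, since they must assemble into a single permutation $\tau_\phi$. Here primeness of $G_\sigma$ is decisive: the subset of $\{1,\ldots,n\}$ on which $\phi$ behaves in an order-preserving fashion relative to its complement must interact uniformly with every edge and non-edge of $G_\sigma$, forcing it to be a module of $G_\sigma$; primeness (equivalently, simplicity of $\sigma$) then collapses this module to a trivial one, and a globally uniform choice for the vertical coordinates results. The $\tau_\phi$ so constructed is one of the four symmetries of $\sigma$, and tracing the construction back identifies $\phi$ with the associated $\psi_s$, completing the argument.
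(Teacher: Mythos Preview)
The paper does not prove this proposition; it is quoted from Klav\'ik and Zeman. So there is no in-paper argument to compare against, and the question is simply whether your sketch stands on its own. It does not, and the gap is in the central construction.

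You want to build $\tau_\phi$ so that $i\mapsto\phi(i)$ is an isomorphism $G_\sigma\to G_{\tau_\phi}$. But $\phi$ is already an automorphism of $G_\sigma$, so $i\sim j$ in $G_\sigma$ is equivalent to $\phi(i)\sim\phi(j)$ in $G_\sigma$. Your requirement therefore forces $\phi(i)\sim\phi(j)$ in $G_{\tau_\phi}$ if and only if $\phi(i)\sim\phi(j)$ in $G_\sigma$; since $\phi$ is a bijection, this says $G_{\tau_\phi}$ and $G_\sigma$ have \emph{identical} edge sets on $\{1,\dots,n\}$, and a permutation is determined by its inversion set, so $\tau_\phi=\sigma$ regardless of $\phi$. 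The appeal to Lemma~\ref{lem-gallai-simple-prime} is then vacuous---it tells you $\sigma\in\{\sigma,\sigma^{-1},\sigma^{\text{rc}},(\sigma^{\text{rc}})^{-1}\}$---and cannot pin down $\phi$. Your ``concretely'' paragraph describes a \emph{different} construction (keeping track of $\sigma(i)$ rather than $\sigma(\phi(i))$ at position $\phi(i)$), but that one fails to produce a graph isomorphism unless $\phi$ is globally order-preserving or order-reversing, which is exactly what needs proving.

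The module argument you gesture at does not rescue this: order-preservation under $\phi$ is a property of \emph{pairs}, not of single indices, so ``the subset on which $\phi$ behaves in an order-preserving fashion'' is not well defined, and there is no evident reason any set you extract from the pair relation should be a module of $G_\sigma$. What actually drives the result is Gallai's theorem that a prime comparability graph has a unique transitive orientation up to reversal: the complement $\overline{G_\sigma}$ is the comparability graph of the two-dimensional poset $P_\sigma$ with $i\le_\sigma j$ iff $i\le j$ and $\sigma(i)\le\sigma(j)$, so any $\phi\in\operatorname{Aut}(G_\sigma)$ must preserve or reverse $\le_\sigma$. One then uses that a prime two-dimensional poset has a unique realizer (the position order and the value order) up to swapping the two linear extensions, which forces $\phi$ to act as one of your four $\psi_s$. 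That two-step reduction---unique orientation, then unique realizer---is the missing idea.
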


Equipped with the restrictions imposed by Proposition~\ref{prop:klavik}, we now show that given two labelings of the same simple permutation, the corresponding inversion graphs are isomorphic if and only if the two labelings are related by a symmetry. 
To state this result formally, we first need some notation. Given an {$L$-labelled} permutation $(\sigma,\ell_\sigma)$, we denote by $(\sigma^{-1},\ell^{-1}_\sigma)$, $(\sigma^{\text{rc}},\ell^{\text{rc}}_\sigma)$, and $((\sigma^{\text{rc}})^{-1},(\ell^{\text{rc}}_\sigma)^{-1})$ the {$L$-labelled} permutations obtained by applying each of the three graph-preserving symmetries to~$\sigma$, whilst preserving the label of each entry, as illustrated in Figure~\ref{fig-labeling-symmetry}. If~$\sigma$ has length $n$ then the resulting label functions can be described by%
\begin{align*}
	\ell_\sigma^{-1}(i) &= \ell_\sigma(\sigma^{-1}(i)),\\
	\ell_\sigma^{\text{rc}}(i) &= \ell_\sigma(n+1-i),\\
	(\ell_\sigma^{\text{rc}})^{-1}(i) &= \ell_\sigma(\sigma^{-1}(n+1-i)).
\end{align*}%
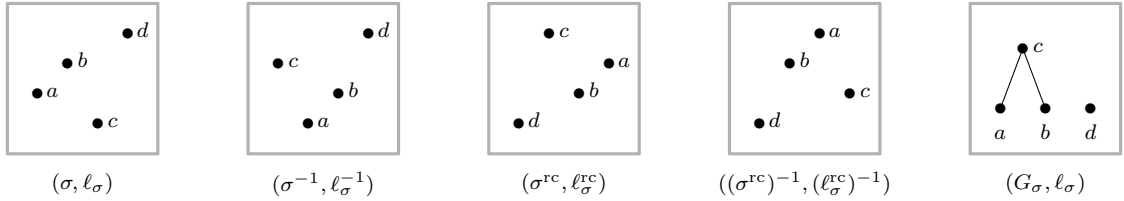
\begin{figure}
	\begin{center}
	\begin{footnotesize}
	\begin{tikzpicture}[scale=0.4, baseline=(current bounding box.center)]
		\plotpermbox{0.5}{0.5}{4.5}{4.5};
		\plotperm{2,3,1,4};
		\foreach \y/\lab [count = \n] in {2/a,3/b,1/c,4/d}
			\node[anchor=base] at (\n+0.5,\y-0.1) {$\lab$};
		\node at (2.5,-1) {$(\sigma,\ell_\sigma)$};
	\begin{scope}[shift={(8,0)}]
		\plotpermbox{0.5}{0.5}{4.5}{4.5};
		\plotperm{3,1,2,4};
		\foreach \y/\lab [count = \n] in {3/c,1/a,2/b,4/d}
			\node[anchor=base] at (\n+0.5,\y-0.1) {$\lab$};
		\node at (2.5,-1) {$(\sigma^{-1},\ell_\sigma^{-1})$};
	\end{scope}
	\begin{scope}[shift={(16,0)}]
		\plotpermbox{0.5}{0.5}{4.5}{4.5};
		\plotperm{1,4,2,3};
		\foreach \y/\lab [count = \n] in {1/d,4/c,2/b,3/a}
			\node[anchor=base] at (\n+0.5,\y-0.1) {$\lab$};
		\node at (2.5,-1) {$(\sigma^{\text{rc}},\ell_\sigma^{\text{rc}})$};
	\end{scope}
	\begin{scope}[shift={(24,0)}]
		\plotpermbox{0.5}{0.5}{4.5}{4.5};
		\plotperm{1,3,4,2};
		\foreach \y/\lab [count = \n] in {1/d,3/b,4/a,2/c}
			\node[anchor=base] at (\n+0.5,\y-0.1) {$\lab$};
		\node at (2.5,-1) {$((\sigma^{\text{rc}})^{-1},(\ell^{\text{rc}}_\sigma)^{-1})$};
	\end{scope}
	\begin{scope}[shift={(32,0)}]
		\plotpermbox{0.5}{0.5}{4.5}{4.5};
		\draw (1,1.5)--(1.75,3.5)--(2.5,1.5);
		\plotpartialperm{1/1.5,1.75/3.5,2.5/1.5,4/1.5};
		\node[anchor=base] at (1,0.5) {$a$};
		\node[anchor=base] at (2.5,0.5) {$b$};
		\node[anchor=base] at (4,0.5) {$d$};
		\node[anchor=base] at (2.25,3.4) {$c$};		
		\node at (2.5,-1) {$(G_\sigma,\ell_\sigma)$};
	\end{scope}
	\end{tikzpicture}
	\end{footnotesize}
	\end{center}
\caption{The first four diagrams show the symmetries of the permutation $\sigma=2314$ with the labeling $\ell_\sigma: 1\mapsto a,\ 2\mapsto b,\ 3\mapsto c,\ 4\mapsto d$. All of these correspond to the same labelled inversion graph, shown on the far right.}
\label{fig-labeling-symmetry}
\end{figure}%
%
%
\begin{proposition}\label{prop-labelled-gallai}
	Let $(\sigma,\ell_\sigma)$ and $(\tau,\ell_\tau)$ be two {$L$-labelled} simple permutations such that $(G_\sigma,\ell_\sigma)$ and $(G_\tau,\ell_\tau)$ are isomorphic. Then $(\tau,\ell_\tau)\in\{(\sigma,\ell_\sigma)$, $(\sigma^{-1},\ell^{-1}_\sigma)$, $(\sigma^{\text{rc}},\ell^{\text{rc}}_\sigma)$, $((\sigma^{\text{rc}})^{-1},(\ell^{\text{rc}}_\sigma)^{-1})\}$.
\end{proposition}

\begin{proof}
Since~$\sigma$ and $\tau$ are simple permutations with the property that $G_\sigma\cong G_\tau$, Lemma~\ref{lem-gallai-simple-prime} shows that $\tau$ is one of~$\sigma$, $\sigma^{-1}$,$\sigma^{\text{rc}}$, or $(\sigma^{\text{rc}})^{-1}$.

Suppose that~$\sigma$ (and hence also $\tau$) has length $n$. Let $\phi : \{1,2,\dots,n\} \to \{1,2,\dots,n\}$ induce an isomorphism from $(G_\sigma,\ell_\sigma)$ to $(G_\tau,\ell_\tau)$, so $i\sim j$ in~$G_\sigma$ if and only if $\phi(i)\sim\phi(j)$ in~$G_\tau$. Thus $\ell_\sigma(i)=\ell_\tau(\phi(i))$ for each vertex~$i$ of $G_\sigma$. By Proposition~\ref{prop:klavik},~$\phi$ must correspond to a symmetry of~$\sigma$. This correspondence shows both which symmetry relates $\tau$ and~$\sigma$ and also that~$\ell_\tau$ must equal the corresponding labeling (for example, if $\tau=\sigma^{\text{rc}}$ then we must have $\ell_\tau=\ell_\sigma^{\text{rc}}$). The result follows.
\end{proof}

We are now ready to establish the promised relationship between lwqo in permutations and lwqo in inversion graphs.

\begin{theorem}
\label{thm-C-lwqo-GC-lwqo}
The permutation class~$\C$ is lwqo if and only if the corresponding class $G_\C$ of inversion graphs is lwqo.
\end{theorem}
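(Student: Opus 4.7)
The forward direction is Proposition~\ref{prop-lwqo-perms-graphs}(a). For the reverse direction, the plan is to prove that if $G_\C$ is lwqo then the set of simple permutations in $\C$ is lwqo, after which Corollary~\ref{cor-simples-lwqo} immediately delivers that $\C$ itself is lwqo.

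Fix an arbitrary wqo $(L,\le_L)$ and an arbitrary infinite sequence $(\sigma_1,\ell_1),(\sigma_2,\ell_2),\ldots$ of $L$-labeled simple permutations from $\C$; the goal is to produce a good pair. The strategy mirrors the wqo argument of Proposition~\ref{prop-GC-wqo-C-simples-wqo}, now lifted to the labeled setting. First I pass to the sequence of labeled inversion graphs in $G_\C\wr L$. Since $G_\C$ is lwqo this sequence is wqo, so by Proposition~\ref{prop-wqo-ramsey-inf-inc} it has an infinite increasing subsequence; in particular there exist indices $i_1<i_2<i_3<i_4<i_5$ with $(G_{\sigma_{i_1}},\ell_{i_1})\le\cdots\le(G_{\sigma_{i_5}},\ell_{i_5})$ in the labeled induced subgraph order.

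Next I extract a graph-preserving symmetry from each step of this chain. For each $k\in\{1,2,3,4\}$, the labeled embedding $(G_{\sigma_{i_k}},\ell_{i_k})\le(G_{\sigma_{i_{k+1}}},\ell_{i_{k+1}})$ picks out, via Proposition~\ref{prop-graph-to-perm-containment}, a subpermutation $\tau\le\sigma_{i_{k+1}}$ with $G_\tau\cong G_{\sigma_{i_k}}$; Lemma~\ref{lem-gallai-simple-prime} then forces $\tau$ to be simple (since $G_{\sigma_{i_k}}$ is prime). Endow $\tau$ with the labeling $\ell_\tau$ it inherits from $\ell_{i_{k+1}}$. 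The labeled graph embedding then gives $(G_\tau,\ell_\tau)\cong(G_{\sigma_{i_k}},\ell'_{i_k})$ for some labeling $\ell'_{i_k}$ of $\sigma_{i_k}$ satisfying $\ell'_{i_k}\ge\ell_{i_k}$ pointwise. Now Proposition~\ref{prop-labeled-gallai} identifies $(\tau,\ell_\tau)$ as one of the four labeled symmetries of $(\sigma_{i_k},\ell'_{i_k})$. This produces a graph-preserving symmetry $s_k\in\{e,{-1},\text{rc},(\text{rc})^{-1}\}$ such that $(\sigma_{i_k}^{s_k},\ell_{i_k}^{s_k})\le(\sigma_{i_{k+1}},\ell_{i_{k+1}})$ as labeled permutations.

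The final step is a pigeonhole argument in the group of graph-preserving symmetries, which is isomorphic to the Klein four-group and in particular has every element of order dividing two. Consider the five partial products $p_m=s_1s_2\cdots s_m$ for $0\le m\le 4$; by pigeonhole two of them must agree, $p_a=p_b$ with $a<b$, so $s_{a+1}s_{a+2}\cdots s_b=e$. Composing the labeled containments for $k=a+1,\ldots,b$ (at each step applying the accumulated symmetry to both sides of the previous inequality, which preserves labeled containment) then collapses to $(\sigma_{i_{a+1}},\ell_{i_{a+1}})\le(\sigma_{i_{b+1}},\ell_{i_{b+1}})$, exactly the good pair required. The principal obstacle is the labeled analogue of the symmetry extraction from Proposition~\ref{prop-GC-wqo-C-simples-wqo}: a priori a labeled graph embedding supplies only label inequalities, not equalities, and Proposition~\ref{prop-labeled-gallai} is exactly the tool needed to convert the intermediate labeling $\ell'_{i_k}\ge\ell_{i_k}$ into the identification $(\tau,\ell_\tau)=(\sigma_{i_k}^{s_k},(\ell'_{i_k})^{s_k})$ which in turn feeds the labeled containment used in the composition above.
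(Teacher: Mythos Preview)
Your proof is correct and follows essentially the same route as the paper's: reduce to the simple permutations via Corollary~\ref{cor-simples-lwqo}, extract an increasing chain of five labeled inversion graphs, convert each step to a labeled permutation containment via a graph-preserving symmetry using Proposition~\ref{prop-labeled-gallai}, and finish with pigeonhole in the Klein four-group. Your treatment is in fact slightly more careful than the paper's on one point: you explicitly introduce the intermediate labeling $\ell'_{i_k}\ge\ell_{i_k}$ to bridge the gap between the label \emph{inequalities} furnished by a labeled graph embedding and the label \emph{equalities} that Proposition~\ref{prop-labeled-gallai} actually assumes, whereas the paper simply asserts that each inclusion is ``witnessed by a symmetry''. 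Your pigeonhole via partial products $p_0,\dots,p_4$ is just a reindexing of the paper's argument (their $s_k$ are already cumulative products).
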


\begin{proof}
Part (a) of Proposition~\ref{prop-lwqo-perms-graphs} states that $G_\C$ is lwqo whenever~$\C$ is lwqo, giving us half of the result. Now suppose that~$\C$ is a permutation class for which $G_\C$ is lwqo.

By Corollary~\ref{cor-simples-lwqo}, it suffices to prove that the set $S$ of simple permutations in~$\C$ is lwqo, and we show this by adapting the proof of Proposition~\ref{prop-GC-wqo-C-simples-wqo}. Take $(L,\mathord{\le_L})$ to be an arbitrary wqo set and let
\(
	(\sigma_1,\ell_1),
	(\sigma_2,\ell_2),
	\dots
\)
be any sequence of labelled simple permutations. Consider the sequence
\(
	(G_{\sigma_1},\ell_1),
	(G_{\sigma_2},\ell_2),
	\dots
\)
of images of these labelled simple permutations under the mapping to labelled inversion graphs.

Since $G_\C\wr L$ is wqo, Proposition~\ref{prop-wqo-ramsey-inf-inc} implies that this sequence contains an infinite increasing chain. In particular, we can find a chain of length five, say
\[
	(G_{\sigma_{i_1}},\ell_{i_1})
	\le
	(G_{\sigma_{i_2}},\ell_{i_2})
	\le
	(G_{\sigma_{i_3}},\ell_{i_3})
	\le
	(G_{\sigma_{i_4}},\ell_{i_4})
	\le
	(G_{\sigma_{i_5}},\ell_{i_5})
\]
for $1\le i_1<i_2<i_3<i_4<i_5$. By Proposition~\ref{prop-labelled-gallai}, each of the inclusions in this chain must be witnessed by a symmetry of the shorter labelled simple permutation embedding in the longer. Thus
\[
	(\sigma_{i_1}^{s_1},\ell_{i_1}^{s_1})
	\leq
	(\sigma_{i_2}^{s_2},\ell_{i_2}^{s_2})
	\leq
	(\sigma_{i_3}^{s_3},\ell_{i_3}^{s_3})
	\leq
	(\sigma_{i_4}^{s_4},\ell_{i_4}^{s_4})
	\leq
	(\sigma_{i_5},\ell_{i_5}),
\]
where $s_1$, $s_2$, $s_3$, and $s_4$ are each one of the four graph-preserving symmetries. If none of these four symmetries is the identity, then two of them are the same (by the pigeonhole principle), and in either case we can find indices $j$ and $k$ with $1\leq j<k\leq 5$ such that $(\sigma_{i_j},\ell_{i_j})\leq(\sigma_{i_k},\ell_{i_k})$. This shows that every infinite sequence in $S\wr L$ contains a good pair, and thus $S\wr L$ is wqo. As~${(L,\le_L)}$ was an arbitrary wqo set, this shows that $S$ is lwqo. Corollary~\ref{cor-simples-lwqo} then implies that~$\C$ is lwqo, completing the proof.
\end{proof}

\section{Grid Classes}
\label{sec-geomgrid}

A grid class consists of those permutations whose plots can be subdivided into rectangles by a finite number of vertical and horizontal lines so that the subpermutations lying in the resulting rectangles satisfy conditions specified by a matrix. Three flavours of grid classes have been studied. Ordered by increasing specificity, these are generalised grid classes, monotone grid classes, and geometric grid classes. While the main result of this section applies to geometric grid classes, we briefly describe the other two types of grid classes to put this result in context.

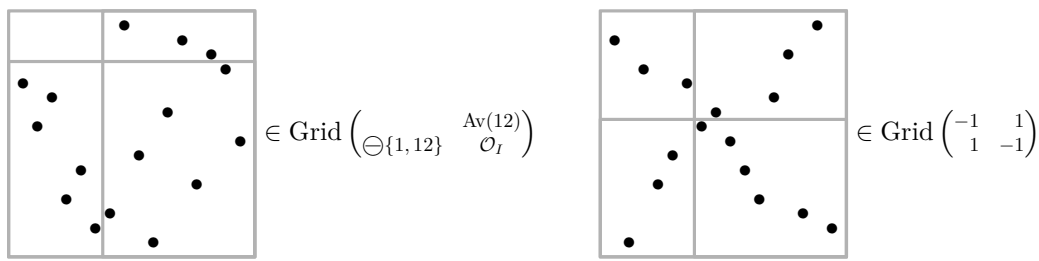
\begin{figure}
\[
	\begin{tikzpicture}[scale=0.1925, baseline=(current bounding box.center)]
		\plotpermbox{0.5}{0.5}{16.5}{16.5};
		\plotpermbox{0.5}{0.5}{16.5}{13};
		\plotpermbox{7}{0.5}{16.5}{16.5};
		\plotperm{12,9,11,4,6,2,3,16,7,1,10,15,5,14,13,8};
		\node at (17,8.5) [right] {$\in\Grid\fnmatrixc{&\Av(12)\\\bigominus\{1,12\}&\O_I}$};
	\end{tikzpicture}
	\quad\quad
	\begin{tikzpicture}[scale=0.1925, baseline=(current bounding box.center)]
		\plotpermbox{0.5}{0.5}{16.5}{16.5};
		\plotpermbox{0.5}{0.5}{16.5}{9};
		\plotpermbox{7}{0.5}{16.5}{16.5};
		\plotperm{15,1,13,5,7,12,9,10,8,6,4,11,14,3,16,2};
		\node at (17,8.5) [right] {$\in\Grid\fnmatrix{-1&1\\1&-1}$};
	\end{tikzpicture}
\]
\caption{Members of a generalised grid class (left) and of a monotone grid class (right).}
\label{fig-gen-grid}
\end{figure}

A \emph{generalised grid class} is defined by a matrix $\M$ of permutation classes. If the matrix $\M$ is of size $t\times u$, then the permutation~$\pi$ lies in the grid class of $\M$ if its plot can be divided by vertical and horizontal lines into a $t\times u$ arrangement of rectangles so that the subpermutation in each rectangle lies in the class specified by the corresponding entry of $\M$. An example is shown on the left of Figure~\ref{fig-gen-grid}. This example displays two common conventions: first, we do not write entries of $\M$ that consist of the empty class $\emptyset$, and second, we index $\M$ in Cartesian coordinates so that its indices correspond to coordinates in the plots of our permutations.

Generalised grid classes were first defined in, and have continued to be essential to, the determination of the set of all growth rates of permutation classes; see Vatter~\cite{vatter:small-permutati:,vatter:growth-rates-of:}.
The wqo (let alone lwqo) properties of generalised grid classes received little attention to-date. The most general result in this direction is due to Brignall~\cite[Theorem 3.1]{brignall:grid-classes-an:}. Note that the generalised grid class shown on the left of Figure~\ref{fig-gen-grid} is not wqo, as it contains the infinite antichain shown on the right of Figure~\ref{fig-three-inc-osc-antichains}.

In a \emph{monotone grid class}, the entries of the defining matrix $\M$ are restricted to the empty class $\emptyset$, the class of increasing permutations $\Av(21)$, and the class of decreasing permutations~${\Av(12)}$. We specify monotone grid classes with $\zpm$ matrices in which the classes $\emptyset$, $\Av(21)$, and $\Av(12)$ are denoted by the symbols $0$, $1$, and $-1$, respectively, although again we typically do not write zeros. For example, under this convention,
\[
	\Grid\fnmatrix{-1&1\\1&-1}
	=
	\Grid\fnmatrix{\Av(12)&\Av(21)\\\Av(21)&\Av(12)}.
\]
A member of this monotone grid class is shown on the right of Figure~\ref{fig-gen-grid}. The elements of this particular grid class are called \emph{skew-merged} permutations because they can be expressed as the union of an increasing subsequence and a decreasing subsequence. Their study dates to a 1994 result of Stankova~\cite[Theorem~2.9]{stankova:forbidden-subse:} that states (in our language) that
\[
	\Grid\fnmatrix{-1&1\\1&-1}
	=
	\Av(2143, 3412).
\]
Experience suggests that the skew-merged permutations exhibit similar behaviour to the $321$-avoiding permutations%
\footnote{For specific examples of these similarities, we refer to the work of Albert and Vatter~\cite{albert:generating-and-:}, who exploit them to enumerate the skew-merged permutations (reproving a result originally due to Atkinson~\cite{atkinson:permutations-wh:}), and to the work of Albert, Lackner, Lackner, and Vatter~\cite{albert:the-complexity-:}.};
in particular, the class of skew-merged permutations is not wqo, as it contains the infinite antichain shown on the right of Figure~\ref{fig-three-antichains}.

Monotone grid classes were first considered in full generality in a 2003 paper of Murphy and Vatter~\cite{murphy:profile-classes:}%
\footnote{Note that Murphy and Vatter~\cite{murphy:profile-classes:} referred to monotone grid classes as ``profile classes''. This is because monotone grid classes can be viewed as generalisations of the profile classes defined by Atkinson in his seminal 1999 paper~\cite[Section~2.2]{atkinson:restricted-perm:}. In terms of grid classes, Atkinson's profile classes are the monotone grid classes of permutation matrices (thus all of their cells are empty or increasing, and no two non-empty cells share a row or column).}.
Their main result gives a characterisation of the $\zpm$ matrices $M$ for which $\Grid(M)$ is wqo, in terms of the \emph{cell graph}%
\footnote{We state Theorem~\ref{thm-mono-grid-wqo} in terms of the cell graph of $M$ because this form has proved easier to work with, although Murphy and Vatter~\cite{murphy:profile-classes:} actually considered a different graph---the \emph{row-column graph} of the $\zpm$ matrix $M$ is the bipartite graph whose (bipartite) adjacency matrix is the absolute value of $M$. In other words, if $M$ is a $t\times u$ matrix, its row-column graph has vertices $x_1,\dots,x_t,y_1,\dots,y_u$ where there is an edge between $x_i$ and $y_j$ if and only if $M(i,j)\neq 0$. It is not difficult to show that the cell graph of a matrix is a forest if and only if its row-column graph is also a forest (a formal proof is given in Vatter and Waton~\cite[Proposition~1.2]{vatter:on-partial-well:}), and thus our formulation of Theorem~\ref{thm-mono-grid-wqo} is equivalent to what Murphy and Vatter proved.}
of $M$. This is the graph on the vertices $\{(i,j) : M(i,j)\neq 0\}$ in which $(i,j)$ and $(k,\ell)$ are adjacent if the corresponding cells of $M$ share a row or a column and there are no nonzero entries between them in this row or column.

\begin{theorem}[Murphy and Vatter~{\cite[Theorem~2.2]{murphy:profile-classes:}}]
\label{thm-mono-grid-wqo}
The monotone grid class $\Grid(M)$ is wqo if and only if the cell graph of $M$ is a forest.
\end{theorem}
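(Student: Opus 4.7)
The plan is to prove both directions separately, using different tools for each. The ``only if'' direction is proved by constructing an infinite antichain from a cycle in the cell graph, while the ``if'' direction is proved by induction on the number of cells, using Higman's lemma to handle the interaction between a leaf cell and the rest of the matrix.

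For the ``only if'' direction, suppose the cell graph of $M$ contains a cycle $c_1, c_2, \dots, c_k, c_1$, where consecutive cells in the cycle share either a row or a column. The plan is to construct, for each sufficiently large $n$, a permutation $\pi_n \in \Grid(M)$ whose entries are distributed among the cells $c_1,\dots,c_k$ so that, when viewed around the cycle, they imitate the increasing oscillating sequence. The cycle allows entries in consecutive cells to be ``linked'' through the shared row or column (with orientations determined by the signs of the corresponding entries of $M$), producing a winding pattern. The family $\{\pi_n\}$ then forms an infinite antichain for essentially the same reason that the antichains depicted in Figure~\ref{fig-three-antichains} are antichains: any embedding of $\pi_m$ into $\pi_n$ would have to respect the cyclic winding, and an entry-counting argument along the cycle shows this is impossible unless $m=n$. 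A careful choice of the ``anchors'' at the beginning and end of each $\pi_n$ ensures incomparability.

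For the ``if'' direction, I would proceed by induction on the number of nonzero cells of $M$. The base case (one cell) is $\Av(21)$ or $\Av(12)$, both of which are trivially wqo. For the inductive step, assume the cell graph is a forest and pick a leaf cell $c$. Let $M'$ be the matrix obtained by setting the entry at $c$ to $0$; the cell graph of $M'$ is obtained by deleting $c$, so it is still a forest and has one fewer cell, and thus $\Grid(M')$ is wqo by induction. The key step is to show that ``adding back'' the leaf cell $c$ preserves wqo. To do this, I would encode a permutation $\pi\in\Grid(M)$ by its restriction $\pi'\in\Grid(M')$ together with a word describing the entries in $c$ and their interleaving with the entries of $\pi'$ in the one row or column that $c$ shares with a neighbour in the cell graph. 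Since $c$ is a leaf, only this single neighbouring cell interacts with $c$, so the interleaving information is local. Formally, one maps $\pi$ to a pair $(\pi', w)$ where $w$ is a word over a finite alphabet recording, in order along the shared row or column, the pattern of ``in-$c$'' versus ``in-neighbour'' entries. The resulting map into $\Grid(M') \times \{\text{finite alphabet}\}^\ast$ is order-reflecting; since $\Grid(M')$ is wqo by induction and words over a finite alphabet are wqo by Higman's lemma, Propositions~\ref{prop-wqo-product} and~\ref{prop-wqo-order-reflecting} finish the proof.

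The hard part will be the ``if'' direction, specifically the verification that the encoding into pairs $(\pi',w)$ is order-reflecting. One must check that if $(\pi_1', w_1) \le (\pi_2', w_2)$ in the product order, then one can stitch together an embedding of $\pi_1$ into $\pi_2$ that simultaneously embeds $\pi_1'$ into $\pi_2'$ and embeds the contents of the leaf cell of $\pi_1$ into that of $\pi_2$, all while respecting the interleaving recorded by $w_1$ and $w_2$. The monotone nature of the leaf cell is essential here: since both contain only increasing (or only decreasing) subsequences, an embedding of the words automatically lifts to an order-isomorphic embedding of entries, and since $c$ shares only one row or column with the rest of $M'$, the embeddings on either side are compatible. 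This is also the precise point where the forest hypothesis is used: if $c$ were part of a cycle, then it would share rows or columns with more than one other cell, and the interleaving information would not be decoupled, producing exactly the obstructions exploited by the antichains of the ``only if'' direction.
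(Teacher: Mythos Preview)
Your ``only if'' sketch is fine and matches what the paper describes: the antichains are cycle-winding variants of widdershins spirals, as in the original Murphy--Vatter construction and Brignall's refinement.

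For the ``if'' direction, note first that the paper does not prove this directly; it observes that the result follows from two of its later theorems: Theorem~\ref{thm-forests-are-geoms} (if the cell graph of $M$ is a forest then $\Grid(M)=\Geom(M)$) together with Theorem~\ref{thm-ggc-wqo} (every geometric grid class is wqo). Your leaf-peeling induction is a genuinely different route, closer in spirit to the original Murphy--Vatter argument, but the specific encoding you propose has a real gap.

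The problem is that the map $\pi\mapsto(\pi',w)$ into the \emph{product} $\Grid(M')\times A^\ast$ is not order-reflecting. If $(\pi_1',w_1)\le(\pi_2',w_2)$ in the product order, you obtain an embedding $f$ of $\pi_1'$ into $\pi_2'$ and, independently, a subword embedding $g$ of $w_1$ into $w_2$. But the entries of $\pi_i'$ lying in the column shared with $c$ appear \emph{both} inside $\pi_i'$ and as letters of $w_i$, and nothing forces $f$ and $g$ to agree on where those entries go in $\pi_2$. Without that agreement you cannot stitch $f$ and $g$ together into an embedding of $\pi_1$ into $\pi_2$. (A secondary issue: a leaf $c$ is the only nonzero cell in its \emph{row}, say, but its \emph{column} may contain several nonzero cells beyond the single neighbour, so ``in-$c$'' versus ``in-neighbour'' is not the whole story.) This decoupling is precisely what the global word encoding in the geometric-grid-class proof, or the consistent orientation in Vatter--Waton, is engineered to avoid.

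One way to rescue the induction is to strengthen the inductive hypothesis to lwqo of the \emph{gridded} permutations $\Grid^\sharp(M')$: then label each column-$k$ entry of $\pi'^\sharp$ by the number of $c$-entries preceding it (a label in the wqo $(\mathbb{N},\le)$), so that a single labeled embedding of $\pi_1'^\sharp$ into $\pi_2'^\sharp$ carries the interleaving data with it and genuinely reflects containment in $\Grid^\sharp(M)$. But at that point you have essentially rederived the paper's lwqo route through geometric grid classes.
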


The infinite antichains used to prove one direction of Theorem~\ref{thm-mono-grid-wqo} are variants of widdershins spirals, although the construction in Murphy and Vatter~\cite[Section 4]{murphy:profile-classes:} is fairly technical. A stream-lined construction follows from the work of Brignall~\cite{brignall:grid-classes-an:}, which generalises that direction of the result. The proof of the other half of Theorem~\ref{thm-mono-grid-wqo} has also been improved upon, in the work of Vatter and Waton~\cite{vatter:on-partial-well:}. That direction also follows, via Theorem~\ref{thm-forests-are-geoms}, from our upcoming Theorem~\ref{thm-ggc-wqo}, and is generalised by Theorem~\ref{thm-ggc-lwqo} after that.

The last result about monotone grid classes that we mention, below, shows how to determine whether a permutation class is contained in some monotone grid class%
\footnote{An extension of Theorem~\ref{thm-mono-griddable} to generalised grid classes is given by Vatter~\cite[Theorem~3.1]{vatter:small-permutati:}.};
classes that are contained in a monotone grid class are called \emph{monotone griddable}.

\begin{theorem}[Huczynska and Vatter~{\cite[Theorem 2.5]{huczynska:grid-classes-an:}}]
\label{thm-mono-griddable}
A permutation class is contained in some monotone grid class if and only if it does not contain $\bigoplus\{1,21\}$ or~$\bigominus\{1,12\}$.
\end{theorem}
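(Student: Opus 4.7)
The theorem is an equivalence, so I will handle the two directions separately. The easier direction, which I will handle first, is the contrapositive for one of the classes: if $\C$ contains $\bigoplus\{1,21\}$, then $\C$ is not monotone griddable, and the analogous claim for $\bigominus\{1,12\}$ will follow by a symmetric argument (so that a monotone griddable $\C$ contains neither). My plan here is to consider the permutations $(21)^n = 21 \oplus 21 \oplus \cdots \oplus 21 \in \bigoplus\{1,21\}$ (with $n$ direct summands) and show that they cannot all lie in a single $\Grid(M)$ for a fixed $\zpm$ matrix $M$ of dimensions $t\times u$. In any such gridding, each of the $n$ constituent $21$-blocks either fits entirely inside one cell---necessarily decreasing, and holding only one such block, because two blocks $B_i,B_j$ with $i<j$ sharing a single cell would give an ascent from the bottom entry of $B_i$ to the top entry of $B_j$---or is split across cells. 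A horizontal grid line can be responsible for splitting at most one $21$-block, since the $k$-th block's values are the consecutive integers $2k,2k-1$ and only one $k$ can straddle a given line; the same holds for vertical lines. Counting yields $n \leq tu + (t-1) + (u-1)$, contradicting $(21)^n \in \Grid(M)$ for large $n$. The symmetric argument with $(12)^n \in \bigominus\{1,12\}$ completes this direction.

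For the backward direction, my first step is a reduction. Every element of $\bigoplus\{1,21\}$ embeds into some $(21)^N$ (send each summand of the sum decomposition to one of the $N$ blocks of $(21)^N$, choosing one or both of its entries depending on whether the summand is $1$ or $21$), and likewise for $\bigominus\{1,12\}$ into $(12)^N$. Consequently, the hypothesis that $\C$ contains neither class is equivalent to the existence of integers $K,L$ with $(21)^K \notin \C$ and $(12)^L \notin \C$. Hence $\C \subseteq \Av\bigl((21)^K,(12)^L\bigr)$, and it suffices to show that this latter class is monotone griddable. The plan here is a recursive decomposition of $\pi \in \Av\bigl((21)^K,(12)^L\bigr)$: writing $\pi = \pi_1 \oplus \cdots \oplus \pi_m$ as its sum decomposition, the avoidance of $(21)^K$ forces all but at most $K-1$ of the $\pi_i$ to be singletons (otherwise one picks a $21$ from each excess non-singleton component to form $(21)^K$). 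The long runs of singletons fit into an increasing cell; each of the $\leq K-1$ non-singleton components is further decomposed skew-wise, and avoidance of $(12)^L$ bounds its non-singleton skew parts by $L-1$. Iterating yields a decomposition tree of bounded depth and width, which I will assemble into a single $\zpm$ matrix $M_{K,L}$ whose dimensions depend only on $K$ and $L$, giving $\Av\bigl((21)^K,(12)^L\bigr) \subseteq \Grid(M_{K,L})$.

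The main obstacle is the base case of this recursion: components that are simultaneously sum-indecomposable and skew-indecomposable. For these I must show that the joint avoidance of $(21)^K$ and $(12)^L$ bounds their length, reducing the base case to a finite collection of permutations. The natural route is to argue that every sufficiently long permutation which is both sum-indecomposable and skew-indecomposable (in particular every sufficiently long simple permutation) must contain $(21)^k$ or $(12)^\ell$ for large $k$ or $\ell$, appealing to the rich inversion/non-inversion structure of such permutations via Erd\H{o}s--Szekeres-type arguments on the alternating patterns forced inside them. Once the indecomposable components are of bounded length, merging the long increasing runs of singletons with the finitely many possible bounded non-singleton components yields a fixed $\zpm$ matrix of the required form, completing the proof.
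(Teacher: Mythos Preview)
The paper does not prove this theorem; it is quoted from Huczynska and Vatter with a citation and no argument is given, so there is no in-paper proof to compare against. I will therefore assess your proposal directly.

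Your forward direction is fine. The counting argument for $(21)^n$ is correct: each $21$-block that is not split by a grid line must occupy a decreasing cell by itself (two unsplit blocks in one cell would produce an ascent), and each of the $t-1$ vertical and $u-1$ horizontal lines can split at most one block, since both the positions and the values of a block are a pair of consecutive integers. This yields $n\le tu+(t-1)+(u-1)$, which is all you need.

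Your backward direction, however, has a fatal gap at precisely the point you flag as ``the main obstacle''. The claim that every sufficiently long permutation which is both sum- and skew-indecomposable must contain $(21)^K$ or $(12)^L$ is \emph{false}. Already for $K=L=2$, the class $\Av\bigl((21)^2,(12)^2\bigr)=\Av(2143,3412)$ is the skew-merged class, and the paper itself observes (just before Theorem~\ref{thm-mono-griddable}) that this class is not wqo. By the contrapositive of Corollary~\ref{cor-simples-lwqo-finite} it therefore contains infinitely many simple permutations---for instance $25314$ and $263514$ are simple and skew-merged---and every simple permutation of length $\ge 4$ is both sum- and skew-indecomposable. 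Your recursion reaches such a permutation immediately and halts; there is no bound on the length of the base-case components, and no Erd\H{o}s--Szekeres-type argument can produce one, because the statement you would need simply does not hold. The actual Huczynska--Vatter proof does not go through the substitution decomposition at all: it works directly on the plot, using the two avoidance hypotheses to peel off a monotone corner region and induct on $K+L$.
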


From our lwqo-centric perspective, geometric grid classes are the most important of the three types of grid classes. To define these we must first adopt a geometric viewpoint of permutations and the containment order. In this view, the notion of relative order can be extended to point sets in the plane: two sets $S$ and $T$ of points in the plane are of the same \emph{relative order} (or, are \emph{order isomorphic}) if the $x$- and $y$-axes can be stretched and shrunk in some manner to transform one set into the other. A point set (such as the plot of a permutation) in which no two points lie on a common horizontal or vertical line is called \emph{independent}.

Every finite independent point set in the plane is in the same relative order as the plot of a unique permutation, and we call such a point set a \emph{drawing} of the permutation. If $S$ is an independent point set with $n$ points, then we can determine the permutation it is a drawing of by labeling its points $1$ to $n$ from bottom to top and then recording these labels reading left to right. It is evident that for any drawing $S$ of a permutation~$\pi$, there is some quantity $\epsilon>0$ (depending on $S$) such that by perturbing the points of $S$ each by at most $\epsilon$, the resulting point set is still a drawing of~$\pi$.

Given a $\zpm$ matrix $M$, we denote by $\Lambda_M$ the \emph{standard figure} of $M$, which we define to be the set of points in the plane consisting of
\begin{itemize}
\item the increasing line segment from $(k-1,\ell-1)$ to $(k,\ell)$ if $M(k,\ell)=1$ and
\item the decreasing line segment from $(k-1,\ell)$ to $(k,\ell-1)$ if $M(k,\ell)=-1$.
\end{itemize}
We then define the \emph{geometric grid class} of $M$, denoted by $\Geom(M)$, to be the set of all permutations that are in the same relative order as some finite independent subset of $\Lambda_M$. One can equivalently define $\Geom(M)$ to consist of all permutations that have a drawing \emph{on} $\Lambda_M$.

\begin{figure}
\begin{footnotesize}
\begin{center}
	\begin{tikzpicture}[scale=1.4, baseline=(current bounding box.center)]
		\draw[step=1, darkgray, thick, line cap=round] (0,0) grid (2,2);
		\plotpartialperm{{7/8}/{9/8}, {10/8}/{10/8}, {5/8}/{11/8}, {4/8}/{4/8}, {13/8}/{3/8}, {2/8}/{2/8}, {1/8}/{15/8}};
		\node at ({7/8},{9/8}) [above=2pt] {$4$};
		\node at ({10/8},{10/8}) [above=2pt] {$5$};
		\node at ({5/8},{11/8}) [above=2pt] {$6$};
		\node at ({4/8},{4/8}) [above=2pt] {$3$};
		\node at ({13/8},{3/8}) [above=2pt] {$2$};
		\node at ({2/8},{2/8}) [above=2pt] {$1$};
		\node at ({1/8},{15/8}) [right=2pt] {$7$};
		\draw [thick, line cap=round] (0,2)--(2,0);
		\draw [thick, line cap=round] (0,0)--(2,2);
		\node at (2,1) [right] {$\in\Geom\fnmatrix{-1&1\\1&-1}$};
	\end{tikzpicture}
	\quad\quad
	\begin{tikzpicture}[scale=1.4, baseline=(current bounding box.center)]
		\draw [lightgray, fill=lightgray] (1.6,0.8) rectangle (2,1);
		\draw [line width=0.8pt, dash pattern=on 0pt off 2.27pt, line cap=round] (1.6,0.8)--(1.6,1.5); 
		\draw [line width=0.8pt, dash pattern=on 0pt off 2.27pt, line cap=round] (1.6,0.8)--(0.9,0.8); 
		\draw [line width=0.8pt, dash pattern=on 0pt off 2.27pt, line cap=round] (1.6,0.8)--(1.6,0.05); 
		\draw [line width=0.8pt, dash pattern=on 0pt off 2.27pt, line cap=round] (1.6,0.8)--(1.95,0.8); 
		\draw[step=1, darkgray, thick, line cap=round] (0,0) grid (2,2);
		\plotpartialperm{0.6/1.4, 0.8/0.8, 1.6/1.6};
		\node at (0.6,1.4) [above=2pt] {$3$};
		\node at (0.8,0.8) [below=2pt] {$1$};
		\node at (1.6,1.6) [above=2pt] {$4$};
		\plotpartialperm{1.8/0.9};
		\node at (1.8,0.9) [below=3pt] {$2$};
		\draw [thick, line cap=round] (0,2)--(2,0);
		\draw [thick, line cap=round] (0,0)--(2,2);
		\node at (2,1) [right] {$\notin\Geom\fnmatrix{-1&1\\1&-1}$};
	\end{tikzpicture}
\end{center}
\end{footnotesize}
\caption{The permutation $7136452$ (left) can be drawn on an $\mathsf{X}$, while the permutation $3142$ (right) cannot be.}
\label{fig-X-example-non-example}
\end{figure}
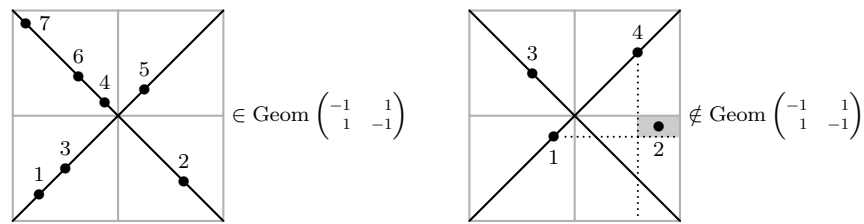

The best-studied geometric grid class is the class
\[
	\X
	=
	\Geom\fnmatrix{-1&1\\1&-1},
\]
consisting of those permutations that can be drawn on an $\Xfig$. An example of a member of this class is shown on the left of Figure~\ref{fig-X-example-non-example}. The class $\X$ has been studied by Waton~\cite[Section~5.6]{waton:on-permutation-:} and Elizalde~\cite{elizalde:the-x-class-and:}. Every permutation in $\X$ lies in the \emph{monotone} grid class of the same matrix (that is, it is skew-merged), but the converse does not hold:
\[
	\Grid\fnmatrix{-1&1\\1&-1}
	\nsubseteq
	\Geom\fnmatrix{-1&1\\1&-1}
\]
In particular, the permutation $3142$ cannot be drawn on an $\Xfig$ because, as is hinted at on the right of Figure~\ref{fig-X-example-non-example}, once the $3$, $1$, and $4$ are placed on the $\Xfig$, there is no point on the $\Xfig$ that lies simultaneously above the $1$ and to the right of the $4$.

In the the case of permutations drawn on an $\Xfig$, a simpler argument is that there must be some point that is farthest from the centre of the $\Xfig$. For this reason, every permutation in $\X$ must be of the form $1\oplus\sigma$, $\sigma\oplus 1$, $1\ominus\sigma$, or $\sigma\ominus 1$ for another permutation $\sigma\in\X$. From this observation, it follows readily that a permutation can be drawn on an $\Xfig$ if and only if it is skew-merged \emph{and separable}.

Despite this example, geometric grid classes and monotone grid classes coincide in many cases of interest. By combining Murphy and Vatter's Theorem~\ref{thm-mono-grid-wqo} with results of Albert, Atkinson, Bouvel, Ru\v{s}kuc, and Vatter~{\cite[Theorems~3.2 and 6.1]{albert:geometric-grid-:}, one can establish the following characterisation of these classes.

\begin{theorem}
\label{thm-forests-are-geoms}
For a $\zpm$ matrix $M$, we have
\[
	\Grid(M) = \Geom(M)
\]
if and only if the cell graph of $M$ is a forest.
\end{theorem}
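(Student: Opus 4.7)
The plan is to show the equivalence by exploiting the fact that one inclusion is automatic, so only one containment is at stake, and then to play off the wqo-sensitivity of the two sides in the ``only if'' direction. Note first that the inclusion $\Geom(M)\subseteq\Grid(M)$ always holds: any finite independent subset of the standard figure $\Lambda_M$ automatically respects the monotonicity conditions imposed by $M$ on its cells, so any permutation order-isomorphic to such a subset is $M$-griddable. Thus the content of the theorem reduces to determining when the reverse inclusion $\Grid(M)\subseteq\Geom(M)$ holds.

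For the ``if'' direction, assume the cell graph of $M$ is a forest. I would invoke Theorem~3.2 of Albert, Atkinson, Bouvel, Ru\v{s}kuc, and Vatter~\cite{albert:geometric-grid-:}, which asserts exactly that when the cell graph (equivalently, the row--column graph, by the Vatter--Waton equivalence recalled in the footnote to Theorem~\ref{thm-mono-grid-wqo}) is a forest, every monotone gridding of a permutation can in fact be realised as a drawing on $\Lambda_M$. This gives $\Grid(M)\subseteq\Geom(M)$ and completes this direction.

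For the ``only if'' direction, I would argue contrapositively using the wqo obstruction. Suppose the cell graph of $M$ contains a cycle. By Theorem~\ref{thm-mono-grid-wqo} (Murphy and Vatter), $\Grid(M)$ is then not wqo, so it contains an infinite antichain. On the other hand, by Theorem~6.1 of \cite{albert:geometric-grid-:} (the wqo portion of our upcoming Theorem~\ref{thm-ggc-lwqo}, which subsumes it since lwqo implies wqo), the class $\Geom(M)$ is wqo. A wqo class cannot equal a non-wqo class, so $\Grid(M)\neq\Geom(M)$.

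The genuine technical difficulty in the theorem is concentrated in the ``if'' direction, where one must actually construct, for each permutation $\pi\in\Grid(M)$ with a given monotone gridding, an honest drawing of $\pi$ on the one-dimensional figure $\Lambda_M$; the forest hypothesis is needed to perform an induction on the leaves of the cell graph, successively straightening each monotone block onto its prescribed diagonal without disturbing points already placed. Since this construction is the substance of~\cite[Theorem 3.2]{albert:geometric-grid-:} and already appears in the literature, I would simply quote it rather than reproduce it here; the ``only if'' direction is then essentially immediate from the two wqo results already available to us.
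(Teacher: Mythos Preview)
Your proposal is correct and follows essentially the same route as the paper: the ``if'' direction is obtained by citing \cite[Theorem~3.2]{albert:geometric-grid-:}, and the ``only if'' direction is argued via the wqo discrepancy between $\Grid(M)$ (not wqo when the cell graph contains a cycle, by Theorem~\ref{thm-mono-grid-wqo}) and $\Geom(M)$ (always wqo, by \cite[Theorem~6.1]{albert:geometric-grid-:} or Theorem~\ref{thm-ggc-wqo}). The paper notes, as you do, that one could alternatively handle the ``only if'' direction by generalising the direct argument that $3142\notin\X$, but opts for the wqo comparison as the simpler path.
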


The fact that $\Grid(M)=\Geom(M)$ whenever the cell graph of $M$ is a forest is explicitly proved in \cite[Theorem~3.2]{albert:geometric-grid-:}, and the proof given there essentially consists of ``straightening out'' the plot of an arbitrary permutation in $\Grid(M)$, although one could alternatively adapt the proof of Vatter and Waton~\cite[Proposition~3.3]{vatter:on-partial-well:} to give an order-theoretical proof. For the other direction of Theorem~\ref{thm-forests-are-geoms}, it is easiest to notice the discrepancies in their wqo properties, since Theorem~\ref{thm-mono-grid-wqo} shows that $\Grid(M)$ is not wqo when the cell graph of $M$ contains a cycle, while geometric grid classes are always wqo:

\begin{theorem}[Albert, Atkinson, Bouvel, Ru\v{s}kuc, and Vatter~{\cite[Theorem~6.1]{albert:geometric-grid-:}}]
\label{thm-ggc-wqo}
For every $\zpm$ matrix $M$, the geometric grid class $\Geom(M)$ is wqo.
\end{theorem}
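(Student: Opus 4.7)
The plan is to encode permutations in $\Geom(M)$ as words over a finite alphabet so that Higman's Lemma applies. To make the encoding well-behaved, I would first work with \emph{gridded} permutations. A gridding of $\pi\in\Geom(M)$ records, for each entry of $\pi$, which cell of $M$ its corresponding point lies in under some chosen drawing on $\Lambda_M$. Write $\Geom^{\#}(M)$ for the set of gridded permutations, with the order $(\sigma,g_\sigma)\le(\pi,g_\pi)$ if and only if there is an embedding of $\sigma$ into $\pi$ that maps each entry of $\sigma$ to an entry of $\pi$ in the same cell. The forgetful map $\Geom^{\#}(M)\tosurj\Geom(M)$ is an order-preserving surjection, so by Proposition~\ref{prop-wqo-order-preserving} it suffices to show $\Geom^{\#}(M)$ is wqo.

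Next I would encode each gridded permutation as a word. The natural scheme reads the entries column by column (left-to-right), and within each column bottom-to-top, emitting a symbol for the cell of each entry together with a separator between columns. This yields a map $\Phi\colon\Geom^{\#}(M)\to A^{\ast}$, where $A$ is a finite alphabet consisting of the cell labels together with a separator. Since $A$ is finite (and hence trivially wqo), Higman's Lemma gives that $A^{\ast}$ is wqo under the generalised subword order; thus if I can show that $\Phi$ is order-reflecting, Proposition~\ref{prop-wqo-order-reflecting} will deliver wqo of $\Geom^{\#}(M)$.

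The main obstacle is making $\Phi$ order-reflecting. In general, within a column several cells with segments of differing slopes can be intermixed, so a subword relation on two encoding words need not witness an embedding of permutations: the relative vertical positions of points in different cells of the same column may not be pinned down. My plan to overcome this is to refine $M$ to a larger $\zpm$ matrix $M'$ by subdividing each cell into finitely many smaller cells, so that in every column and every row of $M'$ the segments of the non-empty cells can be linearly ordered in a fixed way, and so that every drawing on $\Lambda_M$ can be perturbed to a drawing on $\Lambda_{M'}$. This yields $\Geom(M)\subseteq\Geom(M')$, so it suffices to prove wqo of $\Geom(M')$. In the refined setting, the vertical order of entries within a column is completely determined by their cell labels and horizontal order, and one may then verify directly that any subword embedding of encoding words of two gridded permutations in $\Geom^{\#}(M')$ lifts to an embedding of gridded permutations respecting cell, horizontal and vertical order.

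With $\Phi$ order-reflecting for the refined matrix $M'$, Higman's Lemma together with Proposition~\ref{prop-wqo-order-reflecting} gives wqo of $\Geom^{\#}(M')$, and the order-preserving forgetful surjection then yields wqo of $\Geom(M')$, and therefore of $\Geom(M)\subseteq\Geom(M')$, completing the proof.
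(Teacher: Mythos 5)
Your overall architecture---pass to gridded permutations via an order-preserving forgetful surjection, encode griddings as words over a finite alphabet, refine the matrix, and invoke Higman's lemma---is exactly the skeleton of the proof of Albert, Atkinson, Bouvel, Ru\v{s}kuc, and Vatter, so the strategy is sound. The gap is that the specific encoding you propose is not order-reflecting; it is not even injective, which (as noted in Section~\ref{subsec-order-pres}) any order-reflecting map on a poset must be. Take $M=\fnmatrix{c}{1\\1}$, a single column containing two increasing cells. Both $3412$ and $3142$ lie in $\Geom(M)$, each gridded with the values $\{1,2\}$ in the lower cell and $\{3,4\}$ in the upper cell, and reading the column bottom-to-top sends both gridded permutations to the word $aabb$; yet these permutations are incomparable. (If you instead read within each column left-to-right, the symmetric example $M=\fnmatrix{cc}{1&1}$ with $2413$ and $1324$, both encoding to $aa\,\#\,bb$, defeats you.) The obstacle is also not quite the one you identify: the vertical order of two entries \emph{within} a column is always determined by their cell labels together with their horizontal order, refinement or not. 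What the column-by-column block encoding irretrievably loses is the \emph{transverse} interleaving---the horizontal interleaving of entries lying in distinct cells of the same column (or, under the other reading order, the vertical interleaving of entries in distinct cells of the same row). Subdividing cells does not repair this: after any refinement of $\fnmatrix{c}{1\\1}$ one may still choose drawings of $3412$ and $3142$ whose points occupy corresponding subcells identically, so they again receive the same word.

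The cited proof repairs this by abandoning the column-block reading order altogether: entries are read in a single global order, namely by their distance along the line segments of $\Lambda_M$ from consistently chosen base points. The consistent choice of orientations is precisely where the refinement of $M$ (the partial multiplication matrix) genuinely enters, and it guarantees that the relative position of any two points is recoverable from their cells together with their order of appearance in the word. With that encoding the argument is cleaner run in the opposite direction, as a length- and order-preserving surjection $\varphi^\gridded\st\Sigma_M^\ast\tosurj\Geom^\gridded(M)$, so that only Higman's lemma and Proposition~\ref{prop-wqo-order-preserving} are needed and no injectivity or order-reflection has to be checked. If you wish to keep your order-reflecting formulation you must replace your reading order with a global, consistently oriented one of this kind; as written, the final step ``any subword embedding of encoding words lifts to an embedding of gridded permutations'' is false.
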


The conclusion of Theorem~\ref{thm-ggc-wqo} extends to subclasses of geometric grid classes, of course. Analogous to the case with monotone grid classes, we say that a permutation class~$\C$ is \emph{geometrically griddable} if~$\C\subseteq\Geom(M)$ for some $\zpm$ matrix $M$. However, unlike the case with monotone and generalised griddability, there is no known analogue of Theorem~\ref{thm-mono-griddable} characterising the obstructions to geometric griddability.

Our aim in this section is to strengthen the conclusion of Theorem~\ref{thm-ggc-wqo} from wqo to lwqo, but to do this we need a few more definitions. Given a drawing $S$ of the permutation~$\pi$ on the standard figure~$\Lambda_M$, by our comments about perturbations above, we may assume that none of the points of~$S$ lies on the \emph{integer lattice}~$\mathbb{Z}^2$ (or equivalently, that none of the points is an endpoint of one of the line segments of~$\Lambda_M$). Thus every point of~$S$ belongs to precisely one line segment of~$\Lambda_M$, and so we can associate to it a single cell of the matrix~$M$. In this way, we obtain a \emph{gridded permutation}~$\pi^\gridded$, and we denote the set of all gridded members of~$\Geom(M)$ by~$\Geom^\gridded(M)$.

Given the viewpoint of this paper, it is natural for us to view this association of entries of a gridded permutation $\pi^\gridded\in\Geom^\gridded(M)$ to cells of $M$ as a labeling of the entries of~$\pi$. To this end, let $\Sigma_M$ denote a finite antichain consisting of the cells of $M$. (In other treatments of geometric grid classes, $\Sigma_M$ is referred to as a \emph{cell alphabet}, for reasons touched on below.) The gridded permutations of~$\Geom^\gridded(M)$ therefore correspond to certain members of the poset $\Geom(M)\wr\Sigma_M$ of $\Sigma_M$-labelled members of $\Geom(M)$.

The proof of Theorem~\ref{thm-ggc-wqo} given in \cite{albert:geometric-grid-:} relies upon a length- and order-preserving surjection
\(
	\varphi^\gridded : \Sigma_M^\ast\tosurj\Geom^\gridded(M).
\)
Having established that such a mapping exists, it follows immediately from Higman's lemma and Proposition~\ref{prop-wqo-order-preserving} that $\Geom^\gridded(M)$ is wqo. Letting~${\delta : \Geom^\gridded(M)\tosurj\Geom(M)}$ denote the order-preserving surjection that removes griddings, it follows that $\Geom(M)$ is wqo, proving Theorem~\ref{thm-ggc-wqo}.

This approach could be adapted to prove the strengthening of Theorem~\ref{thm-ggc-wqo} we desire, but we employ minimal bad sequences instead. We do this for several reasons. First, our approach does not require us to define $\varphi^\gridded$, a definition that is fairly involved%
\footnote{In particular, the definition of $\varphi^\gridded$ requires the choice of a consistent orientation of the cells of $\Lambda_M$, a process that may require a further subdivision of this figure (this is the role partial multiplication matrices play in \cite{albert:geometric-grid-:}).}.
Second, as Albert, Ru\v{s}kuc, and Vatter~\cite{albert:inflations-of-g:} put it, the mapping $\varphi^\gridded$ ``jumbles'' entries%
\footnote{It is for this reason that an index correspondence $\psi$ must be introduced in \cite[Section~3]{albert:inflations-of-g:}.},
and thus attaching labels (as we must do to establish lwqo) would be cumbersome. Third, we hope that this alternative approach may prove useful for other purposes.

\begin{theorem}
\label{thm-ggc-lwqo}
For every $\zpm$ matrix $M$, the geometric grid class $\Geom(M)$ is lwqo.
\end{theorem}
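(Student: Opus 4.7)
The approach I would take is to reduce to the gridded setting and then apply a minimal bad sequence argument. Let $(L,\le_L)$ be an arbitrary wqo set; the natural forgetful map
\[
\delta\st\Geom^\gridded(M)\wr L \tosurj \Geom(M)\wr L
\]
is an order-preserving surjection, since any labeled gridded embedding descends to an ordinary labeled embedding. By Proposition~\ref{prop-wqo-order-preserving}, it therefore suffices to prove that $\Geom^\gridded(M)\wr L$ is wqo.

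To prove this, I plan to assume otherwise and derive a contradiction. By Proposition~\ref{prop-labeled-well-founded}, $\Geom^\gridded(M)\wr L$ is well-founded, so the construction in Proposition~\ref{prop-wqo-iff-min-bad-seq} yields a minimal bad sequence $S=\{(\pi_1,\ell_1),(\pi_2,\ell_2),\dots\}$, whose proper closure $S^{\suplessthan}$ is wqo by Proposition~\ref{prop-min-bad-seq-prop-closure-wqo}. For each $i$, I perform a canonical one-entry reduction: remove a distinguished entry $p_i$ of $\pi_i$ (say the leftmost), record its cell $c_i\in\Sigma_M$ and $L$-label $\lambda_i\in L$, and let $(\pi_i^-,\ell_i^-)$ be the result. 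Each tuple $\bigl((\pi_i^-,\ell_i^-),c_i,\lambda_i\bigr)$ lies in the product $S^{\suplessthan}\times\Sigma_M\times L$, which is wqo by Proposition~\ref{prop-wqo-product} (noting that $\Sigma_M$ is a finite antichain); Proposition~\ref{prop-wqo-ramsey-inf-inc} then produces indices $i<j$ satisfying $(\pi_i^-,\ell_i^-)\le(\pi_j^-,\ell_j^-)$, $c_i=c_j$, and $\lambda_i\le_L\lambda_j$.

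The main obstacle, and the technical heart of the proof, is to lift this to $(\pi_i,\ell_i)\le(\pi_j,\ell_j)$, contradicting the badness of $S$. The natural extension sends $p_i\mapsto p_j$ and uses the given embedding of $(\pi_i^-,\ell_i^-)$ into $(\pi_j^-,\ell_j^-)$ on the remaining entries; cell labels, $L$-labels, and horizontal order are then preserved by construction. Vertical order, however, is subtle: in a geometric grid class, two entries lying in the same row of cells but in different columns can appear either above or below one another in a drawing on $\Lambda_M$, so the vertical relationship between $p_i$ and such entries is not determined by the cell data alone. I would address this by augmenting the reduction step with additional Higman-compatible positional information, for instance by recording, for each cell $c'\in\Sigma_M$, the word in $(L\times\{\text{above},\text{below}\})^\ast$ giving the labels and vertical positions (relative to $p_i$) of the entries of $\pi_i$ in $c'$, with the empty word when $c'$ does not share a row with $c_i$. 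Higman's lemma guarantees that each such space of words is wqo, so the augmented tuple still lies in a wqo set by Proposition~\ref{prop-wqo-product}; extracting an increasing pair in this finer wqo then provides order-preserving embeddings of the above-$p_i$ and below-$p_i$ parts of each row-of-$c_i$ cell into the corresponding parts for $p_j$. Splicing these per-cell embeddings together with an embedding of the portion of $(\pi_i^-,\ell_i^-)$ lying outside the row of $c_i$ then yields a gridded labeled embedding of $(\pi_i,\ell_i)$ into $(\pi_j,\ell_j)$ that preserves vertical order as well, giving the required contradiction.
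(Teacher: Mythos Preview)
Your overall strategy matches the paper's: reduce to gridded labeled permutations, take a minimal bad sequence, delete one entry from each term, and map into $S^{\suplessthan}\times(\text{finite data})$, aiming to show this map is order-reflecting. The framework is correct; the gap is in the ``splicing'' step.

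The problem is that your per-cell Higman embeddings are independent of one another and of the embedding you use outside the row of $c_i$, and there is no reason these separate embeddings should be mutually compatible. Concretely: the words you record for two different cells $c'$ and $c''$ in the row of $c_i$ encode only the above/below relation of each entry to $p_i$; they carry no information about the vertical interleaving of entries of $c'$ with entries of $c''$. So your Higman embeddings for $c'$ and $c''$ need not combine into a map that preserves relative vertical order between these cells. Worse, entries in a row-of-$c_i$ cell lying in column $k$ are horizontally interleaved with entries in column $k$ lying in other rows; your Higman embedding for that cell knows nothing about this interleaving, so it need not agree horizontally with the embedding you use for the outside-the-row part. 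You would need a \emph{single} embedding of $\pi_i^-$ into $\pi_j^-$ that simultaneously respects cells, $L$-labels, and the above/below-$p$ marks, and your recorded data does not provide one. (Enriching the labels on $\pi_i^-$ itself with above/below marks does not help either, since then the reduced object no longer lies in $S^{\suplessthan}$.)

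The paper avoids this entirely by a different choice of which entry to delete. Instead of the leftmost entry, it fixes a drawing on $\Lambda_M$ and removes the point that is strictly closest to a lattice point, recording only its cell, its $L$-label, and which of the two endpoints of its line segment it was nearest (an element of a two-element antichain $\{\mathsf{L},\mathsf{R}\}$). The payoff is that such an ``extreme'' point has a determined relationship to \emph{every} other point in the drawing: every other point lies further from every lattice point, so knowing the cell and the endpoint pins down the removed point's horizontal and vertical position relative to all remaining points simultaneously. The map $\Psi\st S\to S^{\suplessthan}\times\Sigma_M\times L\times\{\mathsf{L},\mathsf{R}\}$ is then genuinely order-reflecting, with no splicing required.
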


\begin{proof}
Let $M$ be a $\zpm$ matrix and take $(L,\le_L)$ to be an arbitrary wqo set. 
We begin by extending the definition of the mapping $\delta$ mentioned above to our context, defining
\[
	\delta : \Geom^\gridded(M)\wr L \tosurj \Geom(M)\wr L
\]
by
\[
	\delta((\pi^\gridded,\ell_\pi)) = (\pi,\ell_\pi).
\]
This mapping, which in effect simply ``forgets'' the gridding of a permutation, is an order-preserving surjection. Consequently, it suffices to establish that $\Geom^\gridded(M)\wr L$ is wqo.
Suppose to the contrary that this set is not wqo and consider a minimal bad sequence $S\subseteq\Geom^\gridded(M)\wr L$, which must exist by Proposition~\ref{prop-wqo-iff-min-bad-seq}.

Suppose that $(\pi^\gridded,\ell_\pi)\in S$ and that~$\pi^\gridded$ has length~$n$, so ${\ell_\pi : \{1,2,\dots,n\}\to L}$. As discussed earlier,~$\pi^\gridded$ corresponds to a particular member of $\Geom(M)\wr \Sigma_M$. Let ${c_\pi : \{1,2,\dots,n\} \to \Sigma_M}$ denote the function that records the cells of the entries of~$\pi$ in~$\pi^\gridded$, so $\pi(i)$ lies in cell $c_\pi(i)$ in~$\pi^\gridded$. Choose a drawing of~$\pi$ on the standard figure $\Lambda_M$ that witnesses this gridding, so for every index~$i$, the point corresponding to~$\pi(i)$ lies in the cell $c_\pi(i)$. In this drawing of~$\pi$, there must be one point that lies at least as close to a lattice point as any other point; in fact, by perturbing this point by a minuscule amount, we may assume that it lies closer to a lattice point than any other point of the drawing. We fix such a drawing for every $(\pi^\gridded,\ell_\pi)\in S$, and for the remainder of the proof these are the only drawings we consider.

For each $(\pi^\gridded,\ell_\pi)\in S$, let $j_\pi$ denote the index corresponding to the point in the fixed drawing of~$\pi$ that lies closest to a lattice point. We remove the entry $\pi(j_\pi)$ from~$\pi$ to obtain the permutation $\overline{\pi}=\pi-\pi(j_\pi)$. By simultaneously removing the point corresponding to this entry in the drawing of~$\pi$, we obtain a drawing of~$\overline{\pi}$. This drawing naturally induces a gridding~$\overline{\pi}^\gridded$, and we define $\overline{c}_\pi :{\{1,2,\dots,n-1\}\longrightarrow\Sigma_M}$ to be the corresponding cell labeling. Finally, we remove the label of~$\pi(j_\pi)$ from~$\ell_\pi$ to obtain the mapping 
\[
	\overline{\ell}_\pi : \{1,2,\dots,n-1\}\longrightarrow L.
\]
Thus for every index~$i$, the point corresponding to~$\overline{\pi}(i)$ lies in the cell $\overline{c}_\pi(i)$ and has label $\overline{\ell}_\pi(i)$.

With $\overline{\pi}$, $\overline{c}_\pi$, and $\overline{\ell}_\pi$ defined as above, we see that for every $L$-labelled gridded permutation ${(\pi^\gridded,\ell_\pi)\in S}$, we have an $L$-labelled gridded permutation $(\overline{\pi}^\gridded,\overline{\ell}_\pi)\in S^{\suplessthan}$. When passing from $(\pi^\gridded,\ell_\pi)$ to $(\overline{\pi}^\gridded,\overline{\ell}_\pi)$, certain information is lost---the values of $c_\pi(j_\pi)$ and $\ell_\pi(j_\pi)$, of course, but also, which lattice point in the drawing of~$\pi$ was closest to the point corresponding to~$\pi(j_\pi)$. Since we know that this point lies on the line segment of $\Sigma_M$ in the cell $c_\pi(j_\pi)$, there are precisely two possibilities: either it lies closest to the lattice point at the lefthand end of the line segment, or it lies closest to the lattice point at the righthand end. We encode these possibilities by an element $s_\pi$ of the two-element antichain~$\{\mathsf{L},\mathsf{R}\}$.

The above discussion allows us to (finally) define a mapping
\[
	\Psi : S\to S^{\suplessthan}\times \Sigma_M\times L\times \{\mathsf{L},\mathsf{R}\}
\]
by
\[
	\Psi((\pi^\gridded, \ell_\pi))
	=
	((\overline{\pi}^\gridded,\overline{\ell}_\pi), c_\pi(j_\pi), \ell_\pi(j_\pi), s_\pi).
\]
Proposition~\ref{prop-min-bad-seq-prop-closure-wqo} shows that $S^{\suplessthan}$ is wqo, and $\Sigma_M$, $L$, and $\{\mathsf{L},\mathsf{R}\}$ are wqo either by assumption or finiteness, so the image of~$S$ under~$\Psi$ is wqo by Proposition~\ref{prop-wqo-product}. To complete the proof it therefore suffices to show that~$\Psi$ is order-reflecting, as this will imply that our assumed minimal bad sequence~$S$ is wqo, a contradiction.

Consider elements $(\sigma^\gridded,\ell_\sigma)$, $(\pi^\gridded, \ell_\pi)\in S$ satisfying
\[
	\Psi((\sigma^\gridded,\ell_\sigma))\le \Psi((\pi^\gridded,\ell_\pi)).
\]
This means that $\overline{\sigma}^\gridded \le \overline{\pi}^\gridded$ as gridded permutations (or, equivalently, that ${(\overline{\sigma},\overline{c}_\sigma)\le(\overline{\pi},\overline{c}_\pi)}$ as $\Sigma_M$-labelled permutations). Thus it follows that there is a drawing $P_\pi\subseteq\Lambda_M$ of~$\pi$ satisfying the following two conditions.
\begin{itemize}
\item There is a subset ${\overline{P}_\sigma\subseteq P_\pi}$ that is in the same relative order as the plot of~$\overline{\sigma}$, and in fact, for each index~$i$, the point of~$\overline{P}_\sigma$ corresponding to~$\overline{\sigma}(i)$ lies on the line segment in the cell~$\overline{c}_\sigma(i)$.
\item The point of $P_\pi$ lying closest to a lattice point, say $p$, corresponds to $\pi(j_\pi)$ in~$\pi$, and the lattice point it lies closest to is either the left or right endpoint of the line segment in the cell ${\overline{c}_\sigma(j_\sigma)=\overline{c}_\pi(j_\pi)}$, depending on the value of~$s_\sigma=s_\pi$.
\end{itemize}
This ensures that $\sigma^\gridded \le \pi^\gridded$ as gridded permutations (or equivalently, that $(\sigma,c_\sigma)\le (\pi,c_\pi)$ as $\Sigma_M$-labelled permutations): by the above, the point set $P_\sigma=\overline{P}_\sigma\cup\{p\}$ is in the same relative order as~$\sigma$, and moreover, for every index~$i$, the point of $P_\sigma$ corresponding to~$\sigma(i)$ lies in the cell~$c_\sigma(i)$. Furthermore, because $(\overline{\sigma}^\gridded,\overline{\ell}_\sigma)\le (\overline{\pi}^\gridded,\overline{\ell}_\pi)$ and $\ell_\pi(j_\pi)\le\ell_\sigma(j_\sigma)$, we know that this embedding of~$\sigma^\gridded$ into~$\pi^\gridded$ also respects the order of the labels from $L$. Thus this embedding witnesses that ${(\sigma^\gridded,\ell_\sigma)\le (\pi^\gridded, \ell_\pi)}$, verifying that~$\Psi$ is order-reflecting, contradicting our choice of the minimal bad sequence~$S$, and completing the proof of the theorem as already described.
\end{proof}

We conclude by discussing finite bases and graphical analogues. An immediate consequence of Theorem~\ref{thm-ggc-lwqo} (via Proposition~\ref{prop-lwqo-fin-basis}) is that every geometric grid class (in fact, every geometrically griddable class) is finitely based. This is Theorem~6.2 of \cite{albert:geometric-grid-:}, and is established there by showing that for every $\zpm$ matrix $M$, $\Geom(M)^{+1}\subseteq\Geom(M')$ for some larger~$\zpm$~matrix~$M'$, and then appealing to Theorem~\ref{thm-ggc-wqo}. Neither the original proof nor our proof is constructive, and no bounds on the lengths of the basis elements of geometric grid classes have been established, with the notable exception of one special case%
\footnote{This special case is that of monotone/geometric grid classes of row vectors, which were studied in a 2002 paper of Atkinson, Murphy, and Ru\v{s}kuc~\cite{atkinson:partially-well-:} where they are called ``$W$-classes'', owing to the fact that members of one such class, $\Grid(-1$\ \ $1$ $-1$\ \ $1)$, can be drawn on the figure $\gridhoriz{-1,1,-1,1}$. Because these classes can be viewed as juxtapositions, a theorem of Atkinson~\cite[Theorem 2.2]{atkinson:restricted-perm:} implies that they have finite bases and gives a procedure to determine these bases. The enumeration of these classes is also much easier than general geometric grid classes. Albert, Atkinson, Bouvel, Ru\v{s}kuc, and Vatter~\cite[Theorem~8.1]{albert:geometric-grid-:} show that all geometrically griddable classes have rational generating functions, but the proof is nonconstructive. On the other hand, Albert, Atkinson, and Ru\v{s}kuc~\cite[Section 3]{albert:regular-closed-:} show how to compute the (rational) generating functions of arbitrary subclasses of monotone grid classes of $\zpm$ row vectors.}.

The above is all for \emph{geometric} grid classes. We do not even have a nonconstructive proof that \emph{monotone} grid classes are finitely based, although it has been conjectured that this is the case.

\begin{conjecture}[Huczynska and Vatter~{\cite[Conjecture 2.3]{huczynska:grid-classes-an:}}]
\label{conj-mono-grid-basis}
Every monotone grid class is finitely based.
\end{conjecture}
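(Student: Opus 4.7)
The plan splits naturally into two cases according to the cell graph of the defining $\zpm$ matrix $M$. When the cell graph of $M$ is a forest, Theorem~\ref{thm-forests-are-geoms} gives $\Grid(M) = \Geom(M)$, and then Theorem~\ref{thm-ggc-lwqo} together with Proposition~\ref{prop-lwqo-fin-basis} immediately yields that $\Grid(M)$ is finitely based. Hence the real content of the conjecture concerns the case when the cell graph of $M$ contains at least one cycle, and the rest of the plan focuses on this case.

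In the cyclic case, $\Grid(M)$ is not wqo by Theorem~\ref{thm-mono-grid-wqo}, so every ``lwqo implies finite basis'' route is blocked. A tempting shortcut is to embed $\Grid(M)$ inside a larger geometric grid class $\Geom(M')$ obtained by subdividing the cells of $M$, but any such embedding would transfer lwqo back to $\Grid(M)$ via Theorem~\ref{thm-ggc-lwqo} and contradict Theorem~\ref{thm-mono-grid-wqo}; so no such embedding can exist. Instead, I would attempt to bound the length of basis elements directly. Given a candidate basis element $\beta$ of length $n$, for each index $i$ the permutation $\beta - \beta(i)$ has a valid $M$-gridding $g_i$. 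A gridding of a length-$n$ permutation is specified by $(t-1)+(u-1)$ cutlines when $M$ is $t \times u$, so there are only polynomially many griddings in $n$. For each such gridding $g$ of $\beta$ there is a minimal ``witness of failure'' $W_g \subseteq \{1,\dots,n\}$: a smallest set of entries whose restriction already violates some cell's monotonicity constraint.

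The plan is to show that each $W_g$ has size bounded purely in terms of $M$. Since removing any single entry of $\beta$ rescues at least one gridding, every index must lie in some $W_g$; combined with a uniform bound on $|W_g|$ and on the number of relevant griddings, this would bound $n$. Carrying this out seems to require a careful structural analysis of how cycles in the cell graph propagate obstructions, since a violation in one cell can force compensating constraints elsewhere only when those cells lie on a common cycle. I would first attack the special case where the cell graph is a single cycle, such as $\Grid\fnmatrix{cc}{1&1\\1&1}$, where one can hope for a closed-form bound on $|W_g|$, and then try to reduce the general case to the single-cycle case by induction on the cyclomatic number of the cell graph, peeling off one cycle-creating edge at a time and invoking Proposition~\ref{prop-union-fb} at each step.

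The main obstacle is the interaction between different griddings: removing a single entry of $\beta$ can drastically alter which griddings become valid, because the optimal cutlines shift globally in response. Managing this rebalancing rigorously is precisely where previous attempts appear to have stalled, and it is conceivable that the argument requires invoking a stronger structural hypothesis (such as Conjecture~\ref{conj-C+1-wqo-C-lwqo}, which would give $\Grid(M)$ finitely based directly once one verifies $\Grid(M)^{+1} \subseteq \Grid(M')$ for some larger $M'$) or else an entirely new technique; this is very likely why the conjecture has remained open despite the apparent simplicity of its statement.
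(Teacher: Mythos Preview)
This statement is a \emph{conjecture}, not a theorem: the paper does not prove it and explicitly says ``we do not even have a nonconstructive proof that monotone grid classes are finitely based.'' Your proposal correctly recognises this, and what you have written is a research plan rather than a proof. In that sense there is nothing to compare against a ``paper's own proof,'' because none exists.

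That said, one piece of your plan contains a genuine error. In the final paragraph you suggest that Conjecture~\ref{conj-C+1-wqo-C-lwqo} ``would give $\Grid(M)$ finitely based directly once one verifies $\Grid(M)^{+1} \subseteq \Grid(M')$ for some larger $M'$.'' This does not work: Conjecture~\ref{conj-C+1-wqo-C-lwqo} has the hypothesis that $\C^{+1}$ is \emph{wqo}, and embedding $\Grid(M)^{+1}$ into another monotone grid class $\Grid(M')$ does not establish wqo, since $\Grid(M')$ will itself fail to be wqo whenever its cell graph contains a cycle (Theorem~\ref{thm-mono-grid-wqo}). Indeed, $\Grid(M)^{+1}$ contains $\Grid(M)$, so it is never wqo in the cyclic case you are trying to handle. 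Thus this route is just as blocked as the geometric-embedding route you correctly ruled out earlier.

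Your core idea of bounding basis-element length via minimal ``witnesses of failure'' $W_g$ is a natural line of attack, but the obstacle you identify---that deleting an entry shifts the optimal cutlines globally---is exactly the difficulty, and nothing in your outline indicates how to control it. The paper records partial progress you do not mention: the $2\times 2$ case is settled by Albert and Brignall, and a forthcoming result of Bevan, Brignall, and Ru\v{s}kuc handles the unicyclic case. Those arguments are quite specific to their settings and do not obviously generalise along the inductive scheme you propose.
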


Conjecture~\ref{conj-mono-grid-basis} holds for monotone grid classes that are also geometric grid classes, namely (by Theorem~\ref{thm-forests-are-geoms}), monotone grid classes of the form $\Grid(M)$ where the cell graph of $M$ is a forest. It also holds for the skew-merged permutations by the result of Stankova~\cite[Theorem~2.9]{stankova:forbidden-subse:} mentioned earlier. Beyond this, Waton showed in his thesis~\cite[Theorem~4.7.5]{waton:on-permutation-:} that the monotone grid class of the $2\times 2$ all-one matrix is finitely based while Albert and Brignall~\cite{albert:2times-2-monoto:} have shown that every $2\times 2$ monotone grid class is finitely based (in fact their result covers certain $2\times 2$ generalised grid classes as well). 

We conclude this section by briefly discussing related graph classes. The graphical analogues of the skew-merged permutations are the \emph{split graphs}, first studied in a 1977 paper of F\"oldes and Hammer~\cite{foldes:split-graphs:}. These are defined as the graphs whose vertices can be partitioned into a clique and an independent set. F\"oldes and Hammer showed that the split graphs are characterised by the forbidden induced subgraphs $G_{2143}=2K_2$, $G_{3412}=C_4$, and $C_5$. Note that not all split graphs are inversion graphs%
\footnote{%
Foldes and Hammer~\cite[Theorem 3]{foldes:split-graphs-ha:} characterise the split comparability graphs by forbidden induced subgraphs. Because the class of inversion graphs is the intersection of the class of comparability graphs with the class of their complements (the co-comparability graphs), this result implies that the split inversion graphs are defined by the forbidden induced subgraphs $2K_2$, $C_4$, $C_5$, net, co-net, rising sun, and co-rising sun, the last four of which are shown below.
\begin{center}
	\begin{tikzpicture}[scale=0.6, yscale=-1]
		\plotpartialperm{0/0, -0.5/0.866, 0.5/0.866, 0/-0.464, -0.732/1.267824, 0.732/1.267824};
		\draw (0,0)--(-0.5,0.866)--(0.5,0.866)--cycle;
		\draw (-0.5,0.866)--(-0.732,1.267824);
		\draw (0.5,0.866)--(0.732,1.267824);
		\draw (0,-0.464)--(0,0);
	\end{tikzpicture}
	\quad\quad
	\begin{tikzpicture}[scale=0.6]
		\plotpartialperm{-1/0, 0/0, 1/0, -0.5/0.866, 0.5/0.866, 0/1.732};
		\draw (-1,0)--(0,1.732)--(1,0)--cycle;
		\draw (0,0)--(-0.5,0.866)--(0.5,0.866)--cycle;
	\end{tikzpicture}
	\quad\quad
	\begin{tikzpicture}[scale=0.6]
		\plotpartialperm{0/0, 1/0, 1/1, 0/1, 0.5/1.732, -0.732/0.5, 1.732/0.5};
		\draw (0,0)--(1,0)--(1,1)--(0,1)--(0,0)--(1,1)--(0,1)--(1,0);
		\draw (0,0)--(-0.732,0.5)--(0,1)--(0.5,1.732)--(1,1)--(1.732,0.5)--(1,0);
	\end{tikzpicture}
	\quad\quad
	\begin{tikzpicture}[scale=0.6]
		\plotpartialperm{-1/0, 0/0, 1/0, -0.5/0.866, 0.5/0.866, 1/1.732, -1/1.732};
		\draw (0,0)--(-0.5,0.866)--(0.5,0.866)--cycle;
		\draw (-1,0)--(1,0)--(0.5,0.866)--(-0.5,0.866)--(-1,0);
		\draw (0.5,0.866)--(1,1.732);
		\draw (-0.5,0.866)--(-1,1.732);
	\end{tikzpicture}
\end{center}
The class of split inversion graphs has been further studied by Korpelainen, Lozin, and Mayhill~\cite{korpelainen:split-permutati:}, who prove that it is not wqo by establishing that the set of inversion graphs of the infinite antichain of permutations shown on the right of Figure~\ref{fig-three-antichains} forms an infinite antichain of graphs.}.

Outside of the split graphs, the graphical analogues of monotone grid classes have received very little attention, although Atminas~\cite[Theorem~1.2]{atminas:classes-of-grap:} establishes a graphical analogue of (a generalisation of) Theorem~\ref{thm-mono-griddable}.

The graphical analogue of the class $\X$ of permutations that can be drawn on an $\Xfig$ is the class of \emph{threshold graphs} first defined by Golumbic in 1978~\cite{golumbic:threshold-graph:}; these are the graphs that can be built, starting from $K_1$, by repeatedly taking the disjoint union or join with $K_1$. In fact, Golumbic himself considered the permutation class $\X$ in \cite[Section 3]{golumbic:threshold-graph:} and also in his book \emph{Algorithmic Graph Theory and Perfect Graphs}~\cite[Section 10.3]{golumbic:algorithmic-gra:}. From our characterisation above it follows that if $G_\pi$ is a threshold graph, then $\pi\in\X$. However, owing to the many-to-one nature of the mapping $\pi\mapsto G_\pi$, we can obtain the class of threshold graphs by considering inversion graphs of a much smaller permutation class: every threshold graph is of the form $G_\pi$ for some permutation $\pi\in\Geom(\text{\footnotesize $-1$}\ \ \text{\footnotesize $1$})$. Put geometrically, this means that every threshold graph is the inversion graph of a permutation that can be drawn on a $\Vfig$, or, for that matter, on a~$\Vfigt$, a~$\Vfigl$, or a~$\Vfigr$.

The graph-theoretic analogues of geometrically griddable classes are the \emph{graph classes of bounded lettericity}%
\footnote{In particular, all threshold graphs have lettericity $2$, as observed in \cite[Theorem~3]{petkovsek:letter-graphs-a:}.}.
These graph classes were introduced by Petkov{\v{s}}ek~\cite{petkovsek:letter-graphs-a:} in 2002, and the connection to geometric grid classes was first noted in the literature by Alecu, Lozin, de Werra, and Zamaraev~\cite{alecu:letter-graphs-a:}%
\footnote{An extended abstract version also appears as Alecu, Lozin, Zamaraev, and de Werra~\cite{alecu:letter-graphs-a:abstract}.},
who proved that if a permutation class is geometrically griddable, then the corresponding class of inversion graphs has bounded lettericity. They also conjectured that the converse statement holds, and this has since been proved.

\begin{theorem}[Alecu, Ferguson, Kant\'e, Lozin, Vatter, and Zamaraev~\cite{alecu:letter-graphs-a:iff}]
\label{thm-ggc-lettericity}
The permutation class~$\C$ is geometrically griddable if and only if the corresponding class $G_{\C}$ of inversion graphs has bounded lettericity.
\end{theorem}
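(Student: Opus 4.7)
The two directions of this theorem have very different flavours, so I would handle them separately. The forward direction---that geometric griddability implies bounded lettericity---is essentially the content of~\cite{alecu:letter-graphs-a:}. Given $\C\subseteq\Geom(M)$, for each $\pi\in\C$ I would fix a drawing on $\Lambda_M$ and record, for each entry, the cell of $M$ it lies in; reading these cells from left to right yields a word $w_\pi$ over the cell alphabet $\Sigma_M$. After suitably refining $M$ so that cells within each row and within each column admit a consistent orientation---an operation already familiar from the partial multiplication matrices of~\cite{albert:geometric-grid-:}---the relative geometric position of two drawn points depends only on the ordered pair of cells they occupy. Consequently there is a decoder $D_M\subseteq\Sigma_M\times\Sigma_M$, depending only on $M$, such that $G_\pi\cong L(w_\pi,D_M)$ for every $\pi\in\C$, bounding the lettericity of every member of $G_\C$ by $|\Sigma_M|$.

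For the converse, I would start by assuming $G_\C$ has lettericity at most $k$ and observing that there are only finitely many decoders $(\Sigma,D)$ with $|\Sigma|\le k$. Since a finite union of geometrically griddable classes is itself geometrically griddable (obtained by block-diagonally combining the defining matrices), it suffices to fix one such decoder $(\Sigma,D)$ and show that the set
\[
	\C_D = \{\pi\in\C \st G_\pi\cong L(w,D) \text{ for some word } w \text{ over } \Sigma\}
\]
is geometrically griddable. The plan is to construct a single $\zpm$ matrix $M_D$ from $D$: each letter $\sigma\in\Sigma$ receives a cell whose slope is $-1$ if $(\sigma,\sigma)\in D$ and $+1$ otherwise (reflecting that the $\sigma$-entries of $\pi$ form a decreasing or an increasing subsequence respectively), while the grid positions of the cells encode the northwest/southeast relationships imposed by $D$ on pairs of distinct letters. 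When $D$ admits such a geometric realisation one obtains $\C_D\subseteq\Geom(M_D)$; when it does not, one shows $\C_D=\emptyset$ by extracting a small forbidden configuration from any purported letter-graph representation.

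The main obstacle is the vertex-ordering mismatch: the isomorphism $G_\pi\cong L(w,D)$ is typically not the identity on vertex sets, so the word $w$ may order the entries of $\pi$ along a non-standard permutation of $\{1,\dots,n\}$. To control this, I would lean on the substitution decomposition---by Corollary~\ref{cor-simples-lwqo} it suffices to handle simple components, and Proposition~\ref{prop-labeled-gallai} then restricts any isomorphism between the inversion graph of a simple permutation and a letter graph to one of the four graph-preserving symmetries, whose effects can be absorbed into $M_D$ by forming a block-diagonal sum with its reflections and rotations. Verifying that no realisable permutation escapes this construction then reduces to a case analysis on the structure of $D$, and this case analysis is where I expect the bulk of the technical work to lie.
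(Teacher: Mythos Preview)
This theorem is not proved in the paper; it is quoted from Ferguson and Vatter~\cite{ferguson:on-the-relation:}, a reference the bibliography lists as ``In preparation''. The surrounding text attributes the forward implication to Alecu, Lozin, de~Werra, and Zamaraev~\cite{alecu:letter-graphs-a:} and credits the converse to the cited preprint, but neither argument is reproduced here, so there is no proof in this paper for your proposal to be compared against.

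Setting that aside, your sketch of the converse contains a structural gap. You invoke Corollary~\ref{cor-simples-lwqo} to justify ``handling simple components'', but that corollary is an lwqo statement and says nothing about geometric griddability, and the two properties behave very differently under the substitution decomposition. The class of separable permutations has only $12$ and $21$ as simple permutations, yet by Theorem~\ref{thm-mono-griddable} it is not even monotone griddable (it contains both $\bigoplus\{1,21\}$ and $\bigominus\{1,12\}$), hence certainly not geometrically griddable. So even if Proposition~\ref{prop-labeled-gallai} lets you pin the isomorphism on each prime inversion graph down to one of four symmetries, this does not assemble into a drawing of an arbitrary $\pi\in\C_D$ on a fixed standard figure; a letter-graph representation of $G_\pi$ need not respect the modular decomposition of $\pi$ in any useful way. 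Whatever the Ferguson--Vatter argument does to bridge the vertex-ordering mismatch, it must supply a genuinely global construction rather than a reduction to simples.
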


The proof that graph classes of bounded lettericity are wqo follows immediately from Higman's lemma by an argument given by Petkov{\v{s}}ek~\cite[Theorem~8]{petkovsek:letter-graphs-a:}. Atminas and Lozin~\cite[Theorem~4]{atminas:labelled-induce:} later showed that these classes are lwqo. Neither result follows from our work because graph classes of bounded lettericity may contain graphs that are not inversion graphs.

\section{Concluding Remarks}

When studying a permutation class, it is frequently critical to determine whether the class is wqo because if so, then various finiteness conditions can be brought to bear. As we have demonstrated, lwqo permutation classes have even nicer properties, such as finite bases (Proposition~\ref{prop-lwqo-fin-basis}) and the fact that their one-point extensions and substitution closures are also lwqo (Theorems~\ref{thm-lwqo-C-lwqo-C+1} and~\ref{thm-subst-closure-lwqo}, respectively). For the permutation patterns practitioner, our work provides a toolkit to establish lwqo. From this perspective, two corollaries of our work stand out for their wide applicability.

First, by combining Theorem~\ref{thm-ggc-lwqo} and Corollary~\ref{cor-simples-lwqo}, we obtain the following.

\begin{corollary}
\label{cor-conclusion-1}
If the simple permutations in a permutation class are geometrically griddable, then it is lwqo.
\end{corollary}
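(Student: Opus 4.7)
The plan is to chain together two results already established in the paper. First, let $S$ denote the set of simple permutations in $\C$. By hypothesis, $S$ is geometrically griddable, meaning that there is some $\zpm$ matrix $M$ with $S\subseteq\Geom(M)$. By Theorem~\ref{thm-ggc-lwqo}, $\Geom(M)$ is lwqo.

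Next, I would observe that lwqo is inherited by subsets: for any wqo set $(L,\le_L)$, the labeled set $S\wr L$ is a subset of $\Geom(M)\wr L$ under the labeled containment order, and since the latter is wqo, so is the former. Thus $S$ itself is lwqo. I would state this monotonicity explicitly, as it follows immediately from the definition of lwqo but deserves a sentence.

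Finally, by Corollary~\ref{cor-simples-lwqo} (``if the simple permutations of $\C$ are lwqo, then $\C$ is lwqo''), we conclude that $\C$ is lwqo, which is the desired statement.

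There is essentially no obstacle here---the corollary is a straightforward composition of Theorem~\ref{thm-ggc-lwqo} with Corollary~\ref{cor-simples-lwqo}, with the trivial observation that a subset of an lwqo set (of permutations) is itself lwqo slotted in between. The real content of the result lies in the two ingredients being combined, both of which are available by the time this corollary is stated.
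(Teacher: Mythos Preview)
Your proposal is correct and matches the paper's approach exactly: the paper states the corollary as following ``by combining Theorem~\ref{thm-ggc-lwqo} and Corollary~\ref{cor-simples-lwqo}'', which is precisely the chain you describe. Your explicit mention of the monotonicity of lwqo under subsets is a useful clarifying sentence that the paper leaves implicit.
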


One might appeal to Theorem~\ref{thm-lwqo-C-lwqo-C+1} in order to strengthen Corollary~\ref{cor-conclusion-1} by saying that if the simple permutations in~$\C$ are geometrically griddable, then the class~$\C^{+t}$ is lwqo for every~${t\ge 0}$. However, this situation falls under the purview of Corollary~\ref{cor-conclusion-1} already, since~$\C^{+t}$ is geometrically griddable whenever~$\C$ is (this follows from Albert, Atkinson, Bouvel, Ru\v{s}kuc, and Vatter~\cite[Theorem~6.4]{albert:geometric-grid-:}).

As we have demonstrated, Corollary~\ref{cor-conclusion-1} generalises most of the wqo results in the permutation patterns literature, and in fact strengthens their conclusions (from wqo to lwqo). Many of the results in the literature not subsumed by Corollary~\ref{cor-conclusion-1} are subsumed by the following combination of Corollary~\ref{cor-conclusion-1} with Theorem~\ref{thm-CU-inflate-wqo}.

\begin{corollary}
\label{cor-conclusion-2}
If the simple permutations in the permutation class~$\C$ are geometrically griddable, then the class~$\C[\U]$ is wqo for every wqo permutation class $\U$.
\end{corollary}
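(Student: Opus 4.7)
The plan is to chain together two results already established in the paper. The statement of Corollary~\ref{cor-conclusion-2} is almost a tautological combination, so the proposal is really just to identify the two links of the chain and verify that the hypotheses align.

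First, I would apply Corollary~\ref{cor-conclusion-1} to the class $\C$. Its hypothesis---that the simple permutations in $\C$ are geometrically griddable---is exactly the hypothesis we are given, so Corollary~\ref{cor-conclusion-1} immediately yields that $\C$ itself is lwqo. This is the content-bearing step, though the work is done elsewhere: Corollary~\ref{cor-conclusion-1} rests on Theorem~\ref{thm-ggc-lwqo} (geometric grid classes are lwqo), the inheritance of lwqo by subsets (since being lwqo passes to the simple permutations of $\C$), and Corollary~\ref{cor-simples-lwqo} (lwqo of the simples lifts to lwqo of $\C$).

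Second, with $\C$ known to be lwqo and $\U$ assumed wqo, I would invoke Theorem~\ref{thm-CU-inflate-wqo} directly to conclude that $\C[\U]$ is wqo. This completes the proof.

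There is no real obstacle here---both inputs are stated in exactly the form we need, and no additional structural manipulation of $\C$, $\U$, or the inflation operation is required. The only thing worth noting is that one cannot upgrade the conclusion to ``$\C[\U]$ is lwqo'' without a stronger assumption on $\U$; that strengthening would fall under Corollary~\ref{cor-CU-inflate-lwqo}, which requires $\U$ itself to be lwqo (see also row three of Table~\ref{table-wqo-lwqo-inflations}).
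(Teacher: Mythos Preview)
Your proposal is correct and matches the paper's own derivation exactly: the paper states that Corollary~\ref{cor-conclusion-2} ``results from combining Corollary~\ref{cor-conclusion-1} with Theorem~\ref{thm-CU-inflate-wqo},'' which is precisely the two-step chain you describe. Your additional remark about why the conclusion cannot be upgraded to lwqo is also accurate and consistent with Table~\ref{table-wqo-lwqo-inflations}.
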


It should be noted that there is no known systematic method for determining whether the hypotheses of these two results apply to a given permutation class. Albert, Atminas, and Brignall~\cite{albert:characterising-:} have shown how to determine whether the simple permutations of a given class are \emph{monotone} griddable, but Corollaries~\ref{cor-conclusion-1} and~\ref{cor-conclusion-2} apply only to classes whose simple permutations are \emph{geometrically} griddable.

\begin{figure}
\[
	\begin{tikzpicture}[scale=0.1925, baseline=(current bounding box.center)]
		\draw (11,2)--(1.5,2);
		\draw (12,5)--(3.5,5);
		\draw (13,8)--(5.5,8);
		\draw (14,11)--(7.5,11);
		\draw (15,14)--(9.5,14);
		\plotpermbox{0.5}{0.5}{15.5}{15.5};
		\plotpermbox{11}{0.5}{15.5}{15.5};
		\plotperm{3,1,6,4,9,7,12,10,15,13,2,5,8,11,14};
		\node at (17,8.5) [right] {$\in\Grid\fnmatrix{\bigoplus\Av(12)&\Av(21)}$};
	\end{tikzpicture}
\]
\caption{A gridded simple permutation.}
\label{fig-juxta-lwqo}
\end{figure}
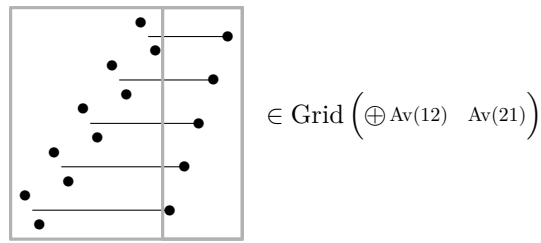

One might also wonder about the converses to these results; in particular, if~$\C$ is an lwqo permutation class, must its simple permutations be geometrically griddable? This is false, and one example is illustrated by the simple permutation shown in Figure~\ref{fig-juxta-lwqo}. It can either argued (either ad hoc, or by lifting the results of Brignall~\cite{brignall:grid-classes-an:} to the lwqo setting) that the set of all simple permutations of this form is lwqo. Letting~$\C$ denote the downward closure of these simple permutations, it then follows by Theorem~\ref{thm-lwqo-downward-closure} that~$\C$ is lwqo; indeed, it follows by Theorem~\ref{thm-subst-closure-lwqo} that~$\langle\C\rangle$ is lwqo. Moreover, Theorem~\ref{thm-CU-inflate-wqo} shows that~$\C[\U]$ is wqo for every wqo permutation class~$\U$. Nevertheless, Theorem~\ref{thm-mono-griddable} implies that these simple permutations are not monotone griddable, let alone geometrically griddable, and this shows that the converses to both Corollary~\ref{cor-conclusion-1} and Corollary~\ref{cor-conclusion-2} are false.

We close by collecting the questions and conjectures that were posed throughout. The question below remains unanswered, although we provided a partial answer with Proposition~\ref{prop-GC-wqo-C-simples-wqo} and established its lwqo analogue with Theorem~\ref{thm-C-lwqo-GC-lwqo}.

\begin{itemize}
\item
	\textbf{Question~\ref{question-prop-wqo-perms-graphs-converse}.}
	\emph{Let~$\C$ be a permutation class and $G_\C$ the corresponding graph class. If $G_\C$ is wqo in the induced subgraph order, must~$\C$ be wqo in the permutation containment order?}
\end{itemize}

We have specialised the following conjecture of Pouzet to permutation classes, but it is also open for graph classes (as well as for Pouzet's original setting of general relational structures).

\begin{itemize}
\item
	\textbf{Conjecture~\ref{conj-pouzet-2-wqo}.}
	(Cf. Pouzet~\cite{pouzet:un-bel-ordre-da:})
	\emph{A permutation class is $2$-wqo if and only if it is $n$-wqo for all $n\ge 1$.}
\end{itemize}

We have presented several questions and conjectures related to Pouzet's Conjecture~\ref{conj-pouzet-2-wqo}, beginning with the following potential strengthening of it, which was asked in the graph context by Brignall, Engen, and Vatter~\cite{brignall:a-counterexampl:}.

\begin{itemize}
\item
	\textbf{Question~\ref{ques-pouzet-2-wqo-lwqo}.}
	(Cf. Brignall, Engen, and Vatter~\cite{brignall:a-counterexampl:})
	\emph{Is every $2$-wqo permutation class also lwqo?}
\end{itemize}

Two other conjectures we presented may be viewed as variants of the above:

\begin{itemize}
\item
	\textbf{Conjecture~\ref{conj-C+1-wqo-C-lwqo}.}
	\emph{If the permutation class~$\C^{+1}$ is wqo, then~$\C$, and thus also~$\C^{+1}$, is lwqo.}
\item
	\textbf{Conjecture~\ref{conj-C-subst-wqo-C-lwqo}.}
	\emph{If the permutation class $\langle\C\rangle$ is wqo, then~$\C$, and thus also $\langle\C\rangle$, is lwqo.}
\end{itemize}

The following two conjectures would follow from Pouzet's Conjecture~\ref{conj-pouzet-2-wqo} or a positive answer to Question~\ref{ques-pouzet-2-wqo-lwqo}. 

\begin{itemize}
\item
	\textbf{Conjecture~\ref{conj-2-wqo-C-lwqo-C+1}.}
	\emph{If the permutation class~$\C$ is $2$-wqo, then the class~$\C^{+1}$ is wqo.}
\item 
	\textbf{Conjecture~\ref{conj-2-wqo-C-lwqo-C+t}.}
	\emph{If the permutation class~$\C$ is $2$-wqo, then the class~$\C^{+t}$ is $2$-wqo for every ${t\ge 0}$.}
\end{itemize}

Finally, we repeat the following conjecture concerning grid classes, although it does not directly address lwqo.

\begin{itemize}
\item
	\textbf{Conjecture~\ref{conj-mono-grid-basis}.}
	(Huczynska and Vatter~{\cite[Conjecture 2.3]{huczynska:grid-classes-an:}})
	\emph{Every monotone grid class is finitely based.}
\end{itemize}

\section*{Acknowledgements}

We thank the two anonymous reviewers for their comments and suggestions that improved this work.

\bibliographystyle{acm}
\begin{small}
\setlength{\bibsep}{2pt}


\end{small}
\end{document}